\newcommand{\e}{\ensuremath{\epsilon}}
\newcommand{\ve}{\ensuremath{\varepsilon}}
\newcommand{\rto}{\ensuremath{\rightarrow}}
\newcommand{\lem}{\ensuremath{\lesssim}}
\newtheorem{theorem}{Theorem}[section]
\newtheorem{definition}[theorem]{Definition}
\newtheorem{lemma}[theorem]{Lemma}
\newtheorem{remark}[theorem]{Remark}
\numberwithin{equation}{section}
\title{\Large Global Existence and Nonlinear Diffusion of Classical Solutions to Non-Isentropic Euler Equations with Damping in Bounded Domain}
\author{\normalsize Fuzhou Wu\thanks{E-mail: michael8723@gmail.com; fuzhou.wu@yahoo.com} \\
\small\it  Mathematical Sciences Center, Tsinghua University\\
\small\it  Beijing 100084, China
}
\date{\normalsize }
\begin{document}
\maketitle
\setlength\parindent{2em}
\setlength\parskip{5pt}

\begin{abstract}
\normalsize{
We considered classical solutions to the initial boundary value problem for non-isentropic compressible Euler equations with damping in multi-dimensions. We obtained global a priori estimates and global existence results of classical solutions to both non-isentropic Euler equations with damping and their nonlinear diffusion equations under small data assumption.
We proved the pressure and velocity decay exponentially to constants, while the entropy and density can not approach constants. Finally, we proved the pressure and velocity of the non-isentropic Euler equations with damping converge exponentially to those of their nonlinear diffusion equations when the time goes to infinity.
}
\\
\par
\small{
\textbf{Keywords}: non-isentropic Euler equation with damping, global existence, equilibrium states, Darcy's law, nonlinear diffusion
}
\end{abstract}

%\newpage
\tableofcontents

%%% find 1
\section{Introduction}
In this paper, we consider classical solutions to IBVP for non-isentropic compressible Euler equations with damping in three dimensions:
\begin{equation}\label{Sect1_NonIsentropic_EulerEq_Original}
\left\{\begin{array}{lll}
\varrho_t + u\cdot\nabla \varrho + \varrho\nabla\cdot u =0, \\[6pt]
\varrho u_t + \varrho u\cdot\nabla u + \nabla p + a\varrho u =0, \\[6pt]
S_t + u\cdot\nabla S =0, \\[6pt]
(\varrho,u,S)(x,0)=(\varrho_0(x), u_0(x), S_0(x)), \\[6pt]
u\cdot n|_{\partial\Omega} =0,\ \forall t\geq 0,
\end{array}\right.
\end{equation}
where $\varrho, u, S, p$ denotes the density, velocity, entropy and pressure of ideal gases, respectively.
The friction coefficient $a>0$, $\Omega\subset\mathbb{R}^3$ is a bounded domain with smooth boundary $\partial\Omega$. The physical model of the equations $(\ref{Sect1_NonIsentropic_EulerEq_Original})$ is the non-isentropic flow of the ideal gases in porous media, for which the pressure law reads
\begin{equation}\label{Sect1_Pressure}
p = A\varrho^{\gamma}e^S,
\end{equation}
where $A>0$, $\gamma=\frac{C_p}{C_V}>1$ are constants.

As long as $(\varrho,p,v,S)$ in $(\ref{Sect1_NonIsentropic_EulerEq_Original})$ remain classical, IBVP $(\ref{Sect1_NonIsentropic_EulerEq_Original})$ are equivalent to the following IBVP, where the first two equations can be symmetrized.
\begin{equation}\label{Sect1_NonIsentropic_EulerEq}
\left\{\begin{array}{lll}
p_t + u\cdot\nabla p + \gamma p\nabla\cdot u = 0, \\[6pt]
u_t + u\cdot\nabla u + \frac{1}{\varrho}\nabla p + a u =0, \\[6pt]
S_t + u\cdot\nabla S =0, \\[6pt]
(p,u,S)(x,0)=(p_0(x), u_0(x), S_0(x)), \\[6pt]
u\cdot n|_{\partial\Omega} =0,\ \forall t\geq 0,
\end{array}\right.
\end{equation}
where $\varrho = \varrho(p,S)
:= \frac{1}{\sqrt[\gamma]{A}}p^{\frac{1}{\gamma}}\exp\{-\frac{S}{\gamma}\}$.

There is a huge literature about the compressible Euler equations with damping, we introduce these results as follows:

\par
As to the isothermal compressible Euler equations with damping:
\begin{equation}\label{Sect1_Isothermal_EulerEq}
\left\{\begin{array}{ll}
\varrho_t + \nabla\cdot(\varrho u)=0, \\[6pt]
\varrho u_t + \varrho u\cdot\nabla u+ \bar{\sigma}^2\nabla \varrho + a\varrho u = 0,
\end{array}\right.
\end{equation}
where $\bar{\sigma}^2 = \mathcal{R}\theta_{\ast}$ is constant. 
The equations $(\ref{Sect1_Isothermal_EulerEq})$ describe the isothermal flow of ideal gases in porous media.
Zhao (see \cite{Zhao_2010}) proved the global existence of classical solutions to IBVP for $(\ref{Sect1_Isothermal_EulerEq})$ with small data. For BV solutions, see \cite{Dafermos_1995,Luskin_Temple_1982}. For entropy weak solutions, see \cite{Huang_Pan_2006,Zhao_2010}.

\par
As to the isentropic compressible Euler equations with damping:
\begin{equation}\label{Sect1_Isentropic_EulerEq}
\left\{\begin{array}{ll}
\varrho_t + \nabla\cdot(\varrho u)=0, \\[6pt]
\varrho u_t + \varrho u\cdot\nabla u +\nabla p + a \varrho u = 0,
\end{array}\right.
\end{equation}
with $p(\rho)=A\rho^{\gamma}$,
Sideris, Thomases and Wang (see \cite{Sideris_Thomases_Wang_2003}) proved the global existence of classical solutions to 3D Cauchy problem for $(\ref{Sect1_Isentropic_EulerEq})$ under small data assumption. They also proved the singularity formation of classical solutions for a class of large data. Pan and Zhao (see \cite{Pan_Zhao_2009}) proved the global existence and exponential decay of classical solutions to 3D IBVP for $(\ref{Sect1_Isentropic_EulerEq})$ under small data assumption, verified the Darcy law when the total mass of the diffusion equations equals the total mass of IBVP $(\ref{Sect1_Isentropic_EulerEq})$. Due to the boundary conditions $\partial_t^{\ell}u\cdot n|_{\partial\Omega}=0$ but $\mathcal{D}^{\alpha}u\cdot n|_{\partial\Omega}$ may not be zero, the a priori estimates for IBVP (see \cite{Pan_Zhao_2009}) are more complicated than those for Cauchy problem (see \cite{Sideris_Thomases_Wang_2003}).

All the variables in the isothermal case $(\ref{Sect1_Isothermal_EulerEq})$ and the isentropic case $(\ref{Sect1_Isentropic_EulerEq})$ have diffusion property, which approach constants when the time goes to infinity. While the entropy and density of the non-isentropic Euler equations with damping $(\ref{Sect1_NonIsentropic_EulerEq_Original})$ are transported in Eulerian coordinates, which bring main difficulties for the non-isentropic Euler equations with damping.

In (\cite{Hsiao_Luo_1996},\cite{Hsiao_Pan_1999},\cite{Hsiao_Pan_2000},\cite{Hsiao_Serre_1996},
\cite{Marcati_Pan_2001},\cite{Pan__2001},\cite{Pan_Darcy_2006},\cite{Zheng_1996}), the authors applied characteristics analysis together with energy estimate method to study the 1D non-isentropic p-system with damping in Lagrangian coordinates $\{(y,t)\}$:
\begin{equation}\label{Sect1_P_System}
\left\{\begin{array}{ll}
\mathcal{V}_t - u_y =0,\\[6pt]
u_t + p(\mathcal{V},S)_y = - a u,\\[6pt]
S_t =0,
\end{array}\right.
\end{equation}
where $\mathcal{V}=\frac{1}{\varrho},\ p(\mathcal{V},S) = A\mathcal{V}^{-\gamma}e^S$.
While in Lagrangian coordinates, the entropy $S(y,t)\equiv S_0(y)$, whose transportation is implicit in this coordinates. All vertical lines in Lagrangian coordinates are particle paths of 1D non-isentropic p-system with damping and its diffusion system, so that the phenomena in 1D Lagrangian coordinates are much simpler.

As to the non-isentropic Euler equations with damping in multi-dimensional Eulerian coordinates, the only results in the present are the global existence and decay properties of classical solutions to $(\ref{Sect1_NonIsentropic_EulerEq})$ in $\mathbb{R}^3$ (see \cite{Wu_Tan_Huang_2013}) and periodic domain $\mathbb{T}\subset\mathbb{R}^3$ (see \cite{Zhang_Wu_2014}).
The spectral method and Duhamel's principle are applied in \cite{Wu_Tan_Huang_2013} to prove $p-\bar{p},u, S_t$ algebraically decay and $S-\bar{S}$ is uniformly bounded. Due to the convenience of periodic boundary condition, similar energy estimate method was applied in \cite{Zhang_Wu_2014}, where $p-\bar{p},u,S_t$ decay exponentially and $S-\bar{S}$ is uniformly bounded. While the initial boundary value problem is more difficult, due to the boundary conditions $\partial_t^{\ell}u\cdot n|_{\partial\Omega}=0$ but $\mathcal{D}^{\alpha}u\cdot n|_{\partial\Omega}$ may not be zero.

The aims of this paper are as follows: (1) to study the long time behavior of classical solutions to the non-isentropic Euler equations with damping, such as global existence, exponential decay, equilibrium states, singularity formation. (2) to study the long time behavior of classical solutions to the nonlinear diffusion equations. (3) to study the relationship between the solutions of the above two systems when the time is large.

In this paper, we assume no vacuum initially, i.e., $\inf\limits_{x\in\Omega}\varrho_0>0$ or $\inf\limits_{x\in\Omega}p_0>0$, otherwise the degeneracy aroused by the vacuum brings about new difficulties, such as local existence and behavior of vacuum boundary. Then $\inf\limits_{\Omega\times[0,T]}\varrho(x,t)>0$ and $\inf\limits_{\Omega\times[0,T]}p(x,t)>0$ as long as the solution remains classical in the time interval $[0,T]$.

We introduce the following constants:
\begin{equation}\label{Sect1_Constants_EulerEq}
\begin{array}{ll}
\bar{p}= \left( \frac{1}{|\Omega|}\int\limits_{\Omega} p_0^{\frac{1}{\gamma}}\,\mathrm{d}x \right)^{\gamma},\quad
\bar{S} = \frac{1}{\Omega}\int\limits_{\Omega} S_0 \,\mathrm{d}x, \quad
\bar{\varrho} = \frac{1}{\Omega}\int\limits_{\Omega} \varrho_0 \,\mathrm{d}x,
\end{array}
\end{equation}
where $\varrho_0 = \frac{1}{\sqrt[\gamma]{A}}p_0^{\frac{1}{\gamma}}\exp\{-\frac{S_0}{\gamma}\}$. Thus, we can express the concept of small data for IBVP $(\ref{Sect1_NonIsentropic_EulerEq})$, i.e., the smallness of $\|(p_0-\bar{p},u_0-0,S_0-\bar{S},\varrho_0-\bar{\varrho})\|_{H^3(\Omega)}$.

We proved that if the initial data $(p_0,u_0,S_0)\in H^3(\Omega)$ are sufficiently small perturbations of their mean values $(\frac{1}{|\Omega|}\int\limits_{\Omega} p_0 \,\mathrm{d}x,0,\bar{S})$ or $(\bar{p},0,\bar{S})$, then IBVP $(\ref{Sect1_NonIsentropic_EulerEq})$ admits a unique global classical solution
$(p,u,S)\in \underset{0\leq \ell\leq 3}{\cap}C^{\ell}([0,+\infty),H^{3-\ell}(\Omega))$,
moreover, $\varrho = \varrho(p,S)\in \underset{0\leq \ell\leq 3}{\cap}C^{\ell}([0,+\infty),H^{3-\ell}(\Omega))$.
$(p,u)$ converge exponentially to $(\bar{p},0)$ rather than $(\frac{1}{|\Omega|}\int\limits_{\Omega} p_0 \,\mathrm{d}x,0)$ as $t\rto +\infty$, $(\varrho,S)$ are uniformly bounded all the time.
Moreover, $\sum\limits_{0\leq\ell\leq 3}(\|\partial_t^{\ell}(p-\bar{p})\|_{H^{3-\ell}(\Omega)}
+\|\partial_t^{\ell}u\|_{H^{3-\ell}(\Omega)})$,
$\sum\limits_{1\leq\ell\leq 3}\|\partial_t^{\ell}\varrho\|_{H^{3-\ell}(\Omega)}$ and
$\sum\limits_{1\leq\ell\leq 3}\|\partial_t^{\ell}S\|_{H^{3-\ell}(\Omega)}$ decay exponentially,
$\|\varrho-\bar{\varrho}\|_{H^{3}(\Omega)}$ and $\|S-\bar{S}\|_{H^{3}(\Omega)}$ are uniformly bounded.

Since $\bar{p} \leq \frac{1}{|\Omega|}\int\limits_{\Omega} p_0\,\mathrm{d}x$,
$\|p_0-\bar{p}\|_{H^\ell(\Omega)}\geq \|p_0- \frac{1}{|\Omega|}\int\limits_{\Omega} p_0\,\mathrm{d}s\|_{H^{\ell}(\Omega)},\ell\geq 0$.
Thus for $p_0$, the smallness of $p_0-\bar{p}$ implies the smallness of $p_0- \frac{1}{|\Omega|}\int\limits_{\Omega} p_0\,\mathrm{d}x$. Therefore, even $(p_0,u_0,S_0)$ are small perturbations of $(\frac{1}{|\Omega|}\int\limits_{\Omega} p_0\,\mathrm{d}x,0,\bar{S})$, the pressure $p$ still converges to $\bar{p}$ as $t\rto +\infty$.

\vspace{0.3cm}
In order to describe the equilibrium states of the global classical solutions, we introduce the following notations:
\begin{equation}\label{Sect1_Variables_Infinity}
\begin{array}{ll}
(p_{\infty}(x),u_{\infty}(x),S_{\infty}(x),\varrho_{\infty}(x))
= \lim\limits_{t\rto\infty} (p(x,t),u(x,t),S(x,t),\varrho(x,t)).
\end{array}
\end{equation}

We define $S_{+} := \max\limits_{x\in\Omega}\{S_0(x)\},\ S_{-} := \min\limits_{x\in\Omega}\{S_0(x)\}$. Due to the characteristic boundary $u\cdot n|_{\partial\Omega}=0$, each particle path in $\Omega\times \{t\geq 0\}$
extends to $\Omega\times \{t=+\infty\}$ rather than terminating on $\partial\Omega\times \{t\geq 0\}$, and
$S$ is invariant along every particle path, so $\max\limits_{x\in\Omega}\{S_{\infty}(x)\}=S_{+},\ \min\limits_{x\in\Omega}\{S_{\infty}(x)\}=S_{-}$. This is a physical explanation of the transportation of the entropy, but we proved mathematically that $S_{\infty}\neq const$ and $\varrho_{\infty}\neq const$, if $S_{+}\neq S_{-}$.
Moreover, $(p,u,S,\varrho)$ converge exponentially to their equilibrium states
$(\bar{p},0,S_{\infty}(x),\varrho_{\infty}(x))$ in $|\cdot|_{\infty}$ norm.

However, the damping effect on the velocity makes the equations $(\ref{Sect1_NonIsentropic_EulerEq_Original})$ or $(\ref{Sect1_NonIsentropic_EulerEq})$ weakly dissipative, such that it
can not prevent the formation of singularities without small data assumption. We proved that for a class of large initial data whose support $Supp(p_0-\bar{p},u_0,S_0-\bar{S})$ is away from the boundary $\partial\Omega$, the singularities must form in the interior of ideal gases. These singularities will have formed before $Supp(p-\bar{p},u,S-\bar{S})$ reaches the boundary. Our argument is based on the analysis of the moment $M_{\varrho}(t) = \int\limits_{\Omega} \varrho u\cdot x\,\mathrm{d}x$ and finite propagation speed of the classical solutions, this method can be extended easily to Cauchy problem. However, the finite size of bounded domain $\Omega$ can not replace the finite propagation speed of the solutions in our proof.

Toward a better understanding of the large time behavior and nonlinear diffusion property of classical solutions to non-isentropic Euler equations with damping $(\ref{Sect1_NonIsentropic_EulerEq})$, we study the following nonlinear diffusion equations which are obtained by applying Darcy's law to $(\ref{Sect1_NonIsentropic_EulerEq})_2$,
\begin{equation}\label{Sect1_Diffusion_Eq}
\left\{\begin{array}{lll}
p_t + u\cdot\nabla p + \gamma p\nabla\cdot u = 0, \\[6pt]
\frac{1}{\varrho}\nabla p + a u =0, \\[6pt]
S_t + u\cdot\nabla S =0, \\[6pt]
(p,S)(x,0)=(\hat{p}_0(x), \hat{S}_0(x)), \\[6pt]
u\cdot n|_{\partial\Omega} =0,\ \forall t\geq 0,
\end{array}\right.
\end{equation}
where $\varrho = \varrho(p,S) := \frac{1}{\sqrt[\gamma]{A}}p^{\frac{1}{\gamma}}\exp\{-\frac{S}{\gamma}\}$,
$(\hat{p}_0(x), \hat{S}_0(x))$ may be different from $(p_0(x), S_0(x))$.
Here, $(\ref{Sect1_Diffusion_Eq})_2$ is not an evolution equation of $u$, thus $u$ itself does not need the initial data.

The physical model of the equations $(\ref{Sect1_Diffusion_Eq})$ is the sufficiently slow motion of the ideal gases in porous media, Darcy's law gives the relationship between the momentum of ideal gases and the gradient of their pressure. The system $(\ref{Sect1_Diffusion_Eq})$ is essentially a parabolic-hyperbolic system with respect to $p$ and $S$ after eliminating $u$:
\begin{equation}\label{Sect1_Parabolic_Hyperbolic}
\left\{\begin{array}{lll}
p_t = \frac{\gamma p}{a\varrho}\triangle p - \frac{\gamma p}{a\varrho^2}\nabla\varrho\cdot\nabla p
+\frac{1}{a\varrho}|\nabla p|^2, \\[6pt]
S_t - \frac{1}{a\varrho}\nabla p\cdot\nabla S =0, \\[6pt]
(p,S)(x,0)=(\hat{p}_0(x), \hat{S}_0(x)), \\[6pt]
\frac{\partial p}{\partial n}|_{\partial\Omega} =0,\ \forall t\geq 0,
\end{array}\right.
\end{equation}
where $\varrho = \varrho(p,S)$.

We introduce the following constants:
\begin{equation}\label{Sect1_Constants_DiffusionEq}
\begin{array}{ll}
\hat{\bar{p}}= \left( \frac{1}{|\Omega|}\int\limits_{\Omega} \hat{p}_0^{\frac{1}{\gamma}}\,\mathrm{d}x \right)^{\gamma},\quad
\hat{\bar{S}} = \frac{1}{\Omega}\int\limits_{\Omega} \hat{S}_0 \,\mathrm{d}x, \quad
\hat{\bar{\varrho}} = \frac{1}{\Omega}\int\limits_{\Omega} \hat{\varrho}_0 \,\mathrm{d}x,
\end{array}
\end{equation}
where $\hat{\varrho}_0 = \frac{1}{\sqrt[\gamma]{A}}\hat{p}_0^{\frac{1}{\gamma}}\exp\{-\frac{\hat{S}_0}{\gamma}\}$.
Thus, we can express the concept of small data for IBVP $(\ref{Sect1_Diffusion_Eq})$, i.e., the smallness of  $\|\hat{p}_0-\hat{\bar{p}}\|_{H^4(\Omega)}+\|(\hat{S}_0-\hat{\bar{S}},\hat{\varrho}_0-\hat{\bar{\varrho}})\|_{H^3(\Omega)}$.

We proved that if the initial data $(\hat{p}_0,\hat{S}_0)\in H^4(\Omega)\times H^3(\Omega)$ are sufficiently small perturbations of their mean values
$(\frac{1}{|\Omega|}\int\limits_{\Omega} \hat{p}_0 \,\mathrm{d}x,\hat{\bar{S}})$ or $(\hat{\bar{p}},\hat{\bar{S}})$, then IBVP $(\ref{Sect1_Diffusion_Eq})$ and $(\ref{Sect1_Parabolic_Hyperbolic})$ admit a unique global classical solution $(\hat{p},\hat{S})$ satisfying
\begin{equation*}
\begin{array}{ll}
(\hat{p},\hat{S})\in \underset{0\leq \ell\leq 3}{\cap}C^{\ell}([0,+\infty),H^{4-\ell}(\Omega)\times H^{3-\ell}(\Omega)),\
\triangle \hat{p} \in C(\Omega\times[0,+\infty)),
\end{array}
\end{equation*}
moreover,
\begin{equation*}
\left\{\begin{array}{ll}
\hat{\varrho} = \varrho(\hat{p},\hat{S})
\in \underset{0\leq \ell\leq 3}{\cap}C^{\ell}([0,+\infty),H^{3-\ell}(\Omega)),\\[6pt]
\hat{u}= - \frac{1}{a\hat{\varrho}}\nabla\hat{p}\in
\underset{0\leq \ell\leq 3}{\cap}C^{\ell}([0,+\infty),H^{3-\ell}(\Omega)),\
\nabla\cdot \hat{u} \in C(\Omega\times[0,+\infty)).
\end{array}\right.
\end{equation*}
Then $(\hat{p},\hat{u})$ converge exponentially to $(\hat{\bar{p}},0)$ rather than $(\frac{1}{|\Omega|}\int\limits_{\Omega} \hat{p}_0 \,\mathrm{d}x,0)$ as $t\rto +\infty$, $(\hat{\varrho},\hat{S})$ are uniformly bounded all the time.
Moreover, $\sum\limits_{0\leq\ell\leq 3}(\|\partial_t^{\ell}(\hat{p}-\hat{\bar{p}})\|_{H^{4-\ell}(\Omega)}
+\|\partial_t^{\ell}\hat{u}\|_{H^{3-\ell}(\Omega)})$,
$\sum\limits_{1\leq\ell\leq 3}\|\partial_t^{\ell}\hat{\varrho}\|_{H^{3-\ell}(\Omega)}$ and
$\sum\limits_{1\leq\ell\leq 3}\|\partial_t^{\ell}\hat{S}\|_{H^{3-\ell}(\Omega)}$ decay exponentially,
$\|\hat{\varrho}-\hat{\bar{\varrho}}\|_{H^{3}(\Omega)}$ and $\|\hat{S}-\hat{\bar{S}}\|_{H^{3}(\Omega)}$ are uniformly bounded.

We define $\hat{S}_{+} :=\sup\limits_{x\in\Omega}\hat{S}_0(x),\ \hat{S}_{-} :=\inf\limits_{x\in\Omega}\hat{S}_0(x)$ and denote $(\hat{S}_{\infty},\hat{\varrho}_{\infty})=\lim\limits_{t\rto \infty}(\hat{S},\hat{\varrho})$. Along the particle paths determined by $\hat{u}$, the entropy $\hat{S}$ remains invariant. We also proved mathematically that $\hat{S}_{\infty}\neq const,\ \hat{\varrho}_{\infty}\neq const$, if $\hat{S}_{+}\neq\hat{S}_{-}$. Moreover, $(\hat{p},\hat{u},\hat{S},\hat{\varrho})$ converge exponentially to their equilibrium states $(\hat{\bar{p}},0,\hat{S}_{\infty}(x),\hat{\varrho}_{\infty}(x))$ in $|\cdot|_{\infty}$ norm.

Furthermore, we proved that if $\int\limits_{\Omega} p_0^{\gamma}\,\mathrm{d}x
= \int\limits_{\Omega} \hat{p}_0^{\gamma}\,\mathrm{d}x$, then $\bar{p}=\hat{\bar{p}}$ and
$(p,u)$ of IBVP $(\ref{Sect1_NonIsentropic_EulerEq})$ converge exponentially to $(\hat{p},\hat{u})$ of IBVP $(\ref{Sect1_Diffusion_Eq})$, namely, as $t\rto +\infty$,
$$ \|p-\hat{p}\|_{H^3(\Omega)} + \|u-\hat{u}\|_{H^3(\Omega)} \leq C_1\exp\{-C_2 t\}.$$

In Lagrangian coordinates $\{(y,t)\}$, if $S_0(y)=\hat{S}_0(y)$, then $S_{\infty}(y)\equiv S(y,t)\equiv S_0(y)=\hat{S}_0(y)\equiv \hat{S}(y,t)\equiv \hat{S}_{\infty}(y)$.
While in Eulerian coordinates, $\hat{S}_{\infty}(x)\neq S_{\infty}(x), \hat{\varrho}_{\infty}(x)\neq \varrho_{\infty}(x)$ in general, due to the transportation of $\hat{S},\hat{\varrho},S,\varrho$. For a given $S_0(x)$, whether there exists $\hat{S}_0(x)$ such that $\hat{S}_{\infty}(x)=S_{\infty}(x)$ is still open. If such a $\hat{S}_0(x)$ exists, $(p,u,S,\varrho)$ of IBVP $(\ref{Sect1_NonIsentropic_EulerEq})$ converge to $(\hat{p},\hat{u},\hat{S},\hat{\varrho})$ of IBVP $(\ref{Sect1_Diffusion_Eq})$ in Eulerian coordinates, as $t\rto +\infty$.

The rest of this paper is organized as follows: In Section 2, we reformulate the equations $(\ref{Sect1_NonIsentropic_EulerEq}),(\ref{Sect1_Diffusion_Eq})$ into appropriate forms and state the main results. In Section 3, we prove global a priori estimates for the non-isentropic Euler equations with damping $(\ref{Sect1_NonIsentropic_EulerEq})$. In Section 4, we prove the global existence of classical solutions to $(\ref{Sect1_NonIsentropic_EulerEq})$ and singularity formation for large data. In Section 5, we prove global a priori estimates for the diffusion equations $(\ref{Sect1_Diffusion_Eq})$. In Section 6, we prove the global existence of classical solutions to $(\ref{Sect1_Diffusion_Eq})$ and the nonlinear diffusion property of $(\ref{Sect1_NonIsentropic_EulerEq})$.

%%% find 2
\section{Preliminaries and Precise Statements of Main Results}
In this section, we will reformulate the equations $(\ref{Sect1_NonIsentropic_EulerEq}),(\ref{Sect1_Diffusion_Eq})$ into appropriate forms, define some energy quantities and state precisely the main results of this paper.

The following lemma mainly gives the relationship between $p_{\infty}(x)$ and the initial data $(p_0(x),S_0(x),\varrho_0(x))$.
\begin{lemma}\label{Sect2_P_Infty_Lemma}
\begin{equation}\label{Sect2_P_Infty_toProve}
\begin{array}{ll}
p_{\infty} = \bar{p} = \left( \frac{1}{|\Omega|}\int\limits_{\Omega} p_0^{\frac{1}{\gamma}}\,\mathrm{d}x \right)^{\gamma},\quad \bar{p}\in[\inf\limits_{x\in\Omega}p(t),\sup\limits_{x\in\Omega}p(t)].
\end{array}
\end{equation}
\end{lemma}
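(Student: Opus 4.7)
The proof hinges on uncovering a conservation law hidden in the pressure equation $(\ref{Sect1_NonIsentropic_EulerEq})_1$. Multiplying that equation by $\frac{1}{\gamma}p^{\frac{1}{\gamma}-1}$ and recombining the terms, one finds
$$
\partial_t\!\left(p^{\frac{1}{\gamma}}\right) + \nabla\cdot\!\left(p^{\frac{1}{\gamma}}u\right)
= \frac{1}{\gamma}p^{\frac{1}{\gamma}-1}\bigl(p_t + u\cdot\nabla p + \gamma p\,\nabla\cdot u\bigr) = 0.
$$
Integrating over $\Omega$, applying the divergence theorem, and invoking the impermeability condition $u\cdot n|_{\partial\Omega}=0$ yields $\frac{d}{dt}\int_\Omega p^{1/\gamma}\,\mathrm{d}x = 0$, hence
$$
\int_\Omega p^{\frac{1}{\gamma}}(x,t)\,\mathrm{d}x = \int_\Omega p_0^{\frac{1}{\gamma}}(x)\,\mathrm{d}x,\qquad \forall\, t\geq 0.
$$
The non-vacuum hypothesis $\inf_{\Omega\times[0,T]}p>0$ keeps all fractional powers of $p$ smooth, which legitimates the manipulation.

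The inclusion $\bar{p}\in[\inf_\Omega p(t),\sup_\Omega p(t)]$ then falls out of the mean-value inequality: $\bar{p}^{1/\gamma}=\frac{1}{|\Omega|}\int_\Omega p^{1/\gamma}(x,t)\,\mathrm{d}x$ lies between $\inf_\Omega p^{1/\gamma}(\cdot,t)$ and $\sup_\Omega p^{1/\gamma}(\cdot,t)$, and the monotonicity of $s\mapsto s^\gamma$ on $(0,\infty)$ transfers the bound from $p^{1/\gamma}$ back to $p$.

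For the identification $p_\infty=\bar{p}$, I plan to appeal to the exponential decay $\|p(\cdot,t)-\bar{p}\|_{H^3(\Omega)}\rto 0$ announced in the introduction and to be proved via the a priori estimates of Section 3, together with the Sobolev embedding $H^3(\Omega)\hookrightarrow C(\overline{\Omega})$. This supplies uniform pointwise convergence $p(\cdot,t)\rto p_\infty$ with $p_\infty$ a constant, and lets me pass to the limit in the conserved integral to obtain $p_\infty^{1/\gamma}|\Omega|=\int_\Omega p_0^{1/\gamma}\,\mathrm{d}x$, i.e.\ $p_\infty=\bar{p}$. The only subtle point is the logical ordering rather than any hard analysis: the conservation identity must be derived independently of the decay estimates, since those estimates already presuppose $\bar{p}$ as the target constant. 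Because the identity is purely algebraic and follows directly from the equation, no circularity arises, and the lemma may be cited freely throughout the later sections.
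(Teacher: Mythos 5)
Your proposal is correct, and its core coincides with the paper's: the paper also obtains the conservation $\int_\Omega p^{1/\gamma}\,\mathrm{d}x=\int_\Omega p_0^{1/\gamma}\,\mathrm{d}x$ and deduces the inclusion $\bar p\in[\inf_\Omega p,\sup_\Omega p]$ by noting that any violation would contradict this identity. The differences are in the packaging. First, you derive the conservation law directly from the pressure equation by multiplying by $\frac{1}{\gamma}p^{\frac{1}{\gamma}-1}$, whereas the paper derives the equivalent transport--divergence identity for $\varrho\exp\{S/\gamma\}$ from the density and entropy equations of $(\ref{Sect1_NonIsentropic_EulerEq_Original})$ and then uses $p^{1/\gamma}=A^{1/\gamma}\varrho e^{S/\gamma}$; the two computations are interchangeable, and yours is marginally more direct. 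Second, for the identification $p_\infty=\bar p$ the paper argues formally: it plugs the equilibrium ansatz $\partial_t\varrho_\infty=u_\infty=\partial_t S_\infty=0$ into the equations, concludes $\nabla p_\infty=0$, and then fixes the constant by the conserved integral. You instead invoke the exponential decay $\|p-\bar p\|_{H^3}\to 0$ established in Sections 3--4 together with the embedding $H^3(\Omega)\hookrightarrow C(\overline\Omega)$. The paper's route is self-contained within the lemma but heuristic (it presupposes that a limit state exists and satisfies the stationary equations), while yours is rigorous but creates a forward dependency; you handle the resulting ordering issue correctly, since the only ingredient of this lemma used in the energy estimates (e.g.\ in Lemma \ref{Sect3_Energy_Estimate_Lemma2}, where $\bar p\in[\inf_\Omega p,\sup_\Omega p]$ supplies a point $x_t$ with $\xi(x_t,t)=0$) is the conservation/inclusion part, which you prove independently of any decay. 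One cosmetic remark: once the decay to $\bar p$ is available, $p_\infty=\bar p$ is immediate, so your final step of passing to the limit in the conserved integral is only needed if you prefer to use the decay merely to know that $p_\infty$ exists and is constant.
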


\begin{proof}
By $(\ref{Sect1_NonIsentropic_EulerEq_Original})_1$ and $(\ref{Sect1_NonIsentropic_EulerEq_Original})_3$, we have
\begin{equation}\label{Sect2_P_Infty_1}
\begin{array}{ll}
(\varrho\exp\{\frac{S}{\gamma}\})_t + u\cdot\nabla(\varrho\exp\{\frac{S}{\gamma}\})
+ \varrho\exp\{\frac{S}{\gamma}\}\nabla\cdot u =0, \\[6pt]

\frac{\mathrm{d}}{\mathrm{d}t}\int\limits_{\Omega} \varrho\exp\{\frac{S}{\gamma}\} \,\mathrm{d}x
= - \int\limits_{\Omega} \nabla\cdot(\varrho u\exp\{\frac{S}{\gamma}\})\,\mathrm{d}x \\[6pt]\hspace{2.6cm}

= - \int\limits_{\partial\Omega} \varrho\exp\{\frac{S}{\gamma}\} u\cdot n\,\mathrm{d}S_x = 0, \\[6pt]

\int\limits_{\Omega} p^{\frac{1}{\gamma}} \,\mathrm{d}x
= \int\limits_{\Omega} p_0^{\frac{1}{\gamma}} \,\mathrm{d}x.
\end{array}
\end{equation}

In the equilibrium state, $\partial_t\varrho_{\infty}=u_{\infty}=\partial_t S_{\infty}=0$, plug which into the equations $(\ref{Sect1_NonIsentropic_EulerEq_Original})$, we have $\nabla p_{\infty}=0$, namely, $p_{\infty}$ is a constant. Then
\begin{equation}\label{Sect2_P_Infty_2}
\begin{array}{ll}
p_{\infty}^{\frac{1}{\gamma}}|\Omega| =
\int\limits_{\Omega} p_0^{\frac{1}{\gamma}} \,\mathrm{d}x, \\[6pt]

p_{\infty} =\bar{p} = \left( \frac{1}{|\Omega|}\int\limits_{\Omega} p_0^{\frac{1}{\gamma}}\,\mathrm{d}x \right)^{\gamma}.
\end{array}
\end{equation}

If $\bar{p}>\sup\limits_{x\in\Omega}p$ or $\bar{p}<\inf\limits_{x\in\Omega}p$, it contradicts with
$\int\limits_{\Omega} p^{\frac{1}{\gamma}} \,\mathrm{d}x
=\int\limits_{\Omega} \bar{p}^{\frac{1}{\gamma}} \,\mathrm{d}x$.
Thus, $\bar{p}\in[\inf\limits_{x\in\Omega}p,\sup\limits_{x\in\Omega}p]$.
\end{proof}

\begin{remark}
$\cite{Zhang_Wu_2014}$ pointed out the pressure $p$ in the periodic domain $\mathbb{T}$ converges \\[6pt]
to $\bar{p}=\frac{1}{|\mathbb{T}^3|}\int\limits_{\mathbb{T}^3}p_0\,\mathrm{d}x$. However,
$\bar{p}=\left(\frac{1}{|\mathbb{T}^3|}\int\limits_{\mathbb{T}^3}p_0^{\frac{1}{\gamma}}\,\mathrm{d}x \right)^{\gamma}$ is correct.
\end{remark}

For the non-isentropic Euler equations with damping $(\ref{Sect1_NonIsentropic_EulerEq})$ together with their initial data $(p_0,u_0,S_0,\varrho_0)$ and constants $\bar{p},\bar{S},\bar{\varrho}$, we introduce the constants:
\begin{equation*}
\begin{array}{ll}
k_1=\sqrt{\frac{1}{\gamma\bar{\varrho}\bar{p}}},\quad k_2 = \sqrt{\frac{\gamma\bar{p}}{\bar{\varrho}}},
\end{array}
\end{equation*}
define the variables:
\begin{equation*}
\begin{array}{ll}
\xi=p-\bar{p},\quad \phi=S-\bar{S},\quad v=\frac{1}{k_1}u,\quad \omega=\nabla\times v,\quad
(v_{\infty},\omega_{\infty})=\lim\limits_{t\rto \infty}(v,\omega).
\end{array}
\end{equation*}

In order to establish the global existence of IBVP $(\ref{Sect1_NonIsentropic_EulerEq})$, we reformulate the equations $(\ref{Sect1_NonIsentropic_EulerEq})$ into the following form:
\begin{equation}\label{Sect2_Final_Eq}
\left\{\begin{array}{ll}
\xi_t + k_2\nabla\cdot v = -\gamma k_1\xi\nabla\cdot v - k_1 v\cdot\nabla \xi, \\[6pt]
v_t + k_2 \nabla\xi + a v = - k_1 v\cdot\nabla v
+ \frac{1}{k_1} (\frac{1}{\bar{\varrho}}- \frac{1}{\varrho})\nabla\xi, \\[6pt]
\phi_t = - k_1 v\cdot\nabla\phi, \\[6pt]
(\xi, v, \phi)(x,0)=(p_0(x)-\bar{p}, \frac{1}{k_1} u_0(x), S_0(x)-\bar{S}), \\[6pt]
v\cdot n|_{\partial\Omega} =0,
\end{array}\right.
\end{equation}
where $\varrho =\varrho(\xi,\phi) := \frac{1}{\sqrt[\gamma]{A}}(\xi+\bar{p})^{\frac{1}{\gamma}}\exp\{-\frac{\phi+\bar{S}}{\gamma}\}$.

In order to prove the global existence of classical solutions to IBVP $(\ref{Sect2_Final_Eq})$ via the energy method, we define the following energy quantities:
\begin{definition}\label{Sect2_Def_Energy} Define
\begin{equation}\label{Sect2_Energy_Define}
\begin{array}{ll}
E[\xi](t):= \sum\limits_{\ell=0}^{3}\|\partial_t^{\ell} \xi(t)\|_{L^2(\Omega)}^2,\
E_1[\xi](t):=\sum\limits_{\ell=0}^{3}\|\partial_t^{\ell}\xi\|_{L^2(\Omega)}^2
- \int\limits_{\Omega}\frac{\xi}{p} \xi_{ttt}^2\,\mathrm{d}x, \\[8pt]

E[v](t):= \sum\limits_{\ell=0}^{3}\|\partial_t^{\ell} v(t)\|_{L^2(\Omega)}^2,\
E_1[v](t):=\sum\limits_{\ell=0}^{3}|\partial_t^{\ell}v|^2
+\int\limits_{\Omega}(\frac{\varrho}{\bar{\varrho}}- 1)|v_{ttt}|^2 \,\mathrm{d}x, \\[8pt]

\mathcal{E}[\xi](t):=
\sum\limits_{0\leq \ell+|\alpha|\leq 3} \|\partial_t^{\ell} \mathcal{D}^{\alpha}\xi(t)\|_{L^2(\Omega)}^2,\
\mathcal{E}[v](t):=
\sum\limits_{0\leq \ell+|\alpha|\leq 3} \|\partial_t^{\ell} \mathcal{D}^{\alpha} v(t)\|_{L^2(\Omega)}^2, \\[8pt]

E[\phi](t):= \sum\limits_{\ell=0}^{3}\|\partial_t^{\ell} \phi(t)\|_{L^2(\Omega)}^2,\
\mathcal{E}[\phi](t):=
\sum\limits_{0\leq \ell+|\alpha|\leq 3} \|\partial_t^{\ell} \mathcal{D}^{\alpha}\phi(t)\|_{L^2(\Omega)}^2,\\[8pt]

\mathcal{E}_1[\omega](t)
:= \sum\limits_{0\leq \ell+|\alpha|\leq 2} \|\partial_t^{\ell} \mathcal{D}^{\alpha} \omega(t)\|_{L^2(\Omega)}^2, \\[8pt]

\mathcal{E}[\xi,v](t):=\mathcal{E}[\xi](t) + \mathcal{E}[v](t),\
\mathcal{E}[\xi,v,\phi](t):=\mathcal{E}[\xi,v](t) + \mathcal{E}[\phi](t).
\end{array}
\end{equation}
\end{definition}

In order to have classical solutions, even locally in time, the initial data are required to be compatible with the boundary condition, namely, $\partial_t^{\ell} u(x,0)\cdot n|_{\partial\Omega}=0, 0\leq \ell\leq 3$, where $\partial_t^{\ell} u(x,0)$ can be solved by the equations $(\ref{Sect1_NonIsentropic_EulerEq})$ in terms of initial data $(p_0,u_0,S_0)$.

The following theorem states the global existence and large time behavior of classical solutions to IBVP $(\ref{Sect2_Final_Eq})$ and $(\ref{Sect1_NonIsentropic_EulerEq})$:
\begin{theorem}\label{Sect2_Global_Existence_Thm}
Assume $(p_0,u_0,S_0)\in H^3(\Omega)$, $\inf\limits_{x\in\Omega}p_0(x)>0$, $\partial_t^{\ell} u(x,0)\cdot n|_{\partial\Omega}=0$, $0\leq \ell\leq 3$.
There exists a sufficiently small number $\delta_1>0$, such that if $\|(p_0-\bar{p},\frac{1}{k_1}u_0,S_0-\bar{S})\|_{H^3(\Omega)}\leq \delta_1$, then IBVP $(\ref{Sect2_Final_Eq})$ admits a unique global classical solution
$(\xi,v,\phi)\in \underset{0\leq \ell\leq 3}{\cap}C^{\ell}([0,+\infty),H^{3-\ell}(\Omega)),$
moreover, $\varrho = \varrho(\xi,\phi)\in \underset{0\leq \ell\leq 3}{\cap}C^{\ell}([0,+\infty),H^{3-\ell}(\Omega))$.
Thus, IBVP $(\ref{Sect1_NonIsentropic_EulerEq})$ admits a unique global classical solution $(p=\bar{p}+\xi, u=k_1 v, S=\bar{S}+\phi)$.
$\sum\limits_{0\leq\ell\leq 3}(\|\partial_t^{\ell}(p-\bar{p})\|_{H^{3-\ell}(\Omega)}
+\|\partial_t^{\ell}u\|_{H^{3-\ell}(\Omega)})$,
$\sum\limits_{0\leq\ell\leq 2}\|\partial_t^{\ell}\omega\|_{H^{2-\ell}(\Omega)}$,
$\sum\limits_{1\leq\ell\leq 3}\|\partial_t^{\ell}\varrho\|_{H^{3-\ell}(\Omega)}$ and
$\sum\limits_{1\leq\ell\leq 3}\|\partial_t^{\ell}S\|_{H^{3-\ell}(\Omega)}$ decay exponentially,
$\|\varrho-\bar{\varrho}\|_{H^{3}(\Omega)}$ and $\|S-\bar{S}\|_{H^{3}(\Omega)}$ are uniformly bounded.

Furthermore, $S_{\infty}(x)\in [S_{-},S_{+}]$ exists and is unique, $p_{\infty}=\bar{p}$, $u_{\infty}=v_{\infty}=\omega_{\infty}=0$, $\varrho_{\infty}(x) = \frac{1}{\sqrt[\gamma]{A}}\bar{p}^{\frac{1}{\gamma}}\exp\{-\frac{S_{\infty}(x)}{\gamma}\}$.
If $S_{+}\neq S_{-}$, then $S_{\infty}\neq \bar{S}$, $\varrho_{\infty}\neq \bar{\varrho}$. As $t\rto +\infty$,
$(p,u,S,\varrho)$ converge to $(\bar{p},0,S_{\infty},\varrho_{\infty})$ exponentially in $|\cdot|_{\infty}$ norm.
\end{theorem}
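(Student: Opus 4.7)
The plan is to combine a standard local existence result with uniform a priori estimates (to be established in Section 3) to extend the solution globally, then read off the decay and equilibrium statements as corollaries of those estimates together with Lemma \ref{Sect2_P_Infty_Lemma}. First I would invoke the local well-posedness theory for symmetric hyperbolic systems with characteristic boundary condition applied to the $(\xi,v)$-block of $(\ref{Sect2_Final_Eq})$, which is genuinely symmetric hyperbolic after multiplying $(\ref{Sect2_Final_Eq})_1$ by $1/p$ and $(\ref{Sect2_Final_Eq})_2$ by $\varrho/\bar{\varrho}$; the $\phi$-equation is a linear transport driven by the velocity $v$ already constructed. The compatibility conditions $\partial_t^{\ell}u(x,0)\cdot n|_{\partial\Omega}=0$ for $0\leq \ell\leq 3$ ensure $H^3$ regularity up to the boundary on a short time interval $[0,T_\ast)$.

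Next I would set up the standard continuation argument: assume the solution exists on $[0,T)$ with $\mathcal{E}[\xi,v,\phi](t)\leq \delta^2$ for some small $\delta$, and show that the a priori estimates of Section 3 improve this to $\mathcal{E}[\xi,v,\phi](t)\leq C\delta_1^2$ with $C\delta_1\ll \delta$. The estimate scheme is: (i) pure time-derivative estimates using the modified energies $E_1[\xi], E_1[v]$ (designed to absorb the lower-order cubic terms via integration by parts using $v\cdot n|_{\partial\Omega}=0$), giving $\frac{d}{dt}(E_1[\xi]+E_1[v])+c(\|v_t\|^2+\cdots)\leq$ small; (ii) vorticity estimates for $\omega=\nabla\times v$, whose equation has no pressure and no boundary difficulty; (iii) recovery of full spatial derivatives of $v$ from $\nabla\cdot v$, $\nabla\times v$, and the normal trace via the Hodge/div-curl inequality $\|v\|_{H^s}\lesssim \|\nabla\cdot v\|_{H^{s-1}}+\|\omega\|_{H^{s-1}}+\|v\|_{L^2}$; (iv) spatial derivatives of $\xi$ obtained from $(\ref{Sect2_Final_Eq})_2$ rewritten as $k_2\nabla\xi=-v_t-av-\cdots$; (v) the entropy estimate by commuting $\partial_t^{\ell}\mathcal{D}^{\alpha}$ with the transport operator and using Grönwall. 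Combining (i)--(v) yields $\frac{d}{dt}\mathcal{E}[\xi,v]+c\mathcal{E}[\xi,v]\leq C\delta\mathcal{E}[\xi,v]+C\|\partial_t\phi\|^2_{\cdots}$, and since $\partial_t^\ell\phi$ ($\ell\geq 1$) is slaved to $v$ through the transport equation, one closes the argument and obtains exponential decay $\mathcal{E}[\xi,v](t)\leq C\exp\{-c t\}$, from which all the listed decay statements follow via Sobolev embedding.

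For the equilibrium part, exponential decay of $\|v\|_{H^3}$ and $\|\xi\|_{H^3}$ together with $H^2\hookrightarrow L^\infty$ in three dimensions forces $u(x,t)\to 0$ and $p(x,t)\to \bar p$ uniformly; Lemma \ref{Sect2_P_Infty_Lemma} identifies the limit of $p$ with $\bar p$. To construct $S_\infty$ I would study the flow map $X(t;x_0)$ of the time-dependent vector field $u$; since $\int_0^\infty |u(\cdot,t)|_\infty\,dt<\infty$ by exponential decay, $X(t;x_0)$ is Cauchy as $t\to\infty$ and yields a bi-Lipschitz limit $X_\infty$. Then $S_\infty(x)=S_0(X_\infty^{-1}(x))$ belongs to $[S_-,S_+]$, and $\varrho_\infty(x)=A^{-1/\gamma}\bar p^{\,1/\gamma}\exp\{-S_\infty(x)/\gamma\}$ by the pressure law. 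If $S_+\neq S_-$, then $S_0$ is non-constant and the bi-Lipschitz map $X_\infty^{-1}$ cannot turn it into a constant, so $S_\infty\not\equiv\bar S$ and consequently $\varrho_\infty\not\equiv\bar\varrho$.

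\textbf{Main obstacle.} The delicate point is (iii)--(iv) above: because of the characteristic boundary $v\cdot n|_{\partial\Omega}=0$, pure tangential or time derivatives of $v$ satisfy the boundary condition but arbitrary spatial derivatives $\mathcal{D}^{\alpha}v$ do not, so a direct energy estimate for $\mathcal{D}^{\alpha}v$ produces uncontrolled boundary integrals. The work-around, following the Pan--Zhao strategy for the isentropic case \cite{Pan_Zhao_2009}, is to never differentiate spatially in energy estimates but to recover spatial derivatives by inverting the divergence-curl system and by reading $\nabla\xi$ off the momentum equation; this couples the vorticity estimate back into the main energy and requires a careful ordering of the inequalities to absorb all cross terms into the small-data smallness. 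A secondary obstacle is the proof that $S_\infty\neq$ const, which requires genuine convergence of the Lagrangian flow rather than mere weak compactness, and this is precisely why the exponential (not merely integrable) decay of $u$ is needed.
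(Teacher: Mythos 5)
Your overall architecture (local existence for the symmetrized system with characteristic boundary, continuation under a smallness assumption, time-derivative energies with the modified functionals, vorticity estimates, div--curl recovery of spatial derivatives, entropy by transport plus Gr\"onwall) is the same as the paper's, and your treatment of $\phi$ (uniform bound of $\mathcal{E}[\phi]$ from $\int_0^\infty\mathcal{E}[v]^{1/2}\,\mathrm{d}t<\infty$, decay of $\partial_t^\ell\phi$ by slaving to $v$) matches Lemmas \ref{Sect3_Entropy_Lemma}--\ref{Sect3_Entropy_Decay_Lemma}. However, there is one concrete gap in the decay mechanism: your steps (i)--(v) produce dissipation only in $v$-type quantities ($2aE_1[v]$ from the damping and $2a\mathcal{E}_1[\omega]$ from the vorticity equation), and no step as described yields a coercive term in $\xi$; the jump to ``$\frac{\mathrm{d}}{\mathrm{d}t}\mathcal{E}[\xi,v]+c\,\mathcal{E}[\xi,v]\leq C\delta\,\mathcal{E}[\xi,v]+\cdots$'' is therefore unjustified as written. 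The paper supplies the missing mechanism in Lemma \ref{Sect3_Energy_Estimate_Lemma2}: one adds the compensating cross functional $-\sum_{\ell=1}^{3}\int_\Omega\partial_t^{\ell}\xi\,\partial_t^{\ell-1}\xi\,\mathrm{d}x$, whose time derivative generates $+\|\partial_t^\ell\xi\|_{L^2}^2$ modulo $v$-controlled terms, and one still needs $\|\xi\|_{L^2}$ itself, which is \emph{not} slaved to $v$ through the equations; since $\xi=p-\bar p$ does not have zero mean (recall $\bar p$ is defined through the mean of $p^{1/\gamma}$), the standard Poincar\'e inequality does not apply directly, and the paper instead uses Lemma \ref{Sect2_P_Infty_Lemma} ($\bar p\in[\inf_\Omega p,\sup_\Omega p]$, so $\xi(\cdot,t)$ vanishes at some point) together with a path-integral bound, see $(\ref{Sect3_XiT_8})$, to get $\|\xi\|_{L^2}\lesssim\|\nabla\xi\|_{L^2}$, after which $\nabla\xi$ is read off the momentum equation as in your step (iv). Without the cross functional and this Poincar\'e-type step, the Lyapunov functional $\lambda_1E_1[\xi,v]-\sum_\ell\int\partial_t^{\ell}\xi\,\partial_t^{\ell-1}\xi\,\mathrm{d}x+\mathcal{E}_1[\omega]$ of Lemma \ref{Sect3_Decay_Lemma} cannot be formed and the Gr\"onwall argument does not close. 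A minor related point: the corrections in $E_1[\xi],E_1[v]$ are there to cancel the top-order terms $\xi\,\xi_{ttt}\nabla\cdot v_{ttt}$ and $(\tfrac1{\bar\varrho}-\tfrac1\varrho)v_{ttt}\cdot\nabla\xi_{ttt}$, not ``lower-order cubic terms''.

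For the equilibrium part your route is genuinely different from the paper's and is viable. You construct $S_\infty$ via convergence of the Lagrangian flow map (using $\int_0^\infty(|u|_\infty+|\nabla u|_\infty)\,\mathrm{d}t<\infty$ to get a bi-Lipschitz limit $X_\infty$ with $S_\infty=S_0\circ X_\infty^{-1}$), which immediately gives $S_\infty([\,\bar\Omega\,])=[S_-,S_+]$ and hence non-constancy when $S_+\neq S_-$; this requires writing out the flow-map convergence and the two-sided inverse of $X_\infty$, but the exponential decay you establish is exactly what makes that work. The paper instead defines $S_\infty=S_0+\int_0^\infty S_t\,\mathrm{d}t$ (integrability of $S_t$ from Lemma \ref{Sect3_Entropy_Decay_Lemma}) and proves $S_\infty\neq const$ purely in Eulerian variables, by exploiting the family of conserved integrals $\int_\Omega p^{1/\gamma}e^{\alpha S}\,\mathrm{d}x$ for all $\alpha$ and letting $\alpha\to\pm\infty$ as in $(\ref{Sect4_S_Infty_2})$--$(\ref{Sect4_S_Infty_5})$; that argument avoids Lagrangian coordinates altogether, while yours gives the sharper structural information $\max S_\infty=S_+$, $\min S_\infty=S_-$. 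Either way the conclusions $p_\infty=\bar p$, $u_\infty=0$, $\varrho_\infty=A^{-1/\gamma}\bar p^{1/\gamma}e^{-S_\infty/\gamma}$ and the $|\cdot|_\infty$ convergence rates follow as you indicate.
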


However, the damping effect on the velocity is weakly dissipative, which can not prevent the singularity formation without small data assumption. The following theorem states that for a class of large initial data whose support $Supp(p_0-\bar{p},u_0,S_0-\bar{S})$ is away from the boundary $\partial\Omega$, the singularities form in the interior of ideal gases.
\begin{theorem}\label{Sect2_Blowup_Thm}
Assume $0\in\Omega$, $(p_0,u_0,S_0)\in H^3(\Omega)$, $\inf\limits_{x\in\Omega}p_0(x)>0$,
$h=dist\{\partial\Omega, Supp(p_0-\bar{p},u_0,S_0-\bar{S})\}>0$, $(p,u,S)\in C^1(\Omega\times[0,\tau))$ is the classical solution to IBVP $(\ref{Sect1_NonIsentropic_EulerEq})$ where $\tau>0$ is the lifespan of $(p,u,S)$. Denote
\begin{equation}\label{Sect2_blowup_Def_Quantities}
\begin{array}{ll}
M_{\varrho}(t) = \int\limits_{\Omega} \varrho u\cdot x \,\mathrm{d}x, \quad
B_0 = |Diam(\Omega)|^2\int\limits_{\Omega} \varrho_0 \,\mathrm{d}x, \\[6pt]
B_1 = \frac{3A e^{S_{-}}}{|\Omega|^{\gamma-1}}
\left(\int\limits_{\Omega} \varrho_0 \,\mathrm{d}x\right)^{\gamma} - 3\int\limits_{\Omega} \bar{p} \,\mathrm{d}x,
\quad
r=\sqrt{|B_1-\frac{a^2B_0}{4}|}.
\end{array}
\end{equation}
\noindent
For any fixed $T$ satisfying $0<T<\min\{\frac{h}{k_2},\frac{\pi}{2}\frac{B_0}{r}\}$, if
\begin{equation}\label{Sect2_Blowup_Condition}
\begin{array}{ll}
M_{\varrho}(0)> \max\{\frac{aB_0}{1-\exp\{-aT\}},\ \frac{aB_0}{2} + r\cot(\frac{rT}{B_0}), \\[6pt]\hspace{2.2cm}
\frac{aB_0}{2} -r + \cfrac{2r}{1-\exp\{-\frac{2rT}{B_0}\}},\ \frac{aB_0}{2} +r \},
\end{array}
\end{equation}
then $\tau\leq T$.
\end{theorem}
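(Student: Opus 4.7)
The plan is to track the weighted moment
\begin{equation*}
M_{\varrho}(t)=\int_{\Omega}\varrho u\cdot x\,\mathrm{d}x,
\end{equation*}
derive a Riccati-type differential inequality
\begin{equation*}
M_{\varrho}'(t) \geq \frac{M_{\varrho}^{2}(t)}{B_{0}}+B_{1}-aM_{\varrho}(t)
\end{equation*}
valid on $[0,\min(\tau,T))$, and convert the blowup of this ODE into an upper bound on $\tau$ that matches the four thresholds listed in $(\ref{Sect2_Blowup_Condition})$.

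First, I would compute $M_{\varrho}'(t)$ by combining $(\ref{Sect1_NonIsentropic_EulerEq_Original})_{1,2}$ into $(\varrho u)_{t}=-\nabla\cdot(\varrho u\otimes u)-\nabla p-a\varrho u$, pairing with $x$, and using $\nabla\cdot x=3$ together with $u\cdot n|_{\partial\Omega}=0$ to obtain
\begin{equation*}
M_{\varrho}'(t)=\int_{\Omega}\varrho|u|^{2}\,\mathrm{d}x+3\int_{\Omega}(p-\bar{p})\,\mathrm{d}x-\int_{\partial\Omega}(p-\bar{p})(x\cdot n)\,\mathrm{d}S_{x}-aM_{\varrho}(t).
\end{equation*}
The boundary integral is where the hypothesis $T<h/k_{2}$ enters: a domain-of-dependence argument applied to the symmetric hyperbolic form $(\ref{Sect2_Final_Eq})$, whose principal characteristic speed is controlled by $k_{2}=\sqrt{\gamma\bar{p}/\bar{\varrho}}$, shows that $(p-\bar{p},u,S-\bar{S})$ remains supported at distance at least $h-k_{2}t>0$ from $\partial\Omega$ for $t\in[0,T]$, so the boundary contribution vanishes identically on that interval.

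Next I would bound the two remaining volume terms from below. Integrating $(\ref{Sect1_NonIsentropic_EulerEq_Original})_{1}$ over $\Omega$ and using $u\cdot n|_{\partial\Omega}=0$ yields mass conservation $\int_{\Omega}\varrho\,\mathrm{d}x=\int_{\Omega}\varrho_{0}\,\mathrm{d}x$. Because $S$ is transported and particle paths are trapped inside $\Omega$ by the characteristic boundary, $S(x,t)\geq S_{-}$ throughout $\Omega\times[0,\tau)$. Combining the pressure law $p=A\varrho^{\gamma}e^{S}$, the lower bound $S\geq S_{-}$, Jensen's inequality applied to the convex map $s\mapsto s^{\gamma}$, and mass conservation gives
\begin{equation*}
3\int_{\Omega}p\,\mathrm{d}x \geq \frac{3Ae^{S_{-}}}{|\Omega|^{\gamma-1}}\Bigl(\int_{\Omega}\varrho_{0}\,\mathrm{d}x\Bigr)^{\gamma},
\end{equation*}
hence $3\int_{\Omega}(p-\bar{p})\,\mathrm{d}x\geq B_{1}$. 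For the kinetic term, Cauchy--Schwarz and $|x|\leq|\mathrm{Diam}(\Omega)|$ (which uses $0\in\Omega$) give $M_{\varrho}^{2}(t)\leq B_{0}\int_{\Omega}\varrho|u|^{2}\,\mathrm{d}x$. Substituting these two lower bounds back into the formula for $M_{\varrho}'$ produces the advertised Riccati inequality.

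The final step is a case-by-case comparison with scalar ODEs. Completing the square rewrites the inequality as $M_{\varrho}'\geq B_{0}^{-1}[(M_{\varrho}-aB_{0}/2)^{2}+(B_{1}-a^{2}B_{0}/4)]$; I would treat three regimes that jointly match the four thresholds in $(\ref{Sect2_Blowup_Condition})$: (i) discarding $B_{1}$ and integrating $M_{\varrho}'\geq M_{\varrho}^{2}/B_{0}-aM_{\varrho}$ via the substitution $f=1/M_{\varrho}$ forces blowup by time $T$ whenever $M_{\varrho}(0)>aB_{0}/(1-e^{-aT})$; (ii) when $B_{1}\geq a^{2}B_{0}/4$, set $z=M_{\varrho}-aB_{0}/2$, compute $(\arctan(z/r))'\geq r/B_{0}$, and conclude blowup under $M_{\varrho}(0)>aB_{0}/2+r\cot(rT/B_{0})$, the second bound $T<\pi B_{0}/(2r)$ keeping the cotangent positive; (iii) when $B_{1}<a^{2}B_{0}/4$, partial fractions applied to $z'\geq(z^{2}-r^{2})/B_{0}$ require both $z(0)>r$ (i.e.\ $M_{\varrho}(0)>aB_{0}/2+r$) and, after integrating $(z^{2}-r^{2})^{-1}\,\mathrm{d}z$, the sharper bound $M_{\varrho}(0)>aB_{0}/2-r+2r/(1-e^{-2rT/B_{0}})$; the algebraic identity $-r+2r/(1-e^{-2rT/B_{0}})=r(1+e^{-2rT/B_{0}})/(1-e^{-2rT/B_{0}})$ cleanly matches the third threshold. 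Because $M_{\varrho}(0)$ exceeds the maximum of all four thresholds, the applicable regime forces $M_{\varrho}$ to escape to $+\infty$ before time $T$; since $M_{\varrho}$ is $C^{1}$ on $[0,\tau)$, this forces $\tau\leq T$.

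The main obstacle I anticipate is the clean justification of the vanishing boundary term: rigorous finite propagation speed for the quasilinear system $(\ref{Sect1_NonIsentropic_EulerEq})$ should be argued via a localized energy estimate in a shrinking shell of thickness $h-k_{2}t$ near $\partial\Omega$, using the symmetric form $(\ref{Sect2_Final_Eq})$ to bound the characteristic cone. Once the boundary term is disposed of, the ODE step is mechanical, but organizing the three regimes so that the union of the four explicit thresholds transparently covers every sign of $B_{1}-a^{2}B_{0}/4$ (and handles the degenerate case $r=0$ by continuity) also deserves care.
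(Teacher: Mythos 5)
Your proposal is correct and follows essentially the same route as the paper: the same moment $M_{\varrho}(t)$, the same lower bounds via mass conservation, $S\geq S_{-}$ and Cauchy--Schwarz, and the same Riccati-type comparison split into the $B_1=0$ (resp.\ discarded), $B_1>\frac{a^2B_0}{4}$ and $B_1<\frac{a^2B_0}{4}$ regimes yielding the four thresholds. The boundary-term issue you flag is handled in the paper exactly as you anticipate, by a localized energy estimate in truncated cones of slope $k_2$ (Lemma \ref{Sect4_Speed_Lemma}), which kills $\int_{\partial\Omega}(p-\bar{p})(x\cdot n)\,\mathrm{d}S_x$ for $t<h/k_2$.
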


For the diffusion equations $(\ref{Sect1_Diffusion_Eq})$ with their initial data $(\hat{p}_0,\hat{u}_0,\hat{S}_0,\hat{\varrho}_0)$ and constants $(\hat{\bar{p}},\hat{\bar{S}},\hat{\bar{\varrho}})$, we introduce the following constants and variables:
\begin{equation*}
\begin{array}{ll}
\hat{k}_1=\sqrt{\frac{1}{\gamma\hat{\bar{\varrho}}\hat{\bar{p}}}},\quad
\hat{k}_2 = \sqrt{\frac{\gamma\hat{\bar{p}}}{\hat{\bar{\varrho}}}},\quad
\hat{\xi}=\hat{p}-\hat{\bar{p}},\quad \hat{\phi}=\hat{S}-\hat{\bar{S}},\\[6pt]
\hat{v}=\frac{1}{k_1}\hat{u},\quad \hat{\omega}=\nabla\times \hat{v},\quad
(\hat{v}_{\infty},\hat{\omega}_{\infty})=\lim\limits_{t\rto \infty}(\hat{v},\hat{\omega}).
\end{array}
\end{equation*}
For simplicity, we omit the symbol $\ \hat{}\ $ over all variables and constants in the equations, initial data and global a priori estimates, if there is no ambiguity, otherwise we will add the symbol $\ \hat{}\ $.

In order to establish the global existence of IBVP $(\ref{Sect1_Diffusion_Eq})$, we reformulate the equations $(\ref{Sect1_Diffusion_Eq})$ into the following form:
\begin{equation}\label{Sect2_Final_Diffusion}
\left\{\begin{array}{ll}
\xi_t + k_2\nabla\cdot v = -\gamma k_1\xi\nabla\cdot v - k_1 v\cdot\nabla \xi, \\[6pt]
k_2 \nabla\xi + a v = \frac{1}{k_1} (\frac{1}{\bar{\varrho}}- \frac{1}{\varrho})\nabla\xi, \\[6pt]
\phi_t = - k_1 v\cdot\nabla\phi, \\[6pt]
(\xi, \phi)(x,0)=(p_0(x)-\bar{p}, S_0(x)-\bar{S}), \\[6pt]
v\cdot n|_{\partial\Omega} =0,
\end{array}\right.
\end{equation}
where $\varrho =\varrho(\xi,\phi) := \frac{1}{\sqrt[\gamma]{A}}(\xi+\bar{p})^{\frac{1}{\gamma}}\exp\{-\frac{\phi+\bar{S}}{\gamma}\}$.

The system $(\ref{Sect2_Final_Diffusion})$ is still a parabolic-hyperbolic system with respect to $\xi$ and $\phi$, which has the following form after eliminating $v$:
\begin{equation}\label{Sect2_Parabolic_Hyperbolic}
\left\{\begin{array}{lll}
\xi_t = \frac{\gamma p}{a\varrho}\triangle\xi + \frac{p}{a\varrho}\nabla\xi\cdot\nabla\phi, \\[6pt]
\phi_t = \frac{1}{a\varrho}\nabla\xi\cdot\nabla\phi, \\[6pt]
(\xi,\phi)(x,0)=(p_0(x)-\bar{p}, S_0(x)-\bar{S}), \\[6pt]
\frac{\partial\xi}{\partial n}|_{\partial\Omega} =0,\ \forall t\geq 0,
\end{array}\right.
\end{equation}
where $\varrho = \varrho(\xi,\phi)$.

Next, we derive the evolution equations of $v$ from $(\ref{Sect2_Final_Diffusion})$, which is useful for proving a priori estimate for $\mathcal{E}_1[\omega](t)$.
Apply $\partial_{i}$ to $(\ref{Sect2_Final_Diffusion})_1$, we get
\begin{equation}\label{Sect2_Velocity_Solve 1}
\begin{array}{ll}
(\partial_i\xi)_t + k_1 \sum\limits_{\nu=1}^{3}v_{\nu}\partial_{\nu}(\partial_i\xi)
+ k_1 \sum\limits_{\nu=1}^{3}(\partial_i v_{\nu})\partial_{\nu}\xi
+ k_1\gamma(\partial_i \xi)\sum\limits_{\nu=1}^{3}\partial_{\nu} v_{\nu} \\[6pt]\hspace{1cm}
+ k_1\gamma p\sum\limits_{\nu=1}^{3}\partial_i\partial_{\nu} v_{\nu} =0.
\end{array}
\end{equation}

Plug $\partial_i \xi = - a k_1\varrho v_i$ into $(\ref{Sect2_Velocity_Solve 1})$, we have
\begin{equation}\label{Sect2_Velocity_Solve 2}
\begin{array}{ll}
(\varrho v_i)_t + k_1 \sum\limits_{\nu=1}^{3}v_{\nu}\cdot\partial_{\nu}(\varrho v_i)
+ k_1 \sum\limits_{\nu=1}^{3}(\partial_i v_{\nu})(\varrho v_{\nu})
+ k_1\gamma\varrho v_i\sum\limits_{\nu=1}^{3}\partial_{\nu} v_{\nu} \\[6pt]\hspace{1cm}
- \frac{\gamma p}{a}\sum\limits_{\nu=1}^{3}\partial_i\partial_{\nu} v_{\nu} =0.
\end{array}
\end{equation}

Plug $\varrho_t = -k_1 \sum\limits_{\nu=1}^{3}v_{\nu}\partial_{\nu}\varrho - k_1 \varrho \sum\limits_{\nu=1}^{3}\partial_{\nu}v_{\nu}$ into $(\ref{Sect2_Velocity_Solve 2})$, we obtained the development equation of $v$:
\begin{equation}\label{Sect2_Velocity_Solve 3}
\begin{array}{ll}
v_t = k_1(1-\gamma)v(\nabla\cdot v) - k_1 v\cdot\nabla v
- \frac{k_1}{2} \nabla(|v|^2) + \frac{\gamma p}{a\varrho}\nabla(\nabla\cdot v).
\end{array}
\end{equation}

Add $(\ref{Sect2_Velocity_Solve 3})$ to $(\ref{Sect2_Final_Diffusion})_2$, we obtained the following equations where $v$ is compatible with $\xi$ and $\phi$, but $(\ref{Sect2_Final_Diffusion_Vt})_2$ is not independent of $(\ref{Sect2_Final_Diffusion_Vt})_1$ and $(\ref{Sect2_Final_Diffusion_Vt})_3$.
\begin{equation}\label{Sect2_Final_Diffusion_Vt}
\left\{\begin{array}{ll}
\xi_t + k_2\nabla\cdot v = -\gamma k_1\xi\nabla\cdot v - k_1 v\cdot\nabla \xi, \\[6pt]

v_t + k_2 \nabla\xi + a v = \frac{1}{k_1} (\frac{1}{\bar{\varrho}}- \frac{1}{\varrho})\nabla\xi
+ k_1(1-\gamma)v(\nabla\cdot v) - k_1 v\cdot\nabla v \\[6pt]\hspace{2.8cm}
- \frac{k_1}{2} \nabla(|v|^2) + \frac{\gamma p}{a\varrho}\nabla(\nabla\cdot v), \\[6pt]

\phi_t = - k_1 v\cdot\nabla\phi,
\end{array}\right.
\end{equation}
where $v = -\frac{1}{a k_1\varrho}\nabla\xi,\ \varrho = \varrho(\xi,\phi)$.

In order to prove the global existence of classical solutions to IBVP $(\ref{Sect2_Final_Diffusion})$ and $(\ref{Sect1_Diffusion_Eq})$ via the energy method, we define the following energy quantities besides
the energy quantities which have been defined in $(\ref{Sect2_Energy_Define})$:
\begin{definition}\label{Sect2_Def_Energy_Diffusion} Define
\begin{equation}\label{Sect2_Energy_Define_Diffusion}
\begin{array}{ll}
\mathcal{F}[\xi](t):= \sum\limits_{0\leq \ell\leq 3,\ell + |\alpha|\leq 4}
\|\partial_t^{\ell} \mathcal{D}^{\alpha}\xi(t)\|_{L^2(\Omega)}^2, \\[12pt]

\mathcal{F}[v](t):= \mathcal{E}[v](t) + \sum\limits_{0\leq \ell\leq 2, \ell + |\alpha|\leq 3}
\|\partial_t^{\ell} \mathcal{D}^{\alpha}(\nabla\cdot v(t))\|_{L^2(\Omega)}^2,\\[12pt]

\mathcal{F}[\xi,v](t):=\mathcal{F}[\xi](t) + \mathcal{F}[v](t),\

\mathcal{F}[\xi,v,\phi](t):=\mathcal{F}[\xi,v](t) + \mathcal{E}[\phi](t).
\end{array}
\end{equation}
\end{definition}

In addition, $\mathcal{F}[v](t)$ contains more information about $\xi$ than $\mathcal{F}[\xi](t)$ itself. All the definitions of energy quantities in $(\ref{Sect2_Energy_Define})$, $(\ref{Sect2_Energy_Define_Diffusion})$ are independent of the equations and initial data, thus the definitions $(\ref{Sect2_Energy_Define})$ can be used for IBVP $(\ref{Sect2_Final_Diffusion})$.

In order to have classical solutions to IBVP $(\ref{Sect2_Final_Diffusion})$, we need to improve the regularity of the initial data, namely $(p_0,S_0)\in H^4(\Omega)\times H^3(\Omega)$. Also, the initial data are required to be compatible with the boundary condition, namely, $\partial_t^{\ell} \nabla p(x,0)\cdot n|_{\partial\Omega}=0,0\leq \ell\leq 3$, where $\partial_t^{\ell} \nabla p(x,0)$ are solved by the equations $(\ref{Sect1_Diffusion_Eq})$ in terms of initial data $(p_0,S_0)$.

The following theorem states that the global existence and large time behavior of classical solutions to IBVP $(\ref{Sect2_Final_Diffusion})$ and $(\ref{Sect1_Diffusion_Eq})$:
\begin{theorem}\label{Sect2_GlobalExistence_Thm_DiffusionEq}
Assume $(p_0,S_0)\in H^4(\Omega)\times H^3(\Omega)$, $\inf\limits_{x\in\Omega}p_0(x)>0$ and $\partial_t^{\ell} \nabla p(x,0)\cdot n|_{\partial\Omega}=0$, $0\leq \ell\leq 3$.
There exists a sufficiently small number $\delta_2>0$, such that if $\|p_0-\bar{p}\|_{H^4(\Omega)}+\|S_0-\bar{S}\|_{H^3(\Omega)}\leq \delta_2$, then IBVP $(\ref{Sect2_Final_Diffusion})$ admits a unique global classical solution $(\xi,\phi)$ satisfying
\begin{equation*}
\begin{array}{ll}
(\xi,\phi)\in \underset{0\leq \ell\leq 3}{\cap}C^{\ell}([0,+\infty),H^{4-\ell}(\Omega)\times H^{3-\ell}(\Omega)),\
\triangle\xi \in C(\Omega\times[0,+\infty)),
\end{array}
\end{equation*}
moreover,
\begin{equation*}
\left\{\begin{array}{ll}
\varrho = \varrho(\xi,\phi)
\in \underset{0\leq \ell\leq 3}{\cap}C^{\ell}([0,+\infty),H^{3-\ell}(\Omega)),
\\[6pt]
v = -\frac{1}{ak_1 \varrho}\nabla\xi
\in \underset{0\leq \ell\leq 3}{\cap}C^{\ell}([0,+\infty),H^{3-\ell}(\Omega)),\
\nabla\cdot v \in C(\Omega\times[0,+\infty)).
\end{array}\right.
\end{equation*}

Thus, IBVP $(\ref{Sect1_Diffusion_Eq})$ admits a unique global classical
solution $(p=\bar{p}+\xi, S=\bar{S}+\phi)$. Moreover,
$\sum\limits_{0\leq\ell\leq 3}(\|\partial_t^{\ell}(p-\bar{p})\|_{H^{4-\ell}(\Omega)}
+\|\partial_t^{\ell}u\|_{H^{3-\ell}(\Omega)})$,
$\sum\limits_{0\leq\ell\leq 2}\|\partial_t^{\ell}\omega\|_{H^{2-\ell}(\Omega)}$,
$\sum\limits_{1\leq\ell\leq 3}\|\partial_t^{\ell}\varrho\|_{H^{3-\ell}(\Omega)}$ and
$\sum\limits_{1\leq\ell\leq 3}\|\partial_t^{\ell}S\|_{H^{3-\ell}(\Omega)}$ decay exponentially,
$\|\varrho-\bar{\varrho}\|_{H^{3}(\Omega)}$ and $\|S-\bar{S}\|_{H^{3}(\Omega)}$ are uniformly bounded.

Furthermore, $S_{\infty}(x)\in [S_{-},S_{+}]$ exists and is unique, $p_{\infty}=\bar{p}$, $u_{\infty}=v_{\infty}=\omega_{\infty}=0$, $\varrho_{\infty}(x) = \frac{1}{\sqrt[\gamma]{A}}\bar{p}^{\frac{1}{\gamma}}\exp\{-\frac{S_{\infty}(x)}{\gamma}\}$.
If $S_{+}\neq S_{-}$, then $S_{\infty}\neq \bar{S}$, $\varrho_{\infty}\neq \bar{\varrho}$. As $t\rto +\infty$,
$(p,u,S,\varrho)$ converge to $(\bar{p},0,S_{\infty},\varrho_{\infty})$ exponentially in $|\cdot|_{\infty}$ norm.
\end{theorem}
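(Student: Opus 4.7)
The plan is to parallel the proof of Theorem~\ref{Sect2_Global_Existence_Thm} for the Euler case, now exploiting the parabolic-hyperbolic structure of the reformulation $(\ref{Sect2_Parabolic_Hyperbolic})$, and to combine local well-posedness with the global a priori bound on $\mathcal{F}[\xi,v,\phi](t)$ established in Section~5. First I would prove local existence for $(\ref{Sect2_Parabolic_Hyperbolic})$: the $\xi$-equation is strictly parabolic because $\gamma p/(a\varrho)$ is bounded above and away from zero by the no-vacuum hypothesis, with Neumann boundary $\partial\xi/\partial n|_{\partial\Omega}=0$, while the $\phi$-equation is pure transport driven by $\nabla\xi/(a\varrho)$. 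A linearization/contraction iteration in the appropriate Sobolev class yields a local classical solution with $\xi\in\cap_{0\leq\ell\leq 3}C^{\ell}([0,T_*),H^{4-\ell}(\Omega))$, $\triangle\xi\in C(\Omega\times[0,T_*))$, and $\phi\in\cap_{0\leq\ell\leq 3}C^{\ell}([0,T_*),H^{3-\ell}(\Omega))$. The compatibility conditions $\partial_t^{\ell}\nabla p(x,0)\cdot n|_{\partial\Omega}=0$ are what allow the three time derivatives to be taken uniformly up to $t=0$; the field $v=-\nabla\xi/(ak_1\varrho)$ and its regularity are then read off from Darcy's law.

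Second, I would use the global a priori estimate $\mathcal{F}[\xi,v,\phi](t)\lesssim \mathcal{F}[\xi,v,\phi](0)\,e^{-\sigma t}$ (Section~5) together with a continuation argument to extend the local solution to $[0,+\infty)$. From the decay of $\mathcal{F}[\xi,v,\phi]$ one directly reads off the claimed exponential decay in all the norms listed for $\partial_t^{\ell}(p-\bar{p})$, $\partial_t^{\ell}u$, $\partial_t^{\ell}\omega$, and (for $\ell\geq 1$) $\partial_t^{\ell}\varrho$ and $\partial_t^{\ell}S$; the last two follow by differentiating $\varrho=A^{-1/\gamma}p^{1/\gamma}\exp(-S/\gamma)$ and $S_t=-k_1 v\cdot\nabla\phi$ and applying product estimates, using that one factor already decays exponentially while the other is uniformly bounded. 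The uniform-in-time bounds $\|\varrho-\bar{\varrho}\|_{H^3},\|S-\bar{S}\|_{H^3}\lesssim 1$ (no decay) reflect the transport character of $\phi$: spatial Sobolev norms of $S$ are conserved along particle paths and do not decay.

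Third, for the equilibrium states, $p_\infty=\bar{p}$ follows from Lemma~\ref{Sect2_P_Infty_Lemma}, since the conservation law $(\varrho e^{S/\gamma})_t+\nabla\cdot(\varrho u e^{S/\gamma})=0$ together with the slip condition $u\cdot n|_{\partial\Omega}=0$ uses only $(\ref{Sect1_Diffusion_Eq})_1$ and $(\ref{Sect1_Diffusion_Eq})_3$, so $\int p^{1/\gamma}\,\mathrm{d}x$ is conserved for the diffusion system as well. The relations $u_\infty=v_\infty=\omega_\infty=0$ follow by Sobolev embedding $H^2\hookrightarrow L^\infty$ from the exponential $H^3$-decay of $v$. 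For $S_\infty$, I integrate the ODE $\dot{X}(t;y)=k_1 v(X(t;y),t)$, $X(0;y)=y$; since $\int_0^\infty\|v(\cdot,s)\|_{L^\infty}\,\mathrm{d}s<\infty$, each trajectory converges to a unique limit $X_\infty(y)$, the map $y\mapsto X_\infty(y)$ is a homeomorphism of $\bar\Omega$ (the slip condition keeps particles in $\bar\Omega$, and the Jacobian is controlled by $\exp\int_0^\infty\|\nabla\cdot v\|_{L^\infty}\,\mathrm{d}t<\infty$), and $S_\infty(x)=S_0(X_\infty^{-1}(x))\in[S_-,S_+]$ is well defined and unique. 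Then $\varrho_\infty=A^{-1/\gamma}\bar{p}^{1/\gamma}\exp(-S_\infty/\gamma)$ follows by passing to the limit in the state equation. If $S_+\neq S_-$, then $S_0$ is non-constant, so $S_\infty=S_0\circ X_\infty^{-1}$ is non-constant (the map is a homeomorphism), ruling out $S_\infty\equiv\bar{S}$ and $\varrho_\infty\equiv\bar{\varrho}$. Pointwise exponential convergence in $|\cdot|_\infty$ again uses Sobolev embedding.

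The main obstacle is, of course, the global a priori bound used in Step~2, which is deferred to Section~5. Within the present theorem, the nontrivial technical points are: (i) balancing the $H^4$ regularity of $p$ against the $H^3$ regularity of $S$ in the nonlinear coupling $\nabla\xi\cdot\nabla\phi$ on the right-hand side of $(\ref{Sect2_Parabolic_Hyperbolic})_1$, where the parabolic smoothing of the $\xi$-equation is essential to absorb the derivative loss caused by the transported entropy, and this is precisely what forces the strengthened initial regularity $p_0\in H^4$; and (ii) handling the Neumann boundary $\partial\xi/\partial n|_{\partial\Omega}=0$ when commuting spatial $\mathcal{D}^\alpha$ past the equations near $\partial\Omega$, since $\mathcal{D}^\alpha\xi$ and $\mathcal{D}^\alpha v\cdot n$ need not vanish on $\partial\Omega$ for general $\alpha$; this is the same boundary-related difficulty the introduction flags for the Euler IBVP and it is what dictates the extra compatibility conditions $\partial_t^{\ell}\nabla p(x,0)\cdot n|_{\partial\Omega}=0$.
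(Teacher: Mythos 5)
Your proposal matches the paper's overall architecture: local existence for the parabolic--hyperbolic reformulation $(\ref{Sect2_Parabolic_Hyperbolic})$ via a linearization/iteration scheme (Lemma~\ref{Sect6_LocalExistence}), global extension by combining the decay of $\mathcal{F}[\xi,v,\phi]$ from Section~5 with a continuation argument (Theorem~\ref{Sect6_GlobalExistence_Thm}), and then reading off the equilibrium data and convergence rates (Theorem~\ref{Sect6_Convergence_Rate_Thm}). The one point where you genuinely diverge from the paper is the proof that $S_{\infty}\neq\mathrm{const}$ when $S_{+}\neq S_{-}$. The paper (the argument of Theorem~\ref{Sect4_Convergence_Rate_Thm}, quoted in Theorem~\ref{Sect6_Convergence_Rate_Thm}) proceeds by contradiction: assuming $S_\infty$ constant, it writes down the one--parameter family of conserved quantities $\int_{\Omega} p^{1/\gamma}e^{\alpha S}\,\mathrm{d}x$ and sends $\alpha\to\pm\infty$ to force $\|e^{\pm(S_0-S_\infty)}\|_{\infty}=1$, hence $S_0\equiv S_\infty$. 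You instead integrate the particle--path ODE $\dot{X}=u(X,t)$, use $\int_0^\infty\|u\|_{\infty}\,\mathrm{d}t<\infty$ and $\int_0^\infty\|\nabla u\|_{\infty}\,\mathrm{d}t<\infty$ to conclude that $X(t;\cdot)$ converges uniformly to a bi--Lipschitz homeomorphism $X_\infty$ of $\bar\Omega$, and read off the explicit formula $S_\infty=S_0\circ X_\infty^{-1}$. Your route is more geometric and in some ways cleaner (it produces the explicit transport formula and avoids the delicate $L^\alpha$ limits), at the price of having to verify that the limit flow map is onto, which requires checking that the backward flow map also converges. The paper's analytic route uses only conserved integrals, sidestepping the flow--map analysis entirely; note it is also exactly the argument the paper's introduction hints at as a ``physical explanation'' but then replaces, so you have essentially supplied the rigorous version of the physical picture.

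One small imprecision: you explain the uniform bound on $\|S-\bar{S}\|_{H^3}$ by saying ``spatial Sobolev norms of $S$ are conserved along particle paths.'' Only $\sup S$ and $\inf S$ are transport invariants; $\|S\|_{H^3}$ is not preserved by the flow, since particle paths stretch and compress. The real mechanism is the Gronwall--type bound $\frac{\mathrm{d}}{\mathrm{d}t}\mathcal{E}[\phi]\lesssim \mathcal{E}[v]^{1/2}\mathcal{E}[\phi]$ of Lemma~\ref{Sect5_Entropy_Lemma}: because $\mathcal{E}[v]^{1/2}$ decays exponentially, its time integral is finite, so $\mathcal{E}[\phi]$ can grow by at most a fixed constant factor. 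This distinction matters: the argument would fail if $v$ decayed only at a non--integrable rate. The conclusion you stated is right, but the attribution to conservation is not.
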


The following theorem states that the pressure and velocity of non-isentropic Euler equations with damping have nonlinear diffusion property, they converges to the pressure and velocity of the diffusion equations.
\begin{theorem}\label{Sect2_Diffusion_Thm}
Assume $(\hat{p},\hat{u},\hat{S},\hat{\varrho})$ are variables of the diffusion equations $(\ref{Sect1_Diffusion_Eq})$ and $(p,u,S,\varrho)$ are variables of non-isentropic Euler equations with damping $(\ref{Sect1_NonIsentropic_EulerEq})$, the initial data $(p_0,u_0,S_0)$ satisfy the conditions in Theorem $\ref{Sect2_Global_Existence_Thm}$, $(\hat{p}_0,\hat{S}_0)$ satisfy the conditions in Theorem 
$\ref{Sect2_GlobalExistence_Thm_DiffusionEq}$.  
If 
\begin{equation}\label{Sect2_Diffusion_1}
\begin{array}{ll}
\int\limits_{\Omega}p_0^{\frac{1}{\gamma}}\,\mathrm{d}x
=\int\limits_{\Omega}\hat{p}_0^{\frac{1}{\gamma}}\,\mathrm{d}x,
\end{array}
\end{equation}
then
\begin{equation}\label{Sect2_Diffusion_2}
\begin{array}{ll}
\|p-\hat{p}\|_{H^3(\Omega)} + \|u-\hat{u}\|_{H^3(\Omega)} \leq C_1\exp\{-C_2 t\},
\end{array}
\end{equation}
for some positive $C_1,C_2$.
\end{theorem}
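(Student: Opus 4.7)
The plan is to reduce the claim to a triangle inequality. The hypothesis \eqref{Sect2_Diffusion_1} is exactly the statement that the equilibrium pressures of the two systems coincide, and Theorems \ref{Sect2_Global_Existence_Thm} and \ref{Sect2_GlobalExistence_Thm_DiffusionEq} already provide exponential convergence of each system to its own equilibrium.

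First I would unpack the definitions in \eqref{Sect1_Constants_EulerEq} and \eqref{Sect1_Constants_DiffusionEq}:
\[
\bar{p}=\Bigl(\tfrac{1}{|\Omega|}\int_{\Omega} p_0^{1/\gamma}\,\mathrm{d}x\Bigr)^{\gamma},
\qquad
\hat{\bar{p}}=\Bigl(\tfrac{1}{|\Omega|}\int_{\Omega} \hat{p}_0^{1/\gamma}\,\mathrm{d}x\Bigr)^{\gamma},
\]
so \eqref{Sect2_Diffusion_1} is equivalent to $\bar{p}=\hat{\bar{p}}$. Moreover, the argument of Lemma \ref{Sect2_P_Infty_Lemma} uses only the conservation form of the $p$--equation, the $S$ transport equation and the boundary condition $u\cdot n|_{\partial\Omega}=0$; all three ingredients are shared by the diffusion system \eqref{Sect1_Diffusion_Eq}, so the lemma applies verbatim and gives $p_{\infty}=\bar{p}$ and $\hat{p}_{\infty}=\hat{\bar{p}}$, which are equal by assumption.

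Second, by Theorem \ref{Sect2_Global_Existence_Thm} there exist constants $C_1',C_2'>0$ with
\[
\|p(t)-\bar{p}\|_{H^3(\Omega)}+\|u(t)\|_{H^3(\Omega)}\leq C_1'\exp\{-C_2' t\},
\]
and by Theorem \ref{Sect2_GlobalExistence_Thm_DiffusionEq} (the $H^4$ control on $\hat{p}-\hat{\bar{p}}$ dominates the $H^3$ norm) there exist $C_1'',C_2''>0$ with
\[
\|\hat{p}(t)-\hat{\bar{p}}\|_{H^3(\Omega)}+\|\hat{u}(t)\|_{H^3(\Omega)}\leq C_1''\exp\{-C_2'' t\}.
\]
Combining with $\bar{p}=\hat{\bar{p}}$, the triangle inequality yields
\[
\|p-\hat{p}\|_{H^3}+\|u-\hat{u}\|_{H^3}\leq (C_1'+C_1'')\exp\{-\min(C_2',C_2'')\,t\},
\]
which is \eqref{Sect2_Diffusion_2} with $C_1:=C_1'+C_1''$ and $C_2:=\min(C_2',C_2'')$.

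There is no real analytic obstacle here beyond the two previous global existence theorems; Theorem \ref{Sect2_Diffusion_Thm} is a packaging statement expressing that both dynamics are driven exponentially to the common equilibrium $(\bar{p},0)$. The reason the analogous convergence $S\to\hat{S}$, $\varrho\to\hat{\varrho}$ cannot be obtained in this way, as anticipated in the introduction, is that $S_{\infty}(x)$ and $\hat{S}_{\infty}(x)$ are nontrivial functions of $x$ determined by the (generally different) particle trajectories of $u$ and $\hat{u}$, so no constant can serve as a common limit and the naive triangle inequality produces no decay.
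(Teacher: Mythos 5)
Your proposal is correct and follows essentially the same route as the paper: the condition \eqref{Sect2_Diffusion_1} gives $\bar{p}=\hat{\bar{p}}$, and then the triangle inequality through this common equilibrium, combined with the exponential decay estimates (the paper invokes Lemmas \ref{Sect3_Decay_Lemma} and \ref{Sect5_Decay_Lemma} directly, which is what underlies the two global existence theorems you cite), yields \eqref{Sect2_Diffusion_2} with $C_2=\min\{\beta_2,\beta_7\}$. Your closing remark about why the same argument cannot give decay of $S-\hat{S}$ or $\varrho-\hat{\varrho}$ matches the paper's Remark \ref{Sect2_Diffusion_remark}.
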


\begin{remark}\label{Sect2_Diffusion_remark}
In Eulerian coordinates, starting from $(x_0,0)$, the particle path $\chi(t;x_0)$ of non-isentropic Euler equations with damping do not coincide with $\hat{\chi}(t;x_0)$ of the diffusion equations, then $S_{\infty}(x)\neq \hat{S}_{\infty}(x)$ in general,
$\|S-\hat{S}\|_{L^2(\Omega)} + \|\varrho-\hat{\varrho}\|_{L^2(\Omega)}$ or $|S-\hat{S}|_{\infty} + |\varrho-\hat{\varrho}|_{\infty}$ may not decay. While this does not contradict with the results in 1D Lagrangian coordinates (see \cite{Hsiao_Pan_1999}).
Both $\chi(t;x_0)$ and $\hat{\chi}(t;x_0)$ in Eulerian coordinates correspond to the same line $\{(y_0,t)|t\geq 0\}$ in Lagrangian coordinates, where $y_0=\int\limits_0^{x_0}\varrho_0(x)\,\mathrm{d}x$ if $\Omega=[0,1]$. The entropy $S$ and $\hat{S}$ remains constant along vertical lines, so $S(y,t)\equiv \hat{S}(y,t)\equiv S_0(y)$ in Lagrangian coordinates, then $(p,u,S,\varrho)$ converge to $(\hat{p},\hat{u},\hat{S},\hat{\varrho})$ in Lagrangian coordinates.
\end{remark}

In the sequent sections, we will use the following notations: $X\lem Y$ denotes the estimate $X\leq CY$ for some implied constant $C>0$ which may different line by line. $(\cdot)_{k}$ denotes a vector in $\mathbb{R}^3$, for instance, $\omega_{k}=\delta_{ijk}\partial_{i}v_j$, where $\delta_{ijk}$ is totally anti-symmetric tensor such that $\delta_{123}=\delta_{231}=\delta_{312}=1, \delta_{213}=\delta_{321}=\delta_{132}=-1$, others are $0$. 'R.H.S.' is the abbreviation for 'right hand side'.

%%% find 3
\section{Global A Priori Estimates for Non-Isentropic Euler Equations with Damping}
In this section, we derive global a priori estimates for the non-isentropic Euler equations with damping $(\ref{Sect2_Final_Eq})$.

The following lemma indicates $\varrho-\bar{\varrho}$, $\varrho_t$, $\nabla\varrho$ can be estimated by $\mathcal{E}[\xi,\phi](t)$.
\begin{lemma}\label{Sect3_Varrho_Lemma}
For any given $T\in (0,+\infty]$, if
\begin{equation*}
\sup\limits_{0\leq t\leq T} \mathcal{E}[\xi,v,\phi](t) \leq\e,
\end{equation*}
where $0<\e\ll 1$, then
\begin{equation}\label{Sect3_Varrho_Eq}
\begin{array}{ll}
\sup\limits_{0\leq t\leq T}\mathcal{E}[\varrho-\bar{\varrho}](t)\lem \e,\quad
\sup\limits_{0\leq t\leq T}|\varrho-\bar{\varrho}|_{\infty}\lem \sqrt{\e},\\[6pt]
\sup\limits_{0\leq t\leq T}|\varrho_t|_{\infty}\lem \sqrt{\e},\quad
\sup\limits_{0\leq t\leq T}|\nabla\varrho|_{\infty}\lem \sqrt{\e}.
\end{array}
\end{equation}
\end{lemma}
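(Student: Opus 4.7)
The plan is to exploit the fact that $\varrho-\bar{\varrho}$ is, up to a harmless constant offset, a smooth function of the small quantities $\xi$ and $\phi$, and then translate Sobolev and $L^{\infty}$ control on $(\xi,\phi)$ into analogous control on $\varrho-\bar{\varrho}$ via the chain rule and Moser-type composition estimates. Write $G(\xi,\phi):=\frac{1}{\sqrt[\gamma]{A}}(\xi+\bar{p})^{1/\gamma}\exp\{-(\phi+\bar{S})/\gamma\}$, so $\varrho=G(\xi,\phi)$.

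First I would use the Sobolev embedding $H^{2}(\Omega)\hookrightarrow L^{\infty}(\Omega)$ in 3D together with the hypothesis $\mathcal{E}[\xi,v,\phi](t)\leq\e$ to obtain the uniform pointwise bounds
\[
|\xi|_{\infty}+|\xi_t|_{\infty}+|\nabla\xi|_{\infty}+|\phi|_{\infty}+|\phi_t|_{\infty}+|\nabla\phi|_{\infty}\lem\sqrt{\e},
\]
since each of the corresponding Sobolev norms is dominated by $\sqrt{\mathcal{E}[\xi,v,\phi]}$ by the definition of $\mathcal{E}$. Because $\bar{p}>0$ and $\sqrt{\e}\ll 1$, this confines $(\xi,\phi)$ to a fixed neighbourhood of the origin on which $G$ is $C^{\infty}$ and all of its partial derivatives are uniformly bounded.

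Next, for the $L^{\infty}$ bounds I would split
\[
\varrho-\bar{\varrho}=\bigl(G(\xi,\phi)-G(0,0)\bigr)+\bigl(G(0,0)-\bar{\varrho}\bigr).
\]
A one-parameter Taylor expansion bounds the first bracket by $\lem|\xi|_{\infty}+|\phi|_{\infty}\lem\sqrt{\e}$. The constant offset depends only on the initial data, and writing $\bar{\varrho}=\frac{1}{|\Omega|}\int_{\Omega}G(p_0-\bar{p},S_0-\bar{S})\,\mathrm{d}x$ I would estimate $|G(0,0)-\bar{\varrho}|\lem\|(p_0-\bar{p},S_0-\bar{S})\|_{\infty}\lem\delta_1\lem\sqrt{\e}$ under the small-data assumption of Theorem \ref{Sect2_Global_Existence_Thm}. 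The bounds on $|\varrho_t|_{\infty}$ and $|\nabla\varrho|_{\infty}$ then follow immediately from the chain-rule identities $\varrho_t=G_{\xi}\,\xi_t+G_{\phi}\,\phi_t$ and $\nabla\varrho=G_{\xi}\,\nabla\xi+G_{\phi}\,\nabla\phi$ combined with the uniform boundedness of $G_{\xi},G_{\phi}$ and the pointwise bounds above.

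For the Sobolev estimate $\mathcal{E}[\varrho-\bar{\varrho}](t)\lem\e$, I would apply each operator $\partial_t^{\ell}\mathcal{D}^{\alpha}$ with $\ell+|\alpha|\leq 3$ to $G(\xi,\phi)-G(0,0)$. Faà di Bruno's formula expresses the result as a finite sum of terms
\[
\bigl(\partial_{\xi}^{a}\partial_{\phi}^{b}G\bigr)(\xi,\phi)\cdot\prod_{i}(\partial_t^{\ell_i}\mathcal{D}^{\alpha_i}\xi)\cdot\prod_{j}(\partial_t^{m_j}\mathcal{D}^{\beta_j}\phi),
\]
with $\sum_i(\ell_i+|\alpha_i|)+\sum_j(m_j+|\beta_j|)=\ell+|\alpha|\leq 3$ and each individual order $\geq 1$. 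The $G$-prefactor is uniformly bounded by the first paragraph, and each product is estimated in $L^{2}$ by a standard Moser-type product inequality, placing the unique factor of highest order in $L^{2}$ and all remaining factors in $L^{\infty}$ via $H^{2}\hookrightarrow L^{\infty}$. Summing over indices and absorbing the constant offset yields $\mathcal{E}[\varrho-\bar{\varrho}]\lem\mathcal{E}[\xi,\phi]+\mathcal{E}[\xi,\phi]^{2}+|G(0,0)-\bar{\varrho}|^{2}|\Omega|\lem\e$. The main obstacle is the combinatorial bookkeeping of these chain-rule terms for mixed space-time derivatives and the verification that every product closes at the level of $\mathcal{E}$; this is where most of the technical work sits, although no new ideas beyond Moser composition are required.
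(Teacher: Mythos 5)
Your argument is correct and follows essentially the same route the paper takes, namely treating $\varrho=G(\xi,\phi)$ as a smooth composition and pushing $L^{\infty}$ and $H^{3}$-in-space-time control through the chain rule; the paper simply compresses the Fa\`a di Bruno / Moser bookkeeping into two lines. One small point worth adjusting: to bound the constant offset $|G(0,0)-\bar{\varrho}|$ you appeal to the small-data threshold $\delta_1$ of Theorem \ref{Sect2_Global_Existence_Thm}, but the lemma is used \emph{inside} the proof of that theorem, so this risks circularity. It is cleaner (and sufficient) to observe that the lemma's own hypothesis at $t=0$ already gives $\|\xi_0\|_{H^{2}(\Omega)}+\|\phi_0\|_{H^{2}(\Omega)}\lem\sqrt{\mathcal{E}[\xi,v,\phi](0)}\leq\sqrt{\e}$, whence $|G(0,0)-\bar{\varrho}|=\big|\tfrac{1}{|\Omega|}\int_{\Omega}\big(G(0,0)-G(\xi_0,\phi_0)\big)\,\mathrm{d}x\big|\lem|\xi_0|_{\infty}+|\phi_0|_{\infty}\lem\sqrt{\e}$ with no reference to $\delta_1$. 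With that substitution the proof is self-contained and matches the paper's.
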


\begin{proof}
Since $\varrho=\frac{1}{\sqrt[\gamma]{A}}p^{\frac{1}{\gamma}}\exp\{-\frac{S}{\gamma}\}$, we have
\begin{equation*}
\begin{array}{ll}
\sup\limits_{0\leq t\leq T}\mathcal{E}[\varrho-\bar{\varrho}](t)
\lem \sup\limits_{0\leq t\leq T}(\mathcal{E}[\xi](t)+\mathcal{E}[\phi](t)) \lem \e, \\[8pt]

\sup\limits_{0\leq t\leq T}|\varrho-\bar{\varrho}|_{\infty}\lem
\sup\limits_{0\leq t\leq T}\mathcal{E}[\varrho-\bar{\varrho}](t)^{\frac{1}{2}} \lem \sqrt{\e}, \\[8pt]

\sup\limits_{0\leq t\leq T}|\varrho_t|_{\infty}
\lem \sup\limits_{0\leq t\leq T}(|\xi_t|_{\infty}+|\phi_t|_{\infty}) \lem \sqrt{\e}, \\[8pt]

\sup\limits_{0\leq t\leq T}|\nabla\varrho|_{\infty}
\lem \sup\limits_{0\leq t\leq T}(|\nabla\xi|_{\infty}+|\nabla\phi|_{\infty}) \lem \sqrt{\e}.
\end{array}
\end{equation*}
\end{proof}

The following lemma involves the Helmholtz-Hodge decomposition of vector fileds, which states that $\nabla v$ is estimated by $\omega$ and $\nabla\cdot v$. The proof of this lemma is standard (see \cite{Bourguignon_Brezis_1974,Pan_Zhao_2009}).
\begin{lemma}\label{Sect3_DivCurl_Decomposition}
Let $v \in H^s(\Omega)$ be a vector satisfying $v\cdot n|_{\partial\Omega}=0$, where $n$ is the unit outer norm of $\partial\Omega$, then $\|v\|_{s}\lem \|\omega\|_{s-1}+\|\nabla\cdot v\|_{s-1}+\|v\|_{s-1}$.
\end{lemma}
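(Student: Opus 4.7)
The plan is to prove the base case $s=1$ via the classical div–curl integration-by-parts identity, and then to bootstrap to higher $s$ by combining tangential-derivative commutators with algebraic recovery of normal derivatives. The boundary condition $v\cdot n|_{\partial\Omega}=0$ enters the argument precisely at the step where a boundary term has to be rewritten in terms of tangential data only.

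For the base case, the key computation is the identity
$$\int_\Omega |\nabla v|^2\,\mathrm{d}x = \int_\Omega\bigl(|\nabla\cdot v|^2 + |\omega|^2\bigr)\,\mathrm{d}x + \int_{\partial\Omega} B(v,v)\,\mathrm{d}S_x,$$
where $B$ is a bilinear form determined by the second fundamental form of $\partial\Omega$. This identity follows from integrating $\int \partial_i v_j\,\partial_i v_j$ by parts twice and rearranging, and its boundary contribution takes the form $\int_{\partial\Omega}(\partial_i v_j)\,v_i\,n_j\,\mathrm{d}S_x$. Using $v\cdot n|_{\partial\Omega}=0$ this boundary term collapses to a bilinear form acting only on the tangential component $v_T$, and by the trace theorem combined with Young's inequality it is controlled by $\varepsilon\|v\|_{1}^{2}+C_\varepsilon\|v\|_{0}^{2}$. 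Absorbing $\varepsilon\|v\|_{1}^{2}$ into the left-hand side yields the $s=1$ estimate.

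For the inductive step $s\geq 2$, I would cover $\overline{\Omega}$ by a finite collection consisting of one interior chart and boundary-fitted coordinate charts near $\partial\Omega$. In each boundary chart, tangential differentiation preserves the boundary condition $v\cdot n=0$, so applying tangential derivatives of order $\leq s-1$ and invoking the $s=1$ estimate controls all mixed derivatives in terms of $\|\omega\|_{s-1}+\|\nabla\cdot v\|_{s-1}+\|v\|_{s-1}$. Purely normal derivatives are then recovered algebraically: $\partial_n v_N$ is expressed via $\nabla\cdot v$ minus the tangential divergence of $v_T$, while the tangential components of $\partial_n v_T$ are read off from the tangential components of $\omega$. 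Each such recovery raises the order by one at the cost of one derivative of $\omega$ or of $\nabla\cdot v$, which is exactly what the right-hand side supplies.

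The main obstacle is the careful bookkeeping of commutator terms and boundary contributions in the iteration, especially verifying that the normal-derivative recovery introduces no worse than $\|v\|_{s-1}$ in the lower-order remainder. As the author notes, this is classical and the cleanest route is to invoke \cite{Bourguignon_Brezis_1974,Pan_Zhao_2009} directly; the sketch above simply records how one would verify the estimate from first principles in the present geometric setting.
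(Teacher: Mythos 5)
The paper gives no proof of this lemma, only the citation to \cite{Bourguignon_Brezis_1974,Pan_Zhao_2009}; your sketch correctly unpacks the standard argument from precisely those references — the $s=1$ integration-by-parts identity with the second-fundamental-form boundary term (controlled via trace and Young after using $v\cdot n|_{\partial\Omega}=0$), followed by tangential-derivative bootstrapping and algebraic recovery of normal derivatives from $\nabla\cdot v$ and $\omega$. This matches the route the paper intends, so there is nothing to compare or correct.
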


Lemma $\ref{Sect3_DivCurl_Decomposition}$ and the standard Sobolev's inequality $\|\cdot\|_{L^4(\Omega)}\lem \|\cdot\|_{H^1(\Omega)}$ are widely used to prove a priori estimates in Section 3 and Section 5.

The following lemma is an application of Lemma $\ref{Sect3_DivCurl_Decomposition}$, which states that the spatial derivatives are bounded by the temporal derivatives and the vorticity, then the total energy $\mathcal{E}[\xi,v](t)$ can be bounded by $E[\xi,v](t)$ and $\mathcal{E}_1[\omega](t)$.
\begin{lemma}\label{Sect3_Total_Energy_Lemma}
For any given $T\in (0,+\infty]$, there exists $\e_0>0$ which is independent of $(\xi_0,v_0,\phi_0)$, such that if
\begin{equation*}
\sup\limits_{0\leq t\leq T} \mathcal{E}[\xi,v,\phi](t) \leq\e,
\end{equation*}
where $\e\ll \min\{1,\e_0\}$, then for $\forall t\in [0,T]$,
\begin{equation}\label{Sect3_Total_Energy_Eq}
\mathcal{E}[\xi,v](t) \leq c_0(E[\xi,v](t) + \mathcal{E}_1[\omega](t)),
\end{equation}
for some $c_0>0$.
\end{lemma}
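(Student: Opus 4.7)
The plan is to prove the bound by induction on the spatial derivative order $|\alpha|$, using the evolution equations in $(\ref{Sect2_Final_Eq})$ to trade spatial derivatives for temporal ones and using the Helmholtz--Hodge decomposition (Lemma~\ref{Sect3_DivCurl_Decomposition}) to recover the full gradient of $v$ from its curl and its divergence. The base case $|\alpha|=0$ is immediate: $\partial_t^\ell\xi$ and $\partial_t^\ell v$ for $\ell\leq 3$ are exactly the summands in $E[\xi,v](t)$.

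For the inductive step on $\xi$, I would rewrite $(\ref{Sect2_Final_Eq})_2$ as
\[
k_2\nabla\xi \;=\; -\,\partial_t v - a v - k_1 v\cdot\nabla v + \tfrac{1}{k_1}\!\bigl(\tfrac{1}{\bar\varrho}-\tfrac{1}{\varrho}\bigr)\nabla\xi,
\]
write $\mathcal{D}^\alpha=\partial_j\mathcal{D}^{\alpha'}$ with $|\alpha'|=|\alpha|-1$, and apply $\partial_t^\ell\mathcal{D}^{\alpha'}$. The two linear terms on the right contribute $\partial_t^{\ell+1}\mathcal{D}^{\alpha'}v$ and $\partial_t^\ell\mathcal{D}^{\alpha'}v$ of total order $\ell+|\alpha|\leq 3$, both already sitting inside $\mathcal{E}[v](t)$, which the induction allows us to bound. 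For the inductive step on $v$, the compatibility conditions give $\partial_t^\ell v\cdot n|_{\partial\Omega}=0$, so Lemma~\ref{Sect3_DivCurl_Decomposition} yields
\[
\|\partial_t^\ell v\|_s \;\lesssim\; \|\partial_t^\ell \omega\|_{s-1} + \|\partial_t^\ell\nabla\cdot v\|_{s-1} + \|\partial_t^\ell v\|_{s-1},
\]
valid for $\ell+s\leq 3$ with $s\geq 1$. The first term is controlled by $\mathcal{E}_1[\omega](t)$ since $\ell+(s-1)\leq 2$; the third term is of strictly lower spatial order, so the induction hypothesis applies; and for the divergence I solve $(\ref{Sect2_Final_Eq})_1$ for $k_2\nabla\cdot v=-\partial_t\xi - \gamma k_1\xi\nabla\cdot v - k_1 v\cdot\nabla\xi$, trading one derivative of $\nabla\cdot v$ for $\partial_t^{\ell+1}\xi$ of order $\ell+s\leq 3$, which is again reached by Step~2. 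Summing over all $(\ell,\alpha)$ with $\ell+|\alpha|\leq 3$ produces
\[
\mathcal{E}[\xi,v](t) \;\leq\; C\bigl(E[\xi,v](t)+\mathcal{E}_1[\omega](t)\bigr) + C\epsilon\,\mathcal{E}[\xi,v](t),
\]
and choosing $\epsilon_0$ small enough that $C\epsilon_0\leq \tfrac{1}{2}$ lets me absorb the last term into the left-hand side to obtain the conclusion with $c_0=2C$.

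The main obstacle is the systematic control of the nonlinear remainders that appear each time $\partial_t^\ell\mathcal{D}^{\alpha'}$ lands on a term like $v\cdot\nabla v$, $\xi\nabla\cdot v$, $v\cdot\nabla\xi$, or $(1/\bar\varrho-1/\varrho)\nabla\xi$. Expanding by the Leibniz rule produces a sum of products of derivatives of $\xi,v,\varrho$; for each such product I must place one factor in $L^\infty$ via Sobolev embedding ($H^2(\Omega)\hookrightarrow L^\infty$ in three dimensions, so at most two derivatives can be spent on this factor) while the remaining factors are bounded in $L^2$ by the total energy. Lemma~\ref{Sect3_Varrho_Lemma} is the key input that reduces the $\varrho$-dependent factor (and its derivatives) to the energies of $\xi$ and $\phi$, and the assumption $\mathcal{E}[\xi,v,\phi](t)\leq\epsilon$ provides the decisive $\sqrt{\epsilon}$ on each remainder, ultimately yielding the absorbable $C\epsilon\,\mathcal{E}[\xi,v](t)$ above.
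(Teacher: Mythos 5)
Your proposal is correct and follows essentially the same route as the paper's own proof: the paper also solves $(\ref{Sect2_Final_Eq})_2$ for $\nabla\xi$ and $(\ref{Sect2_Final_Eq})_1$ for $\nabla\cdot v$, recovers $\nabla(\partial_t^{\ell}v)$ from $\partial_t^{\ell}\omega$, $\partial_t^{\ell}\nabla\cdot v$ and $\partial_t^{\ell}v$ via Lemma~\ref{Sect3_DivCurl_Decomposition} (using $\partial_t^{\ell}v\cdot n|_{\partial\Omega}=0$), estimates the nonlinear remainders by $\sqrt{\e}\,\mathcal{E}[\xi,v](t)$ with the help of Lemma~\ref{Sect3_Varrho_Lemma}, and absorbs that term by taking $\e_0$ small. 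The only difference is presentational: you organize the finitely many cases as an induction on $|\alpha|$ (interleaving the $\xi$ and $v$ estimates), whereas the paper unrolls the same hierarchy explicitly in $(\ref{Sect3_Total_Energy_Prove1})$--$(\ref{Sect3_Total_Energy_Prove12})$.
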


\begin{proof}
By $(\ref{Sect2_Final_Eq})_2$, we get
\begin{equation}\label{Sect3_Total_Energy_Prove1}
\begin{array}{ll}
\nabla\xi = - k_1\varrho v_t - ak_1\varrho v - k_1^2 \varrho v\cdot\nabla v, \\[6pt]

\|\nabla\xi\|_{L^2(\Omega)}^2 \lem \|v_t\|_{L^2(\Omega)}^2 +\|v\|_{L^2(\Omega)}^2
+\|v\cdot\nabla v\|_{L^2(\Omega)}^2.
\end{array}
\end{equation}

By $(\ref{Sect2_Final_Eq})_1$, we get
\begin{equation}\label{Sect3_Total_Energy_Prove2}
\begin{array}{ll}
\nabla\cdot v = \frac{-1}{k_1\gamma p}(\xi_t + k_1 v\cdot\nabla \xi), \\[6pt]
\|\nabla\cdot v\|_{L^2(\Omega)}^2 \lem \|\xi_t\|_{L^(\Omega)}^2 + \sqrt{\e}\mathcal{E}[\xi,v](t), \\[6pt]

\|\nabla v\|_{L^2(\Omega)}^2 \lem \|v\|_{L^2(\Omega)}^2+\|\omega\|_{L^2(\Omega)}^2
+\|\nabla\cdot v\|_{L^2(\Omega)}^2  \\[6pt]\hspace{1.7cm}
\lem \|\xi_t\|_{L^(\Omega)}^2 +\|\omega\|_{L^2(\Omega)}^2 +\|v\|_{L^2(\Omega)}^2 + \sqrt{\e}\mathcal{E}[\xi,v](t).
\end{array}
\end{equation}

Apply $\partial_t$ to $(\ref{Sect3_Total_Energy_Prove1})_1$, we get
\begin{equation}\label{Sect3_Total_Energy_Prove3}
\begin{array}{ll}
\nabla\xi_t = - k_1\xi_t v_t -k_1\varrho v_{tt} - ak_1\xi_t v -ak_1\varrho v_t
-k_1^2\partial_t(\varrho v\cdot\nabla v),\\[6pt]

\|\nabla\xi_t\|_{L^2(\Omega)}^2 \lem \|v_{tt}\|_{L^2(\Omega)}^2 +\|v_t\|_{L^2(\Omega)}^2 +\sqrt{\e}\mathcal{E}[\xi,v](t).
\end{array}
\end{equation}

Apply $\partial_t$ to $(\ref{Sect3_Total_Energy_Prove2})_1$, we get
\begin{equation}\label{Sect3_Total_Energy_Prove4}
\begin{array}{ll}
\nabla\cdot v_t = \frac{-1}{k_1\gamma p^2}[p(\xi_{tt} + k_1 v\cdot\nabla\xi_t + k_1 v_t\cdot\nabla\xi)
- \xi_t^2 - k_1\xi_t v\cdot\nabla\xi], \\[6pt]

\|\nabla\cdot v_t\|_{L^2(\Omega)}^2
 \lem \|\xi_{tt}\|_{L^2(\Omega)}^2+ \sqrt{\e}\mathcal{E}[\xi,v](t),\\[6pt]

\|\nabla v_t\|_{L^2(\Omega)}^2 \lem \|\nabla\cdot v_t\|_{L^2(\Omega)}^2
+ \|\omega_t\|_{L^2(\Omega)}^2 + \|v_t\|_{L^2(\Omega)}^2 \\[6pt]\hspace{1.8cm}
\lem \|\xi_{tt}\|_{L^2(\Omega)}^2 + \|\omega_t\|_{L^2(\Omega)}^2 + \|v_t\|_{L^2(\Omega)}^2
+ \sqrt{\e}\mathcal{E}[\xi,v](t).
\end{array}
\end{equation}

Apply $\partial_{tt}$ to $(\ref{Sect3_Total_Energy_Prove1})_1$, we get
\begin{equation}\label{Sect3_Total_Energy_Prove5}
\begin{array}{ll}
\nabla\xi_{tt} = - k_1\xi_{tt} v_t - 2k_1\xi_t v_{tt} -k_1\varrho v_{ttt}
-k_1^2(\varrho v\cdot\nabla v)_{tt}
 \\[6pt]\hspace{1.1cm}
- ak_1\xi_{tt} v - 2ak_1\xi_t v_t -ak_1\varrho v_{tt}, \\[6pt]

\|\nabla\xi_{tt}\|_{L^2(\Omega)}^2 \lem \|v_{ttt}\|_{L^2(\Omega)}^2 +\|v_{tt}\|_{L^2(\Omega)}^2 +\sqrt{\e}\mathcal{E}[\xi,v](t).
\end{array}
\end{equation}

Apply $\partial_{tt}$ to $(\ref{Sect3_Total_Energy_Prove2})_1$, we get
\begin{equation}\label{Sect3_Total_Energy_Prove6}
\begin{array}{ll}
\nabla\cdot v_{tt} = \frac{-1}{k_1\gamma p}\xi_{ttt} +\frac{1}{k_1\gamma p^2}\xi_t\xi_{tt}
 -\frac{1}{k_1\gamma}\partial_t[\frac{1}{p}(k_1 v\cdot\nabla\xi_t + k_1 v_t\cdot\nabla\xi) \\[6pt]\hspace{1.3cm}
-\frac{1}{p^2}(\xi_t^2 + k_1\xi_t v\cdot\nabla\xi)], \\[6pt]

\|\nabla\cdot v_{tt}\|_{L^2(\Omega)}^2 \lem \|\xi_{ttt}\|_{L^2(\Omega)}^2+ \sqrt{\e}\mathcal{E}[\xi,v](t),\\[6pt]

\|\nabla v_{tt}\|_{L^2(\Omega)}^2 \lem \|\nabla\cdot v_{tt}\|_{L^2(\Omega)}^2
+\|\omega_{tt}\|_{L^2(\Omega)}^2 +\|v_{tt}\|_{L^2(\Omega)}^2 \\[6pt]\hspace{1.9cm}
\lem \|\xi_{ttt}\|_{L^2(\Omega)}^2 +\|\omega_{tt}\|_{L^2(\Omega)}^2 +\|v_{tt}\|_{L^2(\Omega)}^2
+ \sqrt{\e}\mathcal{E}[\xi,v](t).
\end{array}
\end{equation}

Apply $\mathcal{D}^{\alpha}$ to $(\ref{Sect3_Total_Energy_Prove1})_1$, where $|\alpha|=1$, we get
\begin{equation}\label{Sect3_Total_Energy_Prove7}
\begin{array}{ll}
\mathcal{D}^{\alpha}\nabla\xi = - k_1(\mathcal{D}^{\alpha}\xi) v_t - k_1\varrho \mathcal{D}^{\alpha}v_t
- ak_1(\mathcal{D}^{\alpha}\xi) v - ak_1\varrho \mathcal{D}^{\alpha}v
- k_1^2 \mathcal{D}^{\alpha}(\varrho v\cdot\nabla v),  \\[6pt]

\|\mathcal{D}^{\alpha}\nabla\xi\|_{L^2(\Omega)}^2 \lem \|\mathcal{D}^{\alpha}v_t\|_{L^2(\Omega)}^2 +\|\mathcal{D}^{\alpha}v\|_{L^2(\Omega)}^2 +\sqrt{\e}\mathcal{E}[\xi,v](t) \\[6pt]\hspace{2.1cm}

\lem \|\xi_{tt}\|_{L^2(\Omega)}^2 +\|\omega_t\|_{L^2(\Omega)}^2+\|v_t\|_{L^2(\Omega)}^2
+\|\xi_t\|_{L^2(\Omega)}^2 +\|\omega\|_{L^2(\Omega)}^2 \\[6pt]\hspace{2.5cm}
+\|v\|_{L^2(\Omega)}^2 +\sqrt{\e}\mathcal{E}[\xi,v](t).
\end{array}
\end{equation}

Apply $\mathcal{D}^{\alpha}$ to $(\ref{Sect3_Total_Energy_Prove2})_1$, where $|\alpha|=1$, we get
\begin{equation}\label{Sect3_Total_Energy_Prove8}
\begin{array}{ll}
\mathcal{D}^{\alpha}\nabla\cdot v = \frac{1}{k_1\gamma p^2}(\mathcal{D}^{\alpha}\xi)(\xi_t + k_1 v\cdot\nabla\xi)
-\frac{1}{k_1\gamma p}\mathcal{D}^{\alpha}(\xi_t + k_1 v\cdot\nabla\xi), \\[6pt]

\|\mathcal{D}^{\alpha}\nabla\cdot v\|_{L^2(\Omega)}^2 \lem \|\mathcal{D}^{\alpha}\xi_t\|_{L^2(\Omega)}^2
+ \sqrt{\e}\mathcal{E}[\xi,v](t) \\[6pt]\hspace{2.4cm}
\lem \|v_t\|_{L^2(\Omega)}^2 + \|v_{tt}\|_{L^2(\Omega)}^2 + \sqrt{\e}\mathcal{E}[\xi,v](t), \\[6pt]

\|\mathcal{D}^{\alpha}\nabla v\|_{L^2(\Omega)}^2 \lem \|\nabla\cdot v\|_{H^1(\Omega)}^2
+ \|\omega\|_{H^1(\Omega)}^2 + \|v\|_{H^1(\Omega)}^2
\lem \|v_t\|_{L^2(\Omega)}^2 +\|v_{tt}\|_{L^2(\Omega)}^2 \\[6pt]\hspace{2.6cm}
+ \|\omega\|_{H^1(\Omega)}^2 + \|v\|_{L^2(\Omega)}^2 +\|\xi_t\|_{L^2(\Omega)}^2 + \sqrt{\e}\mathcal{E}[\xi,v](t).
\end{array}
\end{equation}

Apply $\mathcal{D}^{\alpha}$ to $(\ref{Sect3_Total_Energy_Prove3})_1$, where $|\alpha|=1$, we get
\begin{equation}\label{Sect3_Total_Energy_Prove9}
\begin{array}{ll}
\mathcal{D}^{\alpha}\nabla\xi_t = - k_1\mathcal{D}^{\alpha}(\xi_t v_t) -k_1(\mathcal{D}^{\alpha}\xi) v_{tt}
-k_1\varrho \mathcal{D}^{\alpha}v_{tt} -k_1^2\mathcal{D}^{\alpha}(\xi_t v\cdot\nabla v) \\[6pt]\hspace{1.5cm}

-k_1^2\mathcal{D}^{\alpha}(\varrho v_t\cdot\nabla v)-k_1^2\mathcal{D}^{\alpha}(\varrho v\cdot\nabla v_t)
- ak_1\mathcal{D}^{\alpha}(\xi_t v) \\[6pt]\hspace{1.5cm}
-ak_1(\mathcal{D}^{\alpha}\xi) v_t -ak_1\varrho\mathcal{D}^{\alpha} v_t, \\[6pt]

\|\mathcal{D}^{\alpha}\nabla\xi_t\|_{L^2(\Omega)}^2 \lem \|\mathcal{D}^{\alpha} v_{tt}\|_{L^2(\Omega)}^2
+\|\mathcal{D}^{\alpha} v_t\|_{L^2(\Omega)}^2 +\sqrt{\e}\mathcal{E}[\xi,v](t) \\[6pt]\hspace{2.3cm}

\lem \|\xi_{ttt}\|_{L^2(\Omega)}^2
+ \|\omega_{tt}\|_{L^2(\Omega)}^2 +\|v_{tt}\|_{L^2(\Omega)}^2 +\|\xi_{tt}\|_{L^2(\Omega)}^2 \\[6pt]\hspace{2.7cm} +\|\omega_t\|_{L^2(\Omega)}^2 +\|v_t\|_{L^2(\Omega)}^2 +\sqrt{\e}\mathcal{E}[\xi,v](t).
\end{array}
\end{equation}

Apply $\mathcal{D}^{\alpha}$ to $(\ref{Sect3_Total_Energy_Prove4})_1$, where $|\alpha|=1$, we get
\begin{equation}\label{Sect3_Total_Energy_Prove10}
\begin{array}{ll}
\mathcal{D}^{\alpha}\nabla\cdot v_t = \frac{-1}{k_1\gamma p}\mathcal{D}^{\alpha}\xi_{tt}
+\frac{1}{k_1\gamma p^2}(\mathcal{D}^{\alpha}\xi)\xi_{tt}
-\frac{1}{k_1\gamma}\mathcal{D}^{\alpha}[\frac{1}{p}(k_1 v\cdot\nabla\xi_t + k_1 v_t\cdot\nabla\xi) \\[6pt]\hspace{1.8cm}
- \frac{1}{p^2}(\xi_t^2 + k_1\xi_t v\cdot\nabla\xi)], \\[12pt]

\|\mathcal{D}^{\alpha}\nabla\cdot v_t\|_{L^2(\Omega)}^2 \lem \|\mathcal{D}^{\alpha}\xi_{tt}\|_{L^2(\Omega)}^2
+\sqrt{\e}\mathcal{E}[\xi,v](t) \\[6pt]\hspace{2.5cm}
\lem \|v_{ttt}\|_{L^2(\Omega)}^2 +\|v_{tt}\|_{L^2(\Omega)}^2 +\sqrt{\e}\mathcal{E}[\xi,v](t), \\[6pt]

\|\mathcal{D}^{\alpha}\nabla v_t\|_{L^2(\Omega)}^2 \lem \|\nabla\cdot v_t\|_{H^1(\Omega)}^2
+ \|\omega_t\|_{H^1(\Omega)}^2 + \|v_t\|_{H^1(\Omega)}^2 + \sqrt{\e}\mathcal{E}[\xi,v](t)
\\[6pt]\hspace{2.3cm}

\lem \|\xi_{tt}\|_{L^2(\Omega)}^2 + \|v_{ttt}\|_{L^2(\Omega)}^2 + \|v_{tt}\|_{L^2(\Omega)}^2
+ \|\omega_t\|_{H^1(\Omega)}^2 \\[6pt]\hspace{2.7cm}
+ \|v_t\|_{L^2(\Omega)}^2 + \sqrt{\e}\mathcal{E}[\xi,v](t).
\end{array}
\end{equation}

Apply $\mathcal{D}^{\alpha}$ to $(\ref{Sect3_Total_Energy_Prove1})_1$, where $|\alpha|=2$, $\alpha=\alpha_1+\alpha_2$, we get
\begin{equation}\label{Sect3_Total_Energy_Prove11}
\begin{array}{ll}
\mathcal{D}^{\alpha}\nabla\xi = - k_1\varrho\mathcal{D}^{\alpha}v_t
- k_1\sum\limits_{\alpha_1>0}(\mathcal{D}^{\alpha_1}\xi) \mathcal{D}^{\alpha_2}v_t
- ak_1\varrho\mathcal{D}^{\alpha}v \\[6pt]\hspace{1.4cm}
- ak_1\sum\limits_{\alpha_1>0}(\mathcal{D}^{\alpha_1}\xi)\mathcal{D}^{\alpha_2}v
- k_1^2 \mathcal{D}^{\alpha}(\varrho v\cdot\nabla v), \\[12pt]

\|\mathcal{D}^{\alpha}\nabla\xi\|_{L^2(\Omega)}^2 \lem \|\mathcal{D}^{\alpha}v_t\|_{L^2(\Omega)}^2 +\|\mathcal{D}^{\alpha}v\|_{L^2(\Omega)}^2 +\sqrt{\e}\mathcal{E}[\xi,v](t)
\\[6pt]\hspace{2.2cm}
\lem \|v_{ttt}\|_{L^2(\Omega)}^2 +\|v_{tt}\|_{L^2(\Omega)}^2 +\|\omega_t\|_{H^1(\Omega)}^2
+ \|\xi_{tt}\|_{L^2(\Omega)}^2 +\|v_t\|_{L^2(\Omega)}^2 \\[6pt]\hspace{2.6cm}
+ \|\omega\|_{H^1(\Omega)}^2 +\|\xi_t\|_{L^2(\Omega)}^2 +\|v\|_{L^2(\Omega)}^2+ \sqrt{\e}\mathcal{E}[\xi,v](t).
\end{array}
\end{equation}

Apply $\mathcal{D}^{\alpha}$ to $(\ref{Sect3_Total_Energy_Prove2})_1$, where $|\alpha|=2$, $\alpha=\alpha_1+\alpha_2$, we get
\begin{equation*}
\begin{array}{ll}
\mathcal{D}^{\alpha}\nabla\cdot v = \frac{-1}{k_1\gamma}\mathcal{D}^{\alpha_1}(\frac{1}{p})
\mathcal{D}^{\alpha_2}(\xi_t + k_1 v\cdot\nabla\xi), \\[6pt]
\end{array}
\end{equation*}

\begin{equation}\label{Sect3_Total_Energy_Prove12}
\begin{array}{ll}
\|\mathcal{D}^{\alpha}\nabla\cdot v\|_{L^2(\Omega)}^2 \lem \|\mathcal{D}^{\alpha}\xi_t\|_{L^2(\Omega)}^2+ \sqrt{\e}\mathcal{E}[\xi,v](t) \\[6pt]\hspace{2.4cm}
\lem \|\xi_{ttt}\|_{L^2(\Omega)}^2 +\|\omega_{tt}\|_{L^2(\Omega)}^2 +\|v_{tt}\|_{L^2(\Omega)}^2
+\|\xi_{tt}\|_{H^1(\Omega)}^2  \\[6pt]\hspace{2.8cm}
+\|\omega_t\|_{L^2(\Omega)}^2 +\|v_t\|_{L^2\Omega)}^2 + \sqrt{\e}\mathcal{E}[\xi,v](t),\\[6pt]

\|\mathcal{D}^{\alpha}\nabla v\|_{L^2(\Omega)}^2 \lem
\|\nabla\cdot v\|_{H^2(\Omega)}^2 +\|\omega\|_{H^2(\Omega)}^2 +\|v\|_{H^2(\Omega)}^2 \\[6pt]\hspace{2.2cm}

\lem \|\xi_{ttt}\|_{L^2(\Omega)}^2 +\|\omega_{tt}\|_{L^2(\Omega)}^2 +\|v_{tt}\|_{L^2(\Omega)}^2
+\|\xi_{tt}\|_{H^1(\Omega)}^2  \\[6pt]\hspace{2.6cm}
+\|\omega_t\|_{L^2(\Omega)}^2 +\|\omega\|_{H^2(\Omega)}^2 +\|v_t\|_{L^2\Omega)}^2
+\|v\|_{L^2\Omega)}^2 \\[6pt]\hspace{2.6cm}
+\|\xi_t\|_{L^2\Omega)}^2 + \sqrt{\e}\mathcal{E}[\xi,v](t).
\end{array}
\end{equation}

Thus, $\mathcal{E}[\xi,v](t) \leq C_3E[\xi,v](t) + C_3\mathcal{E}_1[\omega](t)+ C_3\sqrt{\e}\mathcal{E}[\xi,v](t)$,
where $C_3>0$.

Let $\e_0=\frac{1}{4C_3^2}$, when $\e\ll\min\{1,\e_0\}$, we have
\begin{equation}\label{Sect3_Total_Energy_Prove14}
\begin{array}{ll}
\mathcal{E}[\xi,v](t)\leq 2C_3 \{E[\xi,v](t) + \mathcal{E}_1[\omega](t)\}.
\end{array}
\end{equation}
Let $c_0=2C_3$. Thus, Lemma $\ref{Sect3_Total_Energy_Lemma}$ is proved.
\end{proof}

Next, in order to prove the exponential decay of $\mathcal{E}[\xi,v](t)$ and $\mathcal{E}_1[\omega](t)$, we need to prove a priori estimates for $\mathcal{E}_1[\omega](t)$, $E[\xi,v](t)$ and
$\int\limits_{\Omega}\sum\limits_{\ell=1}^{3}\partial_t^{\ell}\xi \partial_t^{\ell-1}\xi \,\mathrm{d}x$ separatively.

The following lemma concerns a priori estimate for $\mathcal{E}_1[\omega](t)$.
\begin{lemma}\label{Sect3_Vorticity_Lemma}
For any given $T\in (0,+\infty]$, if
\begin{equation*}
\sup\limits_{0\leq t\leq T} \mathcal{E}[\xi,v,\phi](t) \leq\e,
\end{equation*}
where $0<\e\ll 1$, then for $\forall t\in [0,T]$,
\begin{equation}\label{Sect3_Vorticity_toProve}
\begin{array}{ll}
\frac{\mathrm{d}}{\mathrm{d}t} \mathcal{E}_1[\omega](t) + 2a\mathcal{E}_1[\omega](t)\leq C\sqrt{\e}\mathcal{E}[\xi,v](t).
\end{array}
\end{equation}
\end{lemma}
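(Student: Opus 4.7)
The plan is to exploit the fact that the curl of a gradient vanishes, so taking $\nabla\times$ of $(\ref{Sect2_Final_Eq})_2$ eliminates the principal linear term $k_2\nabla\xi$ and produces an equation for $\omega$ whose only linear part is pure damping. Computing directly, and using $\nabla\times[(\tfrac{1}{\bar\varrho}-\tfrac{1}{\varrho})\nabla\xi] = -\nabla(\tfrac{1}{\varrho})\times\nabla\xi$, I obtain
\begin{equation*}
\omega_t + a\omega = -k_1\nabla\times(v\cdot\nabla v) - \tfrac{1}{k_1}\nabla\bigl(\tfrac{1}{\varrho}\bigr)\times\nabla\xi.
\end{equation*}
A clean rewriting via the vector identity $v\cdot\nabla v = \tfrac{1}{2}\nabla|v|^2 - v\times\omega$ gives $\nabla\times(v\cdot\nabla v) = -\nabla\times(v\times\omega)$, exposing the usual vortex-stretching structure plus a baroclinic term, which makes the derivative counting transparent.

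The crucial structural observation is that this is a pointwise linear ODE in $t$ at each $x$ with a quadratic source term; therefore any spatial multi-index $\mathcal{D}^\alpha$ and any time derivative $\partial_t^\ell$ commute with the left-hand side and introduce \emph{no} boundary contributions. Accordingly, for each $(\ell,\alpha)$ with $\ell+|\alpha|\leq 2$ I would apply $\partial_t^\ell\mathcal{D}^\alpha$, multiply by $\partial_t^\ell\mathcal{D}^\alpha\omega$, and integrate over $\Omega$, obtaining
\begin{equation*}
\tfrac{1}{2}\tfrac{\mathrm{d}}{\mathrm{d}t}\|\partial_t^\ell\mathcal{D}^\alpha\omega\|_{L^2(\Omega)}^2 + a\|\partial_t^\ell\mathcal{D}^\alpha\omega\|_{L^2(\Omega)}^2 = \int_\Omega \partial_t^\ell\mathcal{D}^\alpha\omega\cdot\partial_t^\ell\mathcal{D}^\alpha(\text{RHS})\,\mathrm{d}x
\end{equation*}
with no integration by parts required. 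Summation over $\ell+|\alpha|\leq 2$ produces $\tfrac{1}{2}\tfrac{\mathrm{d}}{\mathrm{d}t}\mathcal{E}_1[\omega]+a\mathcal{E}_1[\omega]$ on the left, leaving only the task of showing the right-hand side is $\lem\sqrt{\e}\,\mathcal{E}[\xi,v](t)$.

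Every resulting nonlinear product is handled by placing one factor in $L^\infty$ and the complementary factor in $L^2$. The $L^\infty$ bounds on $v,\nabla v,\nabla\xi,\varrho-\bar\varrho,\nabla\varrho$ are all $\lem\sqrt{\e}$ by Lemma \ref{Sect3_Varrho_Lemma} and Sobolev's embedding $H^2(\Omega)\hookrightarrow L^\infty(\Omega)$ applied to $\mathcal{E}[\xi,v,\phi](t)\leq\e$; the $L^2$ factor carries at most three spatial/temporal derivatives, so it is controlled by $\sqrt{\mathcal{E}[\xi,v](t)}$. Cauchy--Schwarz with the $L^2$ factor $\partial_t^\ell\mathcal{D}^\alpha\omega$, together with Young's inequality in the form $\sqrt{\e}\sqrt{\mathcal{E}[\xi,v]}\sqrt{\mathcal{E}_1[\omega]}\leq \sqrt{\e}(\mathcal{E}[\xi,v]+\mathcal{E}_1[\omega])$ and the trivial embedding $\mathcal{E}_1[\omega]\lem\mathcal{E}[v]\lem\mathcal{E}[\xi,v]$, finally gives $(\ref{Sect3_Vorticity_toProve})$.

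The main technical obstacle I foresee is the combinatorial bookkeeping of the Leibniz expansions of $\partial_t^\ell\mathcal{D}^\alpha\nabla\times(v\cdot\nabla v)$ and $\partial_t^\ell\mathcal{D}^\alpha[\nabla(1/\varrho)\times\nabla\xi]$ at the top index $\ell+|\alpha|=2$, where one must check that at least one factor in every Leibniz summand has low enough derivative order to fit into $H^2(\Omega)\hookrightarrow L^\infty(\Omega)$ while the other stays within $\mathcal{E}[\xi,v]$'s three-derivative budget. The reformulation $\nabla\times(v\cdot\nabla v)=-\nabla\times(v\times\omega)=(v\cdot\nabla)\omega-(\omega\cdot\nabla)v+\omega(\nabla\cdot v)$ (using $\nabla\cdot\omega=0$) simplifies this verification significantly, and the $\nabla(1/\varrho)\times\nabla\xi$ term is controlled by the chain rule applied to the explicit formula $\varrho=\varrho(\xi,\phi)$ combined with Lemma \ref{Sect3_Varrho_Lemma}.
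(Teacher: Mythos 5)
There is a genuine gap at the top order, and it sits exactly where you declared the argument to be easy. Your claim that the scheme requires ``no integration by parts'' and that ``the $L^2$ factor carries at most three spatial/temporal derivatives'' fails for the convective term. When $\ell+|\alpha|=2$, the Leibniz expansion of $\partial_t^{\ell}\mathcal{D}^{\alpha}(v\cdot\nabla\omega)$ contains the summand $v\cdot\nabla(\partial_t^{\ell}\mathcal{D}^{\alpha}\omega)$, and $\nabla\partial_t^{\ell}\mathcal{D}^{\alpha}\omega$ amounts to four derivatives of $v$ (three on $\omega$). Neither $\mathcal{E}[\xi,v](t)$ (which stops at total order $3$ on $v$) nor $\mathcal{E}_1[\omega](t)$ (which stops at total order $2$ on $\omega$) controls this factor in $L^2$, so your ``$L^\infty\times L^2$'' pairing cannot close. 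The paper handles precisely this term by the cancellation
\begin{equation*}
-2k_1\int\limits_{\Omega} v\cdot\nabla(\partial_t^{\ell}\mathcal{D}^{\alpha}\omega)\cdot\partial_t^{\ell}\mathcal{D}^{\alpha}\omega\,\mathrm{d}x
= -k_1\int\limits_{\partial\Omega} (v\cdot n)\,|\partial_t^{\ell}\mathcal{D}^{\alpha}\omega|^2\,\mathrm{d}S_x
+ k_1\int\limits_{\Omega} (\nabla\cdot v)\,|\partial_t^{\ell}\mathcal{D}^{\alpha}\omega|^2\,\mathrm{d}x,
\end{equation*}
where the boundary term vanishes because $v\cdot n|_{\partial\Omega}=0$ and the remaining term is $\lem|\nabla\cdot v|_{\infty}\,\mathcal{E}_1[\omega](t)\lem\sqrt{\e}\,\mathcal{E}[\xi,v](t)$ (see the estimates $(\ref{Sect3_Vorticity_5})$--$(\ref{Sect3_Vorticity_7})$). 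So integration by parts, and hence the boundary condition, is not optional here; it is the one place where the IBVP structure genuinely enters this lemma, and dismissing it is the step that would fail.

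Apart from this, your setup is the same as the paper's: the curl kills $k_2\nabla\xi$, the identity $\nabla\times(v\cdot\nabla v)=v\cdot\nabla\omega-\omega\cdot\nabla v+\omega(\nabla\cdot v)$ matches $(\ref{Sect3_Vorticity_1})$, and your baroclinic term $\frac{\nabla\varrho}{\varrho^2}\times\nabla\xi$ is the paper's $(\frac{\partial_i\varrho}{\varrho^2}\partial_j\xi-\frac{\partial_j\varrho}{\varrho^2}\partial_i\xi)_k$; that part of $I_2$ indeed needs no boundary terms and your $L^\infty$--$L^2$/$L^4$--$L^4$ bookkeeping via Lemma $\ref{Sect3_Varrho_Lemma}$ works for it. If you reinstate the integration-by-parts treatment of the highest-order transport summands (for all three cases $(\ell,|\alpha|)=(0,2),(1,1),(2,0)$), the rest of your proposal goes through and coincides with the paper's proof.
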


\begin{proof}
By $\nabla\times(\ref{Sect2_Final_Eq})_2$, we get
\begin{equation}\label{Sect3_Vorticity_1}
\begin{array}{ll}
\omega_t + a\omega = - k_1\nabla\times(v\cdot\nabla v)
+ \frac{1}{k_1} \nabla\times[(\frac{1}{\bar{\varrho}}- \frac{1}{\varrho})\nabla\xi] \\[6pt]\hspace{1.3cm}
= - k_1(v\cdot\nabla\omega - \omega\cdot\nabla v + \omega\nabla\cdot v)
+ \frac{1}{k_1} \nabla\times[(\frac{1}{\bar{\varrho}}- \frac{1}{\varrho})\nabla\xi] \\[6pt]\hspace{1.3cm}
= - k_1(v\cdot\nabla\omega - \omega\cdot\nabla v + \omega\nabla\cdot v)
+ \frac{1}{k_1} (\frac{\partial_i\varrho}{\varrho^2}\partial_j\xi
-\frac{\partial_j\varrho}{\varrho^2}\partial_i\xi)_{k}.
\end{array}
\end{equation}

Apply $\partial_t^{\ell}\mathcal{D}^{\alpha}$ to $(\ref{Sect3_Vorticity_1})$, where $\ell+|\alpha|\leq 2$, we get
\begin{equation*}
\begin{array}{ll}
(\partial_t^{\ell}\mathcal{D}^{\alpha}\omega)_t + a\partial_t^{\ell}\mathcal{D}^{\alpha}\omega
= - k_1\partial_t^{\ell}\mathcal{D}^{\alpha}(v\cdot\nabla\omega - \omega\cdot\nabla v + \omega\nabla\cdot v) \\[6pt]\hspace{3.6cm}
+ \frac{1}{k_1} \partial_t^{\ell}\mathcal{D}^{\alpha}
(\frac{\partial_i\varrho}{\varrho^2}\partial_j\xi-\frac{\partial_j\varrho}{\varrho^2}\partial_i\xi)_{k},
\end{array}
\end{equation*}

\begin{equation}\label{Sect3_Vorticity_2}
\begin{array}{ll}
\partial_t(|\partial_t^{\ell}\mathcal{D}^{\alpha}\omega|^2)+ 2a|\partial_t^{\ell}\mathcal{D}^{\alpha}\omega|^2
= - 2k_1\partial_t^{\ell}\mathcal{D}^{\alpha}(v\cdot\nabla\omega - \omega\cdot\nabla v + \omega\nabla\cdot v)\cdot\partial_t^{\ell}\mathcal{D}^{\alpha}\omega \\[6pt]\hspace{4.6cm}
+ \frac{2}{k_1} \sum\limits_{k=1}^{3}\partial_t^{\ell}\mathcal{D}^{\alpha}
(\frac{\partial_i\varrho}{\varrho^2}\partial_j\xi-\frac{\partial_j\varrho}{\varrho^2}\partial_i\xi)_{k}
\partial_t^{\ell}\mathcal{D}^{\alpha}\omega_{k}.
\end{array}
\end{equation}

After integrating in $\Omega$, we get
\begin{equation}\label{Sect3_Vorticity_3}
\begin{array}{ll}
\frac{\mathrm{d}}{\mathrm{d}t}\int\limits_{\Omega}|\partial_t^{\ell}\mathcal{D}^{\alpha}\omega|^2\,\mathrm{d}x
+ 2a\int\limits_{\Omega}|\partial_t^{\ell}\mathcal{D}^{\alpha}\omega|^2 \,\mathrm{d}x
= I_1 + I_2,
\end{array}
\end{equation}
where
\begin{equation}\label{Sect3_Vorticity_4}
\begin{array}{ll}
I_1 := - 2k_1\int\limits_{\Omega}\partial_t^{\ell}\mathcal{D}^{\alpha}(v\cdot\nabla\omega - \omega\cdot\nabla v
+ \omega\nabla\cdot v)\cdot\partial_t^{\ell}\mathcal{D}^{\alpha}\omega \,\mathrm{d}x, \\[10pt]

I_2 := \frac{2}{k_1} \int\limits_{\Omega}\sum\limits_{k=1}^{3}\partial_t^{\ell}\mathcal{D}^{\alpha}
(\frac{\partial_i\varrho}{\varrho^2}\partial_j\xi-\frac{\partial_j\varrho}{\varrho^2}\partial_i\xi)_{k}
\cdot\partial_t^{\ell}\mathcal{D}^{\alpha}\omega_k\,\mathrm{d}x.
\end{array}
\end{equation}

When $\ell+|\alpha|<2$, it is easy to check that $I_1+I_2\lem \sqrt{\e}\mathcal{E}[\xi,v](t)$, since they are lower order terms.
\vspace{0.2cm}

When $\ell=0,|\alpha|=2$,
\begin{equation}\label{Sect3_Vorticity_5}
\begin{array}{ll}
I_1 = - 2k_1\int\limits_{\Omega}(\mathcal{D}^{\alpha}v\cdot\nabla\omega
+ \mathcal{D}^{\alpha_1}v\cdot\nabla\mathcal{D}^{\alpha_2}\omega
+ v\cdot\nabla\mathcal{D}^{\alpha}\omega)\cdot\mathcal{D}^{\alpha}\omega \,\mathrm{d}x \\[6pt]\quad
+ 2k_1\int\limits_{\Omega}(\mathcal{D}^{\alpha}\omega\cdot\nabla v
+ \mathcal{D}^{\alpha_1}\omega\cdot\nabla\mathcal{D}^{\alpha_2} v
+ \omega\cdot\nabla\mathcal{D}^{\alpha} v)\cdot\mathcal{D}^{\alpha}\omega \,\mathrm{d}x \\[6pt]\quad
- 2k_1\int\limits_{\Omega}(\mathcal{D}^{\alpha}\omega\nabla\cdot v
+ \mathcal{D}^{\alpha_1}\omega\nabla\cdot\mathcal{D}^{\alpha_2} v
+ \omega\nabla\cdot\mathcal{D}^{\alpha} v)\cdot\mathcal{D}^{\alpha}\omega \,\mathrm{d}x \\[6pt]

\lem \|\mathcal{D}^{\alpha}v\|_{L^4(\Omega)}\|\nabla\omega\|_{L^4(\Omega)}\|\mathcal{D}^{\alpha}\omega\|_{L^2(\Omega)}
+|\mathcal{D}^{\alpha_1}v|_{\infty}\|\nabla\mathcal{D}^{\alpha_2}\omega\|_{L^2(\Omega)}\|\mathcal{D}^{\alpha}\omega\|_{L^2(\Omega)}

\\[6pt]\quad - 2k_1\int\limits_{\partial\Omega}n\cdot v|\mathcal{D}^{\alpha}\omega|^2 \,\mathrm{d}S_x
+ 2k_1\int\limits_{\Omega}\nabla\cdot v|\mathcal{D}^{\alpha}\omega|^2 \,\mathrm{d}x

+|\nabla v|_{\infty}\|\mathcal{D}^{\alpha}\omega\|_{L^2(\Omega)}^2 \\[6pt]\quad
+\|\mathcal{D}^{\alpha_1}\omega\|_{L^4(\Omega)}\|\nabla\mathcal{D}^{\alpha_2} v\|_{L^4(\Omega)}
\|\mathcal{D}^{\alpha}\omega\|_{L^2(\Omega)}
+|\omega|_{\infty}\|\nabla\mathcal{D}^{\alpha} v\|_{L^2(\Omega)}\|\mathcal{D}^{\alpha}\omega\|_{L^2(\Omega)} \\[6pt]\quad

+|\nabla\cdot v|_{\infty}\|\mathcal{D}^{\alpha}\omega\|_{L^2(\Omega)}^2
+\|\mathcal{D}^{\alpha_1}\omega\|_{L^4(\Omega)}\|\nabla\cdot\mathcal{D}^{\alpha_2}v\|_{L^4(\Omega)}
\|\mathcal{D}^{\alpha}\omega\|_{L^2(\Omega)}
\\[6pt]\quad
+|\omega|_{\infty}\|\nabla\cdot\mathcal{D}^{\alpha}v\|_{L^2(\Omega)}\|\mathcal{D}^{\alpha}\omega\|_{L^2(\Omega)}
\\[6pt]
\lem \sqrt{\e}\mathcal{E}[v](t),
\end{array}
\end{equation}
where $|\alpha_1|=|\alpha_2|=1$.

\vspace{0.3cm}
When $\ell=1,|\alpha|=1$,
\begin{equation}\label{Sect3_Vorticity_6}
\begin{array}{ll}
I_1= - 2k_1\int\limits_{\Omega}(\mathcal{D}^{\alpha}v_t\cdot\nabla\omega
+v_t\cdot\nabla\mathcal{D}^{\alpha}\omega +\mathcal{D}^{\alpha}v\cdot\nabla\omega_t
+v\cdot\nabla\mathcal{D}^{\alpha}\omega_t)\cdot\mathcal{D}^{\alpha}\omega_t \,\mathrm{d}x \\[6pt]\quad
+ 2k_1\int\limits_{\Omega}(\mathcal{D}^{\alpha}\omega_t\cdot\nabla v+\omega_t\cdot\nabla\mathcal{D}^{\alpha} v
+\mathcal{D}^{\alpha}\omega\cdot\nabla v_t+\omega\cdot\nabla\mathcal{D}^{\alpha} v_t)
\cdot\mathcal{D}^{\alpha}\omega_t \,\mathrm{d}x \\[6pt]\quad
- 2k_1\int\limits_{\Omega}(\mathcal{D}^{\alpha}\omega_t\nabla\cdot v+\omega_t\nabla\cdot\mathcal{D}^{\alpha} v
+\mathcal{D}^{\alpha}\omega\nabla\cdot v_t+\omega\nabla\cdot\mathcal{D}^{\alpha} v_t)
\cdot\mathcal{D}^{\alpha}\omega_t \,\mathrm{d}x \\[6pt]

\lem \|\mathcal{D}^{\alpha}v_t\|_{L^4(\Omega)}\|\nabla\omega\|_{L^4(\Omega)}\|\mathcal{D}^{\alpha}\omega_t\|_{L^2(\Omega)}
+|v_t|_{\infty}\|\nabla\mathcal{D}^{\alpha}\omega\|_{L^2(\Omega)}\|\mathcal{D}^{\alpha}\omega_t\|_{L^2(\Omega)}
\\[6pt]\quad
+ |\mathcal{D}^{\alpha}v|_{\infty}\|\nabla\omega_t\|_{L^2(\Omega)}\|\mathcal{D}^{\alpha}\omega_t\|_{L^2(\Omega)}
+ \int\limits_{\partial\Omega}n\cdot v|\mathcal{D}^{\alpha}\omega_t|^2 \,\mathrm{d}S_x
- \int\limits_{\Omega}\nabla\cdot v|\mathcal{D}^{\alpha}\omega_t|^2 \,\mathrm{d}x \\[6pt]\quad

+ |\nabla v|_{\infty}\|\mathcal{D}^{\alpha}\omega_t\|_{L^2(\Omega)}^2
+|\omega_t|_{L^4(\Omega)}\|\nabla\mathcal{D}^{\alpha}v\|_{L^4(\Omega)}\|\mathcal{D}^{\alpha}\omega_t\|_{L^2(\Omega)}
\\[6pt]\quad
+\|\mathcal{D}^{\alpha}\omega\|_{L^4(\Omega)}\|\nabla v_t\|_{L^4(\Omega)}
\|\mathcal{D}^{\alpha}\omega_t\|_{L^2(\Omega)}
+|\omega|_{\infty}\|\nabla\mathcal{D}^{\alpha} v_t\|_{L^2(\Omega)}\|\mathcal{D}^{\alpha}\omega_t\|_{L^2(\Omega)}
\\[6pt]\quad

+|\nabla\cdot v|_{\infty}\|\mathcal{D}^{\alpha}\omega_t\|_{L^2(\Omega)}^2
+\|\omega_t\|_{L^4(\Omega)}\|\nabla\cdot\mathcal{D}^{\alpha}v\|_{L^4(\Omega)}\|\mathcal{D}^{\alpha}\omega_t\|_{L^2(\Omega)}
\\[6pt]\quad

+\|\mathcal{D}^{\alpha}\omega\|_{L^4(\Omega)}\|\nabla\cdot v_t\|_{L^4(\Omega)}
\|\mathcal{D}^{\alpha}\omega_t\|_{L^2(\Omega)}
+|\omega|_{\infty}\|\nabla\cdot\mathcal{D}^{\alpha}v_t\|_{L^2(\Omega)}\|\mathcal{D}^{\alpha}\omega_t\|_{L^2(\Omega)}
\\[6pt]
\lem \sqrt{\e}\mathcal{E}[v](t).
\end{array}
\end{equation}

When $\ell=2,|\alpha|=0$,
\begin{equation}\label{Sect3_Vorticity_7}
\begin{array}{ll}
I_1 = - 2k_1\int\limits_{\Omega}(v_{tt}\cdot\nabla\omega + 2 v_t\cdot\nabla\omega_t
+ v\cdot\nabla\omega_{tt})\cdot\omega_{tt} \,\mathrm{d}x \\[6pt]\quad
+ 2k_1\int\limits_{\Omega}(\omega_{tt}\cdot\nabla v + 2 \omega_t\cdot\nabla v_t + \omega\cdot\nabla v_{tt})
\cdot\omega_{tt} \,\mathrm{d}x \\[6pt]\quad
- 2k_1\int\limits_{\Omega}(\omega_{tt}\nabla\cdot v+ 2 \omega_t\nabla\cdot v_t + \omega\nabla\cdot v_{tt})
\cdot\omega_{tt} \,\mathrm{d}x \\[6pt]

\lem \|v_{tt}\|_{L^4(\Omega)}\|\nabla\omega\|_{L^4(\Omega)}\|\omega_{tt}\|_{L^2(\Omega)}
+|v_t|_{\infty}\|\nabla\omega_t\|_{L^2(\Omega)}\|\omega_{tt}\|_{L^2(\Omega)}
\\[6pt]\quad
+\int\limits_{\partial\Omega}n\cdot v|\omega_{tt}|^2 \,\mathrm{d}S_x
-\int\limits_{\Omega}\nabla\cdot v|\omega_{tt}|^2 \,\mathrm{d}x
+ |\nabla v|_{\infty}\|\omega_{tt}\|_{L^2(\Omega)}^2  \\[6pt]\quad

+|\omega_t|_{L^4(\Omega)}\|\nabla v_t\|_{L^4(\Omega)}\|\omega_{tt}\|_{L^2(\Omega)}
+|\omega|_{\infty}\|\nabla v_{tt}\|_{L^2(\Omega)}\|\omega_{tt}\|_{L^2(\Omega)} \\[6pt]\quad

+ |\nabla\cdot v|_{\infty}\|\omega_{tt}\|_{L^2(\Omega)}^2
+\|\omega_t\|_{L^4(\Omega)}\|\nabla\cdot v_t\|_{L^4(\Omega)}\|\omega_{tt}\|_{L^2(\Omega)} \\[6pt]\quad
+|\omega|_{\infty}\|\nabla\cdot v_{tt}\|_{L^2(\Omega)}\|\omega_{tt}\|_{L^2(\Omega)}
\\[6pt]
\lem \sqrt{\e}\mathcal{E}[v](t).
\end{array}
\end{equation}

\vspace{0.2cm}
When $\ell=0,|\alpha|=2$,
\begin{equation}\label{Sect3_Vorticity_8}
\begin{array}{ll}
I_2 = \frac{2}{k_1} \int\limits_{\Omega}\sum\limits_{k=1}^{3}
[(\mathcal{D}^{\alpha}\frac{\partial_i\varrho}{\varrho^2})\partial_j\xi
-(\mathcal{D}^{\alpha}\frac{\partial_j\varrho}{\varrho^2})\partial_i\xi]_{k}
\mathcal{D}^{\alpha}\omega_{k} \,\mathrm{d}x \\[6pt]\qquad
+\frac{2}{k_1} \int\limits_{\Omega}\sum\limits_{k=1}^{3}
[(\mathcal{D}^{\alpha_1}\frac{\partial_i\varrho}{\varrho^2})(\mathcal{D}^{\alpha_2}\partial_j\xi)
-(\mathcal{D}^{\alpha_1}\frac{\partial_j\varrho}{\varrho^2})(\mathcal{D}^{\alpha_2}\partial_i\xi)]_{k}
\mathcal{D}^{\alpha}\omega_{k} \,\mathrm{d}x \\[6pt]\qquad
+\frac{2}{k_1} \int\limits_{\Omega}\sum\limits_{k=1}^{3}
[\frac{\partial_i\varrho}{\varrho^2}(\mathcal{D}^{\alpha}\partial_j\xi)
-\frac{\partial_j\varrho}{\varrho^2}(\mathcal{D}^{\alpha}\partial_i\xi)]_{k}
\mathcal{D}^{\alpha}\omega_{k} \,\mathrm{d}x \\[6pt]\quad

\lem |\nabla\xi|_{\infty}\|\mathcal{D}^{\alpha}\nabla\varrho\|_{L^2(\Omega)}\|\mathcal{D}^{\alpha}\omega\|_{L^2(\Omega)}
+|\nabla\varrho|_{\infty}\|\mathcal{D}^{\alpha}\nabla\xi\|_{L^2(\Omega)}\|\mathcal{D}^{\alpha}\omega\|_{L^2(\Omega)}
\\[6pt]\qquad
+\|\mathcal{D}^{\alpha_1}\nabla\varrho\|_{L^4(\Omega)}\|\mathcal{D}^{\alpha_2}\nabla\xi\|_{L^4(\Omega)}
\|\mathcal{D}^{\alpha}\omega\|_{L^2(\Omega)}
\\[6pt]\quad
\lem \sqrt{\e}\mathcal{E}[\xi,v](t),
\end{array}
\end{equation}
where
$\|\mathcal{D}^{\alpha}\frac{\varrho_{i}}{\varrho^2}\|_{L^2(\Omega)}\lem \|\mathcal{D}^{\alpha}\varrho_{i}\|_{L^2(\Omega)}$, $\|\mathcal{D}^{\alpha_1}\frac{\varrho_{i}}{\varrho^2}\|_{L^2(\Omega)}\lem \|\mathcal{D}^{\alpha_1}\varrho_{i}\|_{L^2(\Omega)}$, since $\e\ll 1$,
$|\alpha_1|=|\alpha_2|=1$.

\vspace{0.6cm}
When $\ell=1,|\alpha|=1$,
\begin{equation}\label{Sect3_Vorticity_9}
\begin{array}{ll}
I_2 = \frac{2}{k_1} \int\limits_{\Omega}\sum\limits_{k=1}^{3}[
\partial_t\mathcal{D}^{\alpha}(\frac{\partial_i\varrho}{\varrho^2})\partial_j\xi
-\partial_t\mathcal{D}^{\alpha}(\frac{\partial_j\varrho}{\varrho^2})\partial_i\xi

+ \partial_t(\frac{\partial_i\varrho}{\varrho^2})\mathcal{D}^{\alpha}(\partial_j\xi) \\[6pt]\qquad
- \partial_t(\frac{\partial_j\varrho}{\varrho^2})\mathcal{D}^{\alpha}(\partial_i\xi)

+\mathcal{D}^{\alpha}(\frac{\partial_i\varrho}{\varrho^2})\partial_t(\partial_j\xi)
-\mathcal{D}^{\alpha}(\frac{\partial_j\varrho}{\varrho^2})\partial_t(\partial_i\xi) \\[6pt]\qquad

+\frac{\partial_i\varrho}{\varrho^2}\partial_t\mathcal{D}^{\alpha}(\partial_j\xi)
-\frac{\partial_j\varrho}{\varrho^2}\partial_t\mathcal{D}^{\alpha}(\partial_i\xi)]_k
\partial_t\mathcal{D}^{\alpha}\omega_{k} \,\mathrm{d}x \\[6pt]\quad

\lem |\nabla\xi|_{\infty}\|\partial_t\mathcal{D}^{\alpha}\frac{\nabla\varrho}{\varrho^2}\|_{L^2(\Omega)}
\|\mathcal{D}^{\alpha}\omega_t\|_{L^2(\Omega)} \\[6pt]\qquad
+|\frac{\nabla\varrho}{\varrho^2}|_{\infty}\|\partial_t\mathcal{D}^{\alpha}\nabla\xi\|_{L^2(\Omega)}
\|\mathcal{D}^{\alpha}\omega_t\|_{L^2(\Omega)} \\[6pt]\qquad

+\|\partial_t(\frac{\nabla\varrho}{\varrho^2})\|_{L^4(\Omega)}\|\mathcal{D}^{\alpha}\nabla\xi\|_{L^4(\Omega)}
\|\mathcal{D}^{\alpha}\omega_t\|_{L^2(\Omega)} \\[6pt]\qquad

+\|\mathcal{D}^{\alpha}(\frac{\nabla\varrho}{\varrho^2})\|_{L^4(\Omega)}\|\nabla\xi_t\|_{L^4(\Omega)}
\|\mathcal{D}^{\alpha}\omega_t\|_{L^2(\Omega)}
\\[6pt]\quad
\lem \sqrt{\e}\mathcal{E}[\xi,v](t).
\end{array}
\end{equation}

When $\ell=2,|\alpha|=0$,
\begin{equation}\label{Sect3_Vorticity_10}
\begin{array}{ll}
I_2 = \frac{2}{k_1} \int\limits_{\Omega}\sum\limits_{k=1}^{3}[
(\frac{\partial_i\varrho}{\varrho^2})_{tt}\partial_j\xi -(\frac{\partial_j\varrho}{\varrho^2})_{tt}\partial_i\xi
+ 2(\frac{\partial_i\varrho}{\varrho^2})_t(\partial_j\xi)_t -2(\frac{\partial_j\varrho}{\varrho^2})_t(\partial_i\xi)_t
\\[6pt]\qquad

+ \frac{\partial_i\varrho}{\varrho^2}(\partial_j\xi)_{tt} -\frac{\partial_j\varrho}{\varrho^2}(\partial_i\xi)_{tt}
]_k \partial_{tt}\omega_{k} \,\mathrm{d}x
\\[6pt]\quad
\lem |\nabla\xi|_{\infty}\|(\frac{\nabla\varrho}{\varrho^2})_{tt}\|_{L^2(\Omega)}\|\omega_{tt}\|_{L^2(\Omega)}

+\|(\frac{\nabla\varrho}{\varrho^2})_t\|_{L^4(\Omega)}\|\nabla\xi_t\|_{L^4(\Omega)}
\|\omega_{tt}\|_{L^2(\Omega)} \\[6pt]\qquad

+|\frac{\nabla\varrho}{\varrho^2}|_{\infty}\|\nabla\xi_{tt}\|_{L^2(\Omega)}
\|\omega_{tt}\|_{L^2(\Omega)}
\\[6pt]\quad
\lem \sqrt{\e}\mathcal{E}[\xi,v](t).
\end{array}
\end{equation}

Summing the above estimates for $0\leq \ell+|\alpha|\leq 2$, we have
\begin{equation}\label{Sect3_Vorticity_11}
\begin{array}{ll}
\frac{\mathrm{d}}{\mathrm{d}t} \mathcal{E}_1[\omega](t) + 2a\mathcal{E}_1[\omega](t)\lem \sqrt{\e}\mathcal{E}[\xi,v](t).
\end{array}
\end{equation}
Thus, Lemma $\ref{Sect3_Vorticity_Lemma}$ is proved.
\end{proof}

\vspace{0.3cm}
The following lemma states that $E[\xi](t)$ and $E_1[\xi](t)$ are equivalent, $E[v](t)$ and $E_1[v](t)$ are equivalent.
\begin{lemma}\label{Sect3_Epsilon0_Lemma}
For any given $T\in (0,+\infty]$, there exists $\e_1>0$ which is independent of $(\xi_0,v_0,\phi_0)$, such that if $\sup\limits_{0\leq t\leq T} \mathcal{E}[\xi,v,\phi](t) \leq\e_1$, then there exist $c_1>0,c_2>0$ such that
\begin{equation}\label{Sect3_Energy_Equivalence}
\begin{array}{ll}
c_1 E[\xi](t) \leq E_1[\xi](t) \leq c_2 E[\xi](t),\\[6pt]
c_1 E[v](t) \leq E_1[v](t) \leq c_2 E[v](t).
\end{array}
\end{equation}
\end{lemma}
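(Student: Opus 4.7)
My plan is to exploit that under the smallness assumption $\mathcal{E}[\xi,v,\phi](t)\leq \e_1$, both $|\xi|_\infty$ and $|\varrho/\bar{\varrho}-1|_\infty$ are of order $\sqrt{\e_1}$, which makes the extra integral terms in the definitions of $E_1[\xi]$ and $E_1[v]$ small perturbations of the pure sum-of-squares quantities $E[\xi]$ and $E[v]$.

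First, by the Sobolev embedding $H^2(\Omega)\hookrightarrow L^\infty(\Omega)$ in three dimensions, the hypothesis $\mathcal{E}[\xi](t)\leq \mathcal{E}[\xi,v,\phi](t)\leq \e_1$ yields $|\xi|_\infty \lem \sqrt{\e_1}$. Since $p = \bar{p}+\xi$ with $\bar{p}>0$ a fixed constant, as soon as $\e_1$ is small enough to force $|\xi|_\infty \leq \bar{p}/2$ we have $p\geq \bar{p}/2$ pointwise, so
\begin{equation*}
\Bigl|\frac{\xi}{p}\Bigr|_\infty \leq \frac{|\xi|_\infty}{\bar{p}/2} \lem \sqrt{\e_1}.
\end{equation*}
Independently, Lemma \ref{Sect3_Varrho_Lemma} gives $|\varrho-\bar{\varrho}|_\infty\lem \sqrt{\e_1}$, hence $|\varrho/\bar{\varrho}-1|_\infty \lem \sqrt{\e_1}$.

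With these pointwise bounds in hand, the correction integrals are controlled directly:
\begin{equation*}
\Bigl|\int_\Omega \frac{\xi}{p}\xi_{ttt}^2\,\mathrm{d}x\Bigr| \leq \Bigl|\frac{\xi}{p}\Bigr|_\infty \|\xi_{ttt}\|_{L^2(\Omega)}^2 \lem \sqrt{\e_1}\, E[\xi](t),
\end{equation*}
and analogously
\begin{equation*}
\Bigl|\int_\Omega \Bigl(\frac{\varrho}{\bar{\varrho}}-1\Bigr)|v_{ttt}|^2\,\mathrm{d}x\Bigr| \lem \sqrt{\e_1}\, E[v](t).
\end{equation*}
Consequently $|E_1[\xi](t)-E[\xi](t)|\leq C\sqrt{\e_1}\,E[\xi](t)$ and $|E_1[v](t)-E[v](t)|\leq C\sqrt{\e_1}\,E[v](t)$ for some absolute constant $C$ depending only on $\Omega$, $\bar{p}$, $\bar{\varrho}$ and the Sobolev constant.

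It remains only to fix $\e_1>0$ so small that $C\sqrt{\e_1}\leq 1/2$; this choice depends only on the fixed data $\bar{p},\bar{\varrho},\Omega$ and is independent of $T$ and the initial perturbation. Then
\begin{equation*}
\tfrac{1}{2}E[\xi](t) \leq E_1[\xi](t) \leq \tfrac{3}{2}E[\xi](t), \qquad \tfrac{1}{2}E[v](t) \leq E_1[v](t) \leq \tfrac{3}{2}E[v](t),
\end{equation*}
so one may take $c_1 = 1/2$, $c_2 = 3/2$. There is no serious obstacle here; the lemma is essentially bookkeeping, and the only care required is to invoke Lemma \ref{Sect3_Varrho_Lemma} (whose smallness threshold is also absolute) to control $\varrho/\bar{\varrho}-1$ uniformly in time rather than through a time-dependent bound.
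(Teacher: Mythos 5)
Your proposal is correct and follows essentially the same route as the paper's own proof: use Sobolev embedding plus the smallness of $\mathcal{E}[\xi,v,\phi]$ to bound $|\xi/p|_\infty$ and $|\varrho/\bar{\varrho}-1|_\infty$ uniformly, then absorb the correction integrals as small perturbations of $E[\xi]$ and $E[v]$. The only cosmetic difference is that you track a sharper $O(\sqrt{\e_1})$ bound on the pointwise factors while the paper simply confines them to fixed intervals such as $[-\tfrac12,\tfrac14]$; both yield the equivalence with explicit $c_1,c_2$.
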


\begin{proof}
Since $|\xi|_{\infty}\lem \|\xi\|_{H^2(\Omega)}\leq C_4\sqrt{\e_1}$, let $C_4\sqrt{\e_1} \leq \frac{\bar{p}}{3}$, i.e. $\e_1\leq \frac{\bar{p}^2}{9C_4^2}$, then $\frac{\xi}{p}=\frac{\xi/\bar{p}}{1+\xi/\bar{p}}\in [-\frac{1}{2},\frac{1}{4}]$, $E_1[\xi](t)\cong E[\xi](t)$.

Since $|\varrho-\bar{\varrho}|_{\infty}\lem \|\varrho-\bar{\varrho}\|_{H^2(\Omega)}^{\frac{1}{2}} \lem \mathcal{E}[\xi,\phi](t)^{\frac{1}{2}}\leq C_5\sqrt{\e_1}$, let $C_5\sqrt{\e_1} \leq \frac{\bar{\varrho}}{2}$, i.e. $\e_1\leq \frac{\bar{\varrho}^2}{4C_5^2}$, then
$\frac{\varrho}{\bar{\varrho}}-1\in [-\frac{1}{2},\frac{1}{2}]$, $E_1[v](t)\cong E[v](t)$.

Thus, we can take $\e_1 =\min\{\frac{\bar{p}^2}{9C_4^2},\frac{\bar{\varrho}^2}{4C_5^2}\}$.
\end{proof}

\vspace{0.3cm}
Since $E_1[\xi,v](t)\cong E[\xi,v](t)$, the following lemma gives an equivalent a priori estimate for $E[\xi,v](t)$.
\begin{lemma}\label{Sect3_Energy_Estimate_Lemma1}
For any given $T\in (0,+\infty]$, if
\begin{equation*}
\sup\limits_{0\leq t\leq T} \mathcal{E}[\xi,v,\phi](t) \leq\e,
\end{equation*}
where $0<\e\ll 1$, then for $\forall t\in [0,T]$,
\begin{equation}\label{Sect3_Estimate1_toProve}
\frac{\mathrm{d}}{\mathrm{d}t}E_1[\xi,v](t)+ 2a E_1[v](t) \leq C\sqrt{\e}\mathcal{E}[\xi,v](t).
\end{equation}
\end{lemma}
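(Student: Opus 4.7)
The plan is to derive, for each order $\ell \in \{0,1,2,3\}$, an $L^2$-energy identity obtained by applying $\partial_t^\ell$ to the first two equations of $(\ref{Sect2_Final_Eq})$ and testing against a suitable multiplier, then summing. Using the algebraic identities $k_2 = k_1\gamma\bar p = \tfrac{1}{k_1\bar\varrho}$, the first two equations of $(\ref{Sect2_Final_Eq})$ can be recast equivalently in the fully symmetric form
\begin{equation*}
\xi_t + k_1 v\cdot\nabla\xi + k_1\gamma p\,\nabla\cdot v = 0, \qquad v_t + k_1 v\cdot\nabla v + \tfrac{1}{k_1\varrho}\nabla\xi + a v = 0,
\end{equation*}
whose principal fluxes can be cancelled by the paired multipliers $\tfrac{\xi}{p}$ and $k_1^2\gamma\,\varrho v$, since any top-order cross contribution collapses to a divergence $k_1\gamma\,\nabla\cdot(\xi v)$, which integrates to zero against the characteristic boundary condition $\partial_t^\ell v\cdot n|_{\partial\Omega}=0$.

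For the lower orders $\ell \in \{0,1,2\}$, I apply $\partial_t^\ell$ directly to $(\ref{Sect2_Final_Eq})_1$ and $(\ref{Sect2_Final_Eq})_2$ and test against the unweighted multipliers $\partial_t^\ell\xi$ and $\partial_t^\ell v$. The constant-coefficient cross terms $k_2\int \partial_t^\ell(\nabla\cdot v)\,\partial_t^\ell\xi\,\mathrm{d}x + k_2\int\partial_t^\ell(\nabla\xi)\cdot\partial_t^\ell v\,\mathrm{d}x$ cancel by one integration by parts, and the damping produces the dissipation $a\|\partial_t^\ell v\|_{L^2}^2$. Every remaining semilinear or commutator contribution carries at least one factor from $\{\xi,v,\phi,\varrho-\bar\varrho\}$ or a derivative thereof; by H\"older together with the Sobolev embeddings $H^1(\Omega)\hookrightarrow L^4(\Omega)$ and $H^2(\Omega)\hookrightarrow L^\infty(\Omega)$ on $\Omega\subset\mathbb{R}^3$, and Lemma $\ref{Sect3_Varrho_Lemma}$, each such term is bounded by $C\sqrt{\e}\,\mathcal{E}[\xi,v](t)$.

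The main difficulty is at $\ell=3$: the Leibniz term $\gamma k_1\,\xi\,\nabla\cdot v_{ttt}$ arising from $\partial_t^3(\xi\nabla\cdot v)$ cannot be absorbed by $\mathcal{E}[\xi,v]$ under a naive $\xi_{ttt}$ test, because $\nabla v_{ttt}$ lies one derivative above the available energy. The remedy is to apply $\partial_t^3$ to the symmetric system above and test with the weighted multipliers $M_A = \tfrac{\xi_{ttt}}{p}$ and $M_B = k_1^2\gamma\,\varrho\,v_{ttt}$, chosen so that the principal cross terms telescope to $k_1\gamma\,\nabla\cdot(\xi_{ttt}v_{ttt})$ and cancel by the boundary condition. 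Using $\tfrac{1}{p} = \tfrac{1}{\bar p}(1-\tfrac{\xi}{p})$, $\varrho = \bar\varrho(1+\tfrac{\varrho-\bar\varrho}{\bar\varrho})$ and the identity $k_1^2\gamma\bar\varrho = \tfrac{1}{\bar p}$, the time-derivative part of this weighted identity becomes
\begin{equation*}
\tfrac{1}{2\bar p}\tfrac{\mathrm{d}}{\mathrm{d}t}\Bigl[\bigl(\|\xi_{ttt}\|_{L^2}^2 - \int\tfrac{\xi}{p}\xi_{ttt}^2\,\mathrm{d}x\bigr) + \bigl(\|v_{ttt}\|_{L^2}^2 + \int(\tfrac{\varrho}{\bar\varrho}-1)|v_{ttt}|^2\,\mathrm{d}x\bigr)\Bigr],
\end{equation*}
and the damping contributes $\tfrac{a}{\bar p}\bigl[\|v_{ttt}\|_{L^2}^2 + \int(\tfrac{\varrho}{\bar\varrho}-1)|v_{ttt}|^2\,\mathrm{d}x\bigr]$. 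These expressions reproduce exactly the third-order pieces of $E_1[\xi]+E_1[v]$ and of $E_1[v]$ appearing in the statement, up to the common factor $2\bar p$; this is precisely why the lemma is stated with $E_1$ rather than $E$. The stray contributions from $\partial_t(\tfrac{1}{p})$ and $\varrho_t$ falling on the weights, together with all Leibniz commutators in $\partial_t^3(p\nabla\cdot v)$, $\partial_t^3(v\cdot\nabla\xi)$, $\partial_t^3(v\cdot\nabla v)$ and $\partial_t^3(\tfrac{1}{\varrho}\nabla\xi)$, each carry a small $L^\infty$ factor by Lemma $\ref{Sect3_Varrho_Lemma}$ and are absorbed into $C\sqrt{\e}\,\mathcal{E}[\xi,v]$ exactly as at the lower orders. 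Adding the $\ell=3$ identity to the three lower-order ones, with constants matched to the coefficients in $E_1$, yields $(\ref{Sect3_Estimate1_toProve})$.
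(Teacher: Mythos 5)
Your proposal is correct and is essentially the same approach as the paper's: in both cases the unweighted multipliers $\partial_t^{\ell}\xi,\ \partial_t^{\ell}v$ handle $\ell\le 2$, and the $\ell=3$ difficulty (the $\gamma k_1\,\xi\,\nabla\cdot v_{ttt}$ and $(\tfrac{1}{\bar\varrho}-\tfrac{1}{\varrho})v_{ttt}\cdot\nabla\xi_{ttt}$ terms lying one derivative above $\mathcal{E}$) is neutralized by exactly the weights $\tfrac{1}{p}$ and $\varrho$, which is what produces the $-\int\frac{\xi}{p}\xi_{ttt}^2\,\mathrm{d}x$ and $\int(\frac{\varrho}{\bar\varrho}-1)|v_{ttt}|^2\,\mathrm{d}x$ corrections in $E_1$. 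The only difference is bookkeeping: the paper tests $\ell=3$ with unweighted multipliers and then adds and subtracts $\frac{\mathrm{d}}{\mathrm{d}t}\int\frac{\xi}{p}\xi_{ttt}^2\,\mathrm{d}x$ and $\frac{\mathrm{d}}{\mathrm{d}t}\int(\frac{\varrho}{\bar\varrho}-1)|v_{ttt}|^2\,\mathrm{d}x$ after the fact, invoking the PDEs to rewrite $\xi_{tttt}+k_1\gamma p\nabla\cdot v_{ttt}$ and $\nabla\xi_{ttt}+k_1\varrho v_{tttt}$ as lower-order, whereas you build the same weights directly into the $\ell=3$ multipliers so that the principal cross terms collapse to $k_1\gamma\,\nabla\cdot(\xi_{ttt}v_{ttt})$ and the $E_1$ normalization emerges automatically from $\tfrac{1}{p}=\tfrac{1}{\bar p}(1-\tfrac{\xi}{p})$ and $k_1^2\gamma\varrho=\tfrac{1}{\bar p}\tfrac{\varrho}{\bar\varrho}$. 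This is a cosmetically cleaner symmetrizer presentation of the identical underlying cancellation.
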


\begin{proof}
Suppose $0\leq \ell \leq 3$, apply $\partial_t^{\ell}$ to $(\ref{Sect2_Final_Eq})$, we get
\begin{equation}\label{Sect3_Estimate2_1}
\left\{\begin{array}{ll}
(\partial_t^{\ell}\xi)_t + k_2\nabla\cdot(\partial_t^{\ell}v) = -\gamma k_1\partial_t^{\ell}(\xi\nabla\cdot v)
- k_1 \partial_t^{\ell}(v\cdot\nabla \xi), \\[10pt]
(\partial_t^{\ell}v)_t + k_2 \nabla(\partial_t^{\ell}\xi) + a \partial_t^{\ell}v = - k_1 \partial_t^{\ell}(v\cdot\nabla v)
+ \frac{1}{k_1} \partial_t^{\ell}[(\frac{1}{\bar{\varrho}}- \frac{1}{\varrho})\nabla\xi].
\end{array}\right.
\end{equation}

Let $(\ref{Sect3_Estimate2_1})\cdot(\partial_t^{\ell}\xi, \partial_t^{\ell}v)$, we get
\begin{equation}\label{Sect3_Estimate2_2}
\left\{\begin{array}{ll}
(|\partial_t^{\ell}\xi|^2)_t + 2k_2 \partial_t^{\ell}\xi\nabla\cdot(\partial_t^{\ell}v)
= -2\gamma k_1 \partial_t^{\ell}\xi\partial_t^{\ell}(\xi\nabla\cdot v)
- 2k_1 \partial_t^{\ell}\xi\partial_t^{\ell}(v\cdot\nabla \xi), \\[6pt]
(|\partial_t^{\ell}v|^2)_t + 2k_2 \partial_t^{\ell}v\cdot\nabla(\partial_t^{\ell}\xi)
+ 2a |\partial_t^{\ell}v|^2
= - 2 k_1 \partial_t^{\ell}v\cdot\partial_t^{\ell}(v\cdot\nabla v) \\[6pt]\hspace{6.2cm}
+ \frac{2}{k_1} \partial_t^{\ell}v\cdot
\partial_t^{\ell}[(\frac{1}{\bar{\varrho}}- \frac{1}{\varrho})\nabla\xi].
\end{array}\right.
\end{equation}

By $(\ref{Sect3_Estimate2_2})_1+(\ref{Sect3_Estimate2_2})_2$, we get
\begin{equation}\label{Sect3_Estimate2_3}
\begin{array}{ll}
(|\partial_t^{\ell}\xi|^2+|\partial_t^{\ell}v|^2)_t
+ 2k_2 \partial_t^{\ell}\xi\nabla\cdot(\partial_t^{\ell}v)+ 2k_2 \partial_t^{\ell}v\cdot\nabla(\partial_t^{\ell}\xi)
+ 2a |\partial_t^{\ell}v|^2 \\[6pt]
= -2\gamma k_1 \partial_t^{\ell}\xi\partial_t^{\ell}(\xi\nabla\cdot v)
- 2k_1 \partial_t^{\ell}\xi\partial_t^{\ell}(v\cdot\nabla \xi)
- 2 k_1 \partial_t^{\ell}v\cdot\partial_t^{\ell}(v\cdot\nabla v) \\[6pt]\quad
+ \frac{2}{k_1} \partial_t^{\ell}v\cdot
\partial_t^{\ell}[(\frac{1}{\bar{\varrho}}- \frac{1}{\varrho})\nabla\xi].
\end{array}
\end{equation}

After integrating $(\ref{Sect3_Estimate2_3})$ in $\Omega$, we get
\begin{equation}\label{Sect3_Estimate2_4}
\begin{array}{ll}
\frac{\mathrm{d}}{\mathrm{d} t}\int\limits_{\Omega}|\partial_t^{\ell}\xi|^2+|\partial_t^{\ell}v|^2 \,\mathrm{d}x
+ 2k_2 \int\limits_{\Omega}\partial_t^{\ell}\xi\nabla\cdot(\partial_t^{\ell}v)
+ \partial_t^{\ell}v\cdot\nabla(\partial_t^{\ell}\xi) \,\mathrm{d}x
+ 2a \int\limits_{\Omega}|\partial_t^{\ell}v|^2 \,\mathrm{d}x \\[6pt]
= \int\limits_{\Omega} -2\gamma k_1 \partial_t^{\ell}\xi\partial_t^{\ell}(\xi\nabla\cdot v)
- 2k_1 \partial_t^{\ell}\xi\partial_t^{\ell}(v\cdot\nabla \xi)
- 2 k_1 \partial_t^{\ell}v\cdot\partial_t^{\ell}(v\cdot\nabla v) \\[6pt]\quad
+ \frac{2}{k_1} \partial_t^{\ell}v\cdot
\partial_t^{\ell}[(\frac{1}{\bar{\varrho}}- \frac{1}{\varrho})\nabla\xi] \,\mathrm{d}x.
\end{array}
\end{equation}

Since $\partial_t^{\ell}v\cdot n|_{\partial\Omega}=0$,
$\int\limits_{\Omega}\partial_t^{\ell}\xi\nabla\cdot(\partial_t^{\ell}v)
+ \partial_t^{\ell}v\cdot\nabla(\partial_t^{\ell}\xi) \,\mathrm{d}x
= \int\limits_{\partial\Omega}\partial_t^{\ell}\xi\partial_t^{\ell}v \cdot n \,\mathrm{d}S_x =0$,
\begin{equation}\label{Sect3_Estimate2_5}
\begin{array}{ll}
\frac{\mathrm{d}}{\mathrm{d} t}
\int\limits_{\Omega}|\partial_t^{\ell}\xi|^2+|\partial_t^{\ell}v|^2 \,\mathrm{d}x
+ 2a \int\limits_{\Omega}|\partial_t^{\ell}v|^2 \,\mathrm{d}x \\[6pt]
= \int\limits_{\Omega} -2\gamma k_1 \partial_t^{\ell}\xi\partial_t^{\ell}(\xi\nabla\cdot v)
- 2k_1 \partial_t^{\ell}\xi\partial_t^{\ell}(v\cdot\nabla \xi)
- 2 k_1 \partial_t^{\ell}v\cdot\partial_t^{\ell}(v\cdot\nabla v) \\[6pt]\quad
+ \frac{2}{k_1} \partial_t^{\ell}v\cdot
\partial_t^{\ell}[(\frac{1}{\bar{\varrho}}- \frac{1}{\varrho})\nabla\xi] \,\mathrm{d}x := I_3.
\end{array}
\end{equation}

\vspace{0.4cm}
\noindent
When $0\leq \ell\leq 2$, it is easy to check that $I_3\lem \sqrt{\e}\mathcal{E}[\xi,v](t)$, since $I_3$ is a lower order term.

\noindent
When $\ell=3$,
\begin{equation}\label{Sect3_Estimate2_6}
\begin{array}{ll}
I_3=\int\limits_{\Omega} -2\gamma k_1 (\xi_{ttt})^2 \nabla\cdot v-6\gamma k_1 \xi_{ttt}\xi_{tt}\nabla\cdot v_t
-6\gamma k_1 \xi_{ttt}\xi_t\nabla\cdot v_{tt} \\[6pt]\qquad
-2\gamma k_1 \xi_{ttt}\xi\nabla\cdot v_{ttt} - 2k_1 \xi_{ttt} v_{ttt}\cdot\nabla \xi
-6k_1 \xi_{ttt} v_{tt}\cdot\nabla \xi_t \\[6pt]\qquad
-6k_1 \xi_{ttt} v_t\cdot\nabla \xi_{tt}- 2k_1 \xi_{ttt} v\cdot\nabla \xi_{ttt}
-2k_1 v_{ttt}\cdot\nabla v\cdot v_{ttt}- 6 k_1 v_{tt}\cdot\nabla v_t\cdot v_{ttt} \\[6pt]\qquad
- 6 k_1 v_t\cdot\nabla v_{tt}\cdot v_{ttt} - 2 k_1 v\cdot\nabla v_{ttt}\cdot v_{ttt}
+ \frac{2}{k_1} (\frac{1}{\bar{\varrho}}- \frac{1}{\varrho})v_{ttt}\cdot\nabla\xi_{ttt}  \\[6pt]\qquad
+ \frac{6}{k_1} (\frac{\varrho_t}{\varrho^2}v_{ttt}\cdot\nabla\xi_{tt}
+ \frac{\varrho\varrho_{tt}-2\varrho_t^2}{\varrho^3}v_{ttt}\cdot\nabla\xi_t)
+ \frac{2}{k_1} \frac{\varrho^2\varrho_{ttt}-6\varrho\varrho_t\varrho_{tt}
+6\varrho_t^3}{\varrho^4}v_{ttt}\cdot\nabla\xi \,\mathrm{d}x.
\end{array}
\end{equation}

\noindent
Now we estimate $I_3 - \frac{\mathrm{d}}{\mathrm{d}t}\int\limits_{\Omega}\frac{\xi}{p}\xi_{ttt}^2 \,\mathrm{d}x
+ \frac{\mathrm{d}}{\mathrm{d}t}\int\limits_{\Omega}(\frac{\varrho}{\bar{\varrho}}-1)|v_{ttt}|^2 \,\mathrm{d}x$,
\begin{equation}\label{Sect3_Estimate2_7}
\begin{array}{ll}
I_3 - \frac{\mathrm{d}}{\mathrm{d}t}\int\limits_{\Omega}\frac{\xi}{p}\xi_{ttt}^2 \,\mathrm{d}x
+ \frac{\mathrm{d}}{\mathrm{d}t}\int\limits_{\Omega}(\frac{\varrho}{\bar{\varrho}}-1)|v_{ttt}|^2 \,\mathrm{d}x \\[6pt]
\lem \sqrt{\e}\mathcal{E}[\xi,v](t)
+ \|\xi_{tt}\|_{L^4(\Omega)}\|\nabla\cdot v_t\|_{L^4(\Omega)} \|\xi_{ttt}\|_{L^2(\Omega)}
-2\gamma k_1 \int\limits_{\Omega}\xi_{ttt}\xi\nabla\cdot v_{ttt}\,\mathrm{d}x \\[6pt]\quad
+\|v_{tt}\|_{L^4(\Omega)}\|\nabla \xi_t\|\|_{L^4(\Omega)} \|\xi_{ttt}\|\|_{L^2(\Omega)}
- 2k_1 \int\limits_{\Omega}\xi_{ttt} v\cdot\nabla \xi_{ttt}\,\mathrm{d}x \\[6pt]\quad

+ \|v_{tt}\|_{L^4(\Omega)}\|\nabla v_t\|_{L^4(\Omega)}\|v_{ttt}\|_{L^2(\Omega)}
- 2 k_1 \int\limits_{\Omega}v\cdot\nabla v_{ttt}\cdot v_{ttt}\,\mathrm{d}x \\[6pt]\quad

+ \frac{2}{k_1} \int\limits_{\Omega}(\frac{1}{\bar{\varrho}}
- \frac{1}{\varrho})v_{ttt}\cdot\nabla\xi_{ttt}\,\mathrm{d}x
+ \|\varrho_{tt}\|_{L^4(\Omega)}\|\nabla\xi_t\|_{L^4(\Omega)}\|v_{ttt}\|_{L^2(\Omega)} \\[6pt]\quad
- 2\int\limits_{\Omega}\frac{\xi}{p}\xi_{ttt}\xi_{tttt} \,\mathrm{d}x
- \int\limits_{\Omega}\partial_t(\frac{\xi}{p})\xi_{ttt}^2 \,\mathrm{d}x
+ 2\int\limits_{\Omega}(\frac{\varrho}{\bar{\varrho}}-1)v_{ttt}\cdot v_{tttt} \,\mathrm{d}x
+ \int\limits_{\Omega}\frac{\varrho_t}{\bar{\varrho}}|v_{ttt}|^2 \,\mathrm{d}x \\[6pt]\quad

\lem \sqrt{\e}\mathcal{E}[\xi,v](t)
- k_1 \int\limits_{\partial\Omega}(|\xi_{ttt}|^2+|v_{ttt}|^2) v\cdot n\,\mathrm{d}S_x
+ k_1 \int\limits_{\Omega}(|\xi_{ttt}|^2+|v_{ttt}|^2) \nabla\cdot v\,\mathrm{d}x \\[6pt]\quad
-2\gamma k_1 \int\limits_{\Omega}\xi\xi_{ttt}\nabla\cdot v_{ttt}\,\mathrm{d}x
+ \frac{2}{k_1} \int\limits_{\Omega}(\frac{1}{\bar{\varrho}}- \frac{1}{\varrho})v_{ttt}\cdot\nabla\xi_{ttt}\,\mathrm{d}x \\[6pt]\quad
- 2\int\limits_{\Omega}\frac{\xi}{p}\xi_{ttt}\xi_{tttt} \,\mathrm{d}x
+ 2\int\limits_{\Omega}(\frac{\varrho}{\bar{\varrho}}-1)v_{ttt}\cdot v_{tttt} \,\mathrm{d}x \\[6pt]

\lem -2\int\limits_{\Omega}\frac{\xi}{p}\xi_{ttt}(\xi_{tttt}+k_1\gamma p\nabla\cdot v_{ttt})\,\mathrm{d}x
+ \frac{2}{k_1} \int\limits_{\Omega}(\frac{1}{\bar{\varrho}}- \frac{1}{\varrho})v_{ttt}\cdot(\nabla\xi_{ttt}+k_1\varrho v_{tttt})\,\mathrm{d}x
\\[6pt]\quad
+ \sqrt{\e}\mathcal{E}[\xi,v](t).
\end{array}
\end{equation}

Apply $\partial_{ttt}$ to $(\ref{Sect2_Final_Eq})_1$, we get
\begin{equation}\label{Sect3_Estimate2_8}
\begin{array}{ll}
\xi_{tttt} + k_1\gamma p\nabla\cdot v_{ttt} = -k_1v\cdot\nabla\xi_{ttt} -3 k_1v_t\cdot\nabla\xi_{tt}
-3 k_1v_{tt}\cdot\nabla\xi_t -k_1v_{ttt}\cdot\nabla\xi \\[6pt]\hspace{3.3cm}
- 3k_1\gamma\varrho_t\nabla\cdot v_{tt} - 3k_1\gamma\xi_{tt}\nabla\cdot v_t - k_1\gamma \xi_{ttt} \nabla\cdot v.
\end{array}
\end{equation}

Plug $(\ref{Sect3_Estimate2_8})$ into the following integral, we get
\begin{equation}\label{Sect3_Estimate2_9}
\begin{array}{ll}
\int\limits_{\Omega}\frac{\xi}{p}\xi_{ttt}(\xi_{tttt}+k_1\gamma p\nabla\cdot v_{ttt})\,\mathrm{d}x
= \int\limits_{\Omega}\frac{\xi}{p}\xi_{ttt}[R.H.S.\ of\ (\ref{Sect3_Estimate2_8})]\,\mathrm{d}x \\[6pt]

\lem \sqrt{\e}\mathcal{E}[\xi,v](t)
-k_1\int\limits_{\partial\Omega}\frac{\xi}{2p}|\xi_{ttt}|^2 v\cdot n \,\mathrm{d}S_x
+ \frac{k_1}{2}\int\limits_{\Omega}|\xi_{ttt}|^2 \nabla\cdot(\frac{\xi}{p}v) \,\mathrm{d}x

\lem \sqrt{\e}\mathcal{E}[\xi,v](t).
\end{array}
\end{equation}

Apply $\partial_{ttt}$ to $(\ref{Sect2_Final_Eq})_2$, we get
\begin{equation}\label{Sect3_Estimate2_10}
\begin{array}{ll}
\nabla\xi_{ttt} + k_1\varrho v_{tttt} = -3k_1\xi_t v_{ttt} -3k_1\xi_{tt} v_{tt} -k_1\xi_{ttt}v_t
-k_1^2\xi_{ttt} v\cdot\nabla v \\[6pt]\hspace{2.8cm}
-3k_1^2\xi_{tt} v_t\cdot\nabla v -3k_1^2\xi_{tt} v\cdot\nabla v_t -3k_1^2\xi_t v_{tt}\cdot\nabla v
\\[6pt]\hspace{2.8cm}
-3k_1^2\xi_t v\cdot\nabla v_{tt} -6k_1^2\xi_t v_t\cdot\nabla v_t -k_1^2\varrho v_{ttt}\cdot\nabla v
\\[6pt]\hspace{2.8cm}
-3k_1^2\varrho v_{tt}\cdot\nabla v_t -3k_1^2\varrho v_t\cdot\nabla v_{tt} -k_1^2\varrho v\cdot\nabla v_{ttt}
\\[6pt]\hspace{2.8cm}
- ak_1\xi_{ttt} v - 3k_1a\xi_{tt} v_t - 3k_1a\xi_t v_{tt} - ak_1\varrho v_{ttt}.
\end{array}
\end{equation}

Plug $(\ref{Sect3_Estimate2_10})$ into the following integral, we get
\begin{equation}\label{Sect3_Estimate2_11}
\begin{array}{ll}
\int\limits_{\Omega}(\frac{1}{\bar{\varrho}}- \frac{1}{\varrho})v_{ttt}\cdot(\nabla\xi_{ttt}+k_1\varrho v_{tttt})\,\mathrm{d}x
= \int\limits_{\Omega}(\frac{1}{\bar{\varrho}}- \frac{1}{\varrho})v_{ttt}\cdot
[R.H.S.\ of\ (\ref{Sect3_Estimate2_10})]\,\mathrm{d}x \\[6pt]

\lem -\frac{k_1^2}{2}\int\limits_{\partial\Omega}(\frac{\varrho}{\bar{\varrho}}-1)|v_{ttt}|^2 v\cdot n \,\mathrm{d}S_x + \frac{k_1^2}{2}\int\limits_{\Omega}|\xi_{ttt}|^2 \nabla\cdot[(\frac{\varrho}{\bar{\varrho}}-1) v] \,\mathrm{d}x
+ \sqrt{\e}\mathcal{E}[\xi,v](t) \\[6pt]

\lem \sqrt{\e}\mathcal{E}[\xi,v](t).
\end{array}
\end{equation}

Plug $(\ref{Sect3_Estimate2_9})$ and $(\ref{Sect3_Estimate2_11})$ into $(\ref{Sect3_Estimate2_7})$, we get
\begin{equation}\label{Sect3_Estimate2_12}
\begin{array}{ll}
I_3 - \frac{\mathrm{d}}{\mathrm{d}t}\int\limits_{\Omega}\frac{\xi}{p}\xi_{ttt}^2 \,\mathrm{d}x
+ \frac{\mathrm{d}}{\mathrm{d}t}\int\limits_{\Omega}(\frac{\varrho}{\bar{\varrho}}-1)|v_{ttt}|^2 \,\mathrm{d}x
\lem \sqrt{\e}\mathcal{E}[\xi,v](t).
\end{array}
\end{equation}

Finally, we have
\begin{equation}\label{Sect3_Estimate2_13}
\begin{array}{ll}
\frac{\mathrm{d}}{\mathrm{d} t}\left(\sum\limits_{\ell=0}^{3}
\int\limits_{\Omega}|\partial_t^{\ell}\xi|^2+|\partial_t^{\ell}v|^2 \,\mathrm{d}x
-\int\limits_{\Omega}\frac{\xi}{p} \xi_{ttt}^2\,\mathrm{d}x
+\int\limits_{\Omega}(\frac{\varrho}{\bar{\varrho}}- 1)|v_{ttt}|^2 \,\mathrm{d}x \right) \\[6pt]\quad
+ 2a \sum\limits_{\ell=0}^{3}\int\limits_{\Omega}|\partial_t^{\ell}v|^2 \,\mathrm{d}x
\lem \sqrt{\e}\mathcal{E}[\xi,v](t).
\end{array}
\end{equation}

Then
\begin{equation}\label{Sect3_Estimate2_14}
\begin{array}{ll}
\frac{\mathrm{d}}{\mathrm{d} t}\left(\sum\limits_{\ell=0}^{3}
\int\limits_{\Omega}|\partial_t^{\ell}\xi|^2+|\partial_t^{\ell}v|^2 \,\mathrm{d}x
-\int\limits_{\Omega}\frac{\xi}{p} \xi_{ttt}^2\,\mathrm{d}x
+\int\limits_{\Omega}(\frac{\varrho}{\bar{\varrho}}- 1)|v_{ttt}|^2 \,\mathrm{d}x \right) \\[6pt]\quad
+ 2a \left(\sum\limits_{\ell=0}^{3}\int\limits_{\Omega}|\partial_t^{\ell}v|^2 \,\mathrm{d}x
+ \int\limits_{\Omega}(\frac{\varrho}{\bar{\varrho}}- 1)|v_{ttt}|^2 \,\mathrm{d}x \right) \\[6pt]
\lem \sqrt{\e}\mathcal{E}[\xi,v](t)
+ 2a\int\limits_{\Omega}\frac{|\varrho-\bar{\varrho}|_{\infty}}{\bar{\varrho}}|v_{ttt}|^2 \,\mathrm{d}x
\lem \sqrt{\e}\mathcal{E}[\xi,v](t).
\end{array}
\end{equation}

So we get
\begin{equation}\label{Sect3_Estimate2_15}
\frac{\mathrm{d}}{\mathrm{d}t}E_1[\xi,v](t)+ 2a E_1[v](t) \lem \sqrt{\e}\mathcal{E}[\xi,v](t).
\end{equation}
Thus, Lemma $\ref{Sect3_Energy_Estimate_Lemma1}$ is proved.
\end{proof}

The following lemma concerns a priori estimate for $\int\limits_{\Omega}\sum\limits_{\ell=1}^{3}\partial_t^{\ell}\xi \partial_t^{\ell-1}\xi \,\mathrm{d}x$, which introduces $E[\xi](t)$ to the inequality $(\ref{Sect3_XiT_toProve})$.
\begin{lemma}\label{Sect3_Energy_Estimate_Lemma2}
For any given $T\in (0,+\infty]$, if
\begin{equation*}
\sup\limits_{0\leq t\leq T} \mathcal{E}[\xi,v,\phi](t) \leq\e,
\end{equation*}
where $0<\e\ll 1$, then there exists $c_3>0$ such that for $\forall t\in [0,T]$,
\begin{equation}\label{Sect3_XiT_toProve}
- \frac{\mathrm{d}}{\mathrm{d}t}\int\limits_{\Omega}\sum\limits_{\ell=1}^{3}
\partial_t^{\ell}\xi\partial_t^{\ell-1}\xi \,\mathrm{d}x
+ E[\xi](t) \leq C\sqrt{\e}\mathcal{E}[\xi,v](t) +c_3 E[v](t).
\end{equation}
\end{lemma}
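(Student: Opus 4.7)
The plan is to use the temporal cross-product $\int_\Omega \partial_t^\ell\xi\,\partial_t^{\ell-1}\xi\,dx$ as a compensator: its time derivative, processed through the linearized equations $(\ref{Sect2_Final_Eq})_1$ and $(\ref{Sect2_Final_Eq})_2$, will produce the dissipative quantity $E[v]$ on the right and reveal the missing $E[\xi]$ on the left. The argument has three parts: a purely algebraic identity, a cross-term computation that invokes both equations of (\ref{Sect2_Final_Eq}), and a Poincar\'e-type closure of the $\ell=0$ gap based on the conservation identity in Lemma~\ref{Sect2_P_Infty_Lemma}.

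For each $\ell\in\{1,2,3\}$ I would begin with the elementary identity
\begin{equation*}
-\frac{d}{dt}\int_\Omega \partial_t^\ell\xi\,\partial_t^{\ell-1}\xi\,dx
= -\|\partial_t^\ell\xi\|_{L^2(\Omega)}^2 - \int_\Omega \partial_t^{\ell+1}\xi\,\partial_t^{\ell-1}\xi\,dx.
\end{equation*}
Summing over $\ell=1,2,3$ and adding $E[\xi](t)=\sum_{\ell=0}^{3}\|\partial_t^\ell\xi\|^2$ cancels three of the squared norms and leaves the LHS of (\ref{Sect3_XiT_toProve}) equal to $\|\xi\|_{L^2(\Omega)}^2 - \sum_{\ell=1}^{3}\int_\Omega \partial_t^{\ell+1}\xi\,\partial_t^{\ell-1}\xi\,dx$. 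For the cross-term I would apply $\partial_t^\ell$ to $(\ref{Sect2_Final_Eq})_1$ to write $\partial_t^{\ell+1}\xi=-k_2\nabla\cdot\partial_t^\ell v+(\text{NL})$, integrate by parts in space (the boundary integral vanishes because $\partial_t^\ell v\cdot n|_{\partial\Omega}=0$), and then substitute $k_2\nabla\partial_t^{\ell-1}\xi=-\partial_t^\ell v-a\partial_t^{\ell-1}v+(\text{NL})$ obtained from $\partial_t^{\ell-1}$ applied to $(\ref{Sect2_Final_Eq})_2$. This gives
\begin{equation*}
\int_\Omega \partial_t^{\ell+1}\xi\,\partial_t^{\ell-1}\xi\,dx
= -\|\partial_t^\ell v\|_{L^2(\Omega)}^2 - a\int_\Omega \partial_t^\ell v\cdot\partial_t^{\ell-1}v\,dx + (\text{NL}),
\end{equation*}
where the nonlinear remainders are all of order $\sqrt{\e}\,\mathcal{E}[\xi,v](t)$ by Lemma~\ref{Sect3_Varrho_Lemma} together with standard Sobolev embeddings. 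Cauchy--Schwarz on the $a$-term and summation yield $-\sum_{\ell=1}^{3}\int\partial_t^{\ell+1}\xi\,\partial_t^{\ell-1}\xi\,dx\le c_3 E[v](t)+C\sqrt{\e}\,\mathcal{E}[\xi,v](t)$.

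The remaining piece is $\|\xi\|_{L^2(\Omega)}^2$, which demands the conservation identity of Lemma~\ref{Sect2_P_Infty_Lemma}: $\int_\Omega(\bar p+\xi)^{1/\gamma}dx=\bar p^{1/\gamma}|\Omega|$. Taylor-expanding in $\xi$ and using $|\xi|_\infty\lesssim\sqrt{\e}$ yields $|\int_\Omega\xi\,dx|\lesssim \|\xi\|_{L^2(\Omega)}^2$, so the Poincar\'e inequality applied to $\xi-\bar\xi_m$ (where $\bar\xi_m=|\Omega|^{-1}\int\xi$) gives
\begin{equation*}
\|\xi\|_{L^2(\Omega)}^2\lesssim \|\nabla\xi\|_{L^2(\Omega)}^2 + \|\xi\|_{L^2(\Omega)}^4,
\end{equation*}
and the quartic term is absorbed for $\e\ll 1$, leaving $\|\xi\|_{L^2(\Omega)}^2\lesssim \|\nabla\xi\|_{L^2(\Omega)}^2$. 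Finally I would solve $(\ref{Sect2_Final_Eq})_2$ for $\nabla\xi$; its linear coefficient $k_2-\tfrac{1}{k_1}\bigl(\tfrac{1}{\bar\varrho}-\tfrac{1}{\varrho}\bigr)$ is uniformly bounded below by Lemma~\ref{Sect3_Varrho_Lemma}, so $\|\nabla\xi\|_{L^2(\Omega)}^2\lesssim \|v_t\|_{L^2(\Omega)}^2+\|v\|_{L^2(\Omega)}^2+\sqrt{\e}\,\mathcal{E}[\xi,v]\le C E[v]+C\sqrt{\e}\,\mathcal{E}[\xi,v]$. Combining this with the cross-term bound closes (\ref{Sect3_XiT_toProve}).

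The main obstacle is the $\ell=0$ piece $\|\xi\|_{L^2(\Omega)}^2$, which is not visible to the temporal cross products at all. It can only be reached via the nonlinear mass-conservation identity in precisely the $1/\gamma$-form used to define $\bar p$ in (\ref{Sect1_Constants_EulerEq}): this is what forces the spatial mean of $\xi$ to be \emph{quadratically} small in the energy, so that Poincar\'e plus the momentum equation can trade $\|\nabla\xi\|_{L^2}$ for the dissipative $\|v_t\|_{L^2}$ and $\|v\|_{L^2}$ without leaking any linearly small remainder.
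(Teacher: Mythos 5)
Your proposal is correct and, for the core of the lemma, follows essentially the paper's route: differentiate the temporal cross products, replace $\partial_t^{\ell+1}\xi$ via $\partial_t^{\ell}$ of $(\ref{Sect2_Final_Eq})_1$, integrate by parts using $\partial_t^{\ell}v\cdot n|_{\partial\Omega}=0$, and convert $\nabla\partial_t^{\ell-1}\xi$ into $\partial_t^{\ell}v,\partial_t^{\ell-1}v$ through $\partial_t^{\ell-1}$ of $(\ref{Sect2_Final_Eq})_2$, which is exactly how the paper produces the $c_3E[v]$ term (the paper estimates first by Cauchy--Schwarz and then substitutes the momentum equation, you substitute first; the effect is identical). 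The genuine difference is the $\ell=0$ zero mode: the paper invokes Lemma~\ref{Sect2_P_Infty_Lemma} only through $\bar{p}\in[\inf p,\sup p]$, picks a point $x_t$ where $\xi(x_t,t)=0$, and bounds $\|\xi\|_{L^2}$ by $|Diam(\Omega)|\,\|\nabla\xi\|_{L^2}$ via a path integral, whereas you use the conservation $\int_{\Omega}p^{1/\gamma}\,\mathrm{d}x=\bar{p}^{1/\gamma}|\Omega|$ directly, Taylor-expand to show the spatial mean of $\xi$ is quadratically small, and then apply the Poincar\'e--Wirtinger inequality, absorbing the quartic remainder for $\e\ll1$. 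Both arguments rest on the same conserved quantity and yield the same bound $\|\xi\|_{L^2}^2\lem\|\nabla\xi\|_{L^2}^2\lem \|v\|_{L^2}^2+\|v_t\|_{L^2}^2+\sqrt{\e}\,\mathcal{E}[\xi,v](t)$; yours is the more standard and arguably cleaner formulation (no geometric path construction needed), while the paper's avoids any explicit Poincar\'e constant. One small caveat on your cross-term remainders: at $\ell=3$ the nonlinear pieces $\gamma k_1\xi\,\nabla\cdot v_{ttt}$ and $k_1 v\cdot\nabla\xi_{ttt}$ paired with $\xi_{tt}$ contain derivatives outside $\mathcal{E}[\xi,v]$, so H\"older/Sobolev alone does not close them; one further integration by parts (boundary terms vanishing by $v\cdot n=v_{ttt}\cdot n=0$ on $\partial\Omega$), exactly as in the paper's estimate of these terms, is needed before they become $O(\sqrt{\e}\,\mathcal{E}[\xi,v](t))$ — a routine fix, not a flaw in the strategy.
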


\begin{proof}
Apply $\partial_t$ to $(\ref{Sect2_Final_Eq})_1$, then we get
\begin{equation}\label{Sect3_XiT_1}
\begin{array}{ll}
\hspace{1.5cm} \xi_{tt} = - k_1 v\cdot\nabla \xi_t - k_1 v_t\cdot\nabla \xi
-k_1\gamma\xi_t\nabla\cdot v -k_1\gamma p\nabla\cdot v_t, \\[10pt]

-(\xi_t\xi)_t +\xi_t^2  = k_1\xi v\cdot\nabla \xi_t + k_1\xi v_t\cdot\nabla \xi
+k_1\gamma\xi\xi_t\nabla\cdot v +k_1\gamma p\xi\nabla\cdot v_t.
\end{array}
\end{equation}

After integrating $(\ref{Sect3_XiT_1})_2$ in $\Omega$, we get
\begin{equation*}
\begin{array}{ll}
- \frac{\mathrm{d}}{\mathrm{d}t}\int\limits_{\Omega}\xi_{t}\xi \,\mathrm{d}x
+ \int\limits_{\Omega}(\xi_t)^2 \,\mathrm{d}x \\[10pt]

\lem \sqrt{\e}\mathcal{E}[\xi,v](t) + k_1\gamma \int\limits_{\partial\Omega} p\xi v_t\cdot n \,\mathrm{d}S_x
- k_1\gamma \int\limits_{\Omega}\xi v_t\cdot\nabla\xi \,\mathrm{d}x
- k_1\gamma \int\limits_{\Omega}p v_t\cdot\nabla\xi \,\mathrm{d}x
\end{array}
\end{equation*}

\begin{equation}\label{Sect3_XiT_2}
\begin{array}{ll}
\lem \sqrt{\e}\mathcal{E}[\xi,v](t) + \|v_t\|_{L^2(\Omega)}^2 + \|\nabla\xi\|_{L^2(\Omega)}^2 \\[6pt]
\lem \sqrt{\e}\mathcal{E}[\xi,v](t) + \|v_t\|_{L^2(\Omega)}^2 + \|-k_1\varrho v_t
-k_1^2 \varrho v\cdot\nabla v -ak_1\varrho v\|_{L^2(\Omega)}^2 \\[6pt]

\lem \sqrt{\e}\mathcal{E}[\xi,v](t) + \|v\|_{L^2(\Omega)}^2+ \|v_t\|_{L^2(\Omega)}^2.
\end{array}
\end{equation}

Apply $\partial_{tt}$ to $(\ref{Sect2_Final_Eq})_1$, then we get
\begin{equation}\label{Sect3_XiT_3}
\begin{array}{ll}
\hspace{1.7cm} \xi_{ttt} = - k_1 v\cdot\nabla \xi_{tt} - 2k_1 v_t\cdot\nabla \xi_t - k_1 v_{tt}\cdot\nabla \xi
-k_1\gamma\xi_{tt}\nabla\cdot v \\[6pt]\hspace{2.7cm}
-2k_1\gamma \xi_t\nabla\cdot v_t - k_1\gamma p\nabla\cdot v_{tt}, \\[10pt]

-(\xi_{tt}\xi_t)_t +\xi_{tt}^2  = k_1 \xi_t v\cdot\nabla \xi_{tt} + 2k_1 \xi_t v_t\cdot\nabla \xi_t
+ k_1 \xi_t v_{tt}\cdot\nabla \xi +k_1\gamma\xi_t\xi_{tt}\nabla\cdot v \\[6pt]\hspace{2.6cm}
+2k_1\gamma \xi_t^2\nabla\cdot v_t +k_1\gamma p\xi_t\nabla\cdot v_{tt}.
\end{array}
\end{equation}

After integrating $(\ref{Sect3_XiT_3})_2$ in $\Omega$, we get
\begin{equation}\label{Sect3_XiT_4}
\begin{array}{ll}
- \frac{\mathrm{d}}{\mathrm{d}t}\int\limits_{\Omega}\xi_{tt}\xi_t \,\mathrm{d}x
+ \int\limits_{\Omega}(\xi_{tt})^2 \,\mathrm{d}x \\[10pt]

\lem \sqrt{\e}\mathcal{E}[\xi,v](t) + k_1\gamma \int\limits_{\partial\Omega} p\xi_t v_{tt}\cdot n \,\mathrm{d}S_x
- k_1\gamma \int\limits_{\Omega}\xi_t v_{tt}\cdot\nabla\xi \,\mathrm{d}x
- k_1\gamma \int\limits_{\Omega}p v_{tt}\cdot\nabla\xi_t \,\mathrm{d}x \\[6pt]

\lem \sqrt{\e}\mathcal{E}[\xi,v](t) + \|v_{tt}\|_{L^2(\Omega)}^2 + \|\nabla\xi_t\|_{L^2(\Omega)}^2
\\[6pt]

\lem \sqrt{\e}\mathcal{E}[\xi,v](t) + \|v_{tt}\|_{L^2(\Omega)}^2 + \|-k_1\varrho v_{tt} -k_1\xi_t v_t
-k_1^2 \xi_t v\cdot\nabla v \\[6pt]\quad
-k_1^2 \varrho v_t\cdot\nabla v -k_1^2 \varrho v\cdot\nabla v_t -ak_1\xi_t v -ak_1\varrho v_t\|_{L^2(\Omega)}^2 \\[6pt]

\lem \sqrt{\e}\mathcal{E}[\xi,v](t) + \|v_t\|_{L^2(\Omega)}^2+ \|v_{tt}\|_{L^2(\Omega)}^2.
\end{array}
\end{equation}

Apply $\partial_{ttt}$ to $(\ref{Sect2_Final_Eq})_1$, then we get
\begin{equation}\label{Sect3_XiT_5}
\begin{array}{ll}
\hspace{1.9cm}\xi_{tttt} = - k_1 v\cdot\nabla \xi_{ttt} - 3k_1 v_t\cdot\nabla \xi_{tt} - 3k_1 v_{tt}\cdot\nabla \xi_t - k_1 v_{ttt}\cdot\nabla \xi \\[8pt]\hspace{2.9cm}
-k_1\gamma\xi_{ttt}\nabla\cdot v -3k_1\gamma \xi_{tt}\nabla\cdot v_t -3k_1\gamma \xi_t\nabla\cdot v_{tt}
 - k_1\gamma p\nabla\cdot v_{ttt}, \\[11pt]

-(\xi_{ttt}\xi_{tt})_t +\xi_{ttt}^2 = k_1 \xi_{tt}v\cdot\nabla \xi_{ttt} + 3k_1 \xi_{tt} v_t\cdot\nabla \xi_{tt}
+ 3k_1 \xi_{tt}v_{tt}\cdot\nabla \xi_t \\[8pt]\hspace{2.9cm}
+ k_1 \xi_{tt}v_{ttt}\cdot\nabla \xi +k_1\gamma\xi_{tt}\xi_{ttt}\nabla\cdot v +3k_1\gamma \xi_{tt}^2\nabla\cdot v_t
\\[8pt]\hspace{2.9cm}
+3k_1\gamma \xi_t\xi_{tt}\nabla\cdot v_{tt} + k_1\gamma p\xi_{tt}\nabla\cdot v_{ttt}.
\end{array}
\end{equation}

After integrating $(\ref{Sect3_XiT_5})_2$ in $\Omega$, we get
\begin{equation*}
\begin{array}{ll}
- \frac{\mathrm{d}}{\mathrm{d}t}\int\limits_{\Omega}\xi_{ttt}\xi_{tt} \,\mathrm{d}x
+ \int\limits_{\Omega}(\xi_{ttt})^2 \,\mathrm{d}x \\[10pt]

\lem \sqrt{\e}\mathcal{E}[\xi,v](t) +\int\limits_{\Omega}k_1 \xi_{tt}v\cdot\nabla \xi_{ttt}
+ k_1\gamma p\xi_{tt}\nabla\cdot v_{ttt} \,\mathrm{d}x \\[6pt]

\lem \sqrt{\e}\mathcal{E}[\xi,v](t) + k_1 \int\limits_{\partial\Omega} \xi_{ttt}\xi_{tt} v\cdot n \,\mathrm{d}S_x
- k_1 \int\limits_{\Omega}\xi_{ttt} \xi_{tt}\nabla\cdot v \,\mathrm{d}x
- k_1 \int\limits_{\Omega}\xi_{ttt} v\cdot\nabla\xi_{tt} \,\mathrm{d}x \\[6pt]\quad
+ k_1\gamma \int\limits_{\partial\Omega} p\xi_{tt} v_{ttt}\cdot n \,\mathrm{d}S_x
- k_1\gamma \int\limits_{\Omega}\xi_{tt} v_{ttt}\cdot\nabla\xi \,\mathrm{d}x
- k_1\gamma \int\limits_{\Omega}p v_{ttt}\cdot\nabla\xi_{tt} \,\mathrm{d}x \\[6pt]

\lem \sqrt{\e}\mathcal{E}[\xi,v](t) + \|v_{ttt}\|_{L^2(\Omega)}^2 + \|\nabla\xi_{tt}\|_{L^2(\Omega)}^2
\end{array}
\end{equation*}

\begin{equation}\label{Sect3_XiT_6}
\begin{array}{ll}
\lem \sqrt{\e}\mathcal{E}[\xi,v](t) + \|v_{ttt}\|_{L^2(\Omega)}^2 + \|-k_1\varrho v_{ttt} -2k_1\xi_t v_{tt}
-k_1\xi_{tt} v_t
\\[6pt]\quad
-k_1^2 \xi_{tt} v\cdot\nabla v -k_1^2 \varrho v_{tt}\cdot\nabla v -k_1^2 \varrho v\cdot\nabla v_{tt}
-2k_1^2 \xi_t v_t\cdot\nabla v -2k_1^2 \varrho v_t\cdot\nabla v_t \\[6pt]\quad
-2k_1^2 \xi_t v\cdot\nabla v_t -ak_1\xi_{tt} v - 2ak_1\xi_t v_t -ak_1\varrho v_{tt}\|_{L^2(\Omega)}^2 \\[6pt]

\lem \sqrt{\e}\mathcal{E}[\xi,v](t) + \|v_{tt}\|_{L^2(\Omega)}^2+ \|v_{ttt}\|_{L^2(\Omega)}^2.
\end{array}
\end{equation}

By $(\ref{Sect3_XiT_2})+(\ref{Sect3_XiT_4})+(\ref{Sect3_XiT_6})$, we get
\begin{equation}\label{Sect3_XiT_7}
- \frac{\mathrm{d}}{\mathrm{d}t}\int\limits_{\Omega}\sum\limits_{\ell=1}^{3}
\partial_t^{\ell}\xi\partial_t^{\ell-1}\xi \,\mathrm{d}x
+ \int\limits_{\Omega}\sum\limits_{\ell=1}^{3}(\partial_t^{\ell}\xi)^2 \,\mathrm{d}x \lem \sqrt{\e}\mathcal{E}[\xi,v](t)
+\sum\limits_{\ell=0}^{3}\|v_t^{\ell}\|_{L^2(\Omega)}^2.
\end{equation}

By Lemma $\ref{Sect2_P_Infty_Lemma}$, $\bar{p}\in[\inf\limits_{x\in\Omega}p,\sup\limits_{x\in\Omega}p]$, then for any $t\geq 0$, there exists $x_t \in\Omega$ such that $\xi(x_t,t)=0$. Assume $\ell(s)$ is a curve with finite length parameter $s$ such that $\ell(0)=x_t,\ \ell(s_x)=x$, then
\begin{equation}\label{Sect3_XiT_8}
\begin{array}{ll}
\|\xi(x,t)\|_{L^2(\Omega)}^2
= \|\xi(x_t,t) + \int\limits_0^{s_x} \nabla\xi[\ell(s)]\cdot \ell(s) \,\mathrm{d}s\|_{L^2(\Omega)}^2
\\[6pt]\hspace{2.15cm}
\leq C|Diam(\Omega)|^2\|\nabla\xi\|_{L^2(\Omega)}^2
\lem \|\nabla\xi\|_{L^2(\Omega)}^2 \\[6pt]\hspace{2.15cm}
\lem \sqrt{\e}\mathcal{E}[\xi,v](t)+\|v\|_{L^2(\Omega)}^2+\|v_t\|_{L^2(\Omega)}^2.
\end{array}
\end{equation}

Summing $(\ref{Sect3_XiT_7})$ and $(\ref{Sect3_XiT_8})$, we get
\begin{equation}\label{Sect3_XiT_9}
\begin{array}{ll}
- \frac{\mathrm{d}}{\mathrm{d}t}\int\limits_{\Omega}\sum\limits_{\ell=1}^{3}
\partial_t^{\ell}\xi\partial_t^{\ell-1}\xi \,\mathrm{d}x
+ \int\limits_{\Omega}\sum\limits_{\ell=0}^{3}(\partial_t^{\ell}\xi)^2 \,\mathrm{d}x
\lem \sqrt{\e}\mathcal{E}[\xi,v](t)
+ \sum\limits_{\ell=0}^{3}\|v_t^{\ell}\|_{L^2(\Omega)}^2.
\end{array}
\end{equation}

Then there exist two constants $C>0$, $c_3>0$ such that
\begin{equation}\label{Sect3_XiT_10}
\begin{array}{ll}
- \frac{\mathrm{d}}{\mathrm{d}t}\int\limits_{\Omega}\sum\limits_{\ell=1}^{3}
\partial_t^{\ell}\xi\partial_t^{\ell-1}\xi \,\mathrm{d}x
+ \int\limits_{\Omega}\sum\limits_{\ell=0}^{3}(\partial_t^{\ell}\xi)^2 \,\mathrm{d}x \leq C\sqrt{\e}\mathcal{E}[\xi,v](t) +c_3\sum\limits_{\ell=0}^{3}\|v_t^{\ell}\|_{L^2(\Omega)}^2.
\end{array}
\end{equation}
Thus, Lemma $\ref{Sect3_Energy_Estimate_Lemma2}$ is proved.
\end{proof}

Based on the above a priori estimates, we prove the exponential decay of $\mathcal{E}[\xi,v](t)$ and $\mathcal{E}_1[\omega](t)$ in the following lemma.
\begin{lemma}\label{Sect3_Decay_Lemma}
For any given $T\in (0,+\infty]$, if
\begin{equation*}
\sup\limits_{0\leq t\leq T} \mathcal{E}[\xi,v,\phi](t) \leq\e,
\end{equation*}
there exists $\e_2>0$, which is independent of $(\xi_0,v_0,\phi_0)$, such that if $0<\e\ll \min\{1,\e_0,\e_1,\e_2\}$, then for $\forall t\in [0,T]$,
\begin{equation}\label{Sect3_Decay_Lemma_Eq}
\begin{array}{ll}
\mathcal{E}[\xi,v](t) \leq \beta_1\|(\xi_0,v_0)\|_{H^3(\Omega)}^2\exp\{-\beta_2 t\},\\[6pt]
\mathcal{E}_1[\omega](t) \leq \beta_3\|(\xi_0,v_0)\|_{H^3(\Omega)}^2\exp\{-\beta_2 t\},
\end{array}
\end{equation}
where $\beta_1, \beta_2, \beta_3$ are three positive numbers.
\end{lemma}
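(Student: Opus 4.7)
The plan is to construct a Lyapunov-type functional that is equivalent to $E[\xi,v](t)+\mathcal{E}_1[\omega](t)$, whose time derivative is controlled by a negative multiple of itself, and then to close the argument using Lemma \ref{Sect3_Total_Energy_Lemma} which passes from $E[\xi,v]+\mathcal{E}_1[\omega]$ back to the full energy $\mathcal{E}[\xi,v]$.

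Specifically, for constants $\lambda_1,\lambda_2,\lambda_3>0$ to be fixed, define
\begin{equation*}
\mathcal{L}(t) := \lambda_1 E_1[\xi,v](t) \;-\; \lambda_2 \int_{\Omega}\sum_{\ell=1}^{3}\partial_t^{\ell}\xi\,\partial_t^{\ell-1}\xi\,\mathrm{d}x \;+\; \lambda_3\,\mathcal{E}_1[\omega](t).
\end{equation*}
By Cauchy--Schwarz, $\big|\int_\Omega\sum_\ell \partial_t^\ell\xi\,\partial_t^{\ell-1}\xi\,\mathrm{d}x\big|\lem E[\xi](t)$, so choosing $\lambda_2\ll\lambda_1$ and using Lemma \ref{Sect3_Epsilon0_Lemma} ($E_1[\xi,v]\cong E[\xi,v]$) makes $\mathcal{L}(t)$ equivalent to $E[\xi,v](t)+\mathcal{E}_1[\omega](t)$. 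Differentiating and applying Lemmas \ref{Sect3_Vorticity_Lemma}, \ref{Sect3_Energy_Estimate_Lemma1}, \ref{Sect3_Energy_Estimate_Lemma2} yields
\begin{equation*}
\frac{\mathrm{d}}{\mathrm{d}t}\mathcal{L}(t) \;\leq\; -2a\lambda_1 E_1[v](t) \;-\; \lambda_2 E[\xi](t) \;+\; \lambda_2 c_3 E[v](t) \;-\; 2a\lambda_3 \mathcal{E}_1[\omega](t) \;+\; C(\lambda_1+\lambda_2+\lambda_3)\sqrt{\e}\,\mathcal{E}[\xi,v](t).
\end{equation*}

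Since $E_1[v](t)\geq c_1 E[v](t)$ by Lemma \ref{Sect3_Epsilon0_Lemma}, fixing $\lambda_1=1$ and then taking $\lambda_2$ small enough so that $2a c_1-\lambda_2 c_3\geq a c_1$, we dominate the first two velocity terms by $-ac_1 E[v](t)$. Choosing $\lambda_3$ comparable to $\lambda_2$, we get
\begin{equation*}
\frac{\mathrm{d}}{\mathrm{d}t}\mathcal{L}(t) \;\leq\; -c_4\bigl(E[\xi,v](t)+\mathcal{E}_1[\omega](t)\bigr) \;+\; C'\sqrt{\e}\,\mathcal{E}[\xi,v](t),
\end{equation*}
for some $c_4>0$. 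Now I invoke Lemma \ref{Sect3_Total_Energy_Lemma} to bound $\mathcal{E}[\xi,v](t)\leq c_0(E[\xi,v](t)+\mathcal{E}_1[\omega](t))$; choosing $\e_2$ so small that $C'c_0\sqrt{\e_2}\leq c_4/2$, one obtains
\begin{equation*}
\frac{\mathrm{d}}{\mathrm{d}t}\mathcal{L}(t) \;\leq\; -\tfrac{c_4}{2}\bigl(E[\xi,v](t)+\mathcal{E}_1[\omega](t)\bigr) \;\leq\; -\beta_2\,\mathcal{L}(t),
\end{equation*}
where $\beta_2>0$ follows from the equivalence $\mathcal{L}(t)\cong E[\xi,v](t)+\mathcal{E}_1[\omega](t)$. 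Gronwall's inequality then gives $\mathcal{L}(t)\leq \mathcal{L}(0)\exp\{-\beta_2 t\}$, and hence the two exponential decay estimates follow once I translate $\mathcal{L}(0)$ and $\mathcal{E}_1[\omega](0)$ into $\|(\xi_0,v_0)\|_{H^3(\Omega)}^2$.

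The last step is routine but the one most prone to bookkeeping error: each $\partial_t^\ell\xi(x,0)$ and $\partial_t^\ell v(x,0)$ appearing inside $E_1[\xi,v](0)$ and $\mathcal{E}_1[\omega](0)$ must be re-expressed through the equations $(\ref{Sect2_Final_Eq})$ in terms of spatial derivatives of $(\xi_0,v_0,\phi_0)$, which produces polynomial expressions bounded by $\|(\xi_0,v_0,\phi_0)\|_{H^3(\Omega)}^2$ under the smallness hypothesis. The main conceptual obstacle is not the decay itself but verifying the \emph{compatibility of constants}: one must check that $\lambda_2$ can be chosen small enough to both (i) give the sign condition $2a\lambda_1 c_1>\lambda_2 c_3$ on the velocity dissipation, and (ii) preserve the equivalence $\mathcal{L}(t)\cong E[\xi,v](t)+\mathcal{E}_1[\omega](t)$, independently of $T$ and of the solution. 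Once this tuning is fixed, the final smallness parameter $\e_2$ is determined only by $a,c_0,c_1,c_3,C,C'$ and hence is independent of the initial data, as required.
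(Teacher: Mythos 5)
Your proposal is correct and follows essentially the same route as the paper: the paper forms $E_2[\xi,v]=\lambda_1E_1[\xi,v]-\sum_{\ell}\int_\Omega\partial_t^{\ell-1}\xi\,\partial_t^{\ell}\xi\,\mathrm{d}x$ with $\lambda_1$ large, adds $\mathcal{E}_1[\omega]$, absorbs the $\sqrt{\e}\,\mathcal{E}[\xi,v]$ error through Lemma \ref{Sect3_Total_Energy_Lemma} and the equivalences of Lemma \ref{Sect3_Epsilon0_Lemma}, and applies Gronwall, which is exactly your Lyapunov functional up to rescaling (your choice $\lambda_1=1$, $\lambda_2,\lambda_3$ small versus the paper's $\lambda_1$ large with unit weights on the other two terms). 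The only cosmetic difference is in bounding the data: the paper expresses $\mathcal{E}[\xi,v](0)$ directly by $\|(\xi_0,v_0)\|_{H^3(\Omega)}^2$, whereas you allow $\|(\xi_0,v_0,\phi_0)\|_{H^3(\Omega)}^2$; under the smallness hypothesis the $\phi_0$-dependence enters only through bounded coefficients, so the stated constant $\beta_1\|(\xi_0,v_0)\|_{H^3(\Omega)}^2$ is recovered.
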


\begin{proof}
In view of Lemmas $\ref{Sect3_Vorticity_Lemma}$, $\ref{Sect3_Energy_Estimate_Lemma1}$ and $\ref{Sect3_Energy_Estimate_Lemma2}$, we have obtained global a priori estimates as follows:
\begin{equation}\label{Sect3_Obtained_Estimates}
\left\{\begin{array}{ll}
\frac{\mathrm{d}}{\mathrm{d}t} \mathcal{E}_1[\omega](t) + 2a\mathcal{E}_1[\omega](t)\leq C\sqrt{\e}\mathcal{E}[\xi,v](t),
\\[6pt]
\frac{\mathrm{d}}{\mathrm{d}t}E_1[\xi,v](t)+ 2a E_1[v](t) \leq C\sqrt{\e}\mathcal{E}[\xi,v](t), \\[6pt]

- \frac{\mathrm{d}}{\mathrm{d}t}\int\limits_{\Omega}\sum\limits_{\ell=1}^{3}
\partial_t^{\ell}\xi\partial_t^{\ell-1}\xi \,\mathrm{d}x
+ E[\xi](t) \leq C\sqrt{\e}\mathcal{E}[\xi,v](t) +c_3 E[v](t).
\end{array}\right.
\end{equation}

Let $\lambda_1=\max\{\frac{4}{3},\frac{c_3}{2a}\}+1$, we define
\begin{equation}\label{Sect3_Define_E2}
\begin{array}{ll}
E_2[\xi,v](t) := \lambda_1 E_1[\xi,v](t)
-\sum\limits_{\ell=1}^{3}\int\limits_{\Omega}\partial_t^{\ell-1}\xi \partial_t^{\ell} \xi\,\mathrm{d}x.
\end{array}
\end{equation}
where $E_2> 0$ by Cauchy-Schwarz inequality. Since $\lambda_1>\frac{4}{3}$, $E_2\cong E$, i.e., there exist $c_4>0,c_5>0$ such that
\begin{equation}\label{Sect3_Equivalence}
\begin{array}{ll}
c_4 E[\xi,v](t) \leq E_2[\xi,v](t) \leq c_5 E[\xi,v](t).
\end{array}
\end{equation}

By $(\ref{Sect3_Obtained_Estimates})_2\times\lambda_1 + (\ref{Sect3_Obtained_Estimates})_3$, we get
\begin{equation}\label{Sect3_Decay_1}
\begin{array}{ll}
\frac{\mathrm{d}}{\mathrm{d}t}E_2[\xi,v](t) + (2a\lambda_1 - c_3) E_1[v](t)+ E[\xi](t)\lem \sqrt{\e}\mathcal{E}[\xi,v](t).
\end{array}
\end{equation}

Since $E_1[\xi](t)\leq c_2 E[\xi](t)$, we have
\begin{equation}\label{Sect3_Decay_2}
\begin{array}{ll}
\frac{\mathrm{d}}{\mathrm{d}t}E_2[\xi,v](t) + (2a\lambda_1 - c_3) E_1[v](t)+ \frac{1}{c_2} E_1[\xi](t)\lem \sqrt{\e}\mathcal{E}[\xi,v](t). \\[6pt]
\end{array}
\end{equation}

Let $c_6 = \min{(2a\lambda_1 - c_3, \frac{1}{c_2})}>0$, it follows from $(\ref{Sect3_Decay_2})$ that
\begin{equation}\label{Sect3_Decay_3}
\frac{\mathrm{d}}{\mathrm{d}t}E_2[\xi,v](t) + c_6 E_1[\xi,v](t) \lem \sqrt{\e}\mathcal{E}[\xi,v](t).
\end{equation}

By $(\ref{Sect3_Obtained_Estimates})_1 + (\ref{Sect3_Decay_3})$, we get
\begin{equation}\label{Sect3_Decay_4}
\begin{array}{ll}
\frac{\mathrm{d}}{\mathrm{d}t} [E_2[\xi,v](t) + \mathcal{E}_1[\omega](t)] + [c_6 E_1[\xi,v](t)+ 2a\mathcal{E}_1[\omega](t)]\lem \sqrt{\e}\mathcal{E}[\xi,v] \\[6pt]
\lem c_0\sqrt{\e}(E[\xi,v](t) + \mathcal{E}_1[\omega](t))\leq C_6c_0\sqrt{\e}(\frac{1}{c_1}E_1[\xi,v](t) + \mathcal{E}_1[\omega](t)),
\end{array}
\end{equation}
for some $C_6>0$.

\vspace{0.3cm}
Let $\e_2=\min\{\frac{c_1^2c_6^2}{4C_6^2c_0^2},\frac{a^2}{C_6^2c_0^2}\}$. When $\e<\min\{1,\e_0,\e_1,\e_2\}$, we have
\begin{equation}\label{Sect3_Decay_5}
\begin{array}{ll}
\frac{\mathrm{d}}{\mathrm{d}t} [E_2[\xi,v](t) + \mathcal{E}_1[\omega](t)] + [\frac{c_6}{2} E_1[\xi,v](t)+ a\mathcal{E}_1[\omega](t)]\leq 0, \\[6pt]
\frac{\mathrm{d}}{\mathrm{d}t} [E_2[\xi,v](t) + \mathcal{E}_1[\omega](t)] + [\frac{c_6c_1}{2c_5} E_2[\xi,v](t)+ a\mathcal{E}_1[\omega](t)]\leq 0, \\[6pt]
\frac{\mathrm{d}}{\mathrm{d}t} [E_2[\xi,v](t) + \mathcal{E}_1[\omega](t)] + c_7[E_2[\xi,v](t)+ \mathcal{E}_1[\omega](t)]\leq 0,
\end{array}
\end{equation}
where $c_7=\min\{\frac{c_6c_1}{2c_5},a\}$.

After integrating $(\ref{Sect3_Decay_5})_3$, we get
\begin{equation}\label{Sect3_Decay_6}
\begin{array}{ll}
E_2[\xi,v](t) + \mathcal{E}_1[\omega](t)\leq (E_2[\xi,v](0) + \mathcal{E}_1[\omega](0))\exp\{-c_7 t\}, \\[6pt]
\mathcal{E}_1[\omega](t)\leq (E_2[\xi,v](0) + \mathcal{E}_1[\omega](0))\exp\{-c_7 t\} \\[6pt]\hspace{1.2cm}
\leq (c_5 E[\xi,v](0) + \mathcal{E}[v](0))\exp\{-c_7 t\} \\[6pt]\hspace{1.2cm}
\leq (c_5+1)\mathcal{E}[\xi,v](0)\exp\{-c_7 t\}. \\[6pt]\hspace{1.2cm}
\leq C_7(c_5+1)\|(\xi_0,v_0)\|_{H^3(\Omega)}^2\exp\{-c_7 t\}. \\[8pt]

c_4E[\xi,v](t)\leq E_2[\xi,v](t)\leq (E_2[\xi,v](0) + \mathcal{E}_1[\omega](0))\exp\{-c_7 t\}, \\[6pt]

\mathcal{E}[\xi,v](t)\leq c_0(E[\xi,v](t) + \mathcal{E}_1[\omega](t)) \\[6pt]\hspace{1.4cm}
\leq (\frac{c_0}{c_4}+c_0)(E_2[\xi,v](0) + \mathcal{E}_1[\omega](0))\exp\{-c_7 t\} \\[6pt]\hspace{1.4cm}
\leq (\frac{c_0}{c_4}+c_0)(c_5+1)\mathcal{E}[\xi,v](0)\exp\{-c_7 t\} \\[6pt]\hspace{1.4cm}
\leq C_7(\frac{c_0}{c_4}+c_0)(c_5+1)\|(\xi_0,v_0)\|_{H^3(\Omega)}^2\exp\{-c_7 t\}
\end{array}
\end{equation}

Take $\beta_1=C_7(\frac{c_0}{c_4}+c_0)(c_5+1)$, $\beta_2=c_7$, $\beta_3=C_7(c_5+1)$, the exponential decay in $(\ref{Sect3_Decay_Lemma_Eq})$ is obtained. Thus, Lemma $\ref{Sect3_Decay_Lemma}$ is proved.
\end{proof}

Finally, we prove the uniform bound of $S-\bar{S}$ and its derivatives $\mathcal{D}^{\alpha}\partial_t^{\ell}S$ and the exponential decay of $\partial_t^{\ell}S$ and $|\partial_t S|_{\infty}$ under the condition that $v$ decays exponentially. $S-\bar{S}$ and $\mathcal{D}^{\alpha}S$ may not decay due to the transportation of $S$, but they are uniformly bounded.

The following lemma concerns the uniform bound of $\mathcal{E}[\phi](t)$ on the condition that $v$ decays exponentially.
\begin{lemma}\label{Sect3_Entropy_Lemma}
For any given $T\in (0,+\infty]$, if
\begin{equation*}
\sup\limits_{0\leq t\leq T} \mathcal{E}[\xi,v,\phi](t) \leq\e,
\end{equation*}
where $0<\e\ll \min\{1,\e_0,\e_1,\e_2\}$, then for $\forall t\in [0,T]$,
\begin{equation}\label{Sect3_Entropy_toProve_1}
\frac{\mathrm{d}}{\mathrm{d}t}\mathcal{E}[\phi](t)\leq \beta_4\mathcal{E}[v](t)^{\frac{1}{2}}\mathcal{E}[\phi](t).
\end{equation}

If $\mathcal{E}[v](t)\leq \beta_1 \|(\xi_0,v_0)\|_{H^3(\Omega)}^2\exp\{-\beta_2 t\}$, then $\mathcal{E}[\phi](t)$ has uniform bound:
\begin{equation}\label{Sect3_Entropy_toProve_2}
\mathcal{E}[\phi](t) \leq \beta_5\|\phi_0\|_{H^3(\Omega)}^2
\left(\exp\{\|(\xi_0,v_0)\|_{H^3(\Omega)}\}\right)^{c_8}.
\end{equation}
for some $c_8>0$.
\end{lemma}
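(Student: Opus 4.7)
The plan is to derive a differential inequality for $\mathcal{E}[\phi](t)$ directly from the transport equation $(\ref{Sect2_Final_Eq})_3$, namely $\phi_t=-k_1 v\cdot\nabla\phi$, and then close the uniform bound by a Gr\"onwall argument that exploits the already-established exponential decay of $\mathcal{E}[v](t)$. First I would apply $\partial_t^{\ell}\mathcal{D}^{\alpha}$ to $(\ref{Sect2_Final_Eq})_3$ for every pair $(\ell,\alpha)$ with $\ell+|\alpha|\leq 3$, multiply the resulting identity by $\partial_t^{\ell}\mathcal{D}^{\alpha}\phi$, and integrate over $\Omega$. The principal transport contribution
\begin{equation*}
k_1\int_{\Omega} v\cdot\nabla(\partial_t^{\ell}\mathcal{D}^{\alpha}\phi)\,\partial_t^{\ell}\mathcal{D}^{\alpha}\phi\,\mathrm{d}x
=\frac{k_1}{2}\int_{\partial\Omega}(v\cdot n)|\partial_t^{\ell}\mathcal{D}^{\alpha}\phi|^2\,\mathrm{d}S_x
-\frac{k_1}{2}\int_{\Omega}(\nabla\cdot v)|\partial_t^{\ell}\mathcal{D}^{\alpha}\phi|^2\,\mathrm{d}x
\end{equation*}
loses its boundary piece thanks to $v\cdot n|_{\partial\Omega}=0$, and its interior piece is controlled by $|\nabla\cdot v|_{\infty}\|\partial_t^{\ell}\mathcal{D}^{\alpha}\phi\|_{L^2(\Omega)}^2\lem \mathcal{E}[v](t)^{1/2}\mathcal{E}[\phi](t)$.

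The remaining pieces are commutator terms of the form $\partial_t^{m}\mathcal{D}^{\beta}v\cdot\nabla\partial_t^{\ell-m}\mathcal{D}^{\alpha-\beta}\phi$ with $(m,\beta)\neq(0,0)$. In each triple product $\int_{\Omega}(\partial_t^{m}\mathcal{D}^{\beta}v)\cdot\nabla(\partial_t^{\ell-m}\mathcal{D}^{\alpha-\beta}\phi)\,\partial_t^{\ell}\mathcal{D}^{\alpha}\phi\,\mathrm{d}x$ the total derivative count across the three factors is four, and the 3D Sobolev embeddings $H^2(\Omega)\hookrightarrow L^{\infty}(\Omega)$ and $H^1(\Omega)\hookrightarrow L^4(\Omega)$ always allow the lowest-order factor to be placed in $L^{\infty}$ and the other two in $L^2\times L^2$ or $L^4\times L^4$. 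A case-by-case check (carrying $1$, $2$, or $3$ derivatives on $v$) bounds every such product by $\mathcal{E}[v](t)^{1/2}\mathcal{E}[\phi](t)$. Summing the principal and commutator estimates over $\ell+|\alpha|\leq 3$ yields the first assertion
\begin{equation*}
\frac{\mathrm{d}}{\mathrm{d}t}\mathcal{E}[\phi](t)\leq \beta_4\,\mathcal{E}[v](t)^{1/2}\mathcal{E}[\phi](t).
\end{equation*}

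For the uniform bound, substitute the hypothesised decay $\mathcal{E}[v](t)\leq \beta_1\|(\xi_0,v_0)\|_{H^3(\Omega)}^2\exp\{-\beta_2 t\}$ into the differential inequality and apply Gr\"onwall. Because $\int_0^{\infty}\mathcal{E}[v](s)^{1/2}\,\mathrm{d}s\leq \frac{2\sqrt{\beta_1}}{\beta_2}\|(\xi_0,v_0)\|_{H^3(\Omega)}$ is finite, the Gr\"onwall multiplier stays uniformly bounded by $\exp\{c_8\|(\xi_0,v_0)\|_{H^3(\Omega)}\}$ with $c_8:=\frac{2\beta_4\sqrt{\beta_1}}{\beta_2}$. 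Finally $\mathcal{E}[\phi](0)\lem \|\phi_0\|_{H^3(\Omega)}^2$ is obtained by iteratively substituting $\phi_t=-k_1 v\cdot\nabla\phi$ at $t=0$ to express each temporal derivative in terms of $\nabla\phi_0$, $v_0$ and their spatial derivatives, and then absorbing the resulting powers of $\|v_0\|_{H^3(\Omega)}\leq\sqrt{\e}$ into the constant $\beta_5$. The delicate point in the whole argument is that every commutator must donate at least one derivative to $v$: if any triple product produced a prefactor independent of $v$, the Gr\"onwall exponent would grow linearly in $t$ and the uniform bound would fail. Since $(m,\beta)\neq(0,0)$ in every commutator the $v$-factor always carries at least one derivative, and this is precisely what makes the time-integrated $\mathcal{E}[v]^{1/2}$ summable and the uniform-in-time bound $(\ref{Sect3_Entropy_toProve_2})$ possible.
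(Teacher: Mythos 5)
Your proposal is correct and follows essentially the same route as the paper: apply $\partial_t^{\ell}\mathcal{D}^{\alpha}$ to the transport equation, integrate by parts the top-order convective term (the boundary integral vanishing by $v\cdot n|_{\partial\Omega}=0$), bound the commutator terms via Sobolev embeddings to obtain $\frac{\mathrm{d}}{\mathrm{d}t}\mathcal{E}[\phi]\leq\beta_4\mathcal{E}[v]^{1/2}\mathcal{E}[\phi]$, and close with Gr\"onwall using the time-integrability of $\mathcal{E}[v]^{1/2}$; your closing observation about why every commutator must donate a derivative to $v$ is exactly the structural point the paper's case-by-case estimates implicitly rely on.
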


\begin{proof}
Apply $\partial_t^{\ell}\mathcal{D}^{\alpha}$ to $(\ref{Sect2_Final_Eq})_3$, $\ell+|\alpha|\leq 3$. We have
\begin{equation}\label{Sect3_Entropy_1}
\begin{array}{ll}
(\partial_t^{\ell}\mathcal{D}^{\alpha}\phi)_t = - k_1\partial_t^{\ell}\mathcal{D}^{\alpha}(v\cdot\nabla\phi).
\end{array}
\end{equation}

When $\ell+|\alpha|<3$, it is easy to check $(\ref{Sect3_Entropy_toProve_1})$, since they are lower order terms.

When $\ell=0,|\alpha|=3$, assume $\alpha=\alpha_1 + \alpha_2, |\alpha_1|=1, |\alpha_2|=2$.
\begin{equation}\label{Sect3_Entropy_2}
\begin{array}{ll}
(|\mathcal{D}^{\alpha}\phi|^2)_t =
- 2 k_1 [\mathcal{D}^{\alpha}v\cdot\nabla\phi
+(\mathcal{D}^{\alpha_1}v)\cdot\nabla(\mathcal{D}^{\alpha_2}\phi)
+(\mathcal{D}^{\alpha_2}v)\cdot\nabla(\mathcal{D}^{\alpha_1}\phi) \\[6pt]\hspace{2cm}
+ v\cdot\nabla(\mathcal{D}^{\alpha}\phi)]\mathcal{D}^{\alpha}\phi.
\end{array}
\end{equation}

Integrating $(\ref{Sect3_Entropy_2})$ in $\Omega$, we get
\begin{equation}\label{Sect3_Entropy_3}
\begin{array}{ll}
\frac{\mathrm{d}}{\mathrm{d}t}\int\limits_{\Omega}|\mathcal{D}^{\alpha}\phi|^2 \,\mathrm{d}x
= \int\limits_{\Omega} [R.H.S.\ of\ (\ref{Sect3_Entropy_2})] \,\mathrm{d}x \\[6pt]

\lem \|\mathcal{D}^{\alpha}\phi\|_{L^2(\Omega)}(|\nabla\phi|_{\infty}\|\mathcal{D}^{\alpha}v\|_{L^2(\Omega)}
+ |\mathcal{D}^{\alpha_1}v|_{\infty}\|\nabla(\mathcal{D}^{\alpha_2}\phi)\|_{L^2(\Omega)}
\\[6pt]\quad
+ \|\mathcal{D}^{\alpha_2}v\|_{L^4(\Omega)}\|\nabla(\mathcal{D}^{\alpha_1}\phi)\|_{L^4(\Omega)})

- k_1 \int\limits_{\partial\Omega}|\mathcal{D}^{\alpha}\phi|^2 v\cdot n  \,\mathrm{d}S_x \\[6pt]\quad
+ k_1 \int\limits_{\Omega}|\mathcal{D}^{\alpha}\phi|^2 \nabla\cdot v \,\mathrm{d}x

\lem \mathcal{E}[v](t)^{\frac{1}{2}}\mathcal{E}[\phi](t).
\end{array}
\end{equation}

When $\ell=1,|\alpha|=2$, assume $\alpha=\alpha_1 + \alpha_2, |\alpha_1|=1, |\alpha_2|=1$.
\begin{equation}\label{Sect3_Entropy_4}
\begin{array}{ll}
(|\mathcal{D}^{\alpha}\phi_t|^2)_t =
- 2 k_1 [\mathcal{D}^{\alpha}v_t\cdot\nabla\phi +\sum\limits_{\alpha_1+\alpha_2=\alpha}(\mathcal{D}^{\alpha_1}v_t)\cdot\nabla(\mathcal{D}^{\alpha_2}\phi)
+ v_t\cdot\nabla(\mathcal{D}^{\alpha}\phi) \\[6pt]\hspace{2cm}

+\mathcal{D}^{\alpha}v\cdot\nabla\phi_t
+\sum\limits_{\alpha_1+\alpha_2=\alpha}(\mathcal{D}^{\alpha_1}v)\cdot\nabla(\mathcal{D}^{\alpha_2}\phi_t)
+ v\cdot\nabla(\mathcal{D}^{\alpha}\phi_t)]\mathcal{D}^{\alpha}\phi_t.
\end{array}
\end{equation}

Integrating $(\ref{Sect3_Entropy_4})$ in $\Omega$, we get
\begin{equation}\label{Sect3_Entropy_5}
\begin{array}{ll}
\frac{\mathrm{d}}{\mathrm{d}t}\int\limits_{\Omega}|\mathcal{D}^{\alpha}\phi_t|^2 \,\mathrm{d}x
= \int\limits_{\Omega} [R.H.S.\ of\ (\ref{Sect3_Entropy_4})] \,\mathrm{d}x \\[6pt]

\lem (|\nabla\phi|_{\infty}\|\mathcal{D}^{\alpha}v_t\|_{L^2(\Omega)}
+ |v_t|_{\infty}\|\nabla\mathcal{D}^{\alpha}\phi\|_{L^2(\Omega)}
+ \|\mathcal{D}^{\alpha_1}v_t\|_{L^4(\Omega)}\|\nabla(\mathcal{D}^{\alpha_2}\phi)\|_{L^4(\Omega)} \\[6pt]\quad

+ \|\mathcal{D}^{\alpha}v\|_{L^4(\Omega)}\|\nabla\phi_t\|_{L^4(\Omega)}
+ |\mathcal{D}^{\alpha_1}v|_{\infty}\|\nabla(\mathcal{D}^{\alpha_2}\phi_t)\|_{L^2(\Omega)})
\|\mathcal{D}^{\alpha}\phi_t\|_{L^2(\Omega)} \\[6pt]\quad

- k_1 \int\limits_{\partial\Omega}|\mathcal{D}^{\alpha}\phi_t|^2 v\cdot n  \,\mathrm{d}S_x
+ k_1 \int\limits_{\Omega}|\mathcal{D}^{\alpha}\phi_t|^2 \nabla\cdot v \,\mathrm{d}x

\lem \mathcal{E}[v](t)^{\frac{1}{2}}\mathcal{E}[\phi](t).
\end{array}
\end{equation}

When $\ell=2,|\alpha|\leq 1$,
\begin{equation}\label{Sect3_Entropy_6}
\begin{array}{ll}
(|\mathcal{D}^{\alpha}\phi_{tt}|^2)_t =
- 2 k_1 [(\mathcal{D}^{\alpha}v_{tt})\cdot\nabla\phi +v_{tt}\cdot\nabla(\mathcal{D}^{\alpha}\phi)
+2(\mathcal{D}^{\alpha}v_t)\cdot\nabla\phi_t  \\[6pt]\hspace{2.1cm}
+2v_t\cdot\nabla(\mathcal{D}^{\alpha}\phi_t) + (\mathcal{D}^{\alpha}v)\cdot\nabla\phi_{tt}
+ v\cdot\nabla(\mathcal{D}^{\alpha}\phi_{tt})]\mathcal{D}^{\alpha}\phi_{tt}.
\end{array}
\end{equation}

Integrating $(\ref{Sect3_Entropy_6})$ in $\Omega$, we get
\begin{equation}\label{Sect3_Entropy_7}
\begin{array}{ll}
\frac{\mathrm{d}}{\mathrm{d}t}\int\limits_{\Omega}|\mathcal{D}^{\alpha}\phi_{tt}|^2 \,\mathrm{d}x
= \int\limits_{\Omega}[R.H.S.\ of\ (\ref{Sect3_Entropy_6})] \,\mathrm{d}x \\[6pt]

\lem (|\nabla\phi|_{\infty}\|\mathcal{D}^{\alpha}v_{tt}\|_{L^2(\Omega)}
+ |v_{tt}|_{L^4(\Omega)}\|\nabla\mathcal{D}^{\alpha}\phi\|_{L^4(\Omega)}
+ \|\mathcal{D}^{\alpha}v_t\|_{L^4(\Omega)}\|\nabla\phi_t\|_{L^4(\Omega)}
 \\[6pt]\quad
+ |v_t|_{\infty}\|\nabla(\mathcal{D}^{\alpha}\phi_t)\|_{L^2(\Omega)}
+ |\mathcal{D}^{\alpha}v|_{\infty}\|\nabla\phi_{tt}\|_{L^2(\Omega)})
\|\mathcal{D}^{\alpha}\phi_{tt}\|_{L^2(\Omega)} \\[6pt]\quad

- k_1 \int\limits_{\partial\Omega}|\mathcal{D}^{\alpha}\phi_{tt}|^2 v\cdot n  \,\mathrm{d}S_x
+ k_1 \int\limits_{\Omega}|\mathcal{D}^{\alpha}\phi_{tt}|^2 \nabla\cdot v \,\mathrm{d}x

\lem \mathcal{E}[v](t)^{\frac{1}{2}}\mathcal{E}[\phi](t).
\end{array}
\end{equation}

When $\ell=3,|\alpha|=0$,
\begin{equation}\label{Sect3_Entropy_8}
\begin{array}{ll}
(|\phi_{ttt}|^2)_t =
- 2 k_1 [v_{ttt}\cdot\nabla\phi + 3v_{tt}\cdot\nabla\phi_t + 3v_t\cdot\nabla\phi_{tt} +v\cdot\nabla\phi_{ttt}]\phi_{ttt}.
\end{array}
\end{equation}

Integrating in $\Omega$, we get
\begin{equation}\label{Sect3_Entropy_9}
\begin{array}{ll}
\frac{\mathrm{d}}{\mathrm{d}t}\int\limits_{\Omega}|\phi_{ttt}|^2 \,\mathrm{d}x
= \int\limits_{\Omega}[R.H.S.\ of\ (\ref{Sect3_Entropy_8})] \,\mathrm{d}x \\[6pt]

\lem (|\nabla\phi|_{\infty}\|v_{ttt}\|_{L^2(\Omega)}+ |v_{tt}|_{L^4(\Omega)}\|\nabla\phi_t\|_{L^4(\Omega)}
+ |v_t|_{\infty}\|\nabla\phi_{tt}\|_{L^2(\Omega)})\|\phi_{ttt}\|_{L^2(\Omega)} \\[6pt]\quad
- k_1 \int\limits_{\partial\Omega}|\phi_{ttt}|^2 v\cdot n  \,\mathrm{d}S_x
+ k_1 \int\limits_{\Omega}|\phi_{ttt}|^2 \nabla\cdot v \,\mathrm{d}x

\lem \mathcal{E}[v](t)^{\frac{1}{2}}\mathcal{E}[\phi](t).
\end{array}
\end{equation}

In views of $(\ref{Sect3_Entropy_3}),(\ref{Sect3_Entropy_5}),(\ref{Sect3_Entropy_7}),(\ref{Sect3_Entropy_9})$, we have, for some constant $\beta_4>0$,
\begin{equation}\label{Sect3_Entropy_10}
\begin{array}{ll}
\frac{\mathrm{d}}{\mathrm{d}t}\mathcal{E}[\phi](t)\leq \beta_4\mathcal{E}[v](t)^{\frac{1}{2}}\mathcal{E}[\phi](t), \\[6pt]
\mathcal{E}[\phi](t) \leq \mathcal{E}[\phi](0)
\exp\{\int\limits_{0}^{t}\beta_4\mathcal{E}[v](\tau)^{\frac{1}{2}}\,\mathrm{d}\tau\}. \\[6pt]
\end{array}
\end{equation}

If $\mathcal{E}[v](t)\leq \beta_1 \|(\xi_0,v_0)\|_{H^3(\Omega)}^2\exp\{-\beta_2 t\}$, then
\begin{equation}\label{Sect3_Entropy_11}
\begin{array}{ll}
\mathcal{E}[\phi](t) \leq \mathcal{E}[\phi](0)
\exp\{\int\limits_{0}^{t}\beta_4\mathcal{E}[v](s)^{\frac{1}{2}}\,\mathrm{d}s\} \\[6pt]\hspace{1.1cm}
\leq \mathcal{E}[\phi](0)
\exp\{\int\limits_{0}^{t}\beta_4\sqrt{\beta_1}\|(\xi_0,v_0)\|_{H^3(\Omega)}
\exp\{-\beta_2 s\}^{\frac{1}{2}}\,\mathrm{d}s\}
\\[6pt]\hspace{1.1cm}

\leq \beta_5\|\phi_0\|_{H^3(\Omega)}^2
\exp\{\frac{2\beta_4\sqrt{\beta_1}\|(\xi_0,v_0)\|_{H^3(\Omega)}}{\beta_2}(1-\exp\{-\frac{\beta_2}{2}t\})\}
\\[6pt]\hspace{1.1cm}

\leq \beta_5\|\phi_0\|_{H^3(\Omega)}^2
\exp\{\frac{2\beta_4\sqrt{\beta_1}}{\beta_2}\|(\xi_0,v_0)\|_{H^3(\Omega)}\} \\[6pt]\hspace{1.1cm}

=\beta_5\|\phi_0\|_{H^3(\Omega)}^2
\left(\exp\{\|(\xi_0,v_0)\|_{H^3(\Omega)}\}\right)^{c_8},
\end{array}
\end{equation}
where $c_8=\frac{2\beta_4\sqrt{\beta_1}}{\beta_2}$, $\beta_5>0$.

Therefore $\mathcal{E}[\phi](t)$ is uniformly bounded when $\mathcal{E}[v](t)$ decays exponentially.
Thus, Lemma $\ref{Sect3_Entropy_Lemma}$ is proved.
\end{proof}

The following lemma concerns the exponential decay of $\sum\limits_{\ell=1}^{3}\|\partial_t^{\ell}S\|_{H^{3-\ell}(\Omega)}^2$ on the condition that $v$ decays exponentially.
\begin{lemma}\label{Sect3_Entropy_Decay_Lemma}
For any given $T\in (0,+\infty]$, if
\begin{equation*}
\sup\limits_{0\leq t\leq T} \mathcal{E}[\xi,v,\phi](t) \leq\e,
\end{equation*}
where $0<\e\ll \min\{1,\e_0,\e_1,\e_2\}$, then for $\forall t\in [0,T]$,
\begin{equation}\label{Sect3_Entropy_T_toProve}
\begin{array}{ll}
\sum\limits_{\ell=1}^{3}\|\partial_t^{\ell}\phi\|_{H^{3-\ell}(\Omega)}^2
\lem \mathcal{E}[v](t)\mathcal{E}[\phi](t) \\[6pt]
\leq c_9\|(\xi_0,v_0)\|_{H^3(\Omega)}^2\|\phi_0\|_{H^3(\Omega)}^2
\left(\exp\{\|(\xi_0,v_0)\|_{H^3(\Omega)}\}\right)^{c_8}\exp\{-\beta_2 t\},
\end{array}
\end{equation}
for some $c_9>0$.
\end{lemma}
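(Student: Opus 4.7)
The plan is to exploit the transport equation $(\ref{Sect2_Final_Eq})_3$, $\phi_t=-k_1 v\cdot\nabla\phi$, which has the key feature that every time derivative of $\phi$ comes automatically multiplied by a factor of $v$ or one of its derivatives. For $1\leq\ell\leq 3$ and $|\alpha|\leq 3-\ell$, I apply $\partial_t^{\ell-1}\mathcal{D}^\alpha$ and the Leibniz rule to obtain
\begin{equation*}
\partial_t^\ell \mathcal{D}^\alpha\phi
= -k_1 \sum_{\substack{\ell_1+\ell_2=\ell-1 \\ \alpha_1+\alpha_2=\alpha}} C_{\ell_1,\alpha_1}\,
  (\partial_t^{\ell_1}\mathcal{D}^{\alpha_1}v)\cdot (\partial_t^{\ell_2}\mathcal{D}^{\alpha_2}\nabla\phi),
\end{equation*}
so that the derivative count on $v$ is at most $\ell-1+|\alpha|\leq 2$ and on $\phi$ (counting the gradient) at most $\ell+|\alpha|\leq 3$. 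These are exactly the regularities controlled by $\mathcal{E}[v](t)$ and $\mathcal{E}[\phi](t)$.

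Next I estimate each bilinear product in $L^2(\Omega)$ by H\"older together with $H^2(\Omega)\hookrightarrow L^\infty(\Omega)$ and $H^1(\Omega)\hookrightarrow L^4(\Omega)$, placing the higher-order factor in $L^2$ and the lower-order factor in $L^\infty$ when the split is sufficiently unbalanced, and both factors in $L^4$ otherwise. Since one factor always comes from $v$ and the other from $\phi$, every product is bounded by a constant times $\mathcal{E}[v](t)^{1/2}\mathcal{E}[\phi](t)^{1/2}$. Squaring and summing over $\ell$ and $\alpha$ gives
\begin{equation*}
\sum_{\ell=1}^{3}\|\partial_t^\ell\phi\|_{H^{3-\ell}(\Omega)}^2 \lem \mathcal{E}[v](t)\mathcal{E}[\phi](t),
\end{equation*}
which is the first inequality in $(\ref{Sect3_Entropy_T_toProve})$.

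For the second inequality I substitute the exponential decay of $\mathcal{E}[v](t)$ from Lemma \ref{Sect3_Decay_Lemma} and the uniform bound on $\mathcal{E}[\phi](t)$ from Lemma \ref{Sect3_Entropy_Lemma}; the product combines into the stated expression with $c_9$ absorbing $\beta_1\beta_5$ and the implicit constant from the bilinear estimate, while the exponential factor $(\exp\{\|(\xi_0,v_0)\|_{H^3(\Omega)}\})^{c_8}$ passes through unchanged. The only real obstacle is the combinatorial bookkeeping of Leibniz splits, in particular verifying that the balanced middle cases $(\ell,|\alpha|)\in\{(1,2),(2,1)\}$ fit into the $L^4\cdot L^4$ regime via $H^1\hookrightarrow L^4$, so that no split forces more than three derivatives on $\phi$ or two on $v$. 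Once this is enumerated, exactly as in the proof of Lemma \ref{Sect3_Entropy_Lemma}, no integration by parts, boundary terms, or cancellations are needed, and the bilinear estimate follows.
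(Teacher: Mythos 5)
Your proposal is correct and follows essentially the same route as the paper: both substitute the transport equation $\phi_t=-k_1 v\cdot\nabla\phi$ so that every time derivative of $\phi$ carries a factor of $v$ (or its derivatives), then close the bilinear estimate $\lem \mathcal{E}[v](t)\mathcal{E}[\phi](t)$ by H\"older together with the Sobolev embeddings $H^1\hookrightarrow L^4$ and $H^2\hookrightarrow L^\infty$, and finally plug in the exponential decay of $\mathcal{E}[v]$ from Lemma \ref{Sect3_Decay_Lemma} and the uniform bound of $\mathcal{E}[\phi]$ from Lemma \ref{Sect3_Entropy_Lemma}. The paper writes out $\phi_t,\phi_{tt},\phi_{ttt}$ and their $H^{3-\ell}$ norms case by case while you package the same computation as a single Leibniz sum with the derivative count $\ell_1+|\alpha_1|\leq 2$ on $v$ and $\ell_2+|\alpha_2|+1\leq 3$ on $\phi$, but the bookkeeping, the placement of factors in $L^2/L^4/L^\infty$, and the absence of any integration by parts are identical.
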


\begin{proof}
It follows from Lemma $\ref{Sect3_Decay_Lemma}$ that
$\mathcal{E}[v](t)\leq \beta_1\|(\xi_0,v_0)\|_{H^3(\Omega)}^2\exp\{-\beta_2 t\}$.
It follows from Lemma $\ref{Sect3_Entropy_Lemma}$ that
$\mathcal{E}[\phi](t) \leq \beta_5\|\phi_0\|_{H^3(\Omega)}^2
\left(\exp\{\|(\xi_0,v_0)\|_{H^3(\Omega)}\}\right)^{c_8}$. \\[10pt]
\indent
By $\phi_t = -k_1 v\cdot\nabla\phi$, we get
\begin{equation}\label{Sect3_Entropy_T_Prove1}
\begin{array}{ll}
\|\phi_t\|_{H^{2}(\Omega)}^2 =
k_1^2 \sum\limits_{|\alpha|\leq 2}\int\limits_{\Omega}|\mathcal{D}^{\alpha}v|^2 |\nabla\phi|^2 \,\mathrm{d}x
+k_1^2 \sum\limits_{|\alpha|\leq 2}
\int\limits_{\Omega}|v|^2 |\mathcal{D}^{\alpha}\nabla\phi|^2 \,\mathrm{d}x \\[6pt]\hspace{2cm}

+ k_1^2\sum\limits_{|\alpha_1|\leq 1,|\alpha_2|\leq 1}\int\limits_{\Omega}|\mathcal{D}^{\alpha_1}v|^2 |\mathcal{D}^{\alpha_2}\nabla\phi|^2 \,\mathrm{d}x \\[6pt]\hspace{1.6cm}

\lem \sum\limits_{|\alpha|\leq 2}|\nabla\phi|_{\infty}^2\|\mathcal{D}^{\alpha}v\|_{L^2(\Omega)}^2
+ \sum\limits_{|\alpha|\leq 2}|v|_{\infty}^2\|\mathcal{D}^{\alpha}\nabla\phi\|_{L^2(\Omega)}^2 \\[6pt]\hspace{2cm}
+ \sum\limits_{|\alpha_1|\leq 1,|\alpha_2|\leq 1}|\mathcal{D}^{\alpha_1} v|_{\infty}^2\|\mathcal{D}^{\alpha_2}\nabla\phi\|_{L^2(\Omega)}^2
\\[6pt]\hspace{1.6cm}

\lem \mathcal{E}[v](t)\mathcal{E}[\phi](t).
\end{array}
\end{equation}
where $0\leq|\alpha_1|\leq 1,0\leq|\alpha_2|\leq 1$.

\vspace{0.3cm}
By $\phi_{tt} = -k_1 v_t\cdot\nabla\phi -k_1 v\cdot\nabla\phi_t$, we get
\begin{equation}\label{Sect3_Entropy_T_Prove2}
\begin{array}{ll}
\|\phi_{tt}\|_{H^{1}(\Omega)}^2 \leq k_1^2\sum\limits_{0\leq|\alpha|\leq 1}\int\limits_{\Omega}
(\mathcal{D}^{\alpha}v_t\cdot\nabla\phi)^2 +(v_t\cdot\nabla\mathcal{D}^{\alpha}\phi)^2
+(\mathcal{D}^{\alpha}v\cdot\nabla\phi_t)^2 \\[6pt]\hspace{2.1cm}
+(v\cdot\nabla\mathcal{D}^{\alpha}\phi_t)^2 \,\mathrm{d}x, \\[6pt]\hspace{1.7cm}

\lem \sum\limits_{0\leq|\alpha|\leq 1}(
|\nabla\phi|_{\infty}^2\|\mathcal{D}^{\alpha}v_t\|_{L^2(\Omega)}^2
+|v_t|_{\infty}^2\|\nabla\mathcal{D}^{\alpha}\phi\|_{L^2(\Omega)}^2 \\[6pt]\hspace{2.1cm}
+|\mathcal{D}^{\alpha}v|_{\infty}^2\|\nabla\phi_t\|_{L^2(\Omega)}^2
+|v|_{\infty}^2\|\nabla\mathcal{D}^{\alpha}\phi_t\|_{L^2(\Omega)}^2) \\[6pt]\hspace{1.7cm}
\lem \mathcal{E}[v](t)\mathcal{E}[\phi](t).
\end{array}
\end{equation}

By $\phi_{ttt} = -k_1 v_{tt}\cdot\nabla\phi -2k_1 v_t\cdot\nabla\phi_t -k_1 v\cdot\nabla\phi_{tt}$, we get
\begin{equation}\label{Sect3_Entropy_T_Prove3}
\begin{array}{ll}
\|\phi_{ttt}\|_{L^{2}(\Omega)}^2 \leq k_1^2\int\limits_{\Omega}(v_{tt}\cdot\nabla\phi)^2 \,\mathrm{d}x
+ 4k_1^2\int\limits_{\Omega}(v_t\cdot\nabla\phi_t)^2 \,\mathrm{d}x
+ k_1^2\int\limits_{\Omega}(v\cdot\nabla\phi_{tt})^2 \,\mathrm{d}x \\[6pt]\hspace{1.7cm}
\lem |\nabla\phi|_{\infty}^2\|v_{tt}\|_{L^2(\Omega)}^2
+ |v_t|_{\infty}^2\|\nabla\phi_t\|_{L^2(\Omega)}^2
+ |v|_{\infty}^2\|\nabla\phi_{tt}\|_{L^2(\Omega)}^2 \\[6pt]\hspace{1.7cm}
\lem \mathcal{E}[v](t)\mathcal{E}[\phi](t).
\end{array}
\end{equation}

Summing $(\ref{Sect3_Entropy_T_Prove1}),(\ref{Sect3_Entropy_T_Prove2}),(\ref{Sect3_Entropy_T_Prove3})$, we get
\begin{equation}\label{Sect3_Entropy_T_Prove4}
\begin{array}{ll}
\sum\limits_{\ell=1}^{3}\|\partial_t^{\ell}\phi\|_{H^{3-\ell}(\Omega)}^2
\lem \mathcal{E}[v](t)\mathcal{E}[\phi](t) \\[6pt]\quad
\leq c_9\|(\xi_0,v_0)\|_{H^3(\Omega)}^2\|\phi_0\|_{H^3(\Omega)}^2
\left(\exp\{\|(\xi_0,v_0)\|_{H^3(\Omega)}\}\right)^{c_8}\exp\{-\beta_2 t\},
\end{array}
\end{equation}
where $c_9=\beta_1\beta_5$. Thus, Lemma $\ref{Sect3_Entropy_Decay_Lemma}$ is proved.
\end{proof}

\begin{remark}\label{Sect3_Varrho_Remark}
When $\mathcal{E}[p-\bar{p}](t)$ and $\mathcal{E}[S-\bar{S}](t)$ are uniformly bounded, $\mathcal{E}[\varrho-\bar{\varrho}](t)$ is also uniformly bounded due to $\varrho = \frac{1}{\sqrt[\gamma]{A}}p^{\frac{1}{\gamma}}\exp\{-\frac{S}{\gamma}\}$.

After differentiating this formula with respect to $t$, we have
\begin{equation}\label{Sect3_Estimate_Density_1}
\begin{array}{ll}
\|\partial_t\varrho\|_{H^2(\Omega)}^2
\leq \beta_1\|(\xi_0,v_0)\|_{H^3(\Omega)}^2\exp\{-\beta_2 t\} \\[6pt]\hspace{2cm}
+c_9\|(\xi_0,v_0)\|_{H^3(\Omega)}^2\|\phi_0\|_{H^3(\Omega)}^2
\left(\exp\{\|(\xi_0,v_0)\|_{H^3(\Omega)}\}\right)^{c_8}\exp\{-\beta_2 t\},
\end{array}
\end{equation}
Similarly,
\begin{equation}\label{Sect3_Estimate_Density_2}
\begin{array}{ll}
\|\partial_{tt}\varrho\|_{H^1(\Omega)}^2
\leq C\exp\{-\beta_2 t\} +C\exp\{-2\beta_2 t\}, \\[6pt]

\|\partial_{ttt}\varrho\|_{L^2(\Omega)}^2
\leq C\exp\{-\beta_2 t\} + C\exp\{-2\beta_2 t\} \ + C\exp\{-3\beta_2 t\}.
\end{array}
\end{equation}

Thus, for any given $T\in (0,+\infty]$, if $\sup\limits_{0\leq t\leq T} \mathcal{E}[\xi,v,\phi](t) \leq\e$,
where $0<\e\ll \min\{1,\e_0,\e_1,\e_2\}$, then $\sum\limits_{\ell=1}^{3}\|\partial_t^{\ell}\varrho\|_{H^{3-\ell}(\Omega)}^2$ also decays at an exponential rate of $C\exp\{-\beta_2 t\}$.
\end{remark}

%%% find 4
\section{Global Existence and Equilibrium States of Non-Isentropic Euler Equations with Damping}
In this section, we prove the global existence of classical solutions to the non-isentropic Euler equations with damping $(\ref{Sect2_Final_Eq})$ under small data assumption and the singularity formation for a class of large data.

The proof of local existence of classical solutions to IBVP $(\ref{Sect2_Final_Eq})$ is standard (see \cite{Majda_1984},\cite{Schochet_1986}), so we give a lemma on the local existence without proof here.

\begin{lemma}\label{Sect4_LocalExistence}
$(Local\ Existence)$\\[6pt]
If $(\xi_0,v_0,\phi_0)\in H^3(\Omega)$, $\inf\limits_{x\in\Omega}p_0(x)>0$ and $\partial_t^{\ell} v(x,0)\cdot n|_{\partial\Omega}=0$, $0\leq \ell\leq 3$, then there exists a finite time $T_{\ast}>0$, such that IBVP (\ref{Sect2_Final_Eq}) admits a unique local classical solution $(\xi,v,\phi)\in \underset{0\leq \ell\leq 3}{\cap}C^{\ell}([0,T_{\ast}),H^{3-\ell}(\Omega))$.
\end{lemma}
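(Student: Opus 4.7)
The plan is to construct the local solution by a Picard-type iteration adapted to the symmetric hyperbolic structure of the pressure-velocity block, combined with transport along characteristics for the entropy. First, observe that the principal part of $(\ref{Sect2_Final_Eq})_1$ and $(\ref{Sect2_Final_Eq})_2$, namely $\partial_t(\xi,v)+k_2(\nabla\cdot v,\nabla\xi)$, is Friedrichs-symmetric with constant coefficients, and the characteristic boundary condition $v\cdot n|_{\partial\Omega}=0$ is maximally non-negative in the sense that the boundary term arising in the basic $L^2$ energy identity, $\int_{\partial\Omega}\xi\, v\cdot n\,\mathrm{d}S_x$, vanishes identically. The nonlinear coefficients and the factor $\frac{1}{\varrho}-\frac{1}{\bar{\varrho}}$ depend smoothly on $(\xi,\phi)$ so long as $\xi>-\bar{p}$, which by continuity and $\inf_\Omega p_0>0$ holds on some short time interval.

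The iteration I would set up freezes the coefficients $v^{(n)}$ and $\varrho^{(n)}=\varrho(\xi^{(n)},\phi^{(n)})$ inside the convective terms and inside $\frac{1}{\bar{\varrho}}-\frac{1}{\varrho}$, producing at each step a linear symmetric hyperbolic IBVP for $(\xi^{(n+1)},v^{(n+1)})$ with characteristic boundary, plus a linear transport equation $\phi_t^{(n+1)}=-k_1 v^{(n)}\cdot\nabla\phi^{(n+1)}$. The linear hyperbolic IBVP with characteristic boundary on a smooth bounded domain admits a unique $H^3$ solution on any time interval on which its coefficients remain regular, by the theory of Friedrichs--Lax--Phillips extended to characteristic boundaries in \cite{Majda_1984,Schochet_1986}; the transport equation is solved along the integral curves of $k_1 v^{(n)}$, which never exit $\Omega$ since $v^{(n)}\cdot n|_{\partial\Omega}=0$.

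Next, I would derive uniform-in-$n$ a priori bounds in $\underset{0\leq \ell\leq 3}{\cap}C^{\ell}([0,T_\ast],H^{3-\ell}(\Omega))$ on a short interval $[0,T_\ast]$. The principal obstacle is that the characteristic boundary controls only the normal component of $v$, so that direct tangential-normal $H^3$ estimates lose a derivative at $\partial\Omega$. The remedy is the one already used in Section 3: estimate first the purely temporal derivatives $\partial_t^\ell(\xi,v)$ for $0\leq\ell\leq 3$, which by the compatibility hypothesis $\partial_t^\ell v(x,0)\cdot n|_{\partial\Omega}=0$ inherit the characteristic boundary condition and hence admit the vanishing-boundary-term energy identity, then estimate $\omega^{(n+1)}=\nabla\times v^{(n+1)}$ by $\nabla\times(\ref{Sect2_Final_Eq})_2$, and finally recover the full spatial $H^3$ regularity from Lemma~\ref{Sect3_DivCurl_Decomposition} together with the identities expressing $\nabla\xi$ and $\nabla\cdot v$ in terms of the temporal derivatives (exactly as in Lemma~\ref{Sect3_Total_Energy_Lemma}). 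The compatibility conditions on $\partial_t^\ell v(x,0)\cdot n$ are precisely what make these norms finite at $t=0$.

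Finally, I would establish convergence of the iteration in the weaker space $\underset{0\leq \ell\leq 2}{\cap}C^{\ell}([0,T_\ast],H^{2-\ell}(\Omega))$ by an analogous energy estimate on the differences $(\xi^{(n+1)}-\xi^{(n)},v^{(n+1)}-v^{(n)},\phi^{(n+1)}-\phi^{(n)})$, producing a contraction for $T_\ast$ small enough; combined with the uniform $H^3$ bound and weak-$\ast$ compactness, this yields the limit $(\xi,v,\phi)\in \underset{0\leq \ell\leq 3}{\cap}C^{\ell}([0,T_\ast),H^{3-\ell}(\Omega))$, and continuity in the strong $H^{3-\ell}$ topology follows from the standard Bona--Smith type argument. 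Uniqueness is an immediate consequence of the same difference estimate applied to two putative solutions. The hardest single step is the closure of the $H^3$ estimate in the presence of the characteristic boundary, and this is exactly where the compatibility conditions on $\partial_t^\ell v\cdot n|_{\partial\Omega}$ are indispensable.
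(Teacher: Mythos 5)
Your proposal is correct, and it is essentially the approach the paper itself defers to: the paper gives no proof of this lemma, instead invoking the standard local existence theory for symmetric hyperbolic systems with characteristic boundary in \cite{Majda_1984,Schochet_1986}, and your sketch (linearized iteration for the $(\xi,v)$ block with the maximally non-negative slip condition, transport along particle paths for $\phi$, recovery of full $H^3$ regularity from time derivatives plus the vorticity via Lemma~\ref{Sect3_DivCurl_Decomposition}, contraction in a lower norm, and the compatibility conditions $\partial_t^{\ell}v(x,0)\cdot n|_{\partial\Omega}=0$) is exactly that standard argument, in particular Schochet's treatment of the characteristic boundary. Nothing needs correcting.
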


Based on the global a priori estimates for $(\xi,v,\phi)$, we obtained the global existence of classical solutions to IBVP $(\ref{Sect2_Final_Eq})$.

\begin{theorem}\label{Sect4_GlobalExistence_Thm}
$(Global\ Existence)$\\[6pt]
Assume $(\xi_0,v_0,\phi_0)\in H^3(\Omega)$, $\inf\limits_{x\in\Omega}p_0(x)>0$, $\partial_t^{\ell} v(x,0)\cdot n|_{\partial\Omega}=0$, $0\leq \ell\leq 3$.
There exists a sufficiently small number $\delta_1>0$, such that if $\|\xi_0,v_0,\phi_0\|_{H^3(\Omega)}\leq \delta_1$, then IBVP $(\ref{Sect2_Final_Eq})$ admits a unique global classical solution $$(\xi,v,\phi)\in \underset{0\leq \ell\leq 3}{\cap}C^{\ell}([0,+\infty),H^{3-\ell}(\Omega)),$$
moreover, $\varrho=\varrho(\xi,\phi)\in \underset{0\leq \ell\leq 3}{\cap}C^{\ell}([0,+\infty),H^{3-\ell}(\Omega)).$
$\forall t\geq 0$, $\mathcal{E}[\xi,v](t)$, $\mathcal{E}_1[\omega](t)$ and $\sum\limits_{\ell=1}^{3}\|\partial_t^{\ell}\phi\|_{H^{3-\ell}(\Omega)}^2$ decays exponentially,
$\mathcal{E}[\phi](t)$ is uniformly bounded.
\end{theorem}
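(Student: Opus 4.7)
The plan is to combine the local existence result of Lemma \ref{Sect4_LocalExistence} with the global a priori estimates derived in Section 3 via a standard continuation (bootstrap) argument. The key observation is that the a priori estimates in Lemmas \ref{Sect3_Decay_Lemma}, \ref{Sect3_Entropy_Lemma}, \ref{Sect3_Entropy_Decay_Lemma} are conditional on the smallness assumption $\sup_{0\le t\le T}\mathcal{E}[\xi,v,\phi](t)\le \varepsilon$, so I need to close this hypothesis by choosing the initial data small enough that the assumed smallness is strictly improved for all time.

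First I would apply Lemma \ref{Sect4_LocalExistence} to obtain a unique classical solution on a maximal interval $[0,T_{\max})$. Define $T^*=\sup\{T\in[0,T_{\max}):\sup_{0\le t\le T}\mathcal{E}[\xi,v,\phi](t)\le \varepsilon\}$ for a fixed small $\varepsilon\ll \min\{1,\varepsilon_0,\varepsilon_1,\varepsilon_2\}$. By continuity of the local solution in $H^3$ and the smallness of the initial data, $T^*>0$. On $[0,T^*)$, all hypotheses of Lemmas \ref{Sect3_Varrho_Lemma}--\ref{Sect3_Entropy_Decay_Lemma} are satisfied. In particular, Lemma \ref{Sect3_Decay_Lemma} yields
\begin{equation*}
\mathcal{E}[\xi,v](t)\le \beta_1\|(\xi_0,v_0)\|_{H^3(\Omega)}^2 \exp\{-\beta_2 t\},\quad
\mathcal{E}_1[\omega](t)\le \beta_3\|(\xi_0,v_0)\|_{H^3(\Omega)}^2\exp\{-\beta_2 t\},
\end{equation*}
while Lemma \ref{Sect3_Entropy_Lemma} gives the uniform bound $\mathcal{E}[\phi](t)\le \beta_5\|\phi_0\|_{H^3(\Omega)}^2 (\exp\{\|(\xi_0,v_0)\|_{H^3(\Omega)}\})^{c_8}$. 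Summing these,
\begin{equation*}
\mathcal{E}[\xi,v,\phi](t)\le \beta_1\delta_1^2+\beta_5\delta_1^2\exp(c_8\delta_1)
\end{equation*}
for $t\in[0,T^*)$. Hence, if I choose $\delta_1>0$ sufficiently small that this right-hand side is $\le \varepsilon/2$, then $\sup_{0\le t\le T^*}\mathcal{E}[\xi,v,\phi](t)\le \varepsilon/2$. This strict improvement together with continuity forces $T^*=T_{\max}$, and the uniform-in-time $H^3$ bound rules out any blow-up of the $H^3$ norm, so the standard continuation criterion gives $T_{\max}=+\infty$.

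Once global existence is secured, the announced regularity $(\xi,v,\phi)\in\cap_{0\le \ell\le 3}C^{\ell}([0,+\infty),H^{3-\ell}(\Omega))$ follows directly from the definition of $\mathcal{E}[\xi,v,\phi](t)$ together with the evolution equations \eqref{Sect2_Final_Eq}, which express higher temporal derivatives through spatial derivatives of lower order. The corresponding regularity of $\varrho=\varrho(\xi,\phi)$ is obtained from the smooth constitutive relation $\varrho=\frac{1}{\sqrt[\gamma]{A}}(\xi+\bar p)^{1/\gamma}\exp\{-(\phi+\bar S)/\gamma\}$ (using $\inf p>0$, which is preserved because $|\xi|_\infty\lesssim \sqrt{\varepsilon}$). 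The exponential decay of $\mathcal{E}[\xi,v](t)$, $\mathcal{E}_1[\omega](t)$ and $\sum_{\ell=1}^3\|\partial_t^\ell \phi\|_{H^{3-\ell}}^2$, as well as the uniform bound of $\mathcal{E}[\phi](t)$, are simply restatements of Lemmas \ref{Sect3_Decay_Lemma}, \ref{Sect3_Entropy_Lemma}, \ref{Sect3_Entropy_Decay_Lemma}, and Remark \ref{Sect3_Varrho_Remark} gives the decay of the time derivatives of $\varrho$.

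The main obstacle in this plan is calibrating the smallness: because $\mathcal{E}[\phi](t)$ does not decay but only grows like $\exp(c_8\|(\xi_0,v_0)\|_{H^3})$, I need to ensure that the exponential factor in Lemma \ref{Sect3_Entropy_Lemma} does not destroy the bootstrap. This is handled by choosing $\delta_1$ small enough that both $\beta_1\delta_1^2$ and $\beta_5\delta_1^2 e^{c_8\delta_1}$ are each less than $\varepsilon/4$, which is possible since the exponential factor is bounded on a bounded set of $\delta_1$. A subtler point is the compatibility condition $\partial_t^\ell v(x,0)\cdot n|_{\partial\Omega}=0$ for $0\le \ell\le 3$, which must be propagated in time; this is ensured because the equations preserve the characteristic boundary condition $v\cdot n=0$ at every order of time differentiation, as can be checked by applying $\partial_t^\ell$ to $(\ref{Sect2_Final_Eq})_2$ and dotting with $n$ on $\partial\Omega$.
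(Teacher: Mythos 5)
Your proposal is correct and follows essentially the same route as the paper: combine the local existence result (Lemma \ref{Sect4_LocalExistence}) with the conditional a priori estimates of Lemmas \ref{Sect3_Decay_Lemma} and \ref{Sect3_Entropy_Lemma}, and choose $\delta_1$ small enough that the assumed bound $\sup_{0\leq t\leq T}\mathcal{E}[\xi,v,\phi](t)\leq\e$ is self-improving, so the bootstrap closes and the solution extends to $[0,+\infty)$. Your explicit calibration of $\delta_1$ (handling the non-decaying $\mathcal{E}[\phi]$ term through the bounded exponential factor) matches the paper's choice in substance, and the extra remarks on regularity and compatibility are consistent with, though not required by, the paper's argument.
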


\begin{proof}
In view of Lemmas $\ref{Sect3_Decay_Lemma}$ and $\ref{Sect3_Entropy_Lemma}$, we have the following global a priori estimates: for any given $T\in (0,+\infty]$, if
\begin{equation}\label{Sect4_Assumption_1}
\sup\limits_{0\leq t\leq T} \mathcal{E}[\xi,v,\phi](t) \leq\e,
\end{equation}
where $0<\e\ll \min\{1,\e_0,\e_1,\e_2\}$, then
\begin{equation}\label{Sect4_Decay}
\begin{array}{ll}
\mathcal{E}[\xi,v](t)\leq \beta_1\|(\xi_0,v_0)\|_{H^3(\Omega)}^2\exp\{-\beta_2 t\}, \\[6pt]
\mathcal{E}[\phi](t)\leq \beta_5\|\phi_0\|_{H^3(\Omega)}^2
\left(\exp\{\|(\xi_0,v_0)\|_{H^3(\Omega)}\}\right)^{c_8}.
\end{array}
\end{equation}

The constants $\e_0,\e_1,\e_2$ are independent of $(\xi_0,v_0,\phi_0)$, so we can choose $\e$ which is independent of $(\xi_0,v_0,\phi_0)$.

Take $\delta_1=\min\{\sqrt{\e},\sqrt{\frac{\e}{2\beta_1}},\sqrt{\frac{\e}{2\beta_5}}
\left(\exp\{\sqrt{\frac{\e}{2\beta_1}}\}\right)^{-\frac{c_8}{2}}\}$, then if $\mathcal{E}[\xi,v,\phi](0)\leq\delta_1$, we have
\begin{equation}\label{Sect4_Data_Condition}
\left\{\begin{array}{ll}
\|(\xi_0,v_0)\|_{H^3(\Omega)} \leq \sqrt{\frac{\e}{2\beta_1}}, \\[6pt]
\|\phi_0\|_{H^3(\Omega)} \leq \sqrt{\frac{\e}{2\beta_5}}\left(\exp\{\sqrt{\frac{\e}{2\beta_1}}\}\right)^{-\frac{c_8}{2}}.
\end{array}\right.
\end{equation}

Due to the estimates in $(\ref{Sect4_Decay})$, the solutions $(\xi,v,\phi)$ satisfy
\begin{equation}\label{Sect4_Solution_Condition}
\begin{array}{ll}
\mathcal{E}[\xi,v](t)\leq \frac{\e}{2},\quad \mathcal{E}[\phi](t)\leq \frac{\e}{2},\quad \forall t\in [0,T].
\end{array}
\end{equation}
This implies the a priori assumption $(\ref{Sect4_Assumption_1})$ is satisfied, the validity of the former a priori estimates is verified.

Due to the global a priori estimates for $(\xi,v,\phi)$ and Lemma $\ref{Sect4_LocalExistence}$ on the local existence result, the classical solution $(\xi,v,\phi)$ can be extended to $[0,+\infty)$. Thus, Theorem $\ref{Sect4_GlobalExistence_Thm}$ on the global existence of classical solutions to IBVP (\ref{Sect2_Final_Eq}) is proved.
\end{proof}

\begin{remark}\label{Sect4_Friction_Coefficient}
Our proof requires $a\geq C\sqrt{\e}$ where $C>0$ is large enough. If $a\rto 0$,  $(p_0,u_0)\rto(\bar{p},0)$ is required.
\end{remark}

Since $(\xi,v,\phi)\in C^1(\Omega\times[0,+\infty))$ is the global classical solution to IBVP $(\ref{Sect2_Final_Eq})$, then $(p=\bar{p}+\xi,u=k_1v,S=\bar{S}+\phi)$ is the global classical solution to IBVP for non-isentropic Euler equations with damping $(\ref{Sect1_NonIsentropic_EulerEq})$.
The following theorem describes the asymptotical behavior of $(p,v,S,\varrho)$ relating to their equilibrium states $(p_{\infty},v_{\infty},S_{\infty},\varrho_{\infty})$.

\begin{theorem}\label{Sect4_Convergence_Rate_Thm}
Assume the conditions in Theorem $\ref{Sect4_GlobalExistence_Thm}$ hold. Let  $(p,u,S)\in C^1(\Omega\times[0,+\infty))$ be the global classical solution to IBVP $(\ref{Sect1_NonIsentropic_EulerEq})$.
$p_{\infty}=\bar{p}$, $u_{\infty}=v_{\infty}=\omega_{\infty}=0$. If $S_0\neq const$, then $S_{\infty}\neq const$.
$\varrho_{\infty}(x)\neq const$, the temperature $\theta_{\infty}(x)\neq const$, the internal energy $e_{\infty}(x)\neq const$.
As $t\rto +\infty$, $(p,u,S,\varrho)$ converge to $(\bar{p},0,S_{\infty},\varrho_{\infty})$ exponentially in $|\cdot|_{\infty}$ norm.
\end{theorem}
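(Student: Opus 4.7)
The plan has three strands: exponential $L^\infty$-decay of $(p-\bar p, u, \omega)$ to zero, exponential $L^\infty$-convergence of $(S,\varrho)$ to well-defined limits, and non-triviality of those limits when $S_0$ is non-constant. By Theorem $\ref{Sect4_GlobalExistence_Thm}$, $\mathcal{E}[\xi,v](t)+\mathcal{E}_1[\omega](t)\lem e^{-\beta_2 t}$ and $\mathcal{E}[\phi](t)$ is uniformly bounded. The Sobolev embedding $H^2(\Omega)\hookrightarrow L^\infty(\Omega)$ then immediately gives $|p-\bar p|_\infty + |u|_\infty + |\omega|_\infty \lem e^{-\beta_2 t/2}$, which simultaneously identifies $p_\infty=\bar p$ (consistent with Lemma $\ref{Sect2_P_Infty_Lemma}$) and $u_\infty=v_\infty=\omega_\infty=0$.

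For the pointwise convergence of $S$, I would use the transport equation $S_t=-u\cdot\nabla S$. The uniform bound on $\mathcal{E}[\phi]$ gives $\|\nabla S\|_{H^2}\lem 1$, while $|u|_\infty\lem e^{-\beta_2 t/2}$, so $|S_t(x,t)|\lem e^{-\beta_2 t/2}$ uniformly in $x$. Integrating in $t$ shows $S(\cdot,t)$ is Cauchy in $L^\infty$, so $S_\infty(x):=\lim_{t\to\infty}S(x,t)$ exists with $|S-S_\infty|_\infty\lem e^{-\beta_2 t/2}$. Applying the same argument to $\varrho$, now using the exponential decay of $\|\partial_t\varrho\|_{H^2}$ from Remark $\ref{Sect3_Varrho_Remark}$, produces $\varrho_\infty$ with $|\varrho-\varrho_\infty|_\infty\lem e^{-\beta_2 t/2}$. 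Passing to the limit in $\varrho=A^{-1/\gamma}p^{1/\gamma}\exp\{-S/\gamma\}$ then identifies $\varrho_\infty(x)=A^{-1/\gamma}\bar p^{1/\gamma}\exp\{-S_\infty(x)/\gamma\}$.

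For the non-constancy claim, I would run a particle-path argument. Define $\chi(t;x_0)$ by $\dot\chi=u(\chi,t)$, $\chi(0;x_0)=x_0$. Because $u\cdot n|_{\partial\Omega}=0$ and $u$ is uniformly Lipschitz in $x$ on $\overline\Omega\times[0,\infty)$, $\chi$ stays in $\overline\Omega$ and exists globally. The integrability $\int_0^\infty |u(\cdot,t)|_\infty\,dt<\infty$ makes every trajectory a Cauchy curve, so $\chi_\infty(x_0):=\lim_{t\to\infty}\chi(t;x_0)\in\overline\Omega$ exists. Along each trajectory $\tfrac{d}{dt}S(\chi(t;x_0),t)=S_t+u\cdot\nabla S=0$, so $S(\chi(t;x_0),t)\equiv S_0(x_0)$; combining with the uniform convergence $S(\cdot,t)\to S_\infty$ and the continuity of $S_\infty$ gives $S_\infty(\chi_\infty(x_0))=S_0(x_0)$. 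Hence the range of $S_\infty$ contains that of $S_0$: if $S_0$ is non-constant, so is $S_\infty$. Since $p_\infty=\bar p$ is constant, the formula for $\varrho_\infty$ forces $\varrho_\infty$ non-constant, and the ideal-gas relations $\theta=p/(\mathcal{R}\varrho)$, $e=C_V\theta$ transfer this non-constancy to $\theta_\infty$ and $e_\infty$.

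The main obstacle is the identity $S_\infty(\chi_\infty(x_0))=S_0(x_0)$, which requires commuting the spatial limit $\chi(t;x_0)\to\chi_\infty(x_0)$ with the time limit $S(\cdot,t)\to S_\infty$. This is justified by uniform $L^\infty$-convergence of the continuous functions $S(\cdot,t)$, which yields continuity of $S_\infty$ on $\overline\Omega$; care is needed only when $\chi_\infty(x_0)$ lies on $\partial\Omega$, where one must verify via $u\cdot n|_{\partial\Omega}=0$ that the trajectory approaches the boundary tangentially and that the evaluation of the continuous function $S_\infty$ there still realizes the limiting value $S_0(x_0)$.
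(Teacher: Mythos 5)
Your proposal is correct, and the first and last strands (exponential $|\cdot|_\infty$ decay of $p-\bar p$, $u$, $\omega$ via Sobolev embedding, existence of $S_\infty,\varrho_\infty$ by integrating the exponentially decaying time derivatives, and the transfer of non-constancy to $\varrho_\infty,\theta_\infty,e_\infty$ through $\varrho_\infty=A^{-1/\gamma}\bar p^{1/\gamma}e^{-S_\infty/\gamma}$) coincide with the paper's argument. Where you genuinely diverge is the proof that $S_0\neq const$ forces $S_\infty\neq const$. The paper argues by contradiction with a purely integral mechanism: for every $\alpha\in\mathbb{R}$ the quantity $\int_\Omega \varrho\,e^{S/\gamma+\alpha S}\,\mathrm{d}x=\int_\Omega p^{1/\gamma}e^{\alpha S}\,\mathrm{d}x$ is conserved, so if $S_\infty$ were constant one gets $\bigl(\int_\Omega (p_0^{1/(\alpha\gamma)}e^{S_0-S_\infty})^{\alpha}\,\mathrm{d}x\bigr)^{1/\alpha}=\bigl(\int_\Omega p_0^{1/\gamma}\,\mathrm{d}x\bigr)^{1/\alpha}$, and letting $\alpha\to+\infty$ and $\alpha\to-\infty$ (an $L^\alpha$-norm limit) yields $\|e^{S_0-S_\infty}\|_\infty=\|e^{S_\infty-S_0}\|_\infty=1$, hence $S_0\equiv S_\infty$, a contradiction. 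You instead make rigorous exactly the particle-path picture that the paper relegates to a ``physical explanation'' in the introduction: since $\int_0^\infty|u(\cdot,t)|_\infty\,\mathrm{d}t<\infty$ and $u$ is uniformly Lipschitz and tangent to $\partial\Omega$, every trajectory $\chi(t;x_0)$ stays in $\overline\Omega$ and converges, $S$ is invariant along it, and uniform convergence of the continuous functions $S(\cdot,t)$ (hence continuity of $S_\infty$ on $\overline\Omega$) lets you pass to the limit to get $S_\infty(\chi_\infty(x_0))=S_0(x_0)$; your worry about $\chi_\infty(x_0)\in\partial\Omega$ is in fact harmless, since continuity of $S_\infty$ on $\overline\Omega$ plus $\chi(t;x_0)\in\overline\Omega$ already justifies the interchange of limits. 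The trade-off: your Lagrangian route needs the ODE well-posedness and invariance of $\overline\Omega$ for the flow, but it delivers strictly more than the paper's argument — the range of $S_\infty$ contains that of $S_0$ (so $\max S_\infty=S_+$, $\min S_\infty=S_-$), a direct rather than contradiction-based proof; the paper's conserved-quantity route avoids any flow-map analysis and uses only the decay estimates plus a conservation law, but yields only non-constancy and requires a somewhat delicate (pointwise in $x$) choice of $\alpha$ in the limiting step.
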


\begin{proof}
$|\nabla p|_{\infty}+|\nabla v|_{\infty}\lem \mathcal{E}[\xi,v](t)^{\frac{1}{2}}\lem \|(\xi_0,v_0)\|_{H^3(\Omega)}\exp\{-\frac{\beta_2}{2} t\}\rto 0$, as $t\rto +\infty$.
Thus $p_{\infty}=\bar{p},\ v_{\infty}=u_{\infty}=0,\omega_{\infty}=0$.

By Lemma $\ref{Sect3_Entropy_Decay_Lemma}$, we have
\begin{equation}\label{Sect4_S_Infty_1}
\begin{array}{ll}
|S_t|_{\infty}\lem
\left( \sum\limits_{1\leq\ell\leq 3}\|\partial_t^{\ell}S\|_{H^{3-\ell}}^2 \right)^{\frac{1}{2}}
\\[6pt]\hspace{0.9cm}
\lem \|(\xi_0,v_0)\|_{H^3(\Omega)}\|\phi_0\|_{H^3(\Omega)}
\left(\exp\{\|(\xi_0,v_0)\|_{H^3(\Omega)}\}\right)^{\frac{c_8}{2}} \exp\{-\frac{\beta_2}{2} t\}.
\end{array}
\end{equation}

So $\int\limits_{0}^{\infty}S_s(x,s) \,\mathrm{d}s$ converges, then
$S_{\infty}(x)=S_0(x) + \int\limits_{0}^{\infty}S_s(x,s) \,\mathrm{d}s$
is bounded. Thus, $S_{\infty}(x)$ exists uniquely.

In order to prove that $S_{\infty}\neq const$ if $S_0\neq const$, we assume $S_{\infty} = const$.

For any $\alpha\in \mathbb{R}$, we have
\begin{equation}\label{Sect4_S_Infty_2}
\begin{array}{ll}
(\varrho \exp\{\frac{S}{\gamma} +\alpha S \})_t + \nabla\cdot (\varrho \exp\{\frac{S}{\gamma} +\alpha S \}u),\\[10pt]
\frac{\mathrm{d}}{\mathrm{d}t}\int\limits_{\Omega} \varrho \exp\{\frac{S}{\gamma} +\alpha S \}\,\mathrm{d}x =0, \\[10pt]
\int\limits_{\Omega} \varrho_{\infty} \exp\{\frac{S_{\infty}}{\gamma} +\alpha S_{\infty} \}\,\mathrm{d}x
= \int\limits_{\Omega} \varrho_0 \exp\{\frac{S_0}{\gamma} +\alpha S_0 \}\,\mathrm{d}x, \\[10pt]

\int\limits_{\Omega} p_{\infty}^{\frac{1}{\gamma}} \exp\{\alpha S_{\infty} \}\,\mathrm{d}x
= \int\limits_{\Omega} p_0^{\frac{1}{\gamma}} \exp\{\alpha S_0 \}\,\mathrm{d}x.
\end{array}
\end{equation}

By assumption $S_{\infty}=const$, then we have
\begin{equation}\label{Sect4_S_Infty_3}
\begin{array}{ll}
\int\limits_{\Omega} p_0^{\frac{1}{\gamma}} (\exp\{S_0 -S_{\infty}\})^{\alpha}\,\mathrm{d}x
=\int\limits_{\Omega} p_{\infty}^{\frac{1}{\gamma}} \,\mathrm{d}x
=\int\limits_{\Omega} p_0^{\frac{1}{\gamma}} \,\mathrm{d}x, \\[10pt]

\left(\int\limits_{\Omega} (p_0^{\frac{1}{\alpha\gamma}} \exp\{S_0 -S_{\infty}\})^{\alpha}\,\mathrm{d}x \right)^{\frac{1}{\alpha}}
=\left(\int\limits_{\Omega} p_0^{\frac{1}{\gamma}} \,\mathrm{d}x \right)^{\frac{1}{\alpha}}
\end{array}
\end{equation}

When $\alpha>0$, for any $0<\delta\ll 1$, there exists $\alpha \geq \max\{\frac{\log p_0}{\gamma \log(1+\delta)},\frac{\log p_0}{\gamma \log(1-\delta)}\}$ such that $1-\delta \leq p_0^{\frac{1}{\alpha\gamma}} \leq 1+\delta$, then
\begin{equation}\label{Sect4_S_Infty_4}
\begin{array}{ll}
(1-\delta)\left(\int\limits_{\Omega} (\exp\{S_0 -S_{\infty}\})^{\alpha}\,\mathrm{d}x \right)^{\frac{1}{\alpha}}
\leq\left(\int\limits_{\Omega} p_0^{\frac{1}{\gamma}} \,\mathrm{d}x \right)^{\frac{1}{\alpha}}
\\[8pt]\hspace{5.3cm}
\leq (1+\delta)\left(\int\limits_{\Omega} (\exp\{S_0 -S_{\infty}\})^{\alpha}\,\mathrm{d}x \right)^{\frac{1}{\alpha}}
\end{array}
\end{equation}

Let $\delta\rto 0$, $\alpha\rto +\infty$, we have $\|\exp\{S_0 -S_{\infty}\}\|_{\infty} =1$.

When $\alpha<0$, for any $0<\delta\ll 1$, there exists $\alpha \leq \min\{-\frac{\log p_0}{\gamma \log(1+\delta)},-\frac{\log p_0}{\gamma \log(1-\delta)}\}$ such that $1-\delta \leq p_0^{\frac{1}{-\alpha\gamma}} \leq 1+\delta$, then
\begin{equation}\label{Sect4_S_Infty_5}
\begin{array}{ll}
(1-\delta)\left(\int\limits_{\Omega} (\exp\{S_{\infty}-S_0\})^{-\alpha}\,\mathrm{d}x \right)^{\frac{1}{-\alpha}}
\leq\left(\int\limits_{\Omega} p_0^{\frac{1}{\gamma}} \,\mathrm{d}x \right)^{\frac{1}{-\alpha}}
\\[8pt]\hspace{5.7cm}
\leq (1+\delta)\left(\int\limits_{\Omega} (\exp\{S_{\infty}-S_0\})^{-\alpha}\,\mathrm{d}x \right)^{\frac{1}{-\alpha}}
\end{array}
\end{equation}

Let $\delta\rto 0$, $\alpha\rto -\infty$, we have $\|\exp\{S_{\infty}- S_0\}\|_{\infty} =1$.

So, $S_0 \equiv S_{\infty}= const$, it contradicts with the assumption $S_0\neq const$. Thus, we proved that $S_{\infty}\neq const$ if $S_0\neq const$.

Moreover, $\varrho_{\infty}(x) = \frac{1}{\sqrt[\gamma]{A}}\bar{p}^{\frac{1}{\gamma}}\exp\{-\frac{S_{\infty}(x)}{\gamma}\}\neq const$, due to the pressure law $(\ref{Sect1_Pressure})$. $\theta_{\infty}(x)=\frac{\bar{p}}{\mathcal{R}\varrho_{\infty}(x)}\neq const$, where $\mathcal{R}$ is universal gas constant. $e_{\infty}(x)=C_V \theta_{\infty}(x)\neq const$, were $C_V>0$ is constant.

\vspace{0.2cm}
The exponential decay rates of $(\xi,v,\phi_{t})$ provides exponential convergence rates of $(p,u,S,\varrho)$ to their equilibrium states as follows:
\begin{equation}\label{Sect4_Convergence_Rate}
\left\{\begin{array}{ll}
|p-p_{\infty}|_{\infty} =|p-\bar{p}|_{\infty}\lem \|(\xi_0,v_0)\|_{H^3(\Omega)}\exp\{-\frac{\beta_2}{2} t\}, \\[8pt]
|u-0|_{\infty}= k_1|v|_{\infty}\lem \|(\xi_0,v_0)\|_{H^3(\Omega)}\exp\{-\frac{\beta_2}{2} t\}, \\[8pt]

|S(x,t)-S_{\infty}(x)|_{\infty} = |-\int\limits_{t}^{\infty}S_s(x,s) \,\mathrm{d}s|_{\infty}
\leq \int\limits_{t}^{\infty} |\phi_s(x,s)|_{\infty} \,\mathrm{d}s \\[8pt]\qquad
\lem \int\limits_{t}^{\infty} \|(\xi_0,v_0)\|_{H^3(\Omega)}\|\phi_0\|_{H^3(\Omega)}
\left(\exp\{\|(\xi_0,v_0)\|_{H^3(\Omega)}\}\right)^{\frac{c_8}{2}} \exp\{-\frac{\beta_2}{2} s\} \,\mathrm{d}s \\[8pt]\qquad
\lem \|(\xi_0,v_0)\|_{H^3(\Omega)}\|\phi_0\|_{H^3(\Omega)}
\left(\exp\{\|(\xi_0,v_0)\|_{H^3(\Omega)}\}\right)^{\frac{c_8}{2}}\exp\{-\frac{\beta_2}{2} t\}, \\[10pt]

|\varrho(x,t)-\varrho_{\infty}(x)|_{\infty}\lem \exp\{-\frac{\beta_2}{2} t\}.
\end{array}\right.
\end{equation}

So $(p,u,S,\varrho)\rto (\bar{p},0,S_{\infty},\varrho_{\infty})$ exponentially in $|\cdot|_{\infty}$ norm as $t\rto +\infty$.
\end{proof}

\begin{remark}\label{Sect4_Eigenvalue_Remark}
For Cauchy problem, it is easier to understand that the equilibrium states are not constant states. The linear equations of the nonlinear equations in $(\ref{Sect2_Final_Eq})$ are
\begin{equation}\label{Sect4_Linear_Eq}
\left\{\begin{array}{ll}
\xi_t + k_2 \nabla\cdot v = 0, \\[6pt]
v_t + k_2 \nabla\xi + a v = 0, \\[6pt]
\phi_t = 0.
\end{array}\right.
\end{equation}

Let $\eta\in\mathbb{R}^3$ is a vector in Fourier space while $x$ is a vector in physical space.
After Fourier transformation of $(\ref{Sect4_Linear_Eq})$, we get
\begin{equation}\label{Sect4_Decay_FourierT_Matrix_1}
\partial_t \left(\begin{array}{c} \hat{\xi}(k,t) \\ \hat{v}(k,t) \\ \hat{\phi}(k,t) \end{array}\right) =
\left(\begin{array}{ccc} 0 & -ik_2\eta^{\top} & 0 \\
-ik_2\eta & -a\mathbf{I}_3 & 0  \\
0 & 0 & 0 \end{array}\right)
\left(\begin{array}{c} \hat{\xi}(k,t) \\ \hat{v}(k,t) \\ \hat{\phi}(k,t) \end{array}\right),
\end{equation}
where the coefficient matrix is denoted by $\mathbf{M}(\eta)$.

Then the eigenvalues of $\mathbf{M}(\eta)$ satisfy the following equation
\begin{equation}\label{Sect4_Decay_FourierT_Matrix_3}
\begin{array}{ll}
|\lambda \mathbf{I}_{5} - \mathbf{M}(\eta)| =
\left|\begin{array}{ccc} \lambda & ik_2\eta^{\top} & 0 \\
ik_2\eta & (\lambda+a)\mathbf{I}_3 & 0  \\
0 & 0 & \lambda \end{array}\right|
= \lambda \left|\begin{array}{ccc} \lambda & ik_2\eta^{\top} \\
ik_2\eta & (\lambda+a)\mathbf{I}_3  \\
\end{array}\right| =0.
\end{array}
\end{equation}

So the coefficient matrix $\mathbf{M}(\eta)$ has one eigenvalue $\lambda_5 =0$, other eigenvalues have negative reals. $\lambda_5=0$ corresponds to its eigenvector $(0,\cdots,0,1)^{\top}$, thus $\phi$ may not decay to zero.
\end{remark}

However, the damping effect on the velocity is weakly dissipative, i.e., there are a class of initial data such that the classical solutions blows up. Our proof is based on the analysis of the moment $M_{\varrho}(t)$ and the finite propagation speed of $(\xi,v,\phi)$. So, we need to prove the following lemma which states the classical solutions possess finite propagation speed, especially near the boundary.
\begin{lemma}\label{Sect4_Speed_Lemma}
Assume $\inf\limits_{x\in\Omega}p_0(x)>0$, $(\xi,v,\phi)\in C^1(\Omega\times[0,\tau))$ is the classical solutions to IBVP $(\ref{Sect2_Final_Eq})$, $(p,u,S)\in C^1(\Omega\times[0,\tau))$ is the classical solutions to IBVP $(\ref{Sect1_NonIsentropic_EulerEq})$. For any $x\in\Omega\cup\partial\Omega, 0\leq t_1<t_2<\tau$, if $(\xi,v,\phi)(y,t_1)=0$ in $\{|y-x|\leq k_2(t_2-t_1)\}\cap\Omega$, then $(\xi,v,\phi)(x,t_2)=0$.
Equivalently, if $(p-\bar{p},u,S-\bar{S})(y,t_1)=0$ in $\{|y-x|\leq k_2(t_2-t_1)\}\cap\Omega$, then $(p-\bar{p},u,S-\bar{S})(x,t_2)=0$.
\end{lemma}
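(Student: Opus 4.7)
The plan is to run a weighted energy estimate on the backward cone $K := \{(y,t) : t_1 \leq t \leq t_2,\ y \in \bar{\Omega},\ |y-x| \leq k_2(t_2-t)\}$ at the sharp speed $k_2$. Let $d(y,t) := k_2(t_2-t) - |y-x|$ and fix $\chi \in C^\infty(\mathbb{R})$ with $\chi \geq 0$, $\chi' \geq 0$, $\chi \equiv 0$ on $(-\infty,0]$ and $\chi \equiv 1$ on $[1,+\infty)$; set $\psi_\ve(y,t) := \chi(d(y,t)/\ve)$ for $\ve > 0$. A direct computation gives the sharp identity $\partial_t \psi_\ve + k_2 |\nabla \psi_\ve| \equiv 0$ on $\Omega \times [t_1,t_2]$, with both factors supported in the thin shell $\{0 < d < \ve\}$; this identity encodes exactly that $k_2$ is the characteristic speed of the symmetrised system linearised at $(\bar{p},0,\bar{S})$.

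Let $Q := \tfrac{1}{2}(\xi^2+|v|^2+\phi^2)$ and $E_\ve(t) := \int_\Omega \psi_\ve(y,t)\,Q(y,t)\,dy$. Using $(\ref{Sect2_Final_Eq})$ one finds
\[
\partial_t Q \;=\; -k_2 \nabla \cdot (\xi v) - a|v|^2 + N, \qquad N := \xi N_1 + v \cdot N_2 + \phi N_3,
\]
where $N_1,N_2,N_3$ denote the right-hand sides of $(\ref{Sect2_Final_Eq})_{1,2,3}$. Integrating by parts in the $-k_2 \int \psi_\ve \nabla \cdot (\xi v)\,dy$ term produces a $-k_2\int_{\partial \Omega} \psi_\ve \xi (v \cdot n)\,dS$ contribution, which vanishes by $v \cdot n|_{\partial \Omega} = 0$. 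Writing $\nabla \psi_\ve = -|\nabla \psi_\ve| \hat{r}$ with $\hat{r} = (y-x)/|y-x|$, using the sharp identity, and applying the Cauchy--Schwarz bound $Q + \xi (\hat{r} \cdot v) \geq \tfrac{1}{2}\phi^2 \geq 0$, the principal wave part reduces to
\[
\int \bigl[ \partial_t \psi_\ve \cdot Q + k_2 \nabla \psi_\ve \cdot (\xi v) \bigr]\,dy \;=\; -k_2 \int |\nabla \psi_\ve|\,\bigl[ Q + \xi (\hat{r} \cdot v) \bigr]\,dy \;\leq\; 0.
\]
For the remaining $\int \psi_\ve N\,dy$, the $C^1$ regularity of $(\xi,v,\phi)$ on the compact slab $\bar{\Omega} \times [t_1,t_2]$ gives a uniform pointwise bound on all first derivatives, and every summand of $N$ (including the pressure-deviation term $\tfrac{1}{k_1}\bigl(\tfrac{1}{\bar{\varrho}}-\tfrac{1}{\varrho}\bigr) v \cdot \nabla \xi$, for which one uses $\tfrac{1}{\bar{\varrho}}-\tfrac{1}{\varrho} = O(|\xi|+|\phi|)$) is bounded pointwise by $C_* Q$ with $C_*$ depending only on the $C^1$ norm of the solution. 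Combined with $-a \int \psi_\ve |v|^2\,dy \leq 0$, this yields the Gronwall inequality $\tfrac{d}{dt}E_\ve(t) \leq C_* E_\ve(t)$.

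By hypothesis $(\xi,v,\phi)(\cdot,t_1) \equiv 0$ on $\{|y-x| \leq k_2(t_2-t_1)\} \cap \Omega$, which contains $\mathrm{supp}\,\psi_\ve(\cdot,t_1) \cap \Omega$, so $E_\ve(t_1)=0$ and Gronwall forces $E_\ve \equiv 0$ on $[t_1,t_2]$. Letting $\ve \to 0^+$ gives $\int_{K(t)} Q\,dy = 0$ for every $t \in [t_1,t_2]$, so $(\xi,v,\phi) \equiv 0$ on $K$; at $t=t_2$ the cone collapses to $\{x\}$ and continuity yields $(\xi,v,\phi)(x,t_2) = 0$. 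The main obstacle is the asymmetric role of the entropy: the sharp cone identity only controls $\xi^2+|v|^2$ through the $\nabla \cdot (\xi v)$ flux, and one must check that $\phi$ does not force a larger effective propagation speed. This is resolved by the purely transport structure of the $\phi$-equation (no pressure-like spatial flux is produced after multiplication by $\phi$) together with the favourable sign of the $\phi^2$ remainder in the Cauchy--Schwarz bound; a secondary care point is the pressure-deviation term in $N_2$, handled by using that $\tfrac{1}{\bar{\varrho}}-\tfrac{1}{\varrho}$ already vanishes on the equilibrium.
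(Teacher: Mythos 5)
Your proposal is correct and is essentially the paper's argument: a domain-of-dependence energy estimate on the backward cone of speed $k_2$, with the lateral flux term made nonnegative by Cauchy--Schwarz ($Q+\xi(\hat r\cdot v)\geq 0$, matching the paper's $k_2(\xi^2+|v|^2+\phi^2)+2k_2\xi v\cdot\frac{y-x}{|y-x|}\geq 0$), the $\partial\Omega$ flux killed by $u\cdot n|_{\partial\Omega}=0$, the damping term discarded by sign, the nonlinear remainder bounded by the $C^1$ norm times the quadratic energy, and Gronwall. The only difference is the technical implementation (mollified cone cutoff $\psi_\ve$ with the eikonal identity and $\ve\to 0$, versus the paper's direct integration over the sharp truncated cone), which does not change the substance.
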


\begin{proof}
For any fixed $(x,t_2)\in (\Omega\cup\partial\Omega)\times \{t_2\}$, we define
the intersection of a truncated cone and $\Omega$ as
\begin{equation}\label{Sect4_Speed_Cone}
\mathcal{O}(s) := \{(y,\iota)|\ |y-x|\leq k_2(t_2 - \iota), t_1\leq\iota\leq s\leq t_2\}\cap\Omega.
\end{equation}
and the energy at the time $s$
\begin{equation}\label{Sect4_Speed_Energy}
\begin{array}{ll}
e(s) = \int\limits_{\{|y-x|\leq k_2(t_2-s)\}\cap\Omega}\xi^2 + |v|^2 + \phi^2 \,\mathrm{d}y.
\end{array}
\end{equation}

By $(\ref{Sect2_Final_Eq})\cdot(\xi,v,\phi)$, we get
\begin{equation}\label{Sect4_Speed_1}
\begin{array}{ll}
(\xi^2 + |v|^2 + \phi^2)_t + 2k_2\xi\nabla\cdot v + 2k_2 v\cdot\nabla\xi + 2a |v|^2 \\[6pt]
= -2 \gamma k_1\xi^2\nabla\cdot v - 2 k_1 \xi v\cdot\nabla \xi - 2k_1 v\cdot\nabla v\cdot v
+ \frac{2}{k_1} (\frac{1}{\bar{\varrho}}- \frac{1}{\varrho})v\cdot\nabla\xi -2 k_1\phi v\cdot\nabla\phi.
\end{array}
\end{equation}

After integrating $(\ref{Sect4_Speed_1})$ in $\mathcal{O}(s)$, we get
\begin{equation}\label{Sect4_Speed_2}
\begin{array}{ll}
\int\limits_{\{|y-x|\leq k_2(t_2-s)\}\cap\Omega}\xi^2 + |v|^2 + \phi^2 \,\mathrm{d}y
- \int\limits_{\{|y-x|\leq k_2(t_2-t_1)\}\cap\Omega}\xi^2 + |v|^2 + \phi^2 \,\mathrm{d}y  \\[15pt]\quad

+\frac{1}{\sqrt{1+k_2^2}} \int\limits_{t_1}^{s}\int\limits_{\{|y-x|= k_2(t-\iota)\}\cap\Omega}
k_2(\xi^2 + |v|^2 + \phi^2) + 2k_2 \xi v \cdot \frac{y-x}{|y-x|} \,\mathrm{d}y \,\mathrm{d}\iota \\[15pt]\quad

+ \int\limits_{t_1}^{s}\int\limits_{\partial\Omega\cap \{|y-x|\leq k_2(t-\iota)\}}
2k_2 \xi v \cdot n \,\mathrm{d}S_y \,\mathrm{d}\iota
+ 2a \int\limits_{\mathcal{O}(s)}|v|^2 \,\mathrm{d}y\,\mathrm{d}\iota \\[18pt]

= -2\iint\limits_{\mathcal{O}(s)} \gamma k_1\xi^2\nabla\cdot v + k_1 \xi v\cdot\nabla \xi
+ k_1 v\cdot\nabla v\cdot v - \frac{1}{k_1} (\frac{1}{\bar{\varrho}}- \frac{1}{\varrho})v\cdot\nabla\xi \\[15pt]\quad
+ k_1\phi v\cdot\nabla\phi \,\mathrm{d}y \,\mathrm{d}\iota

\leq C\max\limits_{\mathcal{O}(t_2)}\{\nabla\xi, \nabla v, \nabla\phi\}
\iint\limits_{\mathcal{O}(s)} \xi^2 + |v|^2 +\phi^2 \,\mathrm{d}y \,\mathrm{d}\iota.
\end{array}
\end{equation}

While we have the following three estimates:
\begin{equation}\label{Sect4_Speed_3}
\begin{array}{ll}
\int\limits_{t_1}^{s}\int\limits_{\{|y-x|= k_2(t-\iota)\}\cap\Omega}
k_2(\xi^2 + |v|^2 + \phi^2) + 2k_2 \xi v \cdot \frac{y-x}{|y-x|} \,\mathrm{d}y \,\mathrm{d}\iota
\geq 0, \\[14pt]

\int\limits_{t_1}^{s}\int\limits_{\partial\Omega\cap \{|y-x|\leq k_2(t-\iota)\}}
2k_2 \xi v \cdot n \,\mathrm{d}S_y \,\mathrm{d}\iota =0, \\[14pt]

\iint\limits_{\mathcal{O}(s)}|v|^2 \,\mathrm{d}y\,\mathrm{d}\iota \geq 0.
\end{array}
\end{equation}

By $(\ref{Sect4_Speed_2})$ and $(\ref{Sect4_Speed_3})$, we get
\begin{equation}\label{Sect4_Speed_4}
\begin{array}{ll}
e(s)- e(t_1) \leq C\max\limits_{\mathcal{O}(t_2)}\{\nabla\xi, \nabla v, \nabla\phi\}
\int\limits_{t_1}^{s} e(\iota) \,\mathrm{d}\iota.
\end{array}
\end{equation}

By Gronwall's inequality, for $\forall s\in [t_1,t_2]$,
\begin{equation}\label{Sect4_Speed_5}
\begin{array}{ll}
e(s)\leq e(t_1)\exp\{C\max\limits_{\mathcal{O}(t_2)}\{\nabla\xi, \nabla v, \nabla\phi\}(s-t_1)\}.
\end{array}
\end{equation}

Thus, if $(\xi,v,\phi)(y,t_1)=0$ in $\{|y-x|\leq k_2(t_2-t_1)\}\cap\Omega$, then $(\xi,v,\phi)(x,t_2)=0$. Equivalently, if $(p-\bar{p},u,S-\bar{S})(y,t_1)=0$ in $\{|y-x|\leq k_2(t_2-t_1)\}\cap\Omega$, then $(p-\bar{p},u,S-\bar{S})(x,t_2)=0$.
Thus, Lemma $\ref{Sect4_Speed_Lemma}$ is proved.
\end{proof}

To verify the weak dissipativity of non-isentropic Euler equations with damping, we proved the following theorem which states that for a class of large data, the singularities form in the interior of ideal gases. In our proof,  $Supp(p_0-\bar{p},u_0,S_0-\bar{S})$ is required to be away from $\partial\Omega$. Thus,
$\partial_t^{\ell}u(x,0)\cdot n|_{\partial\Omega}=0, 0\leq\ell\leq 3$ are satisfied automatically.
\begin{theorem}\label{Sect4_Blowup_Thm}
Assume $0\in\Omega$, $(p_0,u_0,S_0)\in H^3(\Omega)$, $\inf\limits_{x\in\Omega}p_0(x)>0$,
$h=dist\{\partial\Omega, Supp(p_0-\bar{p},u_0,S_0-\bar{S})\}>0$, $(p,u,S)\in C^1(\Omega\times[0,\tau))$ is the classical solution to IBVP $(\ref{Sect1_NonIsentropic_EulerEq})$ where $\tau>0$ is the lifespan of $(p,u,S)$. For any fixed $T$ satisfying $0<T<\min\{\frac{h}{k_2},\frac{\pi}{2}\frac{B_0}{r}\}$, if the condition
$(\ref{Sect2_Blowup_Condition})$ holds, then $\tau\leq T$.
\end{theorem}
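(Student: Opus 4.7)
My plan is to derive a Riccati-type ordinary differential inequality for the moment $M_\varrho(t)=\int_\Omega \varrho u\cdot x\,dx$ and then analyze it in several regimes corresponding to the sign of $B_1-\frac{a^2B_0}{4}$.

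First I would differentiate $M_\varrho$ in time. Using $(\varrho u)_t=-\nabla\cdot(\varrho u\otimes u)-\nabla p-a\varrho u$ (which follows by combining the continuity equation and the momentum equation), integration by parts gives three contributions. The convective term becomes $\int_\Omega \varrho|u|^2\,dx$ after using $u\cdot n|_{\partial\Omega}=0$. The damping term is simply $-aM_\varrho(t)$. For the pressure term $-\int_\Omega \nabla p\cdot x\,dx=3\int_\Omega p\,dx-\int_{\partial\Omega}p\,x\cdot n\,dS$. Here the key point is Lemma \ref{Sect4_Speed_Lemma}: since $T<h/k_2$, the support of $(p-\bar p,u,S-\bar S)$ remains strictly inside $\Omega$ on $[0,T]$, so $p|_{\partial\Omega}=\bar p$ and the boundary integral reduces to $3\int_\Omega\bar p\,dx$. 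Thus
\begin{equation*}
M_\varrho'(t)=\int_\Omega\varrho|u|^2\,dx+3\int_\Omega(p-\bar p)\,dx-aM_\varrho(t).
\end{equation*}

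Next I would bound the two positive terms from below. By Cauchy--Schwarz and $|x|\le\mathrm{Diam}(\Omega)$ together with mass conservation $\int_\Omega\varrho\,dx=\int_\Omega\varrho_0\,dx$,
\begin{equation*}
M_\varrho(t)^2\le\Bigl(\int_\Omega\varrho|x|^2\,dx\Bigr)\Bigl(\int_\Omega\varrho|u|^2\,dx\Bigr)\le B_0\int_\Omega\varrho|u|^2\,dx.
\end{equation*}
For the pressure, using $p=A\varrho^\gamma e^S\ge Ae^{S_-}\varrho^\gamma$ together with Jensen's inequality applied to the convex function $\varrho\mapsto\varrho^\gamma$, we get $\int_\Omega p\,dx\ge\frac{Ae^{S_-}}{|\Omega|^{\gamma-1}}(\int_\Omega\varrho_0\,dx)^\gamma$, so $3\int_\Omega(p-\bar p)\,dx\ge B_1$. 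Combining yields the Riccati differential inequality
\begin{equation*}
M_\varrho'(t)\ge\frac{M_\varrho(t)^2}{B_0}-aM_\varrho(t)+B_1=\frac{1}{B_0}\Bigl[\bigl(M_\varrho(t)-\tfrac{aB_0}{2}\bigr)^2+B_0\bigl(B_1-\tfrac{a^2B_0}{4}\bigr)\Bigr].
\end{equation*}

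The final step is the ODE analysis, which I would split according to the sign of $B_1-\frac{a^2B_0}{4}$. If this quantity is positive ($r^2=B_1-\frac{a^2B_0}{4}$), setting $N=M_\varrho-\frac{aB_0}{2}$ gives $N'\ge\frac{N^2}{B_0}+r^2$; separation of variables and the tangent substitution shows $N$ reaches $+\infty$ in finite time, yielding the threshold of the form $\frac{aB_0}{2}+r\cot(\frac{rT}{B_0})$. If $B_1-\frac{a^2B_0}{4}<0$ ($r^2=\frac{a^2B_0}{4}-B_1$), the right-hand side factorizes as $\frac{1}{B_0}(N-r\sqrt{B_0})(N+r\sqrt{B_0})$ (up to normalization), so starting above the unstable equilibrium $N=r\sqrt{B_0}$ (i.e., above $\frac{aB_0}{2}+r$) the solution blows up by a geometric-type comparison whose explicit time-of-blowup gives the $\frac{aB_0}{2}-r+\frac{2r}{1-e^{-2rT/B_0}}$ threshold. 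Finally, the condition $\frac{aB_0}{1-e^{-aT}}$ covers the degenerate possibility $B_1\le 0$ where one simply compares to the linear inequality $M_\varrho'\ge -aM_\varrho$. Taking the maximum over all sufficient conditions ensures $M_\varrho(t)$ must blow up before time $T$, contradicting $(p,u,S)\in C^1(\Omega\times[0,\tau))$ unless $\tau\le T$.

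The main obstacle I expect is twofold: handling the boundary term in the pressure integral (resolved cleanly by Lemma \ref{Sect4_Speed_Lemma} combined with $T<h/k_2$), and choosing the sharp comparison function in each branch of the Riccati analysis to match the precise quantitative thresholds listed in \eqref{Sect2_Blowup_Condition}. The requirement $T<\frac{\pi}{2}\frac{B_0}{r}$ ensures the $\cot$ argument stays in $(0,\pi/2)$ so the tangent substitution is valid, and the additional geometric restriction $0\in\Omega$ is what allows the bound $|x|\le\mathrm{Diam}(\Omega)$ to yield the correct constant $B_0$.
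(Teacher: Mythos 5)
Your architecture is the same as the paper's: the moment $M_{\varrho}(t)=\int_{\Omega}\varrho u\cdot x\,\mathrm{d}x$, Lemma \ref{Sect4_Speed_Lemma} together with $T<\frac{h}{k_2}$ to kill the boundary integral $\int_{\partial\Omega}(p-\bar{p})(x\cdot n)\,\mathrm{d}S_x$, Cauchy--Schwarz giving $M_{\varrho}^2\leq B_0\int_{\Omega}\varrho|u|^2\,\mathrm{d}x$, the entropy bound $p\geq Ae^{S_{-}}\varrho^{\gamma}$ plus H\"older/Jensen and mass conservation giving the $B_1$ term, and then a Riccati-type analysis. However, your treatment of the branch attached to the threshold $\frac{aB_0}{1-\exp\{-aT\}}$ is wrong as stated: you say it ``covers the degenerate possibility $B_1\le 0$ where one simply compares to the linear inequality $M_{\varrho}'\geq -aM_{\varrho}$.'' That linear inequality only yields $M_{\varrho}(t)\geq M_{\varrho}(0)e^{-at}$ and gives no blow-up at all; moreover, when $B_1<0$ you cannot discard the negative term $B_1$ from a lower bound. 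In the paper this threshold belongs to the case $B_1=0$, where the quadratic term is kept: $M'+aM\geq M^2/B_0$ is integrated after multiplying by $e^{at}$ (a Bernoulli-type argument), and blow-up before $T$ requires exactly $M_{\varrho}(0)>\frac{aB_0}{1-\exp\{-aT\}}$; the case $B_1<0$ is automatically contained in the branch $B_1<\frac{a^2B_0}{4}$ and is handled there with the thresholds $\frac{aB_0}{2}+r$ and $\frac{aB_0}{2}-r+\frac{2r}{1-\exp\{-2rT/B_0\}}$.

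A second point needs care: completing the square gives $N'\geq \frac{N^2}{B_0}\pm r^2=\frac{1}{B_0}\bigl(N^2\pm B_0r^2\bigr)$, so the equilibria sit at $N=\pm r\sqrt{B_0}$, exactly as in your factorization. If you integrate \emph{that} inequality, the thresholds and the time restriction come out in terms of $r\sqrt{B_0}$ (for instance $\frac{aB_0}{2}+r\sqrt{B_0}\cot\bigl(\frac{rT}{\sqrt{B_0}}\bigr)$ and $T<\frac{\pi}{2}\frac{\sqrt{B_0}}{r}$), not in terms of $r$ as written in $(\ref{Sect2_Blowup_Condition})$ and in the hypothesis $T<\frac{\pi}{2}\frac{B_0}{r}$. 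The paper instead integrates the inequality in the form $\frac{\mathrm{d}}{\mathrm{d}t}N_{\varrho}\geq\frac{1}{B_0}(N_{\varrho}^2\pm r^2)$, which is what produces the constants quoted in the theorem. So your remark ``up to normalization'' hides a real discrepancy: either you adopt the paper's form of the inequality (and justify that passage), or your argument proves the theorem with $r$ replaced by $r\sqrt{B_0}$; as written, the claim that your factorization reproduces the precise constants of $(\ref{Sect2_Blowup_Condition})$ is not substantiated.
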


\begin{proof}
The quantities $M_{\varrho}(t),B_0,B_1,r$ are defined in $(\ref{Sect2_blowup_Def_Quantities})$.
The set of initial data satisfying the inequality $(\ref{Sect2_Blowup_Condition})$ and the conditions in Theorem $\ref{Sect4_Blowup_Thm}$ is not empty set if $u_0$ is large enough, since the right hand side of $(\ref{Sect2_Blowup_Condition})$ does not contain $u_0$.

It follows from IBVP $(\ref{Sect1_NonIsentropic_EulerEq_Original})$ that for $\forall t\in [0,T)$,
\begin{equation}\label{Sect4_Blowup_Prove1}
\begin{array}{ll}
\frac{\mathrm{d}}{\mathrm{d}t}M_{\varrho}(t) + a M_{\varrho}(t) \\[10pt]

= \int\limits_{\Omega} \varrho_t u\cdot x \,\mathrm{d}x + \int\limits_{\Omega} \varrho u_t \cdot x \,\mathrm{d}x
+ a \int\limits_{\Omega} \varrho u \cdot x \,\mathrm{d}x \\[10pt]

= -\int\limits_{\Omega} \nabla\cdot (\varrho u) (u\cdot x) \,\mathrm{d}x
- \int\limits_{\Omega} \varrho u\cdot\nabla u\cdot x + x\cdot \nabla p \,\mathrm{d}x \\[10pt]

= -\int\limits_{\Omega} \nabla\cdot(\varrho u) (u\cdot x) \,\mathrm{d}x
- \int\limits_{\Omega} \varrho u\cdot\nabla (u\cdot x)
- \varrho u\cdot\nabla x \cdot u + x\cdot\nabla p \,\mathrm{d}x \\[10pt]

= - \int\limits_{\partial\Omega} (u\cdot x)\varrho u\cdot n \,\mathrm{d}S_x
+ \int\limits_{\Omega} \varrho |u|^2 \,\mathrm{d}x
- \int\limits_{\Omega} x \cdot\nabla (p-\bar{p}) \,\mathrm{d}x \\[10pt]

= \int\limits_{\Omega} \varrho |u|^2 \,\mathrm{d}x
- \int\limits_{\partial\Omega} (p-\bar{p}) (x\cdot n) \,\mathrm{d}S_x
+ 3\int\limits_{\Omega} p \,\mathrm{d}x - 3\int\limits_{\Omega} \bar{p} \,\mathrm{d}x.
\end{array}
\end{equation}

By Lemma $\ref{Sect4_Speed_Lemma}$ on the finite propagation speed of classical solutions near the boundary, for any $(x,t)\in\partial\Omega\times[0,T)$, $p-\bar{p}=0$ due to $h>k_2 T$. So in the time interval $[0,T)$, we have
\begin{equation}\label{Sect4_Blowup_Prove2_1}
\begin{array}{ll}
\int\limits_{\partial\Omega} (p-\bar{p}) (x\cdot n) \,\mathrm{d}S_x =0.
\end{array}
\end{equation}

Since $S$ is invariant along the particle paths, $S(x,t)\geq S_{-}$, then we get
\begin{equation}\label{Sect4_Blowup_Prove2_2}
\begin{array}{ll}
\int\limits_{\Omega} p \,\mathrm{d}x \geq A e^{S_{-}}\int\limits_{\Omega} \varrho^{\gamma} \,\mathrm{d}x.
\end{array}
\end{equation}

By H$\ddot{o}$lder's inequality, we have $\int\limits_{\Omega} \varrho \,\mathrm{d}x \leq
\left(\int\limits_{\Omega} \varrho^{\gamma} \,\mathrm{d}x\right)^{\frac{1}{\gamma}}
\left(\int\limits_{\Omega}1 \,\mathrm{d}x\right)^{1-\frac{1}{\gamma}}$, then we get
\begin{equation}\label{Sect4_Blowup_Prove2_3}
\begin{array}{ll}
\int\limits_{\Omega} \varrho^{\gamma} \,\mathrm{d}x \geq \frac{1}{|\Omega|^{\gamma-1}}
\left(\int\limits_{\Omega} \varrho \,\mathrm{d}x\right)^{\gamma}
= \frac{1}{|\Omega|^{\gamma-1}}
\left(\int\limits_{\Omega} \varrho_0 \,\mathrm{d}x\right)^{\gamma}.
\end{array}
\end{equation}

Therefore, it follows from $(\ref{Sect4_Blowup_Prove1})$ that
\begin{equation}\label{Sect4_Blowup_Prove3}
\begin{array}{ll}
\frac{\mathrm{d}}{\mathrm{d}t}M_{\varrho}(t) + a M_{\varrho}(t)

\geq \int\limits_{\Omega} \varrho |u|^2 \,\mathrm{d}x + \frac{3A e^{S_{-}}}{|\Omega|^{\gamma-1}}
\left(\int\limits_{\Omega} \varrho_0 \,\mathrm{d}x\right)^{\gamma} - 3\int\limits_{\Omega} \bar{p} \,\mathrm{d}x
\\[10pt]\hspace{2.9cm}

= \int\limits_{\Omega} \varrho |u|^2 \,\mathrm{d}x + B_1.
\end{array}
\end{equation}

While, by Cauchy-Schwarz inequality, we get
\begin{equation}\label{Sect4_Blowup_Prove4}
\begin{array}{ll}
M_{\varrho}(t)^2 = \left(\int\limits_{\Omega} \varrho u \cdot x \,\mathrm{d}x \right)^2
\leq \int\limits_{\Omega} \varrho |u|^2 \,\mathrm{d}x \int\limits_{\Omega} \varrho |x|^2 \,\mathrm{d}x
\\[10pt]\hspace{1.1cm}

\leq \int\limits_{\Omega} \varrho |u|^2 \,\mathrm{d}x
\left(|Diam(\Omega)|^2\int\limits_{\Omega} \varrho \,\mathrm{d}x \right)

= B_0\int\limits_{\Omega} \varrho |u|^2 \,\mathrm{d}x.
\end{array}
\end{equation}

If $B_1 =0$, it follows from $(\ref{Sect4_Blowup_Prove1})$ that
\begin{equation}\label{Sect4_Blowup_Prove5}
\begin{array}{ll}
\frac{\mathrm{d}}{\mathrm{d}t}M_{\varrho}(t) + a M_{\varrho}(t)
\geq \int\limits_{\Omega} \varrho |u|^2 \,\mathrm{d}x \geq \frac{M_{\varrho}(t)^2}{B_0}, \\[6pt]

\frac{\mathrm{d}}{\mathrm{d}t}[M_{\varrho}(t)\exp{(at)}] \geq \frac{(M_{\varrho}(t)\exp{(at)})^2}{B_0\exp{(at)}}, \\[6pt]

\frac{1}{M_{\varrho}(t)\exp{(at)}} - \frac{1}{M_{\varrho}(0)}
\leq \int\limits_0^t -\frac{1}{B_0\exp{(a s)}} \,\mathrm{d}s = \frac{1}{a B_0}(\exp\{-at\}-1), \\[6pt]

\frac{1}{M_{\varrho}(t)}\leq \exp\{-at\}[\frac{1}{M_{\varrho}(0)}+ \frac{1}{a B_0}(\exp\{-at\}-1)].
\end{array}
\end{equation}

Since $M_{\varrho}(0)>\frac{aB_0}{1-\exp\{-aT\}}>0$, when $t$ is small, $M_{\varrho}(t)>0$ by its continuity.
\begin{equation}\label{Sect4_Blowup_Prove6}
\begin{array}{ll}
M_{\varrho}(t)\geq \cfrac{\exp\{at\}}{\frac{1}{M_{\varrho}(0)}+ \frac{1}{a B_0}(\exp\{-at\}-1)} >0.
\end{array}
\end{equation}

When $t\rto T-$, the denominator $\frac{1}{M_{\varrho}(0)}+ \frac{1}{a B_0}(\exp\{-at\}-1) \rto 0+$, $M_{\varrho}(t)\rto +\infty$. Thus $\tau\leq T$.

\vspace{0.4cm}
If $B_1 \neq0$, it follows from $(\ref{Sect4_Blowup_Prove1})$ that
\begin{equation}\label{Sect4_Blowup_Prove7}
\begin{array}{ll}
\frac{\mathrm{d}}{\mathrm{d}t}M_{\varrho}(t) \geq \frac{M_{\varrho}(t)^2}{B_0}- a M_{\varrho}(t) + B_1
= \frac{1}{B_0} (M_{\varrho}(t)-\frac{a B_0}{2})^2 + B_1-\frac{a^2 B_0}{4}.
\end{array}
\end{equation}

Denote
\begin{equation}\label{Sect4_Blowup_Prove8}
\begin{array}{ll}
N_{\varrho}(t) = M_{\varrho}(t)-\frac{a B_0}{2},\ r^2 = \left|B_1 -\frac{a^2 B_0}{4}\right|,\ r>0.
\end{array}
\end{equation}

\begin{equation}\label{Sect4_Blowup_Prove9}
\frac{\mathrm{d}}{\mathrm{d}t}N_{\varrho}(t) \geq \left\{\begin{array}{ll}
\frac{1}{B_0}(N_{\varrho}(t)^2 + r^2), \ i\! f\ B_1 > \frac{a^2 B_0}{4}, \\[6pt]
\frac{1}{B_0}(N_{\varrho}(t)^2 - r^2), \ i\! f\ B_1 < \frac{a^2 B_0}{4}.
\end{array}\right.
\end{equation}

Integrating from $0$ to $\tau$, we obtain,
\begin{equation}\label{Sect4_Blowup_Prove10}
\frac{t}{B_0} \leq \left\{\begin{array}{ll}
\frac{1}{r}\arctan(\frac{N_{\varrho}(t)}{r}) -\frac{1}{r}\arctan(\frac{N_{\varrho}(0)}{r})
,\ i\! f\ B_1 > \frac{a^2 B_0}{4}, \\[10pt]
\frac{1}{2r}\ln|\frac{N_{\varrho}(t)-r}{N_{\varrho}(t)+r}| -\frac{1}{2r}\ln|\frac{N_{\varrho}(0)-r}{N_{\varrho}(0)+r}|
,\ i\! f\ B_1 < \frac{a^2 B_0}{4}.
\end{array}\right.
\end{equation}

As to $(\ref{Sect4_Blowup_Prove10})_1$,
\begin{equation}\label{Sect4_Blowup_Prove11}
\begin{array}{ll}
N_{\varrho}(t) \geq \cfrac{r\tan(\frac{rt}{B_0})+N_{\varrho}(0)}{1-\frac{N_{\varrho}(0)}{r}\tan(\frac{rt}{B_0})}\ , \\[15pt]
M_{\varrho}(t) \geq \frac{a B_0}{2} +
\cfrac{r\tan(\frac{rt}{B_0})+M_{\varrho}(0)-\frac{a B_0}{2}}{1-\frac{M_{\varrho}(0)-\frac{a B_0}{2}}{r}\tan(\frac{rt}{B_0})}\ .
\end{array}
\end{equation}

Since $M_{\varrho}(0)>\frac{aB_0}{2} + r\cot(\frac{rT}{B_0})$, when $t$ is small, $M_{\varrho}(t)>0$ by its continuity.
When $t\rto T-$, the denominator $1-\frac{M_{\varrho}(0)-\frac{a B_0}{2}}{r}\tan(\frac{rt}{B_0})\rto 0+$, $M_{\varrho}(t)\rto +\infty$. Since $T<\frac{\pi}{2}\frac{B_0}{r}$ and $[0,\tau)$ as the lifespan of the solution is simply connected, $\tau\leq T$.

As to $(\ref{Sect4_Blowup_Prove10})_2$, since $M_{\varrho}(0)>\frac{aB_0}{2}+r$, $N_{\varrho}(0)>r$, $N_{\varrho}(t)$ is increasing due to $\frac{\mathrm{d}}{\mathrm{d}t}N_{\varrho}(t) \geq \frac{1}{B_0}(N_{\varrho}(t)^2 - r^2)>0$. then $\frac{N_{\varrho}(t)-r}{N_{\varrho}(t)+r}>0$.

\begin{equation}\label{Sect4_Blowup_Prove12}
\begin{array}{ll}
N_{\varrho}(t) \geq -r+
\cfrac{2r}{1-\exp\{\frac{2rt}{B_0}+\ln|\frac{N_{\varrho}(0)-r}{N_{\varrho}(0)+r}|\}}\ , \\[15pt]
M_{\varrho}(t) \geq \frac{a B_0}{2} -r+
\cfrac{2r}{1-\exp\{\frac{2rt}{B_0}+\ln|\frac{M_{\varrho}(0)-\frac{a B_0}{2}-r}{M_{\varrho}(0)-\frac{a B_0}{2}+r}|\}}
\ .
\end{array}
\end{equation}

Since $M_{\varrho}(0)>\max\{\frac{aB_0}{2} -r + \cfrac{2r}{1-\exp\{-\frac{2rT}{B_0}\}},\ \frac{aB_0}{2} +r\}$, when $t$ is small, $M_{\varrho}(t)>0$ by its continuity.
When $t\rto T-$, the denominator $1-\exp\{\frac{2rt}{B_0}+\ln|\frac{M_{\varrho}(0)-\frac{a B_0}{2}-r}{M_{\varrho}(0)-\frac{a B_0}{2}+r}|\}\rto 0+$, $M_{\varrho}(t)=M_{\varrho}(\tau)\rto +\infty$. Thus $\tau\leq T$.

Thus, Theorem $\ref{Sect4_Blowup_Thm}$ on the singularity formation of classical solutions for a class of large data is proved.
\end{proof}

\begin{remark}\label{Sect4_Blowup_Remark}
We cannot remove the condition $dist(\partial\Omega, Supp(p_0-\bar{p},u_0,S_0-\bar{S}))>0$, otherwise the term $-\int\limits_{\partial\Omega} (p-\bar{p})(x\cdot n) \,\mathrm{d}S_x$ in $(\ref{Sect4_Blowup_Prove1})$ cannot be bounded. Denote the area of $\partial\Omega$ by $|\partial\Omega|$, we have
\begin{equation}\label{Sect4_Blowup_Remark_Eq}
\begin{array}{ll}
-\int\limits_{\partial\Omega} (p-\bar{p})(x\cdot n) \,\mathrm{d}S_x \\[10pt]
\geq -\bar{p}|Diam(\Omega)||\partial\Omega| - A|Diam(\Omega)|e^{S_{+}}\int\limits_{\partial\Omega} \varrho^{\gamma}\,\mathrm{d}S_x \\[10pt]
\geq -\bar{p}|Diam(\Omega)||\partial\Omega| - CA|Diam(\Omega)|e^{S_{+}} \|\varrho\|_{L^{\gamma}(\Omega)}^{\gamma-1}\|\nabla\varrho\|_{L^{\gamma}(\Omega)} \\[10pt]
\geq -\bar{p}|Diam(\Omega)||\partial\Omega| - \cfrac{CA|Diam(\Omega)|e^{S_{+}}}
{|\Omega|^{\frac{(\gamma-1)^2}{\gamma}}}\left(\int\limits_{\Omega} \varrho_0 \,\mathrm{d}x\right)^{\gamma-1}
\|\nabla\varrho\|_{L^{\gamma}(\Omega)} .
\end{array}
\end{equation}
where $\|\varrho\|_{L^{\gamma}(\partial\Omega)}^{\gamma}\leq C \|\varrho\|_{L^{\gamma}(\Omega)}^{\gamma-1} \|\nabla\varrho\|_{L^{\gamma}(\Omega)}$ (see \cite{Adams_Fournier_2009}) is applied.
While $\|\nabla\varrho\|_{L^{\gamma}(\Omega)}$ is not conserved and may not be bounded by the initial data.
\end{remark}

%%% find 5
\section{Global A Priori Estimates for Diffusion Equations}
In this section, we derive global a priori estimates for the nonlinear diffusion equations $(\ref{Sect2_Final_Diffusion})$. For simplicity, we omit the symbol $\ \hat{}\ $ over the variables and constants in this section if there is no ambiguity.

The following lemma is also an application of Lemma $\ref{Sect3_DivCurl_Decomposition}$, which states that the spatial derivatives are bounded by the temporal derivatives and the vorticity, thus the total energy $\mathcal{F}[\xi,v](t)$ can be bounded by $E[\xi,v](t)$ and $\mathcal{E}_1[\omega](t)$.
\begin{lemma}\label{Sect5_Total_Energy_Lemma}
For any given $T\in (0,+\infty]$, there exists $\ve_0>0$ which is independent of $(\xi_0,v_0,\phi_0)$, such that if
\begin{equation*}
\sup\limits_{0\leq t\leq T} \mathcal{F}[\xi,v,\phi](t) \leq\ve,
\end{equation*}
where $\ve\ll \min\{1,\ve_0\}$, then for $\forall t\in [0,T]$,
\begin{equation}\label{Sect5_Total_Energy_Eq}
\begin{array}{ll}
\mathcal{F}[\xi,v](t) \leq c_{10}(E[\xi,v](t) + \mathcal{E}_1[\omega](t)),
\end{array}
\end{equation}
for some $c_{10}>0$.
\end{lemma}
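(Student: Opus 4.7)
The plan is to mirror the proof of Lemma \ref{Sect3_Total_Energy_Lemma} but exploit the parabolic-hyperbolic structure $(\ref{Sect2_Parabolic_Hyperbolic})$ of the diffusion system in place of the damped momentum equation $(\ref{Sect2_Final_Eq})_2$. Solving Darcy's law $(\ref{Sect2_Final_Diffusion})_2$ for $v$ gives the algebraic identity
$$v = -\frac{1}{ak_1\varrho}\nabla\xi,$$
so every velocity norm in $\mathcal{E}[v](t)$ is reduced to one more spatial derivative of $\xi$: $\|\partial_t^\ell\mathcal{D}^\alpha v\|_{L^2}\lem \|\partial_t^\ell\mathcal{D}^\alpha\nabla\xi\|_{L^2}+\sqrt{\ve}\,\mathcal{F}[\xi,v]$ for $\ell+|\alpha|\leq 3$, while every divergence norm in the extra part of $\mathcal{F}[v]$ is reduced via $(\ref{Sect2_Final_Diffusion})_1$ to one more temporal derivative of $\xi$: $\|\partial_t^\ell\mathcal{D}^\alpha\nabla\cdot v\|_{L^2}\lem \|\partial_t^{\ell+1}\mathcal{D}^\alpha\xi\|_{L^2}+\sqrt{\ve}\,\mathcal{F}[\xi,v]$ for $\ell\leq 2$, $\ell+|\alpha|\leq 3$. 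Both right-hand sides sit inside $\mathcal{F}[\xi](t)$, so the task reduces to proving $\mathcal{F}[\xi](t)\lem E[\xi](t)+E[v](t)+\mathcal{E}_1[\omega](t)+\sqrt{\ve}\,\mathcal{F}[\xi,v](t)$.

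For that bound I would exploit the parabolic identity read off from $(\ref{Sect2_Parabolic_Hyperbolic})_1$:
$$\triangle\xi = \frac{a\varrho}{\gamma p}\,\xi_t - \frac{1}{\gamma}\nabla\xi\cdot\nabla\phi,$$
whose coefficient is bounded above and below because $\ve\ll 1$ keeps $p$ near $\bar{p}$ and $\varrho$ near $\bar{\varrho}$. Differentiating this identity and commuting the spatial derivatives with the bounded multiplier yields
$$\|\partial_t^\ell\mathcal{D}^\alpha\triangle\xi\|_{L^2} \lem \|\partial_t^{\ell+1}\mathcal{D}^\alpha\xi\|_{L^2}+\sqrt{\ve}\,\mathcal{F}[\xi,v],\qquad \ell\leq 2,\ |\alpha|\leq 2-\ell.$$
The Neumann condition $\nabla\xi\cdot n|_{\partial\Omega}=0$, inherited from $v\cdot n|_{\partial\Omega}=0$ via Darcy's law (together with its time derivatives, supplied by the compatibility hypothesis $\partial_t^\ell\nabla p(x,0)\cdot n|_{\partial\Omega}=0$), lets me apply the standard elliptic regularity estimate $\|\eta\|_{H^{k+2}}\lem \|\triangle\eta\|_{H^k}+\|\eta\|_{H^{k+1}}$ to $\eta=\partial_t^\ell\xi$. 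Iterating downward from $\ell=3$ (where only $\|\xi_{ttt}\|_{H^1}\lem\|\triangle\xi_{tt}\|_{L^2}+\|\xi_{ttt}\|_{L^2}+\sqrt{\ve}\,\mathcal{F}$ is needed) to $\ell=0$, and using the Poincar\'e-type base case $\|\xi\|_{L^2}\lem\|\nabla\xi\|_{L^2}$ from $(\ref{Sect3_XiT_8})$, closes the estimate for $\mathcal{F}[\xi](t)$.

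Whenever an intermediate step needs $\nabla v$ directly rather than $\nabla\cdot v$, I would invoke the Helmholtz-Hodge bound of Lemma \ref{Sect3_DivCurl_Decomposition}, $\|v\|_{H^s}\lem \|\omega\|_{H^{s-1}}+\|\nabla\cdot v\|_{H^{s-1}}+\|v\|_{H^{s-1}}$; the vorticity contribution is absorbed into $\mathcal{E}_1[\omega]$, the divergence contribution was just handled, and the lower order term feeds the induction. Summing all the inequalities yields $\mathcal{F}[\xi,v](t)\leq C(E[\xi,v](t)+\mathcal{E}_1[\omega](t))+C\sqrt{\ve}\,\mathcal{F}[\xi,v](t)$, and choosing $\ve_0=1/(4C^2)$ absorbs the remainder to give the conclusion with $c_{10}=2C$.

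The main obstacle is the iterated elliptic regularity of the second paragraph. One must check the Neumann compatibility $\partial_t^\ell\nabla\xi\cdot n|_{\partial\Omega}=0$ at every level $\ell\leq 3$, which is exactly why the $H^4$--regularity of $\hat{p}_0$ (rather than only $H^3$ as in Theorem \ref{Sect2_Global_Existence_Thm}) is needed, and one must carefully track the nonlinear commutators generated when $\mathcal{D}^\alpha$ hits the $x$-dependent coefficient $a\varrho/(\gamma p)$, verifying that each such commutator is of order $\sqrt{\ve}\,\mathcal{F}[\xi,v]$ by the embedding $H^2\hookrightarrow L^\infty$ and the $L^4$--product estimates used throughout Section 3.
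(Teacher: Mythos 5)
Your proposal is correct, but it follows a genuinely different route from the paper's. The paper never invokes elliptic regularity for the Neumann--Laplacian; instead it proves the analogue of $(\ref{Sect3_Total_Energy_Prove1})$--$(\ref{Sect3_Total_Energy_Prove12})$ from Lemma~\ref{Sect3_Total_Energy_Lemma}, replacing $(\ref{Sect2_Final_Eq})_2$ by Darcy's law $\nabla\xi=-ak_1\varrho v$ so that $\nabla\xi$ and its derivatives are bounded by $v$ and its derivatives (estimates $(\ref{Sect5_Total_Energy_Prove4})$--$(\ref{Sect5_Total_Energy_Prove13})$); it controls $\|\nabla v\|$ terms by $\omega$, $\nabla\cdot v$, $v$ via the Helmholtz--Hodge Lemma~\ref{Sect3_DivCurl_Decomposition}; and for the extra $\mathcal{F}[v]$ entries $\partial_t^\ell\mathcal{D}^\alpha(\nabla\cdot v)$ it reads off $\nabla(\nabla\cdot v)=\frac{a\varrho}{\gamma p}v_t+(\text{quadratic})$ from the derived evolution equation $(\ref{Sect2_Velocity_Solve 3})$ and bounds $\|\nabla\cdot v\|_{H^3}$, $\|\nabla\cdot v_t\|_{H^2}$, $\|\nabla\cdot v_{tt}\|_{H^1}$ by temporal derivatives (estimates $(\ref{Sect5_Velocity_Prove14})$--$(\ref{Sect5_Velocity_Prove16})$). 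You instead invert the direction of Darcy's law ($v\lesssim\nabla\xi$), collapse all divergence norms to $\xi_t$-type norms via the mass equation, and then close the estimate on $\mathcal{F}[\xi]$ alone using the identity $\triangle\xi=\frac{a\varrho}{\gamma p}\xi_t-\frac1\gamma\nabla\xi\cdot\nabla\phi$ and iterated Neumann elliptic regularity $\|\eta\|_{H^{k+2}}\lesssim\|\triangle\eta\|_{H^k}+\|\eta\|_{L^2}$, together with the Poincar\'e-type bound $(\ref{Sect3_XiT_8})$ and the Darcy estimate $\|\nabla\xi_{ttt}\|_{L^2}\lesssim\|v_{ttt}\|_{L^2}+\sqrt{\ve}\mathcal{F}$ for the single $\ell=3$ entry. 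The two arguments trade the same two spatial derivatives for one temporal derivative using the parabolic structure, so they are morally equivalent, but yours has the feature (which you do not remark upon) that the vorticity is never actually needed: since $v=-\frac{1}{ak_1\varrho}\nabla\xi$, one has $\omega=-\frac{1}{ak_1}\nabla(\tfrac1\varrho)\times\nabla\xi$, a quadratic commutator, so the term $\mathcal{E}_1[\omega]$ on the right-hand side of $(\ref{Sect5_Total_Energy_Eq})$ is superfluous in the diffusion case and your chain already gives $\mathcal{F}[\xi,v]\lesssim E[\xi,v]+\sqrt{\ve}\,\mathcal{F}[\xi,v]$. The paper's route, by contrast, runs through Lemma~\ref{Sect3_DivCurl_Decomposition} and keeps the vorticity in the estimate, which lets it recycle Section 3's machinery verbatim at the cost of an extra decay lemma for $\mathcal{E}_1[\omega]$.

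One small imprecision: you attribute the time-differentiated Neumann conditions $\partial_t^\ell\nabla\xi\cdot n|_{\partial\Omega}=0$ to the compatibility hypothesis on the initial data, but these actually hold for every $t$ directly from the PDE, since $\partial_t^\ell(v\cdot n)|_{\partial\Omega}=0$ for all $t$ and $\nabla\xi=-ak_1\varrho v$ gives $\partial_t^\ell\nabla\xi\cdot n|_{\partial\Omega}=0$ by the Leibniz rule. The compatibility hypothesis on $(p_0,S_0)$ is what makes the solution smooth up to $t=0$ in the first place; once the solution exists in the regularity class $(\ref{Sect6_Local_Regularity})$, the boundary conditions you need for elliptic regularity are automatic.
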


\begin{proof}
Since the equation $(\ref{Sect2_Final_Diffusion})_1$ is the same with the equation $(\ref{Sect2_Final_Eq})_1$,
similar to the estimates in Lemma $\ref{Sect3_Total_Energy_Lemma}$, we have
\begin{equation}\label{Sect5_Total_Energy_Prove0}
\begin{array}{ll}
\|\nabla v\|_{L^2(\Omega)}^2
\lem \|\xi_t\|_{L^(\Omega)}^2 +\|\omega\|_{L^2(\Omega)}^2 +\|v\|_{L^2(\Omega)}^2 + \sqrt{\ve}\mathcal{F}[\xi,v](t), \\[6pt]
\|\nabla v_t\|_{L^2(\Omega)}^2
\lem \|\xi_{tt}\|_{L^2(\Omega)}^2 + \|\omega_t\|_{L^2(\Omega)}^2 + \|v_t\|_{L^2(\Omega)}^2
+ \sqrt{\ve}\mathcal{F}[\xi,v](t),\\[6pt]

\|\nabla v_{tt}\|_{L^2(\Omega)}^2
\lem \|\xi_{ttt}\|_{L^2(\Omega)}^2 +\|\omega_{tt}\|_{L^2(\Omega)}^2 +\|v_{tt}\|_{L^2(\Omega)}^2
+ \sqrt{\ve}\mathcal{F}[\xi,v](t).
\end{array}
\end{equation}

Moreover, when $|\alpha|=1$,
\begin{equation}\label{Sect5_Total_Energy_Prove1}
\begin{array}{ll}
\|\mathcal{D}^{\alpha}\nabla\cdot v\|_{L^2(\Omega)}^2
\lem \|\mathcal{D}^{\alpha}\xi_t\|_{L^2(\Omega)}^2 + \sqrt{\ve}\mathcal{F}[\xi,v](t)
\lem \|v_t\|_{L^2(\Omega)}^2 + \sqrt{\ve}\mathcal{F}[\xi,v](t), \\[6pt]

\|\mathcal{D}^{\alpha}\nabla v\|_{L^2(\Omega)}^2
\lem \|v_t\|_{L^2(\Omega)}^2 + \|\omega\|_{H^1(\Omega)}^2
+ \|v\|_{L^2(\Omega)}^2 +\|\xi_t\|_{L^2(\Omega)}^2 + \sqrt{\ve}\mathcal{F}[\xi,v](t).
\end{array}
\end{equation}

\begin{equation}\label{Sect5_Total_Energy_Prove2}
\begin{array}{ll}
\|\mathcal{D}^{\alpha}\nabla\cdot v_t\|_{L^2(\Omega)}^2 \lem \|\mathcal{D}^{\alpha}\xi_{tt}\|_{L^2(\Omega)}^2
+\sqrt{\ve}\mathcal{F}[\xi,v](t)
\lem \|v_{tt}\|_{L^2(\Omega)}^2 +\sqrt{\ve}\mathcal{F}[\xi,v](t), \\[6pt]

\|\mathcal{D}^{\alpha}\nabla v_t\|_{L^2(\Omega)}^2
\lem \|\xi_{tt}\|_{L^2(\Omega)}^2 + \|v_{tt}\|_{L^2(\Omega)}^2
+ \|\omega_t\|_{H^1(\Omega)}^2 + \|v_t\|_{L^2(\Omega)}^2 \\[6pt]\hspace{2.7cm}
+ \sqrt{\ve}\mathcal{F}[\xi,v](t).
\end{array}
\end{equation}

When $|\alpha|=2$,
\begin{equation}\label{Sect5_Total_Energy_Prove3}
\begin{array}{ll}
\|\mathcal{D}^{\alpha}\nabla\cdot v\|_{L^2(\Omega)}^2 \lem \|\mathcal{D}^{\alpha}\xi_t\|_{L^2(\Omega)}^2+ \sqrt{\ve}\mathcal{F}[\xi,v](t) \\[6pt]\hspace{2.4cm}
\lem \|\xi_{tt}\|_{H^1(\Omega)}^2 +\|\omega_t\|_{L^2(\Omega)}^2 +\|v_t\|_{L^2\Omega)}^2
+ \sqrt{\ve}\mathcal{F}[\xi,v](t), \\[6pt]

\|\mathcal{D}^{\alpha}\nabla v\|_{L^2(\Omega)}^2
\lem \|\xi_{tt}\|_{H^1(\Omega)}^2 +\|\omega_t\|_{H^1(\Omega)}^2 +\|\omega\|_{H^2(\Omega)}^2 +\|v_t\|_{L^2\Omega)}^2
\\[6pt]\hspace{2.6cm}
+\|v\|_{L^2\Omega)}^2 +\|\xi_t\|_{L^2\Omega)}^2 + \sqrt{\ve}\mathcal{F}[\xi,v](t).
\end{array}
\end{equation}

By $(\ref{Sect2_Final_Diffusion_Vt})_2$, we get
\begin{equation}\label{Sect5_Total_Energy_Prove4}
\begin{array}{ll}
\nabla\xi = - ak_1\varrho v, \\[6pt]
\|\nabla\xi\|_{L^2(\Omega)}^2 \lem \|v\|_{L^2(\Omega)}^2.
\end{array}
\end{equation}

Apply $\partial_t$ to $(\ref{Sect5_Total_Energy_Prove4})_1$, we get
\begin{equation}\label{Sect5_Total_Energy_Prove5}
\begin{array}{ll}
\nabla\xi_t = - ak_1\xi_t v -ak_1\varrho v_t,\\[6pt]

\|\nabla\xi_t\|_{L^2(\Omega)}^2 \lem \|v_t\|_{L^2(\Omega)}^2 +\sqrt{\ve}\mathcal{F}[\xi,v](t).
\end{array}
\end{equation}

Apply $\partial_{tt}$ to $(\ref{Sect5_Total_Energy_Prove4})_1$, we get
\begin{equation}\label{Sect5_Total_Energy_Prove6}
\begin{array}{ll}
\nabla\xi_{tt} = - ak_1\xi_{tt} v - 2ak_1\xi_t v_t -ak_1\varrho v_{tt}, \\[6pt]

\|\nabla\xi_{tt}\|_{L^2(\Omega)}^2 \lem \|v_{tt}\|_{L^2(\Omega)}^2 +\sqrt{\ve}\mathcal{F}[\xi,v](t).
\end{array}
\end{equation}

Apply $\partial_{ttt}$ to $(\ref{Sect5_Total_Energy_Prove4})_1$, we get
\begin{equation}\label{Sect5_Total_Energy_Prove7}
\begin{array}{ll}
\nabla\xi_{ttt} = - ak_1\xi_{ttt} v - 3ak_1\xi_{tt} v_t - 3ak_1\xi_t v_{tt} -ak_1\varrho v_{ttt}, \\[6pt]

\|\nabla\xi_{ttt}\|_{L^2(\Omega)}^2 \lem \|v_{ttt}\|_{L^2(\Omega)}^2 +\sqrt{\ve}\mathcal{F}[\xi,v](t).
\end{array}
\end{equation}

Apply $\mathcal{D}^{\alpha}$ to $(\ref{Sect5_Total_Energy_Prove4})_1$, where $|\alpha|=1$, we get
\begin{equation}\label{Sect5_Total_Energy_Prove8}
\begin{array}{ll}
\mathcal{D}^{\alpha}\nabla\xi = - ak_1(\mathcal{D}^{\alpha}\xi) v - ak_1\varrho \mathcal{D}^{\alpha}v, \\[6pt]

\|\mathcal{D}^{\alpha}\nabla\xi\|_{L^2(\Omega)}^2 \lem \|\mathcal{D}^{\alpha}v\|_{L^2(\Omega)}^2 +\sqrt{\ve}\mathcal{F}[\xi,v](t) \\[6pt]\hspace{2.1cm}

\lem \|\xi_t\|_{L^2(\Omega)}^2 +\|\omega\|_{L^2(\Omega)}^2 +\|v\|_{L^2(\Omega)}^2 +\sqrt{\ve}\mathcal{F}[\xi,v](t).
\end{array}
\end{equation}

Apply $\mathcal{D}^{\alpha}$ to $(\ref{Sect5_Total_Energy_Prove5})_1$, where $|\alpha|=1$, we get
\begin{equation}\label{Sect5_Total_Energy_Prove9}
\begin{array}{ll}
\mathcal{D}^{\alpha}\nabla\xi_t = - ak_1\mathcal{D}^{\alpha}(\xi_t v)
-ak_1(\mathcal{D}^{\alpha}\xi) v_t -ak_1\varrho\mathcal{D}^{\alpha} v_t,\\[6pt]

\|\mathcal{D}^{\alpha}\nabla\xi_t\|_{L^2(\Omega)}^2 \lem
\|\mathcal{D}^{\alpha} v_t\|_{L^2(\Omega)}^2 +\sqrt{\ve}\mathcal{F}[\xi,v](t) \\[6pt]\hspace{2.3cm}

\lem \|\xi_{tt}\|_{L^2(\Omega)}^2 +\|\omega_t\|_{L^2(\Omega)}^2 +\|v_t\|_{L^2(\Omega)}^2
+\sqrt{\ve}\mathcal{F}[\xi,v](t).
\end{array}
\end{equation}

Apply $\mathcal{D}^{\alpha}$ to $(\ref{Sect5_Total_Energy_Prove6})_1$, where $|\alpha|=1$, we get
\begin{equation}\label{Sect5_Total_Energy_Prove10}
\begin{array}{ll}
\mathcal{D}^{\alpha}\nabla\xi_{tt} = - ak_1\mathcal{D}^{\alpha}(\xi_{tt}v) - 2ak_1\mathcal{D}^{\alpha}(\xi_t v_t)
- ak_1 (\mathcal{D}^{\alpha}\varrho) v_{tt} - ak_1\varrho \mathcal{D}^{\alpha}v_{tt}, \\[6pt]

\|\mathcal{D}^{\alpha}\nabla\xi_{tt}\|_{L^2(\Omega)}^2 \lem \|\mathcal{D}^{\alpha}v_{tt}\|_{L^2(\Omega)}^2 +\sqrt{\ve}\mathcal{F}[\xi,v](t) \\[6pt]\hspace{2.4cm}

\lem \|\xi_{ttt}\|_{L^2(\Omega)}^2 +\|\omega_{tt}\|_{L^2(\Omega)}^2 +\|v_{tt}\|_{L^2(\Omega)}^2 +\sqrt{\ve}\mathcal{F}[\xi,v](t).
\end{array}
\end{equation}

Apply $\mathcal{D}^{\alpha}$ to $(\ref{Sect5_Total_Energy_Prove4})_1$, where $|\alpha|=2$, $\alpha=\alpha_1+\alpha_2$, we get
\begin{equation}\label{Sect5_Total_Energy_Prove11}
\begin{array}{ll}
\mathcal{D}^{\alpha}\nabla\xi = - ak_1\varrho\mathcal{D}^{\alpha}v
- ak_1\sum\limits_{\alpha_1>0}(\mathcal{D}^{\alpha_1}\xi)\mathcal{D}^{\alpha_2}v, \\[6pt]

\|\mathcal{D}^{\alpha}\nabla\xi\|_{L^2(\Omega)}^2 \lem \|\mathcal{D}^{\alpha}v\|_{L^2(\Omega)}^2 +\sqrt{\ve}\mathcal{F}[\xi,v](t) \\[6pt]\hspace{2.2cm}

\lem \|v_t\|_{L^2(\Omega)}^2 + \|\omega\|_{H^1(\Omega)}^2 +\|\xi_t\|_{L^2(\Omega)}^2
+\|v\|_{L^2(\Omega)}^2+ \sqrt{\ve}\mathcal{F}[\xi,v](t).
\end{array}
\end{equation}

Apply $\mathcal{D}^{\alpha}$ to $(\ref{Sect5_Total_Energy_Prove5})_1$, where $|\alpha|=2$, $\alpha=\alpha_1+\alpha_2$, we get
\begin{equation}\label{Sect5_Total_Energy_Prove12}
\begin{array}{ll}
\mathcal{D}^{\alpha}\nabla\xi_t =  - ak_1\mathcal{D}^{\alpha}(\xi_t v)
- ak_1\sum\limits_{\alpha_1>0}(\mathcal{D}^{\alpha_1}\varrho\mathcal{D}^{\alpha_2}v_t)
- ak_1\varrho\mathcal{D}^{\alpha}v_t, \\[6pt]

\|\mathcal{D}^{\alpha}\nabla\xi_t\|_{L^2(\Omega)}^2 \lem \|\mathcal{D}^{\alpha}v_t\|_{L^2(\Omega)}^2 +\sqrt{\ve}\mathcal{F}[\xi,v](t) \\[6pt]\hspace{2.2cm}

\lem \|\xi_{tt}\|_{L^2(\Omega)}^2 + \|v_{tt}\|_{L^2(\Omega)}^2 + \|\omega_t\|_{H^1(\Omega)}^2
+ \|v_{t}\|_{L^2(\Omega)}^2 \\[6pt]\hspace{2.6cm}
+ \sqrt{\ve}\mathcal{F}[\xi,v](t).
\end{array}
\end{equation}

Apply $\mathcal{D}^{\alpha}$ to $(\ref{Sect5_Total_Energy_Prove4})_1$, where $|\alpha|=3$, $\alpha=\alpha_1+\alpha_2$, we get
\begin{equation}\label{Sect5_Total_Energy_Prove13}
\begin{array}{ll}
\mathcal{D}^{\alpha}\nabla\xi = - ak_1\varrho\mathcal{D}^{\alpha}v
- ak_1\sum\limits_{\alpha_1>0}(\mathcal{D}^{\alpha_1}\xi)\mathcal{D}^{\alpha_2}v, \\[6pt]

\|\mathcal{D}^{\alpha}\nabla\xi\|_{L^2(\Omega)}^2 \lem \|\mathcal{D}^{\alpha}v\|_{L^2(\Omega)}^2 +\sqrt{\ve}\mathcal{F}[\xi,v](t) \\[6pt]\hspace{2.2cm}

\lem \|v_{tt}\|_{L^2(\Omega)}^2 +\|\xi_{tt}\|_{H^1(\Omega)}^2
+\|\omega_t\|_{H^1(\Omega)}^2 +\|\omega\|_{H^2(\Omega)}^2 \\[6pt]\hspace{2.6cm}
+\|v_t\|_{L^2\Omega)}^2 + \|v\|_{L^2(\Omega)}^2 +\|\xi_t\|_{L^2(\Omega)}^2
+ \sqrt{\ve}\mathcal{F}[\xi,v](t).
\end{array}
\end{equation}

By $(\ref{Sect2_Velocity_Solve 3})$, we have
\begin{equation}\label{Sect5_Velocity_Prove14}
\begin{array}{ll}
\nabla(\nabla\cdot v) = \frac{a\varrho}{\gamma p}v_t + k_1(\gamma-1)\frac{a\varrho}{\gamma p}v(\nabla\cdot v)
+ \frac{k_1 a\varrho}{\gamma p}v\cdot\nabla v + \frac{k_1 a\varrho}{2\gamma p}\nabla(|v|^2),\\[6pt]

\|\nabla\cdot v\|_{H^3{(\Omega)}}^2 \lem \|\nabla(\nabla\cdot v)\|_{H^2{(\Omega)}}^2
\lem \|\frac{\varrho}{p}v_t\|_{H^2{(\Omega)}}^2 + \|\frac{\varrho}{p}v(\nabla\cdot v)\|_{H^2{(\Omega)}}^2
\\[6pt]\hspace{2.4cm}
+ \|\frac{\varrho}{p}v\cdot\nabla v\|_{H^2{(\Omega)}}^2 + \|\frac{\varrho}{p}\nabla(|v|^2)\|_{H^2{(\Omega)}}^2 \\[6pt]\hspace{2cm}
\lem \|v_t\|_{H^2{(\Omega)}}^2 + \sqrt{\ve}\mathcal{F}[\xi,v](t) \\[6pt]\hspace{2cm}
\lem \|\xi_{tt}\|_{L^2(\Omega)}^2 + \|v_{tt}\|_{L^2(\Omega)}^2
+ \|\omega_t\|_{H^1(\Omega)}^2 + \|v_t\|_{L^2(\Omega)}^2
+ \sqrt{\ve}\mathcal{F}[\xi,v](t).
\end{array}
\end{equation}

Apply $\partial_t$ to $(\ref{Sect5_Velocity_Prove14})_1$, we get
\begin{equation}\label{Sect5_Velocity_Prove15}
\begin{array}{ll}
\nabla(\nabla\cdot v_t) = \frac{a\varrho}{\gamma p}v_{tt} + \frac{a}{\gamma}(\frac{\varrho}{p})_t v_t + k_1(\gamma-1)\frac{a}{\gamma}\partial_t[\frac{\varrho}{p}v(\nabla\cdot v)]
+ \frac{k_1 a}{\gamma}\partial_t[\frac{\varrho}{p}v\cdot\nabla v] \\[6pt]\hspace{1.9cm}
+ \frac{k_1 a}{2\gamma}\partial_t[\frac{\varrho}{p}\nabla(|v|^2)],\\[6pt]

\|\nabla\cdot v_t\|_{H^2{(\Omega)}}^2 \lem \|\nabla(\nabla\cdot v_t)\|_{H^1{(\Omega)}}^2
\lem \|v_{tt}\|_{H^1{(\Omega)}}^2 + \sqrt{\ve}\mathcal{F}[\xi,v](t) \\[6pt]\hspace{2.1cm}
\lem \|\xi_{ttt}\|_{L^2(\Omega)}^2 +\|\omega_{tt}\|_{L^2(\Omega)}^2 +\|v_{tt}\|_{L^2(\Omega)}^2
+ \sqrt{\ve}\mathcal{F}[\xi,v](t).
\end{array}
\end{equation}

Apply $\partial_{tt}$ to $(\ref{Sect5_Velocity_Prove14})_1$, we get
\begin{equation}\label{Sect5_Velocity_Prove16}
\begin{array}{ll}
\nabla(\nabla\cdot v_{tt}) = \frac{a\varrho}{\gamma p}v_{ttt} + \frac{2a}{\gamma}(\frac{\varrho}{p})_t v_{tt}
+ \frac{a}{\gamma}(\frac{\varrho}{p})_{tt} v_t
+ k_1(\gamma-1)\frac{a}{\gamma}\partial_{tt}[\frac{\varrho}{p}v(\nabla\cdot v)] \\[6pt]\hspace{2cm}
+ \frac{k_1 a}{\gamma}\partial_{tt}[\frac{\varrho}{p}v\cdot\nabla v]
+ \frac{k_1 a}{2\gamma}\partial_{tt}[\frac{\varrho}{p}\nabla(|v|^2)],\\[6pt]

\|\nabla\cdot v_{tt}\|_{H^1{(\Omega)}}^2 \lem \|\nabla(\nabla\cdot v_{tt})\|_{L^2{(\Omega)}}^2
\lem \|v_{ttt}\|_{L^2{(\Omega)}}^2 + \sqrt{\ve}\mathcal{F}[\xi,v](t).
\end{array}
\end{equation}

Thus, $\mathcal{F}[\xi,v](t) \leq C_8E[\xi,v](t) + C_8\mathcal{E}_1[\omega](t)+ C_8\sqrt{\ve}\mathcal{F}[\xi,v](t)$,
where $C_8>0$.

Let $\ve_0=\frac{1}{4C_8^2}$, when $\ve\ll\min\{1,\ve_0\}$, we have
\begin{equation}\label{Sect5_Total_Energy_Prove18}
\begin{array}{ll}
\mathcal{F}[\xi,v](t)\leq 2C_8 \{E[\xi,v](t) + \mathcal{E}_1[\omega](t)\}.
\end{array}
\end{equation}
Let $c_{10}=2C_8$. Thus, Lemma $\ref{Sect5_Total_Energy_Lemma}$ is proved.
\end{proof}

Next, in order to prove the exponential decay of $\mathcal{F}[\xi,v](t)$ and $\mathcal{E}_1[\omega](t)$, we need to prove a priori estimates for $\mathcal{E}_1[\omega](t)$, $E[\xi](t)$,
$\int\limits_{\Omega}\sum\limits_{\ell=1}^{3}\partial_t^{\ell}\xi \partial_t^{\ell-1}\xi \,\mathrm{d}x$
and $E[v](t)$ separatively.

The following lemma concerns a priori estimate for $\mathcal{E}_1[\omega](t)$.
\begin{lemma}\label{Sect5_Vorticity_Lemma}
For any given $T\in (0,+\infty]$, if
\begin{equation*}
\sup\limits_{0\leq t\leq T} \mathcal{F}[\xi,v,\phi](t) \leq\ve,
\end{equation*}
where $0<\ve\ll 1$, then for $\forall t\in [0,T]$,
\begin{equation}\label{Sect5_Vorticity_toProve}
\begin{array}{ll}
\frac{\mathrm{d}}{\mathrm{d}t}\int\limits_{\Omega}\mathcal{E}_1[\omega](t)\,\mathrm{d}x + 2a\mathcal{E}_1[\omega](t)
\leq C\sqrt{\ve}\mathcal{F}[\xi,v](t).
\end{array}
\end{equation}
\end{lemma}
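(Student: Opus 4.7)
My plan is to derive the vorticity equation for the diffusion system by taking the curl of $(\ref{Sect2_Final_Diffusion_Vt})_2$, then mimic the structure of the proof of Lemma \ref{Sect3_Vorticity_Lemma}. Because $\nabla\times\nabla(\cdot)=0$, the terms $k_2\nabla\xi$, $\frac{k_1}{2}\nabla(|v|^2)$, and the bulk of $\frac{\gamma p}{a\varrho}\nabla(\nabla\cdot v)$ drop out up to commutators, and the resulting evolution equation for $\omega$ reads
\begin{equation*}
\omega_t + a\omega = -k_1\nabla\times(v\cdot\nabla v) + k_1(1-\gamma)\nabla\times[v(\nabla\cdot v)] + \frac{1}{k_1}\nabla\bigl(\tfrac{1}{\varrho}\bigr)\times\nabla\xi + \nabla\bigl(\tfrac{\gamma p}{a\varrho}\bigr)\times\nabla(\nabla\cdot v).
\end{equation*}
The right-hand side is a sum of quadratic terms in which at least one factor carries a derivative of a small quantity, so each piece will ultimately be absorbed by $\sqrt{\ve}\,\mathcal{F}[\xi,v](t)$.

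Next, for each multi-index $(\ell,\alpha)$ with $\ell+|\alpha|\le 2$ I would apply $\partial_t^\ell\mathcal{D}^\alpha$ to this equation, take $L^2$-inner product with $\partial_t^\ell\mathcal{D}^\alpha\omega$, and integrate over $\Omega$. The linear damping produces the dissipation $2a\|\partial_t^\ell\mathcal{D}^\alpha\omega\|_{L^2}^2$. Summing over $(\ell,\alpha)$ yields exactly $2a\,\mathcal{E}_1[\omega](t)$ on the left, so the task reduces to bounding every nonlinear contribution on the right by $\sqrt{\ve}\,\mathcal{F}[\xi,v](t)$. The advection-type terms $v\cdot\nabla\omega$ (after the decomposition $\nabla\times(v\cdot\nabla v)=v\cdot\nabla\omega-\omega\cdot\nabla v+\omega\nabla\cdot v$) and $v(\nabla\cdot v)$ are handled exactly as in Lemma \ref{Sect3_Vorticity_Lemma}, using the boundary condition $v\cdot n|_{\partial\Omega}=0$ to kill the surface contribution from integration by parts, together with Sobolev embedding $H^1\hookrightarrow L^4$ and Lemma \ref{Sect3_DivCurl_Decomposition} to distribute the derivatives. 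The pressure/density commutator $\nabla(1/\varrho)\times\nabla\xi$ is controlled by the small-$\ve$ estimates on $\varrho-\bar\varrho$ and its derivatives given by Lemma \ref{Sect3_Varrho_Lemma}, which apply verbatim here.

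The only genuinely new obstacle is the last term $\nabla(\gamma p/(a\varrho))\times\nabla(\nabla\cdot v)$, which has no counterpart in the Euler case. After applying $\partial_t^\ell\mathcal{D}^\alpha$ with $\ell+|\alpha|=2$, the worst distribution of derivatives requires an $L^2$ bound on $\partial_t^\ell\mathcal{D}^\alpha\nabla(\nabla\cdot v)$, i.e.\ on third-order mixed derivatives of $\nabla\cdot v$. This is precisely the reason the energy quantity $\mathcal{F}[v](t)$ was enlarged (compared to $\mathcal{E}[v](t)$) to include $\sum_{\ell\le 2,\,\ell+|\alpha|\le 3}\|\partial_t^\ell\mathcal{D}^\alpha(\nabla\cdot v)\|_{L^2}^2$, and using the smallness of $|\nabla(p/\varrho)|_\infty\lesssim\sqrt{\ve}$ (from Lemma \ref{Sect3_Varrho_Lemma} applied to the hatted variables) together with Leibniz and Sobolev-embedding estimates for the lower-order distributions of derivatives closes this term by $\sqrt{\ve}\,\mathcal{F}[\xi,v](t)$. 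Summing all the resulting pointwise inequalities over $\ell+|\alpha|\le 2$ gives the desired differential inequality.
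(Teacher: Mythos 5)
Your proposal follows essentially the same route as the paper: take the curl of $(\ref{Sect2_Final_Diffusion_Vt})_2$ (so that $\nabla\xi$, $\nabla(|v|^2)$ and the gradient part of $\frac{\gamma p}{a\varrho}\nabla(\nabla\cdot v)$ drop out, leaving exactly the commutator $\nabla(\frac{\gamma p}{a\varrho})\times\nabla(\nabla\cdot v)$ and the density--pressure term), apply $\partial_t^{\ell}\mathcal{D}^{\alpha}$ with $\ell+|\alpha|\leq 2$, perform the damped $L^2$ energy estimate with the boundary condition $v\cdot n|_{\partial\Omega}=0$, and close the new second-order term using the extra $\nabla\cdot v$ derivatives built into $\mathcal{F}[v](t)$ together with $|\nabla(p/\varrho)|_{\infty}\lesssim\sqrt{\ve}$, which is precisely the paper's argument. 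The only blemish is a harmless sign slip in the commutator term (it should be $\frac{1}{k_1}\frac{\nabla\varrho}{\varrho^2}\times\nabla\xi$, i.e. $-\frac{1}{k_1}\nabla(\frac{1}{\varrho})\times\nabla\xi$), which does not affect any of the estimates.
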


\begin{proof}
By $\nabla\times (\ref{Sect2_Final_Diffusion_Vt})_2$, we get
\begin{equation}\label{Sect5_Vorticity_1}
\begin{array}{ll}
\partial_t \omega + a\omega = k_1(1-\gamma)[\omega(\nabla\cdot v) - v\times\nabla(\nabla\cdot v)]
-k_1 v\cdot\nabla\omega -k_1 \omega(\nabla\cdot v) \\[10pt]\hspace{1.8cm}
+ k_1 \omega\cdot\nabla v +\frac{\gamma}{a}\nabla(\frac{p}{\varrho})\times\nabla(\nabla\cdot v)
+ \frac{1}{k_1}(\frac{\partial_i\varrho}{\varrho^2}\partial_j\xi
-\frac{\partial_j\varrho}{\varrho^2}\partial_i\xi)_{k}.
\end{array}
\end{equation}

Apply $\partial_t^{\ell}\mathcal{D}^{\alpha}$ to $(\ref{Sect5_Vorticity_1})$, where $0\leq \ell+|\alpha|\leq 2$, we get
\begin{equation}\label{Sect5_Vorticity_2}
\begin{array}{ll}
\partial_t (\partial_t^{\ell}\mathcal{D}^{\alpha}\omega) + a\partial_t^{\ell}\mathcal{D}^{\alpha}\omega \\[10pt]
= k_1(1-\gamma)\partial_t^{\ell}\mathcal{D}^{\alpha}[\omega(\nabla\cdot v) - v\times\nabla(\nabla\cdot v)]
-k_1 \partial_t^{\ell}\mathcal{D}^{\alpha}[v\cdot\nabla\omega + \omega(\nabla\cdot v) \\[10pt]\quad
- \omega\cdot\nabla v]
+\frac{\gamma}{a}\partial_t^{\ell}\mathcal{D}^{\alpha}[\nabla(\frac{p}{\varrho})\times\nabla(\nabla\cdot v)]
+ \frac{1}{k_1} \partial_t^{\ell}\mathcal{D}^{\alpha}
(\frac{\partial_i\varrho}{\varrho^2}\partial_j\xi-\frac{\partial_j\varrho}{\varrho^2}\partial_i\xi)_{k}.
\end{array}
\end{equation}

Let $\partial_t^{\ell}\mathcal{D}^{\alpha}\omega\cdot (\ref{Sect5_Vorticity_2})$ and integrate in $\Omega$, we get
\begin{equation}\label{Sect5_Vorticity_3}
\begin{array}{ll}
\frac{\mathrm{d}}{\mathrm{d}t}\int\limits_{\Omega}|\partial_t^{\ell}\mathcal{D}^{\alpha}\omega|^2 \,\mathrm{d}x
+ 2a\int\limits_{\Omega}|\partial_t^{\ell}\mathcal{D}^{\alpha}\omega|^2 \,\mathrm{d}x = I_4 + I_5,
\end{array}
\end{equation}
where
\begin{equation}\label{Sect5_Vorticity_4}
\begin{array}{ll}
I_4 := 2\int\limits_{\Omega}k_1(1-\gamma)\partial_t^{\ell}\mathcal{D}^{\alpha}\omega\cdot
\partial_t^{\ell}\mathcal{D}^{\alpha}[\omega(\nabla\cdot v) - v\times\nabla(\nabla\cdot v)]
\\[8pt]\qquad
-k_1 \partial_t^{\ell}\mathcal{D}^{\alpha}\omega\cdot
\partial_t^{\ell}\mathcal{D}^{\alpha}[v\cdot\nabla\omega + \omega(\nabla\cdot v) - \omega\cdot\nabla v]
\\[8pt]\qquad
+\frac{\gamma}{a}\partial_t^{\ell}\mathcal{D}^{\alpha}\omega\cdot
\partial_t^{\ell}\mathcal{D}^{\alpha}[\nabla(\frac{p}{\varrho})\times\nabla(\nabla\cdot v)]\,\mathrm{d}x,

\\[8pt]
I_5:= \frac{2}{k_1} \int\limits_{\Omega}\sum\limits_{k=1}^{3}\partial_t^{\ell}\mathcal{D}^{\alpha}
(\frac{\partial_i\varrho}{\varrho^2}\partial_j\xi-\frac{\partial_j\varrho}{\varrho^2}\partial_i\xi)_{k}
\cdot\partial_t^{\ell}\mathcal{D}^{\alpha}\omega_k\,\mathrm{d}x.
\end{array}
\end{equation}

When $\ell+|\alpha|<2$, it is easy to check that $I_4+I_5\lem \sqrt{\ve}\mathcal{F}[\xi,v](t)$, since they are lower order terms.

Since $I_5$ has the same form with $I_2$ in the proof of Lemma $\ref{Sect3_Vorticity_Lemma}$, repeat those estimates $(\ref{Sect3_Vorticity_8}),(\ref{Sect3_Vorticity_9}),(\ref{Sect3_Vorticity_10})$, we have
\begin{equation}\label{Sect5_Vorticity_8}
\begin{array}{ll}
I_5\lem \sqrt{\ve}\mathcal{E}[\xi,v](t)\lem \sqrt{\ve}\mathcal{F}[\xi,v](t).
\end{array}
\end{equation}

Thus, we just need to estimate $I_4$, when $\ell=0,|\alpha|=2$,
\begin{equation}\label{Sect5_Vorticity_5}
\begin{array}{ll}
I_4\lem \|\mathcal{D}^{\alpha}\omega\|_{L^2(\Omega)}
(|\nabla\cdot v|_{\infty}\|\mathcal{D}^{\alpha}\omega\|_{L^2(\Omega)}
+ \|\mathcal{D}^{\alpha_1}\omega\|_{L^4(\Omega)}\|\mathcal{D}^{\alpha_2}\nabla\cdot v\|_{L^4(\Omega)}
\\[6pt]\qquad
+ |\omega|_{\infty}\|\mathcal{D}^{\alpha}\nabla\cdot v\|_{L^2(\Omega)}
+ |\nabla(\nabla\cdot v)|_{\infty}\|\mathcal{D}^{\alpha} v\|_{L^2(\Omega)} \\[6pt]\qquad
+ |\mathcal{D}^{\alpha_1}v|_{\infty}\|\mathcal{D}^{\alpha_2}\nabla(\nabla\cdot v)\|_{L^2(\Omega)}
+ |v|_{\infty}\|\mathcal{D}^{\alpha}\nabla(\nabla\cdot v)\|_{L^2(\Omega)} \\[6pt]\qquad

+ \|\mathcal{D}^{\alpha}v\|_{L^4(\Omega)}\|\nabla\omega\|_{L^4(\Omega)}
+|\mathcal{D}^{\alpha_1}v|_{\infty}\|\mathcal{D}^{\alpha_2}\nabla\omega\|_{L^2(\Omega)} \\[6pt]\qquad
+|\nabla\cdot v|_{\infty}\|\mathcal{D}^{\alpha}\omega\|_{L^2(\Omega)}
+\|\mathcal{D}^{\alpha_1}\omega\|_{L^4(\Omega)}\|\mathcal{D}^{\alpha_2}\nabla\cdot v\|_{L^4(\Omega)}
\\[6pt]\qquad

+ |\omega|_{\infty}\|\mathcal{D}^{\alpha}\nabla\cdot v\|_{L^2(\Omega)}
+|\omega|_{\infty}\|\mathcal{D}^{\alpha}\nabla v\|_{L^2(\Omega)}
+ \|\mathcal{D}^{\alpha_1}\omega\|_{L^4(\Omega)}\|\mathcal{D}^{\alpha_2}\nabla v\|_{L^4(\Omega)} \\[6pt]\qquad

+|\nabla v|_{\infty}\|\mathcal{D}^{\alpha}\omega\|_{L^2(\Omega)}
+ |\nabla(\frac{p}{\varrho})|_{\infty}\|\mathcal{D}^{\alpha}\nabla(\nabla\cdot v)\|_{L^2(\Omega)} \\[6pt]\qquad
+ \|\mathcal{D}^{\alpha_1}\nabla(\frac{p}{\varrho})\|_{L^4(\Omega)}\|\mathcal{D}^{\alpha_2}\nabla(\nabla\cdot v)\|_{L^4(\Omega)}
+ |\nabla(\nabla\cdot v)|_{\infty}\|\mathcal{D}^{\alpha}\nabla(\frac{p}{\varrho})\|_{L^2(\Omega)}) \\[6pt]\qquad
-2k_1(\int\limits_{\partial\Omega}v\cdot n |\mathcal{D}^{\alpha}\omega|^2 \,\mathrm{d}S_x
-\int\limits_{\Omega}\nabla\cdot v |\mathcal{D}^{\alpha}\omega|^2 \,\mathrm{d}x)

\lem \sqrt{\ve}\mathcal{F}[\xi,v](t),
\end{array}
\end{equation}
where $|\alpha_1|=|\alpha_2|=1$.

When $\ell=1,|\alpha|=1$,
\begin{equation}\label{Sect5_Vorticity_6}
\begin{array}{ll}
I_4\lem \|\mathcal{D}^{\alpha}\omega_t\|_{L^2(\Omega)}
(|\nabla\cdot v|_{\infty}\|\mathcal{D}^{\alpha}\omega_t\|_{L^2(\Omega)}
+ \|\mathcal{D}^{\alpha}\omega\|_{L^4(\Omega)}\|\nabla\cdot v_t\|_{L^4(\Omega)}\\[6pt]\qquad
+ \|\omega_t\|_{L^4(\Omega)}\|\mathcal{D}^{\alpha}\nabla\cdot v\|_{L^4(\Omega)}
+ |\omega|_{\infty}\|\mathcal{D}^{\alpha}\nabla\cdot v_t\|_{L^2(\Omega)}\\[6pt]\qquad
+ |\nabla(\nabla\cdot v)|_{\infty}\|\mathcal{D}^{\alpha} v_t\|_{L^2(\Omega)}
+ |\mathcal{D}^{\alpha}v|_{\infty}\|\nabla(\nabla\cdot v_t)\|_{L^2(\Omega)} \\[6pt]\qquad
+ |v_t|_{\infty}\|\mathcal{D}^{\alpha}\nabla(\nabla\cdot v)\|_{L^2(\Omega)}
+ |v|_{\infty}\|\mathcal{D}^{\alpha}\nabla(\nabla\cdot v_t)\|_{L^2(\Omega)} \\[6pt]\qquad

+ \|\mathcal{D}^{\alpha} v_{t}\|_{L^4(\Omega)}\|\nabla\omega\|_{L^4(\Omega)}
+|v_t|_{\infty}\|\mathcal{D}^{\alpha}\nabla\omega\|_{L^2(\Omega)}
+|\mathcal{D}^{\alpha} v|_{\infty}\|\nabla\omega_t\|_{L^2(\Omega)} \\[6pt]\qquad
+|\nabla\cdot v|_{\infty}\|\mathcal{D}^{\alpha}\omega_t\|_{L^2(\Omega)}
+\|\omega_t\|_{L^4(\Omega)}\|\mathcal{D}^{\alpha}\nabla\cdot v\|_{L^4(\Omega)} \\[6pt]\qquad
+\|\mathcal{D}^{\alpha}\omega\|_{L^4(\Omega)}\|\nabla\cdot v_t\|_{L^4(\Omega)}
+ |\omega|_{\infty}\|\mathcal{D}^{\alpha}\nabla\cdot v_t\|_{L^2(\Omega)} \\[6pt]\qquad

+|\omega|_{\infty}\|\mathcal{D}^{\alpha}\nabla v_t\|_{L^2(\Omega)}
+ \|\omega_{t}\|_{L^4(\Omega)}\|\mathcal{D}^{\alpha}\nabla v\|_{L^4(\Omega)}
+ \|\mathcal{D}^{\alpha}\omega\|_{L^4(\Omega)}\|\nabla v_t\|_{L^4(\Omega)} \\[6pt]\qquad
+|\nabla v|_{\infty}\|\mathcal{D}^{\alpha}\omega_t\|_{L^2(\Omega)}

+ |\nabla(\frac{p}{\varrho})|_{\infty}\|\mathcal{D}^{\alpha}\nabla(\nabla\cdot v_t)\|_{L^2(\Omega)} \\[6pt]\qquad
+ \|\mathcal{D}^{\alpha}\nabla(\frac{p}{\varrho})\|_{L^4(\Omega)}\|\nabla(\nabla\cdot v_t)\|_{L^4(\Omega)}
+ \|\partial_t\nabla(\frac{p}{\varrho})\|_{L^4(\Omega)}\|\mathcal{D}^{\alpha}\nabla(\nabla\cdot v)\|_{L^4(\Omega)}
\\[6pt]\qquad
+ |\nabla(\nabla\cdot v)|_{\infty}\|\partial_t\mathcal{D}^{\alpha}\nabla(\frac{p}{\varrho})\|_{L^2(\Omega)})
-2k_1(\int\limits_{\partial\Omega}v\cdot n |\mathcal{D}^{\alpha}\omega_t|^2 \,\mathrm{d}S_x  \\[6pt]\qquad
-\int\limits_{\Omega}\nabla\cdot v |\mathcal{D}^{\alpha}\omega_t|^2 \,\mathrm{d}x)

\lem \sqrt{\ve}\mathcal{F}[\xi,v](t).
\end{array}
\end{equation}

When $\ell=2,|\alpha|=0$,
\begin{equation*}
\begin{array}{ll}
I_4\lem \|\omega_{tt}\|_{L^2(\Omega)}(|\nabla\cdot v|_{\infty}\|\omega_{tt}\|_{L^2(\Omega)}
+ \|\omega_t\|_{L^4(\Omega)}\|\nabla\cdot v_t\|_{L^4(\Omega)}\\[6pt]\qquad
+ |\omega|_{\infty}\|\nabla\cdot v_{tt}\|_{L^2(\Omega)}
+ |\nabla(\nabla\cdot v)|_{\infty}\|v_{tt}\|_{L^2(\Omega)}
+ |v_t|_{\infty}\|\nabla(\nabla\cdot v_t)\|_{L^2(\Omega)} \\[6pt]\qquad
+ |v|_{\infty}\|\nabla(\nabla\cdot v_{tt})\|_{L^2(\Omega)}

+ \|v_{tt}\|_{L^4(\Omega)}\|\nabla\omega\|_{L^4(\Omega)}
+|v_t|_{\infty}\|\nabla\omega_t\|_{L^2(\Omega)} \\[6pt]\qquad
+|\nabla\cdot v|_{\infty}\|\omega_{tt}\|_{L^2(\Omega)}+\|\omega_t\|_{L^4(\Omega)}\|\nabla\cdot v_t\|_{L^4(\Omega)}
+ |\omega|_{\infty}\|\nabla\cdot v_{tt}\|_{L^2(\Omega)} \\[6pt]\qquad

+|\omega|_{\infty}\|\nabla v_{tt}\|_{L^2(\Omega)} + \|\omega_{t}\|_{L^4(\Omega)}\|\nabla v_t\|_{L^4(\Omega)}
+|\nabla v|_{\infty}\|\omega_{tt}\|_{L^2(\Omega)}
\end{array}
\end{equation*}

\begin{equation}\label{Sect5_Vorticity_7}
\begin{array}{ll}
\qquad
+ |\nabla(\frac{p}{\varrho})|_{\infty}\|\nabla(\nabla\cdot v_{tt})\|_{L^2(\Omega)}
+ \|\partial_t\nabla(\frac{p}{\varrho})\|_{L^4(\Omega)}\|\nabla(\nabla\cdot v_t)\|_{L^4(\Omega)} \\[6pt]\qquad
+ |\nabla(\nabla\cdot v)|_{\infty}\|\partial_{tt}\nabla(\frac{p}{\varrho})\|_{L^2(\Omega)})
-2k_1(\int\limits_{\partial\Omega}v\cdot n |\omega_{tt}|^2 \,\mathrm{d}S_x
-\int\limits_{\Omega}\nabla\cdot v |\omega_{tt}|^2 \,\mathrm{d}x) \\[6pt]\quad

\lem \sqrt{\ve}\mathcal{F}[\xi,v](t).
\end{array}
\end{equation}

After summing the above estimates, we have
\begin{equation}\label{Sect5_Vorticity_9}
\begin{array}{ll}
\frac{\mathrm{d}}{\mathrm{d}t}\int\limits_{\Omega}\mathcal{E}_1[\omega](t)\,\mathrm{d}x
+ 2a\mathcal{E}_1[\omega](t)
\lem \sqrt{\ve}\mathcal{F}[\xi,v](t).
\end{array}
\end{equation}
Thus, Lemma $\ref{Sect5_Vorticity_Lemma}$ is proved.
\end{proof}

Similar to Lemma $\ref{Sect3_Epsilon0_Lemma}$, the following lemma states that $E[\xi](t)$ and $E_1[\xi](t)$ are equivalent, $E[v](t)$ and $E_1[v](t)$ are equivalent.
\begin{lemma}\label{Sect5_Epsilon0_Lemma}
For any given $T\in (0,+\infty]$, there exists $\ve_1>0$ which is independent of $(\hat{\xi}_0,\hat{v}_0,\hat{\phi}_0)$, such that if $\sup\limits_{0\leq t\leq T} \mathcal{F}[\xi,v,\phi](t) \leq\ve_1$, then there exist $c_{11}>0,c_{12}>0$ such that
\begin{equation}\label{Sect5_Energy_Equivalence}
\begin{array}{ll}
c_{11} E[\xi](t) \leq E_1[\xi](t) \leq c_{12} E[\xi](t),\\[6pt]
c_{11} E[v](t) \leq E_1[v](t) \leq c_{12} E[v](t).
\end{array}
\end{equation}
\end{lemma}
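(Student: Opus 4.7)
The plan is to mirror the proof of Lemma \ref{Sect3_Epsilon0_Lemma} almost verbatim, since the two pairs of functionals differ only by lower-order weighted terms of the top-order temporal derivative, and the required smallness of those weights follows from the same Sobolev embedding arguments.

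First I would address the equivalence $E_1[\xi](t) \cong E[\xi](t)$. The only difference between them is the term $-\int_\Omega \frac{\xi}{p}\xi_{ttt}^2\,\mathrm{d}x$, so it suffices to show $\left|\frac{\xi}{p}\right|_\infty \leq \frac{1}{2}$ uniformly. By Sobolev embedding $H^2(\Omega)\hookrightarrow L^\infty(\Omega)$, we have $|\xi|_\infty \lesssim \|\xi\|_{H^2(\Omega)} \leq C_9 \sqrt{\ve_1}$, and since $\mathcal{F}[\xi](t)$ already controls $\|\xi\|_{H^4(\Omega)}^2$ this estimate is immediate. Writing $\frac{\xi}{p} = \frac{\xi/\bar{p}}{1+\xi/\bar{p}}$, one checks that whenever $|\xi/\bar{p}|\leq \tfrac{1}{3}$ the ratio lies in $[-\tfrac{1}{2},\tfrac{1}{4}]$. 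Thus the condition $C_9\sqrt{\ve_1}\leq \bar{p}/3$, i.e. $\ve_1 \leq \bar{p}^2/(9C_9^2)$, forces $E_1[\xi](t) \cong E[\xi](t)$ with explicit constants.

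Next I would do the same for $v$. The extra term $\int_\Omega\left(\frac{\varrho}{\bar{\varrho}}-1\right)|v_{ttt}|^2\,\mathrm{d}x$ is controlled once $\left|\frac{\varrho}{\bar{\varrho}}-1\right|_\infty \leq \tfrac{1}{2}$. Because $\varrho=\varrho(\xi,\phi)$ is smooth in its arguments, Lemma \ref{Sect3_Varrho_Lemma} (whose proof only uses algebraic composition and Sobolev embedding, and which applies equally to $\mathcal{F}$-smallness) gives $|\varrho-\bar{\varrho}|_\infty \lesssim \|\varrho-\bar{\varrho}\|_{H^2(\Omega)}^{1/2} \leq C_{10}\sqrt{\ve_1}$. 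Imposing $C_{10}\sqrt{\ve_1}\leq \bar{\varrho}/2$, i.e. $\ve_1\leq \bar{\varrho}^2/(4C_{10}^2)$, yields $E_1[v](t)\cong E[v](t)$.

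Finally I would take $\ve_1 = \min\!\left\{\frac{\bar{p}^2}{9C_9^2},\ \frac{\bar{\varrho}^2}{4C_{10}^2}\right\}$, which is independent of the initial data $(\hat{\xi}_0,\hat{v}_0,\hat{\phi}_0)$, and read off the equivalence constants $c_{11},c_{12}>0$ from the elementary inequalities derived above. There is no real obstacle here; the only point of care is verifying that $\mathcal{F}$-smallness still produces the required $L^\infty$ bounds on $\xi$ and $\varrho-\bar{\varrho}$, but since $\mathcal{F}[\xi,v,\phi]$ dominates $\mathcal{E}[\xi,v,\phi]$, the argument of Lemma \ref{Sect3_Epsilon0_Lemma} carries over verbatim with identical constants.
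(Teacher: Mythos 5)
Your proposal is correct and follows exactly the route the paper intends: the paper states Lemma \ref{Sect5_Epsilon0_Lemma} without proof, appealing to the argument of Lemma \ref{Sect3_Epsilon0_Lemma}, and your verification that $\mathcal{F}[\xi,v,\phi]$ dominates the quantities needed for the $L^\infty$ bounds on $\xi$ and $\varrho-\bar{\varrho}$ is precisely the point that makes that argument carry over. No gaps; this matches the paper's (implicit) proof.
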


To make calculations simpler, we calculate $\frac{\mathrm{d}}{\mathrm{d}t}E_1[\xi](t)$ and $\frac{\mathrm{d}}{\mathrm{d}t}E[v](t)$ separately.

Since $E_1[\xi](t)\cong E[\xi](t)$, the following lemma gives an equivalent a priori estimate for $E[\xi](t)$.
\begin{lemma}\label{Sect5_Energy_Estimate_Lemma2}
For any given $T\in (0,+\infty]$, if
\begin{equation*}
\sup\limits_{0\leq t\leq T} \mathcal{F}[\xi,v,\phi](t) \leq\ve,
\end{equation*}
where $0<\ve\ll 1$, then for $\forall t\in [0,T]$,
\begin{equation}\label{Sect5_Estimate1_toProve}
\frac{\mathrm{d}}{\mathrm{d}t}E_1[\xi](t)+ 2a E[v](t) \leq C\sqrt{\ve}\mathcal{F}[\xi,v](t).
\end{equation}
\end{lemma}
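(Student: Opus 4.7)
The plan is to mimic the argument of Lemma \ref{Sect3_Energy_Estimate_Lemma1}, but exploiting the fact that in the diffusion system the velocity $v$ is no longer evolved: $(\ref{Sect2_Final_Diffusion})_2$ is an algebraic (Darcy) relation between $v$ and $\nabla\xi$. The key observation is that, although $E_1[\xi]$ contains no $v$-derivatives, the dissipation $2aE[v]$ will nevertheless arise from combining integration by parts in the pressure equation with Darcy's law. First I would apply $\partial_t^{\ell}$ to $(\ref{Sect2_Final_Diffusion})_1$ for $0\leq\ell\leq 3$, obtaining
$$
\partial_t^{\ell+1}\xi + k_2\nabla\cdot(\partial_t^{\ell}v) = -\gamma k_1\partial_t^{\ell}(\xi\nabla\cdot v) - k_1\partial_t^{\ell}(v\cdot\nabla\xi) =: N_{\ell},
$$
multiply by $\partial_t^{\ell}\xi$ and integrate over $\Omega$. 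Since $\partial_t^{\ell}v\cdot n|_{\partial\Omega}=0$ (which follows from $\frac{\partial\xi}{\partial n}|_{\partial\Omega}=0$ via $v=-\frac{1}{ak_1\varrho}\nabla\xi$), the boundary contribution of the divergence term vanishes and
$$
k_2\int_{\Omega}\partial_t^{\ell}\xi\,\nabla\cdot(\partial_t^{\ell}v)\,\mathrm{d}x
= -\int_{\Omega}\partial_t^{\ell}(k_2\nabla\xi)\cdot\partial_t^{\ell}v\,\mathrm{d}x.
$$
Differentiating Darcy's law $(\ref{Sect2_Final_Diffusion})_2$ in $t$ gives $k_2\nabla(\partial_t^{\ell}\xi) = -a\,\partial_t^{\ell}v + \frac{1}{k_1}\partial_t^{\ell}\big[(\tfrac{1}{\bar\varrho}-\tfrac{1}{\varrho})\nabla\xi\big]$, so substituting produces exactly the dissipation $+2a\|\partial_t^{\ell}v\|_{L^2(\Omega)}^2$ together with a remainder that is manifestly $O(\sqrt{\ve}\,\mathcal F[\xi,v])$ because $\tfrac{1}{\bar\varrho}-\tfrac{1}{\varrho}$ vanishes in the constant state.

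For $0\le\ell\le 2$ the integrand $N_\ell\,\partial_t^{\ell}\xi$ is a sum of triple products of derivatives of $\xi$ and $v$ of total order $\le 3$, each of which splits through H\"older as $L^\infty \cdot L^2\cdot L^2$ or $L^\infty\cdot L^4\cdot L^4$; Sobolev embedding and Lemma \ref{Sect3_Varrho_Lemma} yield the bound $\lesssim\sqrt{\ve}\,\mathcal F[\xi,v](t)$, exactly as in $(\ref{Sect3_XiT_2})$--$(\ref{Sect3_XiT_4})$. Summing those three cases produces $\frac{\mathrm{d}}{\mathrm{d}t}\sum_{\ell=0}^{2}\|\partial_t^{\ell}\xi\|_{L^2}^{2}+2a\sum_{\ell=0}^{2}\|\partial_t^{\ell}v\|_{L^2}^{2}\leq C\sqrt{\ve}\,\mathcal F[\xi,v]$.

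The case $\ell=3$ is the delicate one and is the reason why $E_{1}[\xi]$ is defined with the correction $-\int_{\Omega}\tfrac{\xi}{p}\xi_{ttt}^{2}\,\mathrm{d}x$. After multiplying by $\xi_{ttt}$, the nonlinear term $-\gamma k_{1}\int p\,\xi_{ttt}\nabla\cdot v_{ttt}\,\mathrm{d}x$ is integrated by parts to produce $\gamma k_{1}\int p\,v_{ttt}\cdot\nabla\xi_{ttt}\,\mathrm{d}x$ plus an $L^2$-controllable remainder. Adding $-\frac{\mathrm{d}}{\mathrm{d}t}\int\tfrac{\xi}{p}\xi_{ttt}^{2}\,\mathrm{d}x$ and expressing $\xi_{tttt}$ from $\partial_{ttt}$ of $(\ref{Sect2_Final_Diffusion})_1$, namely $\xi_{tttt}+k_{1}\gamma p\,\nabla\cdot v_{ttt}=\text{lower order}$, the troublesome contribution $-2\int\tfrac{\xi}{p}\xi_{ttt}\xi_{tttt}\,\mathrm{d}x$ collapses into $2k_{1}\gamma\int\xi\,\xi_{ttt}\nabla\cdot v_{ttt}\,\mathrm{d}x$ (up to $\sqrt\ve\,\mathcal F$), which cancels the earlier obstruction verbatim as in $(\ref{Sect3_Estimate2_9})$. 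All other top-order terms in $N_3$ are handled by integration by parts through $v\cdot\nabla$, using $v\cdot n|_{\partial\Omega}=0$ and the uniform smallness of $|\nabla\cdot v|_{\infty}$.

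The main obstacle will be the $\ell=3$ cancellation: contrary to the Euler case, here there is no symmetric $+\int(\tfrac{\varrho}{\bar\varrho}-1)|v_{ttt}|^{2}$ correction to lean on (because $v$ is not evolved), so one must verify that the single pressure correction $-\int\tfrac{\xi}{p}\xi_{ttt}^{2}\,\mathrm{d}x$, combined with the $t$-differentiated Darcy law replacing $k_2\nabla\xi_{ttt}$, produces the $2a\|v_{ttt}\|_{L^{2}}^{2}$ dissipation without creating uncontrolled top-order velocity terms. The high regularity built into $\mathcal F[v](t)$ (which contains $\|\nabla\cdot v_{tt}\|_{H^1}$ and, via Lemma \ref{Sect5_Total_Energy_Lemma}, derivatives of $\nabla\cdot v$ one order above $v$ itself) is precisely what is required to close the estimate at top order; using it one arrives at $\frac{\mathrm{d}}{\mathrm{d}t}E_{1}[\xi](t)+2aE[v](t)\leq C\sqrt{\ve}\,\mathcal F[\xi,v](t)$, finishing the proof.
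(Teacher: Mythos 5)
Your proposal is correct and follows essentially the same route as the paper: test the time-differentiated pressure equation with $\partial_t^{\ell}\xi$, use $\partial_t^{\ell}v\cdot n|_{\partial\Omega}=0$ to kill the boundary term, and recover the $2aE[v]$ dissipation from the time-differentiated Darcy relation, with the $\ell=3$ obstruction absorbed by the correction $-\int_{\Omega}\frac{\xi}{p}\xi_{ttt}^2\,\mathrm{d}x$ and the cancellation $\xi_{tttt}+k_1\gamma p\nabla\cdot v_{ttt}=\text{lower order}$, and with $(\tfrac{1}{\bar\varrho}-\tfrac{1}{\varrho})\nabla\xi_{ttt}$ handled by substituting the thrice-differentiated Darcy law (no velocity correction needed since $v$ is not evolved). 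Your substitution of Darcy's law after integrating by parts is algebraically identical to the paper's device of adding the tested Darcy equation so that the cross terms cancel.
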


\begin{proof}
Suppose $0\leq \ell \leq 3$, apply $\partial_t^{\ell}$ to $(\ref{Sect2_Final_Eq})$, we get
\begin{equation}\label{Sect5_Estimate2_1}
\left\{\begin{array}{ll}
(\partial_t^{\ell}\xi)_t + k_2\nabla\cdot(\partial_t^{\ell}v) = -\gamma k_1\partial_t^{\ell}(\xi\nabla\cdot v)
- k_1 \partial_t^{\ell}(v\cdot\nabla \xi), \\[6pt]
k_2 \nabla(\partial_t^{\ell}\xi) + a \partial_t^{\ell}v =
\frac{1}{k_1} \partial_t^{\ell}[(\frac{1}{\bar{\varrho}}- \frac{1}{\varrho})\nabla\xi].
\end{array}\right.
\end{equation}

Let $(\ref{Sect5_Estimate2_1})\cdot(\partial_t^{\ell}\xi, \partial_t^{\ell}v)$, we get
\begin{equation}\label{Sect5_Estimate2_2}
\left\{\begin{array}{ll}
(|\partial_t^{\ell}\xi|^2)_t + 2k_2 \partial_t^{\ell}\xi\nabla\cdot(\partial_t^{\ell}v)
= -2\gamma k_1 \partial_t^{\ell}\xi\partial_t^{\ell}(\xi\nabla\cdot v)
- 2k_1 \partial_t^{\ell}\xi\partial_t^{\ell}(v\cdot\nabla \xi), \\[6pt]
2k_2 \partial_t^{\ell}v\cdot\nabla(\partial_t^{\ell}\xi)
+ 2a |\partial_t^{\ell}v|^2
= \frac{2}{k_1} \partial_t^{\ell}v\cdot
\partial_t^{\ell}[(\frac{1}{\bar{\varrho}}- \frac{1}{\varrho})\nabla\xi].
\end{array}\right.
\end{equation}

By $(\ref{Sect5_Estimate2_2})_1+(\ref{Sect5_Estimate2_2})_2$, we get
\begin{equation}\label{Sect5_Estimate2_3}
\begin{array}{ll}
(|\partial_t^{\ell}\xi|^2)_t
+ 2k_2 \partial_t^{\ell}\xi\nabla\cdot(\partial_t^{\ell}v)+ 2k_2 \partial_t^{\ell}v\cdot\nabla(\partial_t^{\ell}\xi)
+ 2a |\partial_t^{\ell}v|^2 \\[6pt]
= -2\gamma k_1 \partial_t^{\ell}\xi\partial_t^{\ell}(\xi\nabla\cdot v)
- 2k_1 \partial_t^{\ell}\xi\partial_t^{\ell}(v\cdot\nabla \xi)
+ \frac{2}{k_1} \partial_t^{\ell}v\cdot
\partial_t^{\ell}[(\frac{1}{\bar{\varrho}}- \frac{1}{\varrho})\nabla\xi].
\end{array}
\end{equation}

After integrating $(\ref{Sect5_Estimate2_3})$ in $\Omega$, we get
\begin{equation}\label{Sect5_Estimate2_4}
\begin{array}{ll}
\frac{\mathrm{d}}{\mathrm{d} t}\int\limits_{\Omega}|\partial_t^{\ell}\xi|^2 \,\mathrm{d}x
+ 2k_2 \int\limits_{\Omega}\partial_t^{\ell}\xi\nabla\cdot(\partial_t^{\ell}v)
+ \partial_t^{\ell}v\cdot\nabla(\partial_t^{\ell}\xi) \,\mathrm{d}x
+ 2a \int\limits_{\Omega}|\partial_t^{\ell}v|^2 \,\mathrm{d}x \\[6pt]
= \int\limits_{\Omega} -2\gamma k_1 \partial_t^{\ell}\xi\partial_t^{\ell}(\xi\nabla\cdot v)
- 2k_1 \partial_t^{\ell}\xi\partial_t^{\ell}(v\cdot\nabla \xi)
+ \frac{2}{k_1} \partial_t^{\ell}v\cdot
\partial_t^{\ell}[(\frac{1}{\bar{\varrho}}- \frac{1}{\varrho})\nabla\xi] \,\mathrm{d}x.
\end{array}
\end{equation}

Since $\partial_t^{\ell}v\cdot n|_{\partial\Omega}=0$,
$\int\limits_{\Omega}\partial_t^{\ell}\xi\nabla\cdot(\partial_t^{\ell}v)
+ \partial_t^{\ell}v\cdot\nabla(\partial_t^{\ell}\xi) \,\mathrm{d}x
= \int\limits_{\partial\Omega}\partial_t^{\ell}\xi\partial_t^{\ell}v \cdot n \,\mathrm{d}S_x =0$,
\begin{equation}\label{Sect5_Estimate2_5}
\begin{array}{ll}
\frac{\mathrm{d}}{\mathrm{d} t}
\int\limits_{\Omega}|\partial_t^{\ell}\xi|^2 \,\mathrm{d}x
+ 2a \int\limits_{\Omega}|\partial_t^{\ell}v|^2 \,\mathrm{d}x \\[6pt]
= \int\limits_{\Omega} -2\gamma k_1 \partial_t^{\ell}\xi\partial_t^{\ell}(\xi\nabla\cdot v)
- 2k_1 \partial_t^{\ell}\xi\partial_t^{\ell}(v\cdot\nabla \xi)
+ \frac{2}{k_1} \partial_t^{\ell}v\cdot
\partial_t^{\ell}[(\frac{1}{\bar{\varrho}}- \frac{1}{\varrho})\nabla\xi] \,\mathrm{d}x := I_6.
\end{array}
\end{equation}

\vspace{0.4cm}
\noindent
When $0\leq \ell\leq 2$, it is easy to check that $I_6\lem \sqrt{\ve}\mathcal{F}[\xi,v](t)$, since $I_6$ is a lower order term.

\noindent
When $\ell=3$,
\begin{equation}\label{Sect5_Estimate2_6}
\begin{array}{ll}
I_6= \int\limits_{\Omega} -2\gamma k_1 \xi_{ttt}\partial_{ttt}(\xi\nabla\cdot v)
- 2k_1 \xi_{ttt}\partial_{ttt}(v\cdot\nabla \xi)
+ \frac{2}{k_1} v_{ttt}\cdot\partial_{ttt}[(\frac{1}{\bar{\varrho}}- \frac{1}{\varrho})\nabla\xi] \,\mathrm{d}x \\[6pt]\quad

=\int\limits_{\Omega} -2\gamma k_1 (\xi_{ttt})^2 \nabla\cdot v-6\gamma k_1 \xi_{ttt}\xi_{tt}\nabla\cdot v_t
-6\gamma k_1 \xi_{ttt}\xi_t\nabla\cdot v_{tt} \\[6pt]\qquad
-2\gamma k_1 \xi_{ttt}\xi\nabla\cdot v_{ttt} - 2k_1 \xi_{ttt} v_{ttt}\cdot\nabla \xi
-6k_1 \xi_{ttt} v_{tt}\cdot\nabla \xi_t \\[6pt]\qquad
-6k_1 \xi_{ttt} v_t\cdot\nabla \xi_{tt}- 2k_1 \xi_{ttt} v\cdot\nabla \xi_{ttt}
+ \frac{2}{k_1} (\frac{1}{\bar{\varrho}}- \frac{1}{\varrho})v_{ttt}\cdot\nabla\xi_{ttt}  \\[6pt]\qquad
+ \frac{6}{k_1} (\frac{\varrho_t}{\varrho^2}v_{ttt}\cdot\nabla\xi_{tt}
+ \frac{\varrho\varrho_{tt}-2\varrho_t^2}{\varrho^3}v_{ttt}\cdot\nabla\xi_t)
+ \frac{2}{k_1} \frac{\varrho^2\varrho_{ttt}-6\varrho\varrho_t\varrho_{tt}
+6\varrho_t^3}{\varrho^4}v_{ttt}\cdot\nabla\xi \,\mathrm{d}x.
\end{array}
\end{equation}

\noindent
Now we estimate $I_6 - \frac{\mathrm{d}}{\mathrm{d}t}\int\limits_{\Omega}\frac{\xi}{p}\xi_{ttt}^2 \,\mathrm{d}x$,
\begin{equation}\label{Sect5_Estimate2_7}
\begin{array}{ll}
I_6 - \frac{\mathrm{d}}{\mathrm{d}t}\int\limits_{\Omega}\frac{\xi}{p}\xi_{ttt}^2 \,\mathrm{d}x \\[6pt]
\lem \sqrt{\ve}\mathcal{F}[\xi,v](t)
+ \|\xi_{tt}\|_{L^4(\Omega)}\|\nabla\cdot v_t\|_{L^4(\Omega)} \|\xi_{ttt}\|_{L^2(\Omega)}
-2\gamma k_1 \int\limits_{\Omega}\xi_{ttt}\xi\nabla\cdot v_{ttt}\,\mathrm{d}x \\[6pt]\quad

+\|v_{tt}\|_{L^4(\Omega)}\|\nabla \xi_t\|\|_{L^4(\Omega)} \|\xi_{ttt}\|\|_{L^2(\Omega)}
- 2k_1 \int\limits_{\Omega}\xi_{ttt} v\cdot\nabla \xi_{ttt}\,\mathrm{d}x \\[6pt]\quad

+ \frac{2}{k_1} \int\limits_{\Omega}(\frac{1}{\bar{\varrho}}
- \frac{1}{\varrho})v_{ttt}\cdot\nabla\xi_{ttt}\,\mathrm{d}x
+ \|\varrho_{tt}\|_{L^4(\Omega)}\|\nabla\xi_t\|_{L^4(\Omega)}\|v_{ttt}\|_{L^2(\Omega)} \\[6pt]\quad
- 2\int\limits_{\Omega}\frac{\xi}{p}\xi_{ttt}\xi_{tttt} \,\mathrm{d}x
- \int\limits_{\Omega}\partial_t(\frac{\xi}{p})\xi_{ttt}^2 \,\mathrm{d}x
\\[6pt]
\lem \sqrt{\ve}\mathcal{F}[\xi,v](t)
- k_1 \int\limits_{\partial\Omega}|\xi_{ttt}|^2 v\cdot n\,\mathrm{d}S_x
+ k_1 \int\limits_{\Omega}|\xi_{ttt}|^2 \nabla\cdot v\,\mathrm{d}x \\[6pt]\quad
-2\gamma k_1 \int\limits_{\Omega}\xi\xi_{ttt}\nabla\cdot v_{ttt}\,\mathrm{d}x
+ \frac{2}{k_1} \int\limits_{\Omega}(\frac{1}{\bar{\varrho}}- \frac{1}{\varrho})v_{ttt}\cdot\nabla\xi_{ttt}\,\mathrm{d}x
- 2\int\limits_{\Omega}\frac{\xi}{p}\xi_{ttt}\xi_{tttt} \,\mathrm{d}x \\[6pt]

\lem -2\int\limits_{\Omega}\frac{\xi}{p}\xi_{ttt}(\xi_{tttt}+k_1\gamma p\nabla\cdot v_{ttt})\,\mathrm{d}x
+ \frac{2}{k_1} \int\limits_{\Omega}(\frac{1}{\bar{\varrho}}- \frac{1}{\varrho})v_{ttt}\cdot\nabla\xi_{ttt}\,\mathrm{d}x + \sqrt{\ve}\mathcal{F}[\xi,v](t).
\end{array}
\end{equation}

Apply $\partial_{ttt}$ to $(\ref{Sect2_Final_Diffusion})_1$, we get
\begin{equation}\label{Sect5_Estimate2_8}
\begin{array}{ll}
\xi_{tttt} + k_1\gamma p\nabla\cdot v_{ttt} = -k_1v\cdot\nabla\xi_{ttt} -3 k_1v_t\cdot\nabla\xi_{tt}
-3 k_1v_{tt}\cdot\nabla\xi_t -k_1v_{ttt}\cdot\nabla\xi \\[6pt]\hspace{3.3cm}
- 3k_1\gamma\varrho_t\nabla\cdot v_{tt} - 3k_1\gamma\xi_{tt}\nabla\cdot v_t - k_1\gamma \xi_{ttt} \nabla\cdot v.
\end{array}
\end{equation}

Plug $(\ref{Sect5_Estimate2_8})$ into the following integral, we get
\begin{equation}\label{Sect5_Estimate2_9}
\begin{array}{ll}
\int\limits_{\Omega}\frac{\xi}{p}\xi_{ttt}(\xi_{tttt}+k_1\gamma p\nabla\cdot v_{ttt})\,\mathrm{d}x
= \int\limits_{\Omega}\frac{\xi}{p}\xi_{ttt}[R.H.S.\ of\ (\ref{Sect5_Estimate2_8})]\,\mathrm{d}x \\[6pt]
\lem \sqrt{\ve}\mathcal{F}[\xi,v](t)
-k_1\int\limits_{\partial\Omega}\frac{\xi}{2p}|\xi_{ttt}|^2 v\cdot n \,\mathrm{d}S_x
+ \frac{k_1}{2}\int\limits_{\Omega}|\xi_{ttt}|^2 \nabla\cdot(\frac{\xi}{p}v) \,\mathrm{d}x

\lem \sqrt{\ve}\mathcal{F}[\xi,v](t).
\end{array}
\end{equation}

Apply $\partial_{ttt}$ to $(\ref{Sect2_Final_Diffusion})_2$, we get
\begin{equation}\label{Sect5_Estimate2_10}
\begin{array}{ll}
\nabla\xi_{ttt} = - ak_1 \varrho v_{ttt} -3 ak_1 \varrho_t v_{tt}
-3 ak_1 \varrho_{tt}v_t - ak_1 \varrho_{ttt}.
\end{array}
\end{equation}

Plug $(\ref{Sect5_Estimate2_10})$ into the following integral, we get
\begin{equation}\label{Sect5_Estimate2_11}
\begin{array}{ll}
\int\limits_{\Omega}(\frac{1}{\bar{\varrho}}-\frac{1}{\varrho})v_{ttt}\cdot\nabla\xi_{ttt}\,\mathrm{d}x
= \int\limits_{\Omega}(\frac{1}{\bar{\varrho}}-\frac{1}{\varrho})v_{ttt}\cdot
[R.H.S.\ of\ (\ref{Sect5_Estimate2_10})]\,\mathrm{d}x \lem \sqrt{\ve}\mathcal{F}[\xi,v](t).
\end{array}
\end{equation}

Plug $(\ref{Sect5_Estimate2_9}),(\ref{Sect5_Estimate2_9})$ into $(\ref{Sect5_Estimate2_7})$, we get
\begin{equation}\label{Sect5_Estimate2_12}
\begin{array}{ll}
I_6 - \frac{\mathrm{d}}{\mathrm{d}t}\int\limits_{\Omega}\frac{\xi}{p}\xi_{ttt}^2 \,\mathrm{d}x
\lem \sqrt{\ve}\mathcal{F}[\xi,v](t).
\end{array}
\end{equation}

Finally, we have
\begin{equation}\label{Sect5_Estimate2_13}
\begin{array}{ll}
\frac{\mathrm{d}}{\mathrm{d} t}\left(\sum\limits_{\ell=0}^{3}
\int\limits_{\Omega}|\partial_t^{\ell}\xi|^2 \,\mathrm{d}x
-\int\limits_{\Omega}\frac{\xi}{p} \xi_{ttt}^2\,\mathrm{d}x \right)
+ 2a E[v](t) \lem \sqrt{\ve}\mathcal{F}[\xi,v](t).
\end{array}
\end{equation}
Thus, Lemma $\ref{Sect5_Energy_Estimate_Lemma2}$ is proved.
\end{proof}

The following lemma concerns a priori estimate for $E[v](t)$.
\begin{lemma}\label{Sect5_Velocity_Lemma}
For any given $T\in (0,+\infty]$, if
\begin{equation*}
\sup\limits_{0\leq t\leq T} \mathcal{F}[\xi,v,\phi](t) \leq\ve,
\end{equation*}
where $0<\ve\ll 1$, then for $\forall t\in [0,T]$,
\begin{equation}\label{Sect5_Velocity_toProve}
\begin{array}{ll}
\frac{\mathrm{d}}{\mathrm{d}t} E[v](t)
+ \frac{\gamma}{a}\int\limits_{\Omega} \frac{p}{\varrho}\sum\limits_{\ell=0}^3|\nabla\cdot \partial_t^{\ell}v|^2\,\mathrm{d}x \lem \sqrt{\ve}\mathcal{F}[\xi,v](t).
\end{array}
\end{equation}
\end{lemma}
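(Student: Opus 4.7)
The plan is to carry out the energy estimate by testing the velocity evolution equation $(\ref{Sect2_Velocity_Solve 3})$ against $\partial_t^\ell v$ in $L^2(\Omega)$ for $0 \le \ell \le 3$. This equation expresses $v_t$ purely in terms of the parabolic term $\frac{\gamma p}{a\varrho}\nabla(\nabla\cdot v)$ and quadratic nonlinearities; it was derived in Section 2 precisely by using the Darcy constraint $\nabla\xi = -ak_1\varrho v$ to eliminate both the pressure gradient $k_2\nabla\xi$ and the linear damping $av$ present in $(\ref{Sect2_Final_Diffusion_Vt})_2$. Working with this pure-parabolic form (rather than testing $(\ref{Sect2_Final_Diffusion_Vt})_2$ directly and invoking a cancellation between $k_2\nabla\xi$ and $av$ at the end) isolates the parabolic dissipation on $\nabla\cdot\partial_t^\ell v$ cleanly, with no residual $\|\partial_t^\ell v\|_{L^2}^2$ term to absorb.

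After applying $\partial_t^\ell$ to $(\ref{Sect2_Velocity_Solve 3})$, multiplying by $\partial_t^\ell v$, and integrating over $\Omega$, the principal contribution comes from integration by parts on the leading term:
\begin{equation*}
\int_\Omega \partial_t^\ell v\cdot \frac{\gamma p}{a\varrho}\nabla(\nabla\cdot\partial_t^\ell v)\,\mathrm{d}x = -\int_\Omega \frac{\gamma p}{a\varrho}|\nabla\cdot\partial_t^\ell v|^2\,\mathrm{d}x - \int_\Omega \nabla\Big(\frac{\gamma p}{a\varrho}\Big)\cdot\partial_t^\ell v\,(\nabla\cdot\partial_t^\ell v)\,\mathrm{d}x,
\end{equation*}
where the boundary integral $\int_{\partial\Omega}\frac{\gamma p}{a\varrho}(\partial_t^\ell v\cdot n)(\nabla\cdot\partial_t^\ell v)\,\mathrm{d}S_x$ vanishes by the compatibility condition $\partial_t^\ell v\cdot n|_{\partial\Omega}=0$, propagated from $\partial_t^\ell\nabla p(x,0)\cdot n|_{\partial\Omega}=0$ through the Darcy relation. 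The first integral on the right is the target dissipation and is moved to the left-hand side; the second is controlled by $\sqrt{\ve}\,\mathcal{F}[\xi,v](t)$ thanks to the smallness of $\nabla(\gamma p/a\varrho)$ ensured by Lemma \ref{Sect3_Varrho_Lemma}. The commutator terms produced when $\partial_t^\ell$ falls on the coefficient $\frac{\gamma p}{a\varrho}$ at least once, together with the $\partial_t^\ell$-derivatives of the remaining nonlinearities $k_1(1-\gamma)v(\nabla\cdot v)$, $-k_1 v\cdot\nabla v$, and $-\frac{k_1}{2}\nabla(|v|^2)$, are handled by H\"older's inequality together with the Sobolev embedding $H^1\hookrightarrow L^4$ in the same spirit as Lemma \ref{Sect3_Energy_Estimate_Lemma1}, and each contributes $O(\sqrt{\ve}\,\mathcal{F}[\xi,v](t))$. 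Summing over $\ell = 0,1,2,3$ yields the claimed inequality.

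The main obstacle is the top-order case $\ell=3$. When $\partial_t^3$ is distributed over the parabolic coefficient one obtains terms of the form $\int_\Omega \partial_t^k(\gamma p/a\varrho)\,\nabla(\nabla\cdot\partial_t^{3-k} v)\cdot\partial_t^3 v\,\mathrm{d}x$ with $k\ge 1$, in which both the coefficient derivative (in $L^2$) and $\partial_t^3 v$ (in $L^2$) force $\nabla(\nabla\cdot\partial_t^{3-k}v)$ to be placed in $L^\infty$. This is precisely the motivation for enlarging $\mathcal{F}[v](t)$ beyond $\mathcal{E}[v](t)$ by the extra sum $\sum_{\ell\le 2,\,\ell+|\alpha|\le 3}\|\partial_t^\ell\mathcal{D}^\alpha(\nabla\cdot v)\|_{L^2}^2$, which by Lemma \ref{Sect5_Total_Energy_Lemma} supplies $\nabla(\nabla\cdot v)\in H^2\subset L^\infty$ together with analogous $L^\infty$ control for $\nabla(\nabla\cdot v_t)$ and $\nabla(\nabla\cdot v_{tt})$. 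With this control in hand each commutator is bounded by $\sqrt{\ve}\,\mathcal{F}[\xi,v](t)$, and the argument closes.
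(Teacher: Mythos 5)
Your overall route is the same as the paper's: apply $\partial_t^{\ell}$ to $(\ref{Sect2_Velocity_Solve 3})$, test with $\partial_t^{\ell}v$, integrate the term $\frac{\gamma p}{a\varrho}\nabla(\nabla\cdot\partial_t^{\ell}v)$ by parts using $\partial_t^{\ell}v\cdot n|_{\partial\Omega}=0$, and push everything else into $\sqrt{\ve}\,\mathcal{F}[\xi,v](t)$. However, at the top order $\ell=3$ two of your concrete steps would fail as stated. First, several terms contain $\nabla\cdot v_{ttt}$, which is \emph{not} controlled by $\mathcal{F}[v](t)$ (the extra sum in $\mathcal{F}[v]$ only reaches $\ell\leq 2$): the cross term $\int_{\Omega}\nabla(\frac{\gamma p}{a\varrho})\cdot v_{ttt}\,(\nabla\cdot v_{ttt})\,\mathrm{d}x$ from your integration by parts, the term $2k_1(1-\gamma)\int_{\Omega}v_{ttt}\cdot v\,(\nabla\cdot v_{ttt})\,\mathrm{d}x$, and the contribution of $-\frac{k_1}{2}\nabla(|v|^2)$ after integrating by parts, which produces $k_1\int_{\Omega}(\nabla\cdot v_{ttt})\,\partial_{ttt}(|v|^2)\,\mathrm{d}x$. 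None of these is $O(\sqrt{\ve}\,\mathcal{F}[\xi,v](t))$; they must be absorbed into the dissipation by Cauchy--Schwarz, splitting off pieces like $\frac{\gamma}{3a}\int_{\Omega}\frac{p}{\varrho}|\nabla\cdot v_{ttt}|^2\,\mathrm{d}x$, which is exactly why the lemma retains only the coefficient $\frac{\gamma}{a}$ rather than the full $\frac{2\gamma}{a}$ in front of the dissipation. Your blanket claim that every nonlinear and cross term contributes $O(\sqrt{\ve}\,\mathcal{F})$ misses this absorption, and without it the estimate does not close.

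Second, your treatment of the commutator $\sum_{k\geq 1}\int_{\Omega}\partial_t^{k}(\frac{\gamma p}{a\varrho})\,\nabla(\nabla\cdot\partial_t^{3-k}v)\cdot v_{ttt}\,\mathrm{d}x$ places $\nabla(\nabla\cdot\partial_t^{3-k}v)$ in $L^{\infty}$, and you assert that $\mathcal{F}[v]$ supplies $L^{\infty}$ control of $\nabla(\nabla\cdot v_t)$ and $\nabla(\nabla\cdot v_{tt})$. It does not: $\mathcal{F}[v]$ only gives $\nabla\cdot v\in H^{3}$, $\nabla\cdot v_t\in H^{2}$, $\nabla\cdot v_{tt}\in H^{1}$, so $\nabla(\nabla\cdot v_t)$ is at best $L^{4}$ (via $H^1\hookrightarrow L^4$) and $\nabla(\nabla\cdot v_{tt})$ only $L^{2}$. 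The correct placement, as in $(\ref{Sect5_Velocity_13})$, puts the coefficient time-derivatives in $L^{\infty}$ or $L^{4}$ (available since $\xi_t\in H^{3}$, $\xi_{tt}\in H^{2}$, etc.) and the factors $\nabla(\nabla\cdot v_t)$, $\nabla(\nabla\cdot v_{tt})$ in $L^{4}$, $L^{2}$ respectively; only $\nabla(\nabla\cdot v)$ itself may be taken in $L^{\infty}$. With these two repairs (absorption of the $\nabla\cdot v_{ttt}$ terms into the dissipation and the corrected H\"older placements) your argument coincides with the paper's proof.
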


\begin{proof}
Apply $\partial_t^{\ell}$ to the equation $(\ref{Sect2_Velocity_Solve 3})$, where $1\leq \ell\leq 3$, we get
\begin{equation}\label{Sect5_Velocity_1}
\begin{array}{ll}
\partial_t^{\ell+1} v = \partial_t^{\ell}
[k_1(1-\gamma)v(\nabla\cdot v)- k_1 v\cdot\nabla v
- \frac{k_1}{2} \nabla(|v|^2)] \\[6pt]\hspace{1.5cm}

+ \frac{\gamma}{a}\sum\limits_{0\leq \mu<\ell}
[\partial_t^{\ell-\mu}(\frac{p}{\varrho})\partial_t^{\mu}\nabla(\nabla\cdot v)]
+ \frac{\gamma p}{a\varrho}\partial_t^{\ell}\nabla(\nabla\cdot v).
\end{array}
\end{equation}

Let $(\ref{Sect5_Velocity_1})\cdot \partial_t^{\ell} v$, we get
\begin{equation}\label{Sect5_Velocity_2}
\begin{array}{ll}
\partial_t|\partial_t^{\ell} v|^2 = 2\partial_t^{\ell} v\cdot\partial_t^{\ell}
[k_1(1-\gamma)v(\nabla\cdot v)- k_1 v\cdot\nabla v - \frac{k_1}{2} \nabla(|v|^2)] \\[6pt]\hspace{1.5cm}

+ \frac{2\gamma}{a}\sum\limits_{0\leq \mu<\ell}
[\partial_t^{\ell-\mu}(\frac{p}{\varrho})\partial_t^{\mu}\nabla(\nabla\cdot v)]\cdot\partial_t^{\ell} v
+ \frac{2\gamma p}{a\varrho}\partial_t^{\ell}\nabla(\nabla\cdot v)\cdot\partial_t^{\ell} v.
\end{array}
\end{equation}

Integrate $(\ref{Sect5_Velocity_2})$ in $\Omega$, we get
\begin{equation}\label{Sect5_Velocity_3}
\begin{array}{ll}
\frac{\mathrm{d}}{\mathrm{d}t}\int\limits_{\Omega}|\partial_t^{\ell} v|^2 \,\mathrm{d}x
= 2\int\limits_{\Omega}\partial_t^{\ell} v\cdot\partial_t^{\ell}
[k_1(1-\gamma)v(\nabla\cdot v)- k_1 v\cdot\nabla v
- \frac{k_1}{2} \nabla(|v|^2)]\,\mathrm{d}x \\[6pt]\hspace{2.4cm}

+ \frac{2\gamma}{a}\int\limits_{\Omega}\sum\limits_{0\leq \mu<\ell}
[\partial_t^{\ell-\mu}(\frac{p}{\varrho})\partial_t^{\mu}\nabla(\nabla\cdot v)]\cdot\partial_t^{\ell} v \,\mathrm{d}x
\\[6pt]\hspace{2.4cm}

+ \frac{2\gamma}{a}\int\limits_{\Omega}\frac{p}{\varrho}\partial_t^{\ell}\nabla(\nabla\cdot v)
\cdot\partial_t^{\ell} v \,\mathrm{d}x := I_7.
\end{array}
\end{equation}

When $\ell=0$,
\begin{equation}\label{Sect5_Velocity_4}
\begin{array}{ll}
\int\limits_{\Omega} \frac{p}{\varrho}v\cdot\nabla(\nabla\cdot v)\,\mathrm{d}x
= \int\limits_{\partial\Omega} \frac{p}{\varrho}v\cdot n \nabla\cdot v\,\mathrm{d}S_x
- \int\limits_{\Omega} \frac{p}{\varrho}|\nabla\cdot v|^2\,\mathrm{d}x
- \int\limits_{\Omega} \nabla\cdot v \frac{\varrho v\cdot\nabla\xi - p v\cdot\nabla\varrho}{\varrho^2}\,\mathrm{d}x
\\[10pt]\hspace{2.8cm}
\lem \sqrt{\ve}\mathcal{F}[\xi,v] - \int\limits_{\Omega} \frac{p}{\varrho}|\nabla\cdot v|^2\,\mathrm{d}x
\end{array}
\end{equation}

Then
\begin{equation}\label{Sect5_Velocity_5}
\begin{array}{ll}
\frac{\mathrm{d}}{\mathrm{d}t} \int\limits_{\Omega}|v|^2 \,\mathrm{d}x
+ \frac{2\gamma}{a}\int\limits_{\Omega} \frac{p}{\varrho}|\nabla\cdot v|^2\,\mathrm{d}x
\lem \sqrt{\ve}\mathcal{F}[\xi,v]
\end{array}
\end{equation}

When $\ell=1$,
\begin{equation}\label{Sect5_Velocity_6}
\begin{array}{ll}
\int\limits_{\Omega} \frac{p}{\varrho}v_t\cdot\nabla(\nabla\cdot v_t)\,\mathrm{d}x
= \int\limits_{\partial\Omega} \frac{p}{\varrho}v_t\cdot n \nabla\cdot v_t\,\mathrm{d}S_x
- \int\limits_{\Omega} \frac{p}{\varrho}|\nabla\cdot v_t|^2\,\mathrm{d}x  \\[10pt]\hspace{3.4cm}
- \int\limits_{\Omega} (\nabla\cdot v_t)v_t\cdot\nabla(\frac{p}{\varrho}) \,\mathrm{d}x
\\[10pt]\hspace{3cm}
\lem \sqrt{\ve}\mathcal{F}[\xi,v] - \int\limits_{\Omega} \frac{p}{\varrho}|\nabla\cdot v_t|^2\,\mathrm{d}x
\end{array}
\end{equation}

Then
\begin{equation}\label{Sect5_Velocity_7}
\begin{array}{ll}
\frac{\mathrm{d}}{\mathrm{d}t} \int\limits_{\Omega}|v_t|^2 \,\mathrm{d}x
+ \frac{2\gamma}{a}\int\limits_{\Omega} \frac{p}{\varrho}|\nabla\cdot v_t|^2\,\mathrm{d}x
\lem \sqrt{\ve}\mathcal{F}[\xi,v]
\end{array}
\end{equation}

When $\ell=2$,
\begin{equation}\label{Sect5_Velocity_8}
\begin{array}{ll}
\int\limits_{\Omega} \frac{p}{\varrho}v_{tt}\cdot\nabla(\nabla\cdot v_{tt})\,\mathrm{d}x
= \int\limits_{\partial\Omega} \frac{p}{\varrho}v_{tt}\cdot n \nabla\cdot v_{tt}\,\mathrm{d}S_x
- \int\limits_{\Omega} \frac{p}{\varrho}|\nabla\cdot v_{tt}|^2\,\mathrm{d}x  \\[10pt]\hspace{3.6cm}
- \int\limits_{\Omega} (\nabla\cdot v_{tt})v_{tt}\cdot\nabla(\frac{p}{\varrho}) \,\mathrm{d}x
\\[10pt]\hspace{3.2cm}
\lem \sqrt{\ve}\mathcal{F}[\xi,v] - \int\limits_{\Omega} \frac{p}{\varrho}|\nabla\cdot v_{tt}|^2\,\mathrm{d}x
\end{array}
\end{equation}

Then
\begin{equation}\label{Sect5_Velocity_9}
\begin{array}{ll}
\frac{\mathrm{d}}{\mathrm{d}t} \int\limits_{\Omega}|v_{tt}|^2 \,\mathrm{d}x
+ \frac{2\gamma}{a}\int\limits_{\Omega} \frac{p}{\varrho}|\nabla\cdot v_{tt}|^2\,\mathrm{d}x
\lem \sqrt{\ve}\mathcal{F}[\xi,v]
\end{array}
\end{equation}

When $\ell=3$, we estimate each term of $I_7$ separately.

The first term of $I_7$:
\begin{equation}\label{Sect5_Velocity_10}
\begin{array}{ll}
2k_1(1-\gamma)\int\limits_{\Omega}v_{ttt}\cdot\partial_{ttt}[v(\nabla\cdot v)]\,\mathrm{d}x
\\[6pt]
\lem \|v_{ttt}\|_{L^2{(\Omega)}}(\|v_{ttt}\|_{L^2(\Omega)}|\nabla\cdot v|_{\infty}
+\|v_{tt}\|_{L^4(\Omega)}\|\nabla\cdot v_t\|_{L^4(\Omega)} \\[6pt]\quad
+|v_{t}|_{\infty}\|\nabla\cdot v_{tt}\|_{L^2(\Omega)})
+ 2k_1(1-\gamma)\int\limits_{\Omega}v_{ttt}\cdot v(\nabla\cdot v_{ttt})\,\mathrm{d}x
\\[6pt]
\leq C\sqrt{\ve}\mathcal{F}[\xi,v](t)
+ \frac{\gamma}{3a}\int\limits_{\Omega}\frac{p}{\varrho}|\nabla\cdot v_{ttt}|^2\,\mathrm{d}x
+ C\int\limits_{\Omega}\frac{\varrho}{p}(v\cdot v_{ttt})^2\,\mathrm{d}x

\\[6pt]
\leq C\sqrt{\ve}\mathcal{F}[\xi,v](t)
+ \frac{\gamma}{3a}\int\limits_{\Omega}\frac{p}{\varrho}|\nabla\cdot v_{ttt}|^2\,\mathrm{d}x.
\end{array}
\end{equation}

The second term of $I_7$:
\begin{equation}\label{Sect5_Velocity_11}
\begin{array}{ll}
-2k_1\int\limits_{\Omega}v_{ttt}\cdot\partial_{ttt}[v\cdot\nabla v]\,\mathrm{d}x \\[6pt]
\lem \|v_{ttt}\|_{L^2{(\Omega})}(\|v_{ttt}\|_{L^2(\Omega)}|\nabla v|_{\infty}
+\|v_{tt}\|_{L^4(\Omega)}\|\nabla v_t\|_{L^4(\Omega)} \\[6pt]\quad
+|v_{t}|_{\infty}\|\nabla v_{tt}\|_{L^2(\Omega)})
-k_1\int\limits_{\Omega}v\cdot\nabla |v_{ttt}|^2\,\mathrm{d}x \\[6pt]

\lem \sqrt{\ve}\mathcal{F}[\xi,v](t)-k_1\int\limits_{\partial\Omega}v\cdot n |v_{ttt}|^2\,\mathrm{d}S_x
+k_1\int\limits_{\Omega}\nabla\cdot v |v_{ttt}|^2\,\mathrm{d}x
\lem \sqrt{\ve}\mathcal{F}[\xi,v](t).
\end{array}
\end{equation}

The third term of $I_7$:
\begin{equation}\label{Sect5_Velocity_12}
\begin{array}{ll}
-k_1\int\limits_{\Omega}v_{ttt}\cdot\partial_{ttt}[\nabla(|v|^2)]\,\mathrm{d}x \\[6pt]
=-k_1\int\limits_{\partial\Omega}v_{ttt}\cdot n \partial_{ttt}(|v|^2)\,\mathrm{d}S_x
+k_1\int\limits_{\Omega}\nabla\cdot v_{ttt}\partial_{ttt}(|v|^2)\,\mathrm{d}x \\[6pt]
= k_1\int\limits_{\Omega}\nabla\cdot v_{ttt}\partial_{ttt}(|v|^2)\,\mathrm{d}x

\leq \frac{\gamma}{3a}\int\limits_{\Omega}\frac{p}{\varrho}|\nabla\cdot v_{ttt}|^2\,\mathrm{d}x
+ C\int\limits_{\Omega}\frac{\varrho}{p}|\partial_{ttt}(|v|^2)|^2\,\mathrm{d}x \\[6pt]

\leq C\sqrt{\ve}\mathcal{F}[\xi,v](t)
+ \frac{\gamma}{3a}\int\limits_{\Omega}\frac{p}{\varrho}|\nabla\cdot v_{ttt}|^2\,\mathrm{d}x.
\end{array}
\end{equation}

The fourth term of $I_7$:
\begin{equation}\label{Sect5_Velocity_13}
\begin{array}{ll}
\frac{2\gamma}{a}\int\limits_{\Omega}\sum\limits_{0\leq \mu\leq 2}
[\partial_t^{3-\mu}(\frac{p}{\varrho})\partial_t^{\mu}\nabla(\nabla\cdot v)]\cdot v_{ttt} \,\mathrm{d}x
\lem |\nabla(\nabla\cdot v)|_{\infty}\|(\frac{p}{\varrho})_{ttt}\|_{L^2(\Omega)}\|v_{ttt}\|_{L^2(\Omega)}
\\[6pt]\quad
+ \|v_{ttt}\|_{L^2(\Omega)}\|(\frac{p}{\varrho})_{tt}\|_{L^4(\Omega)}\|\nabla(\nabla\cdot v_t)\|_{L^4(\Omega)}
\\[6pt]\quad
+ |(\frac{p}{\varrho})_t|_{\infty}\|\nabla(\nabla\cdot v_{tt})\|_{L^2(\Omega)}\|v_{ttt}\|_{L^2(\Omega)}
\lem  \sqrt{\ve}\mathcal{F}[\xi,v](t).
\end{array}
\end{equation}

The fifth term of $I_7$:
\begin{equation}\label{Sect5_Velocity_14}
\begin{array}{ll}
\frac{2\gamma}{a}\int\limits_{\Omega} \frac{p}{\varrho}v_{ttt}\cdot\nabla(\nabla\cdot v_{ttt})\,\mathrm{d}x
= \frac{2\gamma}{a}\int\limits_{\partial\Omega} \frac{p}{\varrho}v_{ttt}\cdot n \nabla\cdot v_{ttt}\,\mathrm{d}S_x
- \frac{2\gamma}{a}\int\limits_{\Omega} \frac{p}{\varrho}|\nabla\cdot v_{ttt}|^2\,\mathrm{d}x  \\[10pt]\quad
- \frac{2\gamma}{a}\int\limits_{\Omega} (\nabla\cdot v_{ttt})v_{ttt}\cdot\nabla(\frac{p}{\varrho}) \,\mathrm{d}x
\\[10pt]
\leq C\sqrt{\ve}\mathcal{F}[\xi,v]
- (\frac{2\gamma}{a}-\frac{\gamma}{3a})\int\limits_{\Omega} \frac{p}{\varrho}|\nabla\cdot v_{ttt}|^2\,\mathrm{d}x
+C\int\limits_{\Omega} \frac{\varrho}{p}|\nabla(\frac{p}{\varrho})|_{\infty}|v_{ttt}|^2\,\mathrm{d}x
\\[10pt]
\leq C\sqrt{\ve}\mathcal{F}[\xi,v] - (\frac{2\gamma}{a}-\frac{\gamma}{3a})\int\limits_{\Omega} \frac{p}{\varrho}|\nabla\cdot v_{ttt}|^2\,\mathrm{d}x.
\end{array}
\end{equation}

After summing the five terms of $I_7$, namely  $(\ref{Sect5_Velocity_10})+(\ref{Sect5_Velocity_11})+(\ref{Sect5_Velocity_12})+(\ref{Sect5_Velocity_13})+
(\ref{Sect5_Velocity_14})$, we get
\begin{equation}\label{Sect5_Velocity_15}
\begin{array}{ll}
\frac{\mathrm{d}}{\mathrm{d}t} \int\limits_{\Omega}|v_{ttt}|^2 \,\mathrm{d}x
+ \frac{\gamma}{a}\int\limits_{\Omega} \frac{p}{\varrho}|\nabla\cdot v_{ttt}|^2\,\mathrm{d}x
\lem \sqrt{\ve}\mathcal{F}[\xi,v]
\end{array}
\end{equation}

By $(\ref{Sect5_Velocity_5})+(\ref{Sect5_Velocity_7})+(\ref{Sect5_Velocity_9})+(\ref{Sect5_Velocity_15})$, we get
\begin{equation}\label{Sect5_Velocity_16}
\begin{array}{ll}
\frac{\mathrm{d}}{\mathrm{d}t} E[v](t)
+ \frac{\gamma}{a}\int\limits_{\Omega} \frac{p}{\varrho}\sum\limits_{\ell=0}^3|\nabla\cdot \partial_t^{\ell}v|^2\,\mathrm{d}x
\lem \sqrt{\ve}\mathcal{F}[\xi,v].
\end{array}
\end{equation}
Thus, Lemma $\ref{Sect5_Velocity_Lemma}$ is proved.
\end{proof}

The following lemma concerns a priori estimate for $\int\limits_{\Omega}\sum\limits_{\ell=1}^{3}\partial_t^{\ell}\xi \partial_t^{\ell-1}\xi \,\mathrm{d}x$, which introduces $E[\xi](t)$ to the inequality $(\ref{Sect5_XiT_toProve})$.
\begin{lemma}\label{Sect5_Energy_Estimate_Lemma1}
For any given $T\in (0,+\infty]$, if
\begin{equation*}
\sup\limits_{0\leq t\leq T} \mathcal{F}[\xi,v,\phi](t) \leq\ve,
\end{equation*}
where $0<\ve\ll 1$, then there exists $c_{13}>0$ such that for $\forall t\in [0,T]$,
\begin{equation}\label{Sect5_XiT_toProve}
- \frac{\mathrm{d}}{\mathrm{d}t}\int\limits_{\Omega}\sum\limits_{\ell=1}^{3}
\partial_t^{\ell}\xi\partial_t^{\ell-1}\xi \,\mathrm{d}x
+ E[\xi](t) \leq C\sqrt{\ve}\mathcal{F}[\xi,v](t) +c_{13} E[v](t).
\end{equation}
\end{lemma}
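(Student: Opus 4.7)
The plan is to mirror the proof of Lemma \ref{Sect3_Energy_Estimate_Lemma2} almost verbatim, taking advantage of the cleaner structure that Darcy's law imposes in the diffusion setting. For $\ell=1,2,3$ I would differentiate the pressure transport equation $(\ref{Sect2_Final_Diffusion})_1$ in time $\ell$ times to get an identity of the form
\begin{equation*}
\partial_t^{\ell+1}\xi + k_1\gamma p\nabla\cdot\partial_t^{\ell}v = (\text{Leibniz lower-order terms}),
\end{equation*}
multiply by $-\partial_t^{\ell-1}\xi$, and use the elementary identity $-\partial_t^{\ell+1}\xi\,\partial_t^{\ell-1}\xi = -(\partial_t^{\ell}\xi\,\partial_t^{\ell-1}\xi)_t + (\partial_t^{\ell}\xi)^2$ to recover the desired time-derivative structure. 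Integrating in $\Omega$ and integrating by parts on the principal term $\int_\Omega k_1\gamma p\,\partial_t^{\ell-1}\xi\,\nabla\cdot\partial_t^{\ell}v\,dx$, the boundary integral vanishes by $\partial_t^{\ell}v\cdot n|_{\partial\Omega}=0$, and what remains is bounded by $\|\partial_t^{\ell}v\|_{L^2}^2 + \|\nabla\partial_t^{\ell-1}\xi\|_{L^2}^2$ modulo $\sqrt{\ve}\,\mathcal{F}[\xi,v](t)$ corrections.

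The essential ingredient that closes the estimate is then the Darcy-type gradient bound obtained directly from $(\ref{Sect2_Final_Diffusion})_2$. Because $\nabla\xi=-ak_1\varrho v$, differentiating in $t$ yields $\nabla\partial_t^{\ell-1}\xi = -ak_1\partial_t^{\ell-1}(\varrho v)$, whence
\begin{equation*}
\|\nabla\partial_t^{\ell-1}\xi\|_{L^2}^2 \lem \|\partial_t^{\ell-1}v\|_{L^2}^2 + \sqrt{\ve}\,\mathcal{F}[\xi,v](t)
\qquad (\ell=1,2,3).
\end{equation*}
This is a strict simplification compared with Lemma \ref{Sect3_Energy_Estimate_Lemma2}, where the hyperbolic momentum equation forced $\|\nabla\partial_t^{\ell-1}\xi\|_{L^2}^2$ to include a $\|\partial_t^{\ell}v\|_{L^2}^2$ summand; here the right-hand side only involves time derivatives of $v$ of the same order $\ell-1$, so no loss of regularity occurs and the final bound closes with $E[v](t)$ on the right (rather than $E_1[v](t)$).

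For the $\ell=0$ step, i.e.\ upgrading $\sum_{\ell=1}^{3}\|\partial_t^{\ell}\xi\|_{L^2}^2$ to the full $E[\xi](t)$, I would repeat the argument used in $(\ref{Sect3_XiT_8})$: since $\bar{p}\in[\inf_\Omega p,\sup_\Omega p]$ by Lemma \ref{Sect2_P_Infty_Lemma}, at every fixed $t$ there exists $x_t\in\Omega$ with $\xi(x_t,t)=0$, so a fundamental-theorem-of-calculus estimate along a curve of bounded length gives $\|\xi\|_{L^2}^2 \lem |\mathrm{Diam}(\Omega)|^2\|\nabla\xi\|_{L^2}^2 \lem \|v\|_{L^2}^2 + \sqrt{\ve}\,\mathcal{F}[\xi,v](t)$. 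Summing over $\ell=0,1,2,3$ yields $(\ref{Sect5_XiT_toProve})$ with some $c_{13}>0$.

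The main (mild) obstacle is bookkeeping of the nonlinear remainder terms when $\ell=3$: differentiating $v\cdot\nabla\xi$ and $\xi\nabla\cdot v$ three times produces many cross products such as $\xi_{tt}\nabla\cdot v_t$, $\xi_t\nabla\cdot v_{tt}$, $v_{ttt}\cdot\nabla\xi$, etc., each of which must be shown to fall inside $\sqrt{\ve}\,\mathcal{F}[\xi,v](t)$ via Hölder together with the Sobolev embedding $\|\cdot\|_{L^4(\Omega)}\lesssim\|\cdot\|_{H^1(\Omega)}$; the only non-routine term is the top-order $k_1\gamma p\,\xi_{tt}\nabla\cdot v_{ttt}$, which must be integrated by parts once more (using $v_{ttt}\cdot n|_{\partial\Omega}=0$) to trade $\nabla\cdot v_{ttt}$ for $\nabla\xi_{tt}$, after which the Darcy bound applied with $\ell=3$ absorbs it. Apart from this careful accounting, the proof is a direct adaptation of Lemma \ref{Sect3_Energy_Estimate_Lemma2}.
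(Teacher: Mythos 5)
Your proposal follows the paper's proof essentially step by step: apply $\partial_t^{\ell}$ to $(\ref{Sect2_Final_Diffusion})_1$, pair with $-\partial_t^{\ell-1}\xi$, integrate by parts on the leading term $k_1\gamma p\,\partial_t^{\ell-1}\xi\,\nabla\cdot\partial_t^{\ell}v$ using $\partial_t^{\ell}v\cdot n|_{\partial\Omega}=0$, control $\|\nabla\partial_t^{\ell-1}\xi\|_{L^2}$ via the Darcy relation $\nabla\xi=-ak_1\varrho v$ and its time derivatives, and close the $\ell=0$ bound with the Poincar\'e-type estimate built on the mean-value point $x_t$ where $\xi(x_t,t)=0$ exactly as in $(\ref{Sect3_XiT_8})$, which is what the paper does in $(\ref{Sect5_XiT_1})$--$(\ref{Sect5_XiT_7})$. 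The only small misstatement is your parenthetical that this closes with $E[v]$ rather than $E_1[v]$ --- Lemma~\ref{Sect3_Energy_Estimate_Lemma2} also has $c_3 E[v]$ on its right-hand side, not $E_1[v]$ --- but that remark is inessential and the argument is sound.
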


\begin{proof}
Similar to $(\ref{Sect3_XiT_2})$, we have
\begin{equation}\label{Sect5_XiT_1}
\begin{array}{ll}
- \frac{\mathrm{d}}{\mathrm{d}t}\int\limits_{\Omega}\xi_{t}\xi \,\mathrm{d}x
+ \int\limits_{\Omega}(\xi_t)^2 \,\mathrm{d}x
\lem \sqrt{\ve}\mathcal{F}[\xi,v](t) + \|v_t\|_{L^2(\Omega)}^2 + \|\nabla\xi\|_{L^2(\Omega)}^2 \\[6pt]
\lem \sqrt{\ve}\mathcal{F}[\xi,v](t) + \|v\|_{L^2(\Omega)}^2+ \|v_t\|_{L^2(\Omega)}^2.
\end{array}
\end{equation}

Similar to $(\ref{Sect3_XiT_4})$, we have
\begin{equation}\label{Sect5_XiT_2}
\begin{array}{ll}
- \frac{\mathrm{d}}{\mathrm{d}t}\int\limits_{\Omega}\xi_{tt}\xi_t \,\mathrm{d}x
+ \int\limits_{\Omega}(\xi_{tt})^2 \,\mathrm{d}x
\lem \sqrt{\ve}\mathcal{F}[\xi,v](t) + \|v_{tt}\|_{L^2(\Omega)}^2 + \|\nabla\xi_t\|_{L^2(\Omega)}^2 \\[6pt]
\lem \sqrt{\ve}\mathcal{F}[\xi,v](t) + \|v_{tt}\|_{L^2(\Omega)}^2 + \|-ak_1\xi_t v -ak_1\varrho v_t\|_{L^2(\Omega)}^2 \\[6pt]

\lem \sqrt{\ve}\mathcal{F}[\xi,v](t) + \|v_t\|_{L^2(\Omega)}^2+ \|v_{tt}\|_{L^2(\Omega)}^2.
\end{array}
\end{equation}

Similar to $(\ref{Sect3_XiT_6})$, we have
\begin{equation}\label{Sect5_XiT_3}
\begin{array}{ll}
- \frac{\mathrm{d}}{\mathrm{d}t}\int\limits_{\Omega}\xi_{ttt}\xi_{tt} \,\mathrm{d}x
+ \int\limits_{\Omega}(\xi_{ttt})^2 \,\mathrm{d}x

\lem \sqrt{\ve}\mathcal{F}[\xi,v](t) + \|v_{ttt}\|_{L^2(\Omega)}^2 + \|\nabla\xi_{tt}\|_{L^2(\Omega)}^2 \\[6pt]
\lem \sqrt{\ve}\mathcal{F}[\xi,v](t) + \|v_{ttt}\|_{L^2(\Omega)}^2 + \|
-ak_1\xi_{tt} v - 2ak_1\xi_t v_t -ak_1\varrho v_{tt}\|_{L^2(\Omega)}^2 \\[6pt]

\lem \sqrt{\ve}\mathcal{F}[\xi,v](t) + \|v_{tt}\|_{L^2(\Omega)}^2+ \|v_{ttt}\|_{L^2(\Omega)}^2.
\end{array}
\end{equation}

By $(\ref{Sect5_XiT_1})+(\ref{Sect5_XiT_2})+(\ref{Sect5_XiT_3})$, we get
\begin{equation}\label{Sect5_XiT_4}
- \frac{\mathrm{d}}{\mathrm{d}t}\int\limits_{\Omega}\sum\limits_{\ell=1}^{3}
\partial_t^{\ell}\xi\partial_t^{\ell-1}\xi \,\mathrm{d}x
+ \int\limits_{\Omega}\sum\limits_{\ell=1}^{3}(\partial_t^{\ell}\xi)^2 \,\mathrm{d}x \lem \sqrt{\ve}\mathcal{F}[\xi,v](t)
+\sum\limits_{\ell=0}^{3}\|v_t^{\ell}\|_{L^2(\Omega)}^2.
\end{equation}

By Lemma $\ref{Sect2_P_Infty_Lemma}$, $\bar{p}\in[\inf\limits_{x\in\Omega}p,\sup\limits_{x\in\Omega}p]$, then for any $t\geq 0$, there exists $x_t \in\Omega$ such that $\xi(x_t,t)=0$. Assume $\ell(s)$ is a curve with finite length parameter $s$ such that $\ell(0)=x_t,\ \ell(s_x)=x$, then
\begin{equation}\label{Sect5_XiT_5}
\begin{array}{ll}
\|\xi(x,t)\|_{L^2(\Omega)}^2
= \|\xi(x_t,t) + \int\limits_0^{s_x} \nabla\xi[\ell(s)]\cdot \ell(s) \,\mathrm{d}s\|_{L^2(\Omega)}^2
\\[6pt]\hspace{2.15cm}
\leq C|Diam(\Omega)|^2\|\nabla\xi\|_{L^2(\Omega)}^2
\lem \|\nabla\xi\|_{L^2(\Omega)}^2 \\[6pt]\hspace{2.15cm}
= \|-ak_1 \varrho v\|_{L^2(\Omega)}^2
\lem \|v\|_{L^2(\Omega)}^2.
\end{array}
\end{equation}

Summing $(\ref{Sect5_XiT_4})$ and $(\ref{Sect5_XiT_5})$, we get
\begin{equation}\label{Sect5_XiT_6}
\begin{array}{ll}
- \frac{\mathrm{d}}{\mathrm{d}t}\int\limits_{\Omega}\sum\limits_{\ell=1}^{3}
\partial_t^{\ell}\xi\partial_t^{\ell-1}\xi \,\mathrm{d}x
+ \int\limits_{\Omega}\sum\limits_{\ell=0}^{3}(\partial_t^{\ell}\xi)^2 \,\mathrm{d}x
\lem \sqrt{\ve}\mathcal{F}[\xi,v](t)
+ \sum\limits_{\ell=0}^{3}\|v_t^{\ell}\|_{L^2(\Omega)}^2.
\end{array}
\end{equation}

Then there exist two constants $C>0$, $c_{13}>0$ such that
\begin{equation}\label{Sect5_XiT_7}
\begin{array}{ll}
- \frac{\mathrm{d}}{\mathrm{d}t}\int\limits_{\Omega}\sum\limits_{\ell=1}^{3}
\partial_t^{\ell}\xi\partial_t^{\ell-1}\xi \,\mathrm{d}x
+ \int\limits_{\Omega}\sum\limits_{\ell=0}^{3}(\partial_t^{\ell}\xi)^2 \,\mathrm{d}x \leq C\sqrt{\ve}\mathcal{F}[\xi,v](t) +c_{13}\sum\limits_{\ell=0}^{3}\|v_t^{\ell}\|_{L^2(\Omega)}^2.
\end{array}
\end{equation}
Thus, Lemma $\ref{Sect5_Energy_Estimate_Lemma1}$ is proved.
\end{proof}

In order to prove the uniform bound of $\int\limits_0^{\infty}\sum\limits_{0\leq\ell\leq 2,\ell+|\alpha|\leq 3}
\|\partial_t^{\ell}\mathcal{D}^{\alpha}\nabla\cdot v(s)\|_{L^2{(\Omega)}}^2 \,\mathrm{d}s$, we must have
the following lemma.
\begin{lemma}\label{Sect5_Velocity_Div_Lemma}
For any given $T\in (0,+\infty]$, if
\begin{equation*}
\sup\limits_{0\leq t\leq T} \mathcal{F}[\xi,v,\phi](t) \leq\ve,
\end{equation*}
where $0<\ve\ll 1$, then for $\forall t\in [0,T]$,
\begin{equation}\label{Sect5_Velocity_Div_toProve}
\begin{array}{ll}
\sum\limits_{0\leq\ell\leq 2,\ell+|\alpha|\leq 3}
\|\partial_t^{\ell}\mathcal{D}^{\alpha}\nabla\cdot v\|_{L^2{(\Omega)}}^2 \leq C\sqrt{\ve}\mathcal{F}[\xi,v](t)
+ c_{14}(E[\xi,v](t) + \mathcal{E}_1[\omega](t)).
\end{array}
\end{equation}
\end{lemma}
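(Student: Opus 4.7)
The plan is to mirror the strategy of Lemma~\ref{Sect5_Total_Energy_Lemma} but restrict attention to the divergence terms in $\mathcal{F}[v](t)$. First I would apply $\partial_t^{\ell}\mathcal{D}^{\alpha}$ directly to equation $(\ref{Sect2_Final_Diffusion})_1$ and solve algebraically for the divergence to obtain
\begin{equation*}
\partial_t^{\ell}\mathcal{D}^{\alpha}(\nabla\cdot v) = -\frac{1}{k_1\gamma p}\,\partial_t^{\ell}\mathcal{D}^{\alpha}\xi_t + R_{\ell,\alpha},
\end{equation*}
where $R_{\ell,\alpha}$ collects commutators involving products like $(\mathcal{D}^{\alpha_1}\xi)(\mathcal{D}^{\alpha_2}\nabla\cdot v)$ and $(\mathcal{D}^{\alpha_1}v)(\mathcal{D}^{\alpha_2}\nabla\xi)$. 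Each factor in $R_{\ell,\alpha}$ is controlled in $L^\infty$ or $L^4$ by $\sqrt{\ve}$ via Sobolev embedding and Lemma~\ref{Sect3_Varrho_Lemma}-type estimates, so $\|R_{\ell,\alpha}\|_{L^2}^2 \lem \sqrt{\ve}\,\mathcal{F}[\xi,v](t)$.

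For the low-order indices $\ell+|\alpha|\leq 2$ the leading term is $\|\partial_t^{\ell+1}\mathcal{D}^{\alpha}\xi\|_{L^2}^2$. When $|\alpha|=0$ this already sits inside $E[\xi](t)$. When $|\alpha|\geq 1$ I would invoke Darcy's law $(\ref{Sect2_Final_Diffusion})_2$ in the form $\nabla\xi = -ak_1\varrho v$; differentiating in both $t$ and $x$ converts $\partial_t^{\ell+1}\mathcal{D}^{\alpha}\xi = \partial_t^{\ell+1}\mathcal{D}^{\alpha-1}\nabla\xi$ into $-ak_1\partial_t^{\ell+1}\mathcal{D}^{\alpha-1}(\varrho v)$ plus commutators handled by $\sqrt{\ve}\mathcal{F}[\xi,v]$. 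The resulting norm of $\partial_t^{\ell+1}v$ in $H^{|\alpha|-1}$ is then controlled, via Lemma~\ref{Sect3_DivCurl_Decomposition}, by the vorticity part (inside $\mathcal{E}_1[\omega]$), a strictly lower-order divergence term (bootstrapped from the previous step of the induction on $\ell+|\alpha|$), and an $L^2$ norm of $\partial_t^{\ell+1}v$ that belongs to $E[v](t)$.

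For the top-order cases $\ell+|\alpha|=3$ with $\ell\leq 2$ the same chain applies: the Darcy substitution produces $\|\partial_t^{\ell+1}v\|_{H^{|\alpha|-1}}^2$, which the Helmholtz--Hodge decomposition bounds by $\|\partial_t^{\ell+1}\omega\|_{H^{|\alpha|-2}}^2 + \|\partial_t^{\ell+1}\nabla\cdot v\|_{H^{|\alpha|-2}}^2 + \|\partial_t^{\ell+1}v\|_{L^2}^2$. Since $\ell+1\leq 3$ and $(\ell+1)+(|\alpha|-2)\leq 2$, the vorticity term lies within the index range defining $\mathcal{E}_1[\omega]$, the divergence term is of strictly lower order in $\ell+|\alpha|$ and so is absorbed by the induction hypothesis, and the last term belongs to $E[v]$. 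Summing over all admissible $(\ell,\alpha)$ yields the desired inequality with some constant $c_{14}$.

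The principal obstacle will be the bookkeeping of indices: one must verify that the two substitutions (continuity to eliminate $\nabla\cdot v$, Darcy to eliminate $\nabla\xi$) never push a derivative of $\omega$ beyond the range $\ell+|\alpha|\leq 2$ allowed by $\mathcal{E}_1[\omega]$, and never demand an $H^s$ norm of $\xi$ or $v$ not already contained in $E[\xi,v]+\mathcal{E}_1[\omega]+\sqrt{\ve}\mathcal{F}[\xi,v]$. Because the continuity equation exchanges one time derivative of $\xi$ for no spatial derivative of $v$, while Darcy exchanges one spatial derivative of $\xi$ for one derivative of $v$, the total order budget is preserved and the induction closes cleanly; any remaining high--order coupling is quadratic in the small quantity $\sqrt{\ve}$ and absorbs into $C\sqrt{\ve}\,\mathcal{F}[\xi,v](t)$ for $\ve$ small.
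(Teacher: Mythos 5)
Your proposal is correct and is in substance the paper's own argument: the paper proves this lemma by adding the estimates $(\ref{Sect5_Velocity_Prove14})$--$(\ref{Sect5_Velocity_Prove16})$, which were themselves obtained by combining the continuity equation with Darcy's law (exactly the two substitutions you use, packaged once into the derived evolution equation $(\ref{Sect2_Velocity_Solve 3})$) and then controlling the resulting $\|\partial_t^{\ell+1}v\|$-norms through the div--curl Lemma $\ref{Sect3_DivCurl_Decomposition}$, the vorticity energy $\mathcal{E}_1[\omega]$, $E[\xi,v]$, and $\sqrt{\ve}\mathcal{F}[\xi,v]$. The only difference is organizational: the paper works with the full $H^{3-\ell}$ norms of $\nabla\cdot\partial_t^{\ell}v$ via $\nabla(\nabla\cdot v)$, whereas you argue term by term with an induction on $\ell+|\alpha|$; both close in the same way.
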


\begin{proof}
By $(\ref{Sect5_Velocity_Prove14})_2 + (\ref{Sect5_Velocity_Prove15})_2 + (\ref{Sect5_Velocity_Prove16})_2$, we get
\begin{equation}\label{Sect5_Velocity_Div_1}
\begin{array}{ll}
\|\nabla\cdot v\|_{H^3{(\Omega)}}^2 + \|\nabla\cdot v_t\|_{H^2{(\Omega)}}^2
+ \|\nabla\cdot v_{tt}\|_{H^1{(\Omega)}}^2 \\[6pt]

\lem \sqrt{\ve}\mathcal{F}[\xi,v](t)+ \|v_t\|_{H^2{(\Omega)}}^2
+ \|v_{tt}\|_{H^1{(\Omega)}}^2 + \|v_{ttt}\|_{L^2{(\Omega)}}^2 \\[6pt]

\leq C\sqrt{\ve}\mathcal{F}[\xi,v](t)+ C_9c_{10}(E[\xi,v](t) + \mathcal{E}_1[\omega](t)).
\end{array}
\end{equation}

Take $c_{14}=C_9c_{10}$. Thus, Lemma $\ref{Sect5_Velocity_Div_Lemma}$ is proved.
\end{proof}

The following lemma proves not only the exponential decay of $\mathcal{F}[\xi,v](t)$ and $\mathcal{E}[\omega](t)$, but also the uniform bound of $\int\limits_{0}^{T} \mathcal{E}[\nabla\cdot v](s)\,\mathrm{d}s$.
\begin{lemma}\label{Sect5_Decay_Lemma}
For any given $T\in (0,+\infty]$, there exists $\ve_2>0$ which is independent of $(\xi_0,v_0,\phi_0)$, such that if
\begin{equation*}
\sup\limits_{0\leq t\leq T} \mathcal{F}[\xi,v,\phi](t) \leq\ve,
\end{equation*}
where $\ve\ll \min\{1,\ve_0,\ve_2\}$, then for $\forall t\in [0,T]$,
\begin{equation}\label{Sect5_Decay_Estimates_toProve}
\begin{array}{ll}
\mathcal{F}[\xi,v](t)\leq \beta_6 \|\xi_0\|_{H^4(\Omega)}^2\exp\{-\beta_7 t\}, \\[6pt]
\mathcal{E}_1[\omega](t)\leq \beta_8 \|\xi_0\|_{H^4(\Omega)}^2\exp\{-\beta_7 t\}, \\[6pt]
\int\limits_{0}^{T} \mathcal{E}[\nabla\cdot v](s)\,\mathrm{d}s \leq \beta_9 \|\xi_0\|_{H^4(\Omega)}^2,
\end{array}
\end{equation}
where $\beta_6,\beta_7,\beta_8,\beta_9$ are four positive numbers.
\end{lemma}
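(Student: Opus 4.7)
The plan is to adapt the strategy of Lemma~\ref{Sect3_Decay_Lemma} from Section~3 to the present parabolic--hyperbolic diffusion setting. I will first form the Lyapunov functional
\begin{equation*}
\mathcal{G}(t) := \lambda_2 E_1[\xi](t) + E[v](t) + \mathcal{E}_1[\omega](t) -\sum_{\ell=1}^{3}\int_\Omega \partial_t^{\ell-1}\xi\,\partial_t^\ell\xi\,\mathrm{d}x,
\end{equation*}
where $\lambda_2 > 0$ will be chosen sufficiently large. Cauchy--Schwarz bounds the cross term by $E[\xi]$, and Lemma~\ref{Sect5_Epsilon0_Lemma} gives $E_1[\xi]\cong E[\xi]$, so that for $\lambda_2$ large enough the functional satisfies $\mathcal{G}(t)\cong E[\xi,v](t)+\mathcal{E}_1[\omega](t)$.

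Next I will form the weighted combination $\lambda_2\cdot(\text{Lemma~\ref{Sect5_Energy_Estimate_Lemma2}})+(\text{Lemma~\ref{Sect5_Velocity_Lemma}})+(\text{Lemma~\ref{Sect5_Energy_Estimate_Lemma1}})+(\text{Lemma~\ref{Sect5_Vorticity_Lemma}})$, selecting $\lambda_2 > c_{13}/(2a)$ so that the $c_{13}E[v]$ error produced on the right-hand side of Lemma~\ref{Sect5_Energy_Estimate_Lemma1} is absorbed by the $2a\lambda_2 E[v]$ dissipation coming from Lemma~\ref{Sect5_Energy_Estimate_Lemma2}. The outcome is
\begin{equation*}
\frac{\mathrm{d}}{\mathrm{d}t}\mathcal{G}(t) + c_\ast\bigl(E[\xi,v](t) + \mathcal{E}_1[\omega](t)\bigr) + \tfrac{\gamma}{a}\int_\Omega \tfrac{p}{\varrho}\sum_{\ell=0}^{3}|\nabla\cdot\partial_t^\ell v|^2\,\mathrm{d}x \leq C\sqrt{\ve}\,\mathcal{F}[\xi,v](t).
\end{equation*}
Applying Lemma~\ref{Sect5_Total_Energy_Lemma} to bound $\mathcal{F}[\xi,v](t)\leq c_{10}(E[\xi,v](t)+\mathcal{E}_1[\omega](t))$ and fixing $\ve_2 > 0$ with $C c_{10}\sqrt{\ve_2}<c_\ast/2$ absorbs the right-hand side into the left; using the equivalence $\mathcal{G}(t)\cong E[\xi,v](t)+\mathcal{E}_1[\omega](t)$ this becomes $\frac{\mathrm{d}}{\mathrm{d}t}\mathcal{G}(t)+c_\sharp\mathcal{G}(t)\leq 0$, and Gronwall yields exponential decay of $\mathcal{G}$. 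Lemma~\ref{Sect5_Total_Energy_Lemma} then transfers this decay to $\mathcal{F}[\xi,v](t)$ and $\mathcal{E}_1[\omega](t)$. The fact that the initial norm is expressed solely in terms of $\|\xi_0\|_{H^4(\Omega)}$ is a consequence of Darcy's law at $t=0$: $v_0=-(ak_1\varrho_0)^{-1}\nabla\xi_0$ yields $\|v_0\|_{H^3(\Omega)}\lesssim\|\xi_0\|_{H^4(\Omega)}$.

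For the third estimate I will combine two ingredients. Integrating the displayed differential inequality on $[0,T]$ produces $\int_0^T\int_\Omega (p/\varrho)\sum_{\ell=0}^{3}|\nabla\cdot\partial_t^\ell v|^2\,\mathrm{d}x\,\mathrm{d}s\lesssim\mathcal{G}(0)\lesssim\|\xi_0\|_{H^4(\Omega)}^2$, which controls the purely temporally differentiated part of $\mathcal{E}[\nabla\cdot v]$. The spatially differentiated part is handled pointwise by Lemma~\ref{Sect5_Velocity_Div_Lemma}, whose right-hand side $C\sqrt{\ve}\mathcal{F}[\xi,v]+c_{14}(E[\xi,v]+\mathcal{E}_1[\omega])$ inherits the exponential decay already established and is therefore integrable in $s$ with bound $O(\|\xi_0\|_{H^4(\Omega)}^2)$. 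The principal obstacle I anticipate is the coefficient balancing: the lower bound $\lambda_2>c_{13}/(2a)$ needed to absorb Lemma~\ref{Sect5_Energy_Estimate_Lemma1}'s $c_{13}E[v]$ must remain compatible with the Cauchy--Schwarz coercivity $\mathcal{G}(t)\gtrsim E[\xi,v](t)+\mathcal{E}_1[\omega](t)$, and all the constants must be frozen before $\ve_2$ is selected small enough to close the $\sqrt{\ve}$-absorption in the last step.
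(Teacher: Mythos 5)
Your proposal is correct and follows the paper's own strategy almost verbatim: the same Lyapunov functional $\lambda_2 E_1[\xi]+E[v]+\mathcal{E}_1[\omega]-\sum_{\ell=1}^{3}\int_\Omega\partial_t^{\ell-1}\xi\,\partial_t^{\ell}\xi\,\mathrm{d}x$, the same weighted combination of Lemmas \ref{Sect5_Vorticity_Lemma}, \ref{Sect5_Energy_Estimate_Lemma2}, \ref{Sect5_Velocity_Lemma}, \ref{Sect5_Energy_Estimate_Lemma1}, the same absorption of the $\sqrt{\ve}\,\mathcal{F}[\xi,v]$ right-hand side through Lemma \ref{Sect5_Total_Energy_Lemma} and a smallness choice of $\ve_2$, and the same Gronwall/equivalence step, with $\mathcal{F}[\xi,v](0)\lesssim\|\xi_0\|_{H^4(\Omega)}^2$ via Darcy's law at $t=0$. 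The only genuine deviation is the third estimate: the paper adds $\lambda_3\times$Lemma \ref{Sect5_Velocity_Div_Lemma} into the combination with a small weight $\lambda_3=\min\{\tfrac{1}{2c_{14}},\tfrac{a}{c_{14}}\}$, so that one time-integration of the resulting inequality simultaneously yields the mixed space--time part of $\int_0^T\mathcal{E}[\nabla\cdot v]\,\mathrm{d}s$ and the purely temporal part from the $\frac{\gamma}{a}\int_\Omega\frac{p}{\varrho}\sum_{\ell}|\nabla\cdot\partial_t^{\ell}v|^2$ dissipation, at the price of the extra coefficient constraints $(1-\lambda_3 c_{14})$, $(2a-\lambda_3 c_{14})$, $(2a\lambda_2-c_{13}-\lambda_3 c_{14})>0$; you instead keep Lemma \ref{Sect5_Velocity_Div_Lemma} out of the combination and integrate its pointwise bound a posteriori, using the already-established exponential decay of $\mathcal{F}[\xi,v]$ and $\mathcal{E}_1[\omega]$, which is equally valid (the decaying bound is integrable uniformly in $T$, and $p/\varrho$ is bounded below under the smallness assumption so the dissipation term controls the $|\alpha|=0$, $0\le\ell\le 3$ part) and slightly simplifies the constant bookkeeping. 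Your anticipated ``obstacle'' is harmless: both requirements on $\lambda_2$ are lower bounds, hence compatible, exactly as in the paper's choice $\lambda_2=\max\{\tfrac43,\tfrac{c_{13}}{a}\}+1$.
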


\begin{proof}
In view of Lemmas $\ref{Sect5_Vorticity_Lemma}$, $\ref{Sect5_Energy_Estimate_Lemma2}$, $\ref{Sect5_Velocity_Lemma}$,
$\ref{Sect5_Energy_Estimate_Lemma1}$ and $\ref{Sect5_Velocity_Div_Lemma}$, we have obtained global a priori estimates as follows:
\begin{equation}\label{Sect5_Global_A_Priori_Estimates}
\left\{\begin{array}{ll}
\frac{\mathrm{d}}{\mathrm{d}t}\int\limits_{\Omega}\mathcal{E}_1[\omega](t)\,\mathrm{d}x + 2a\mathcal{E}_1[\omega](t)
\lem C\sqrt{\ve}\mathcal{F}[\xi,v](t), \\[10pt]

\frac{\mathrm{d}}{\mathrm{d}t}E_1[\xi](t)+ 2a E[v](t) \leq C\sqrt{\ve}\mathcal{F}[\xi,v](t), \\[6pt]

\frac{\mathrm{d}}{\mathrm{d}t} E[v](t)
+ \frac{\gamma}{a}\int\limits_{\Omega} \frac{p}{\varrho}\sum\limits_{\ell=0}^3|\nabla\cdot \partial_t^{\ell}v|^2\,\mathrm{d}x \lem \sqrt{\ve}\mathcal{F}[\xi,v](t), \\[7pt]

- \frac{\mathrm{d}}{\mathrm{d}t}\int\limits_{\Omega}\sum\limits_{\ell=1}^{3}
\partial_t^{\ell}\xi\partial_t^{\ell-1}\xi \,\mathrm{d}x
+ E[\xi](t) \leq C\sqrt{\ve}\mathcal{F}[\xi,v](t) +c_{13} E[v](t), \\[10pt]

\sum\limits_{0\leq\ell\leq 2,\ell+|\alpha|\leq 3}
\|\partial_t^{\ell}\mathcal{D}^{\alpha}\nabla\cdot v\|_{L^2{(\Omega)}}^2 \leq C\sqrt{\ve}\mathcal{F}[\xi,v](t)
+ c_{14}(E[\xi,v](t) + \mathcal{E}_1[\omega](t)).
\end{array}\right.
\end{equation}

Let $\lambda_2=\max\{\frac{4}{3},\frac{c_{13}}{a}\}+1, \
\lambda_3 = \min\{\frac{1}{2c_{14}},\frac{a}{c_{14}}\}$. Define
\begin{equation}\label{Sect5_Define_E2}
\begin{array}{ll}
E_2[\xi](t) := \lambda_2 E_1[\xi](t)
-\sum\limits_{\ell=1}^{3}\int\limits_{\Omega}\partial_t^{\ell-1}\xi \partial_t^{\ell} \xi\,\mathrm{d}x.
\end{array}
\end{equation}
where $E_3>0$ by Cauchy-Schwarz inequality. Since $\lambda_2>\frac{4}{3}$, $E_2[\xi](t)\cong E[\xi](t)$, i.e., there exists $c_{15}>0$, $c_{16}>0$ such that
\begin{equation}\label{Sect5_Equivalence}
\begin{array}{ll}
c_{15} E[\xi](t) \leq E_2[\xi](t) \leq c_{16} E[\xi](t).
\end{array}
\end{equation}

By $(\ref{Sect5_Global_A_Priori_Estimates})_1+ (\ref{Sect5_Global_A_Priori_Estimates})_2\times\lambda_2
+(\ref{Sect5_Global_A_Priori_Estimates})_3 + (\ref{Sect5_Global_A_Priori_Estimates})_4 + (\ref{Sect5_Global_A_Priori_Estimates})_5\times\lambda_3$, we get
\begin{equation}\label{Sect5_Decay_1}
\begin{array}{ll}
\frac{\mathrm{d}}{\mathrm{d}t}\left(E_2[\xi](t) + E[v](t) +\mathcal{E}_1[\omega](t)\right)
+ (1-\lambda_3c_{14})E[\xi](t)+ (2a -\lambda_3c_{14})\mathcal{E}_1[\omega](t)
\\[10pt]\quad
+ (2a\lambda_2 - c_{13}-\lambda_3c_{14}) E[v](t)
+ \frac{\gamma}{a}\int\limits_{\Omega} \frac{p}{\varrho}\sum\limits_{\ell=0}^3|\nabla\cdot \partial_t^{\ell}v|^2\,\mathrm{d}x
\\[10pt]\quad
+ \lambda_3\sum\limits_{0\leq\ell\leq 2,\ell+|\alpha|\leq 3}
\|\partial_t^{\ell}\mathcal{D}^{\alpha}\nabla\cdot v\|_{L^2{(\Omega)}}^2

\leq C_{10}\sqrt{\ve}\mathcal{F}[\xi,v](t),
\end{array}
\end{equation}
for some $C_{10}>0$.

By Lemma $\ref{Sect5_Total_Energy_Lemma}$, we have
\begin{equation}\label{Sect5_Decay_2}
\begin{array}{ll}
\sqrt{\ve}\mathcal{F}[\xi,v](t) \leq c_{10}\sqrt{\ve}(E[\xi,v](t) + \mathcal{E}_1[\omega](t)).
\end{array}
\end{equation}

Let $\ve_2 =\min\{\frac{(1-\lambda_3c_{14})^2}{4C_{10}^2c_0^2}, \frac{(2a\lambda_2-c_{13}-\lambda_3c_{13})^2}{4C_{10}^2c_0^2},
\frac{(2a-\lambda_3c_{14})^2}{4C_{10}^2c_0^2}\}$,
plug $(\ref{Sect5_Decay_2})$ into $(\ref{Sect5_Decay_1})$, we get
\begin{equation}\label{Sect5_Decay_3}
\begin{array}{ll}
\frac{\mathrm{d}}{\mathrm{d}t}\left(E_2[\xi](t) + E[v](t) +\mathcal{E}_1[\omega](t)\right)
+ \frac{1-\lambda_3c_{14}}{2}E[\xi](t)+ \frac{2a -\lambda_3c_{14}}{2}\mathcal{E}_1[\omega](t)
\\[10pt]\quad
+ \frac{2a\lambda_2 - c_{13}-\lambda_3c_{14}}{2} E[v](t)
+ \frac{\gamma}{a}\int\limits_{\Omega} \frac{p}{\varrho}\sum\limits_{\ell=0}^3|\nabla\cdot \partial_t^{\ell}v|^2\,\mathrm{d}x
\\[10pt]\quad
+ \lambda_3\sum\limits_{0\leq\ell\leq 2,\ell+|\alpha|\leq 3}
\|\partial_t^{\ell}\mathcal{D}^{\alpha}\nabla\cdot v\|_{L^2{(\Omega)}}^2
\leq 0.
\end{array}
\end{equation}

Since the last two terms in $(\ref{Sect5_Decay_3})$ are positive, we have
\begin{equation}\label{Sect5_Decay_4}
\begin{array}{ll}
\frac{\mathrm{d}}{\mathrm{d}t}\left(E_2[\xi](t) + E[v](t) +\mathcal{E}_1[\omega](t)\right)
+ \frac{1-\lambda_3c_{14}}{2c_{16}}E_2[\xi](t)+ \frac{2a -\lambda_3c_{14}}{2}\mathcal{E}_1[\omega](t)
\\[10pt]\qquad
+ \frac{2a\lambda_2 - c_{13}-\lambda_3c_{14}}{2} E[v](t) \leq 0.
\end{array}
\end{equation}

Let $c_{17} = \min\{\frac{1-\lambda_3c_{14}}{2}, \frac{2a -\lambda_3c_{14}}{2},
\frac{2a\lambda_2 - c_{13}-\lambda_3c_{14}}{2}\}$, it follows from $(\ref{Sect3_Decay_2})$ that
\begin{equation}\label{Sect5_Decay_5}
\begin{array}{ll}
\frac{\mathrm{d}}{\mathrm{d}t}\left(E_2[\xi](t) + E[v](t) +\mathcal{E}_1[\omega](t)\right)
+ c_{17}(E_2[\xi](t)+ \mathcal{E}_1[\omega](t)+ E[v](t)) \leq 0.
\end{array}
\end{equation}

After integrating $(\ref{Sect5_Decay_5})$, we get
\begin{equation}\label{Sect5_Decay_6}
\begin{array}{ll}
E_2[\xi](t) + E[v](t) +\mathcal{E}_1[\omega](t)\leq (E_2[\xi](0) + E[v](0) +\mathcal{E}_1[\omega](0))
\exp\{-c_{17} t\}, \\[10pt]
\mathcal{E}_1[\omega](t)\leq (E_2[\xi](0) + E[v](0) +\mathcal{E}_1[\omega](0))
\exp\{-c_{17} t\} \\[6pt]\hspace{1.2cm}
\leq (c_{16} E[\xi](0)+ E[v](0) + \mathcal{E}[v](0))\exp\{-c_{17} t\} \\[6pt]\hspace{1.2cm}
\leq (c_{16} +1)\mathcal{F}[\xi,v](0)\exp\{-c_{17} t\} \\[6pt]\hspace{1.2cm}
\leq C_{11}(c_{16} +1)\|\xi_0\|_{H^4(\Omega)}^2\exp\{-c_{17} t\}. \\[12pt]

c_{15} E[\xi](t)\leq E_2[\xi](t)\leq (E_2[\xi](0)+E[v](0) + \mathcal{E}_1[\omega](0))
\exp\{-c_{17} t\}, \\[10pt]

\mathcal{F}[\xi,v](t)\leq c_{10}(E[\xi,v](t) + \mathcal{E}_1[\omega](t)) \\[6pt]\hspace{1.4cm}
\leq (\frac{c_{10}}{c_{15}}+ 2c_{10})(E_2[\xi](0)+E[v](0) + \mathcal{E}_1[\omega](0))
\exp\{-c_{17} t\} \\[6pt]\hspace{1.4cm}
\leq (\frac{c_{10}}{c_{15}}+ 2c_{10})(c_{16} +1)\mathcal{F}[\xi,v](0)\exp\{-c_{17} t\}
\\[6pt]\hspace{1.4cm}
\leq C_{11}(\frac{c_{10}}{c_{15}}+ 2c_{10})(c_{16}+1)\|\xi_0\|_{H^4(\Omega)}^2\exp\{-c_{17} t\}
\end{array}
\end{equation}

Take $\beta_6=C_{11}(\frac{c_{10}}{c_{15}}+ 2c_{10})(c_{16} +1)$, $\beta_7=c_{17}$, $\beta_8=C_{11}(c_{16}+1)$, the exponential decay in $(\ref{Sect5_Decay_Estimates_toProve})$ is obtained.

\vspace{0.2cm}
It follows from $(\ref{Sect5_Decay_3})$ that
\begin{equation}\label{Sect5_Decay_7}
\begin{array}{ll}
\frac{\mathrm{d}}{\mathrm{d}t}\left(E_2[\xi](t) + E[v](t) +\mathcal{E}_1[\omega](t)\right)
+ \frac{\gamma}{a}\int\limits_{\Omega} \frac{p}{\varrho}\sum\limits_{\ell=0}^3|\nabla\cdot \partial_t^{\ell}v|^2\,\mathrm{d}x \\[13pt]\quad
+ \lambda_3\sum\limits_{0\leq\ell\leq 2,\ell+|\alpha|\leq 3}
\|\partial_t^{\ell}\mathcal{D}^{\alpha}\nabla\cdot v\|_{L^2{(\Omega)}}^2
\leq 0.
\end{array}
\end{equation}

Integrate $(\ref{Sect5_Decay_7})$ from $t=0$ to $t=T$, we get
\begin{equation}\label{Sect5_Decay_8}
\begin{array}{ll}
E_2[\xi](T) + E[v](T) +\mathcal{E}_1[\omega](T)
+ \lambda_3\int\limits_{0}^{T}\sum\limits_{0\leq\ell\leq 2,\ell+|\alpha|\leq 3}
\|\partial_{\tau}^{\ell}\mathcal{D}^{\alpha}\nabla\cdot v(s)\|_{L^2{(\Omega)}}^2 \mathrm{d}s \\[6pt]\quad
+ \frac{\gamma}{a}\int\limits_{0}^{T}\int\limits_{\Omega} \frac{p}{\varrho}\sum\limits_{\ell=0}^3|\nabla\cdot \partial_{\tau}^{\ell}v(s)|^2\,\mathrm{d}x\mathrm{d}s
\leq E_2[\xi](0) + E[v](0) +\mathcal{E}_1[\omega](0).
\end{array}
\end{equation}

Then
\begin{equation}\label{Sect5_Decay_9}
\begin{array}{ll}
\int\limits_{0}^{T}\mathcal{E}[\nabla\cdot v](s)\,\mathrm{d}s \leq
C_{12}\lambda_3\int\limits_{0}^{T}\sum\limits_{0\leq\ell\leq 2,\ell+|\alpha|\leq 3}
\|\partial_{\tau}^{\ell}\mathcal{D}^{\alpha}\nabla\cdot v(s)\|_{L^2{(\Omega)}}^2\mathrm{d}s \\[10pt]\quad
+ C_{12}\frac{\gamma}{a}\int\limits_{0}^{T}\int\limits_{\Omega} \frac{p}{\varrho}\sum\limits_{\ell=0}^3|\nabla\cdot \partial_{\tau}^{\ell}v(s)|^2\,\mathrm{d}x\mathrm{d}s
\leq C_{12}(E_2[\xi](0) + E[v](0) +\mathcal{E}_1[\omega](0)) \\[10pt]
\leq C_{12}(c_{16}+1)\mathcal{F}[\xi,v](0)
\leq C_{13}C_{12}(c_{16}+1)\|\xi_0\|_{H^4(\Omega)}^2,
\end{array}
\end{equation}
where $C_{12} = \min\{\frac{1}{\lambda_3},\frac{9a\hat{\bar{\varrho}}}{4\gamma\hat{\bar{p}}}\}$.

Take $\beta_9=C_{13}C_{12}(c_{16}+1)$. Thus, Lemma $\ref{Sect5_Decay_Lemma}$ is proved.
\end{proof}

The following lemma concerns the uniform bound of $\mathcal{E}[\phi](t)$ on the condition that $v$ decays exponentially.
\begin{lemma}\label{Sect5_Entropy_Lemma}
For any given $T\in (0,+\infty]$, if
\begin{equation*}
\sup\limits_{0\leq t\leq T} \mathcal{F}[\xi,v,\phi](t) \leq\ve,
\end{equation*}
where $0<\ve\ll \min\{1,\ve_0,\ve_1,\ve_2\}$, then for $\forall t\in [0,T]$,
\begin{equation}\label{Sect5_Entropy_toProve_1}
\frac{\mathrm{d}}{\mathrm{d}t}\mathcal{E}[\phi](t)\leq \beta_4\mathcal{E}[v](t)^{\frac{1}{2}}\mathcal{E}[\phi](t).
\end{equation}

If $\mathcal{E}[v](t)\leq \beta_6 \|\xi_0\|_{H^4(\Omega)}^2\exp\{-\beta_7 t\}$, then $\mathcal{E}[\phi](t)$ has uniform bound:
\begin{equation}\label{Sect5_Entropy_toProve_2}
\mathcal{E}[\phi](t) \leq \beta_{10}\|\phi_0\|_{H^3(\Omega)}^2\left(\exp\{\|\xi_0\|_{H^4(\Omega)}\}\right)^{c_{18}},
\end{equation}
for some $c_{18}>0$.
\end{lemma}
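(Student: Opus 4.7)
The plan is to mimic closely the proof of Lemma \ref{Sect3_Entropy_Lemma}, since the entropy equation $(\ref{Sect2_Final_Diffusion})_3$ is identical to $(\ref{Sect2_Final_Eq})_3$, namely $\phi_t = -k_1 v\cdot\nabla\phi$. The only difference is that the ambient energy of $(\xi,v)$ lives inside $\mathcal{F}$ rather than $\mathcal{E}$, but when we bound $\mathcal{D}^\alpha v$ or $|v|_\infty$ by Sobolev embedding we automatically have $\mathcal{E}[v](t)\leq \mathcal{F}[v](t)\leq\ve$, so all Sobolev inequality based estimates carry over verbatim.

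First I would apply $\partial_t^\ell \mathcal{D}^\alpha$ to $\phi_t = -k_1 v\cdot\nabla\phi$ for each multi-index with $\ell+|\alpha|\leq 3$, multiply by $\partial_t^\ell \mathcal{D}^\alpha\phi$, and integrate over $\Omega$. For the lower-order cases ($\ell+|\alpha|<3$), the required bound is immediate via H\"older and Sobolev embedding. For the top-order cases, split according to which factor carries the most derivatives and estimate using $\|\cdot\|_{L^4}\lesssim \|\cdot\|_{H^1}$, exactly as in $(\ref{Sect3_Entropy_3})$, $(\ref{Sect3_Entropy_5})$, $(\ref{Sect3_Entropy_7})$ and $(\ref{Sect3_Entropy_9})$. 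In every case the term with no derivative falling on $v\cdot\nabla(\partial_t^\ell\mathcal{D}^\alpha\phi)$ is handled by integrating by parts, using $v\cdot n|_{\partial\Omega}=0$ to kill the boundary integral, and bounding the resulting interior term $\int_\Omega \nabla\cdot v\,|\partial_t^\ell\mathcal{D}^\alpha\phi|^2\,\mathrm{d}x$ by $|\nabla\cdot v|_\infty \|\partial_t^\ell\mathcal{D}^\alpha\phi\|_{L^2}^2 \lesssim \mathcal{E}[v]^{1/2}\mathcal{E}[\phi]$. Summing over all $(\ell,\alpha)$ with $\ell+|\alpha|\leq 3$ yields a constant $\beta_4>0$ with
\begin{equation*}
\frac{\mathrm{d}}{\mathrm{d}t}\mathcal{E}[\phi](t)\leq \beta_4\,\mathcal{E}[v](t)^{1/2}\mathcal{E}[\phi](t).
\end{equation*}

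Next, assuming $\mathcal{E}[v](t)\leq \beta_6\|\xi_0\|_{H^4(\Omega)}^2\exp\{-\beta_7 t\}$ (available from Lemma \ref{Sect5_Decay_Lemma}), Gronwall's inequality gives
\begin{equation*}
\mathcal{E}[\phi](t)\leq \mathcal{E}[\phi](0)\exp\!\Bigl\{\beta_4\int_0^t \sqrt{\beta_6}\,\|\xi_0\|_{H^4(\Omega)}\exp\{-\tfrac{\beta_7}{2}s\}\,\mathrm{d}s\Bigr\},
\end{equation*}
and the inner integral is bounded by $2\sqrt{\beta_6}\,\|\xi_0\|_{H^4(\Omega)}/\beta_7$ uniformly in $t$. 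Choosing $c_{18}=2\beta_4\sqrt{\beta_6}/\beta_7$ and $\beta_{10}$ large enough so that $\mathcal{E}[\phi](0)\leq \beta_{10}\|\phi_0\|_{H^3(\Omega)}^2$, we obtain the claimed uniform bound.

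The only non-routine step is the handling of the top-order transport term $\int_\Omega v\cdot\nabla(\partial_t^\ell\mathcal{D}^\alpha\phi)\,\partial_t^\ell\mathcal{D}^\alpha\phi\,\mathrm{d}x$ when the spatial derivatives land entirely on $\phi$; this is precisely where the boundary condition $v\cdot n|_{\partial\Omega}=0$ must be invoked via integration by parts, and where we must trade a seemingly uncontrolled $\|\mathcal{D}^\alpha\nabla\phi\|_{L^2}$ for $\nabla\cdot v$ in $L^\infty$. Apart from this observation, everything else is a direct transcription of the estimates already performed in Lemma \ref{Sect3_Entropy_Lemma}, with $\ve_0,\ve_1,\ve_2$ replaced by the diffusion-equation thresholds introduced in Lemmas \ref{Sect5_Total_Energy_Lemma}, \ref{Sect5_Epsilon0_Lemma} and \ref{Sect5_Decay_Lemma}.
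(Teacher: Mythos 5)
Your proposal is correct and follows essentially the same route as the paper, which simply cites Lemma \ref{Sect3_Entropy_Lemma} and repeats the Gronwall argument with $(\ve_i,\beta_i,c_8)$ replaced by the diffusion-equation constants $(\ve_i,\beta_6,\beta_7,\beta_{10},c_{18})$. Your observation that $\mathcal{E}[v]\le\mathcal{F}[v]$ lets all the Sobolev/H\"older bounds transfer verbatim, and your choice $c_{18}=2\beta_4\sqrt{\beta_6}/\beta_7$ matches the paper.
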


\begin{proof}
Similar to Lemma $\ref{Sect3_Entropy_Lemma}$, we have the following a priori estimate: 
\begin{equation}\label{Sect5_Entropy_1}
\begin{array}{ll}
\frac{\mathrm{d}}{\mathrm{d}t}\mathcal{E}[\phi](t)\leq \beta_4\mathcal{E}[v](t)^{\frac{1}{2}}\mathcal{E}[\phi](t), \\[6pt]
\mathcal{E}[\phi](t) \leq \mathcal{E}[\phi](0)
\exp\{\int\limits_{0}^{t}\beta_4\mathcal{E}[v](\tau)^{\frac{1}{2}}\,\mathrm{d}\tau\}. \\[6pt]
\end{array}
\end{equation}

If $\mathcal{E}[v](t)\leq \beta_6 \|\xi_0\|_{H^4(\Omega)}^2\exp\{-\beta_7 t\}$, then
\begin{equation*}
\begin{array}{ll}
\mathcal{E}[\phi](t) \leq \mathcal{E}[\phi](0)
\exp\{\int\limits_{0}^{t}\beta_4\mathcal{E}[v](\tau)^{\frac{1}{2}}\,\mathrm{d}\tau\} \\[6pt]\hspace{1.1cm}
\leq \beta_{10}\|\phi_0\|_{H^3(\Omega)}^2
\exp\{\int\limits_{0}^{t}\beta_4\sqrt{\beta_6}\|\xi_0\|_{H^4(\Omega)} \exp\{-\beta_7 \tau\}^{\frac{1}{2}}\,\mathrm{d}\tau\}
\end{array}
\end{equation*}

\begin{equation}\label{Sect5_Entropy_2}
\begin{array}{ll}
\hspace{1.1cm}
\leq \beta_{10}\|\phi_0\|_{H^3(\Omega)}^2
\exp\{\frac{2\beta_4\sqrt{\beta_6}}{\beta_7}\|\xi_0\|_{H^4(\Omega)}(1-\exp\{-\frac{\beta_7}{2}t\})\}
\\[6pt]\hspace{1.1cm}

\leq \beta_{10}\|\phi_0\|_{H^3(\Omega)}^2
\exp\{\frac{2\beta_4\sqrt{\beta_6}}{\beta_7}\|\xi_0\|_{H^4(\Omega)}\} \\[6pt]\hspace{1.1cm}

= \beta_{10}\|\phi_0\|_{H^3(\Omega)}^2
\left(\exp\{\|\xi_0\|_{H^4(\Omega)}\}\right)^{c_{18}},
\end{array}
\end{equation}
where $c_{18}=\frac{2\beta_4\sqrt{\beta_6}}{\beta_7}$, $\beta_{10}>0$.

Therefore $\mathcal{E}[\phi](t)$ is uniformly bounded when $\mathcal{E}[v](t)$ decays exponentially.
Thus, Lemma $\ref{Sect5_Entropy_Lemma}$ is proved.
\end{proof}

The following lemma concerns the exponential decay of $\sum\limits_{\ell=1}^{3}\|\partial_t^{\ell}S\|_{H^{3-\ell}(\Omega)}^2$ on the condition that $v$ decays exponentially.
\begin{lemma}\label{Sect5_Entropy_Decay_Lemma}
For any given $T\in (0,+\infty]$, if
\begin{equation*}
\sup\limits_{0\leq t\leq T} \mathcal{E}[\xi,v,\phi](t) \leq\ve,
\end{equation*}
where $0<\ve\ll \min\{1,\ve_0,\ve_1,\ve_2\}$, then for $\forall t\in [0,T]$,
\begin{equation}\label{Sect5_Entropy_T_toProve}
\begin{array}{ll}
\sum\limits_{\ell=1}^{3}\|\partial_t^{\ell}\phi\|_{H^{3-\ell}(\Omega)}^2
\leq c_{19}\|\xi_0\|_{H^4(\Omega)}^2\|\phi_0\|_{H^3(\Omega)}^2
\left(\exp\{\|\xi_0\|_{H^4(\Omega)}\}\right)^{c_{18}}\exp\{-\beta_7 t\},
\end{array}
\end{equation}
for some $c_{19}>0$.
\end{lemma}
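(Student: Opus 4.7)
The plan is to mimic the proof of Lemma \ref{Sect3_Entropy_Decay_Lemma}, which is structurally identical, and then substitute the sharper decay/boundedness estimates that are now available for the diffusion system (Lemmas \ref{Sect5_Decay_Lemma} and \ref{Sect5_Entropy_Lemma}) in place of the Euler-system counterparts (Lemmas \ref{Sect3_Decay_Lemma} and \ref{Sect3_Entropy_Lemma}).

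First, I would use the evolution equation $\phi_t=-k_1 v\cdot\nabla\phi$ from $(\ref{Sect2_Final_Diffusion})_3$ to express $\phi_t$, $\phi_{tt}$, $\phi_{ttt}$ purely in terms of products of temporal/spatial derivatives of $v$ and $\phi$. Differentiating, one obtains
\begin{equation*}
\phi_{tt}=-k_1 v_t\cdot\nabla\phi-k_1 v\cdot\nabla\phi_t,\quad
\phi_{ttt}=-k_1 v_{tt}\cdot\nabla\phi-2k_1 v_t\cdot\nabla\phi_t-k_1 v\cdot\nabla\phi_{tt}.
\end{equation*}
Applying $\mathcal{D}^{\alpha}$ with $|\alpha|\leq 3-\ell$ to $\partial_t^{\ell}\phi$ for $\ell=1,2,3$, then taking the $L^2$ norm and estimating each factor by Hölder/Sobolev inequalities exactly as in $(\ref{Sect3_Entropy_T_Prove1})$–$(\ref{Sect3_Entropy_T_Prove3})$, I would arrive at the bilinear bound
\begin{equation*}
\sum_{\ell=1}^{3}\|\partial_t^{\ell}\phi\|_{H^{3-\ell}(\Omega)}^2\lesssim \mathcal{E}[v](t)\,\mathcal{E}[\phi](t).
\end{equation*}
Since $\mathcal{E}[v](t)\leq \mathcal{F}[v](t)$, none of these estimates require more regularity on $v$ than what is controlled by $\mathcal{F}[\xi,v,\phi]$, so the calculations go through verbatim.

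Next, I would feed the decay estimate $\mathcal{E}[v](t)\leq\beta_6\|\xi_0\|_{H^4(\Omega)}^2\exp\{-\beta_7 t\}$ from Lemma \ref{Sect5_Decay_Lemma} and the uniform bound $\mathcal{E}[\phi](t)\leq \beta_{10}\|\phi_0\|_{H^3(\Omega)}^2 (\exp\{\|\xi_0\|_{H^4(\Omega)}\})^{c_{18}}$ from Lemma \ref{Sect5_Entropy_Lemma} into the bilinear estimate, obtaining
\begin{equation*}
\sum_{\ell=1}^{3}\|\partial_t^{\ell}\phi\|_{H^{3-\ell}(\Omega)}^2\leq c_{19}\|\xi_0\|_{H^4(\Omega)}^2\|\phi_0\|_{H^3(\Omega)}^2\left(\exp\{\|\xi_0\|_{H^4(\Omega)}\}\right)^{c_{18}}\exp\{-\beta_7 t\},
\end{equation*}
where $c_{19}$ is a constant absorbing the implicit constant, $\beta_6$ and $\beta_{10}$.

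I do not expect any serious obstacle, since the argument is essentially a repetition of Lemma \ref{Sect3_Entropy_Decay_Lemma} with the Euler-based bounds replaced by their diffusion-equation analogs; the only point requiring a little care is making sure that each norm of $v$ and its temporal derivatives that appears in the product estimates is controlled by $\mathcal{E}[v](t)$ and not by the larger quantity $\mathcal{F}[v](t)$, so that the exponential decay factor $\exp\{-\beta_7 t\}$ is genuinely inherited. This is straightforward because $\mathcal{E}[v]\leq \mathcal{F}[v]$ and Lemma \ref{Sect5_Decay_Lemma} yields exponential decay of the whole $\mathcal{F}[\xi,v](t)$, hence in particular of $\mathcal{E}[v](t)$.
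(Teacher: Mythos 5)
Your proposal matches the paper's own proof: the paper likewise reuses the bilinear estimate $\sum_{\ell=1}^{3}\|\partial_t^{\ell}\phi\|_{H^{3-\ell}(\Omega)}^2\lesssim \mathcal{E}[v](t)\mathcal{E}[\phi](t)$ established as in Lemma \ref{Sect3_Entropy_Decay_Lemma} (via $\phi_t=-k_1 v\cdot\nabla\phi$ and its time derivatives), then inserts the exponential decay of $\mathcal{E}[v](t)$ from Lemma \ref{Sect5_Decay_Lemma} and the uniform bound on $\mathcal{E}[\phi](t)$ from Lemma \ref{Sect5_Entropy_Lemma}, taking $c_{19}=\beta_6\beta_{10}$. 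Your argument is correct and essentially identical to the paper's.
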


\begin{proof}
It follows from Lemma $\ref{Sect5_Decay_Lemma}$ that
$\mathcal{E}[v](t)\leq \beta_6 \|\xi_0\|_{H^4(\Omega)}^2\exp\{-\beta_7 t\}$.
It follows from Lemma $\ref{Sect5_Entropy_Lemma}$ that
$\mathcal{E}[\phi](t)\leq \beta_{10}\|\phi_0\|_{H^3(\Omega)}^2\left(\exp\{\|\xi_0\|_{H^4(\Omega)}\}\right)^{c_{18}}$.

Similar to Lemma $\ref{Sect3_Entropy_Decay_Lemma}$, we have the following a priori estimate:
\begin{equation}\label{Sect5_Entropy_T_Prove4}
\begin{array}{ll}
\sum\limits_{\ell=1}^{3}\|\partial_t^{\ell}S\|_{H^{3-\ell}(\Omega)}^2
= \sum\limits_{\ell=1}^{3}\|\partial_t^{\ell}\phi\|_{H^{3-\ell}(\Omega)}^2
\lem \mathcal{E}[v](t)\mathcal{E}[\phi](t) \\[6pt]\hspace{2.7cm}
\leq \beta_6\beta_{10}\|\xi_0\|_{H^4(\Omega)}^2\|\phi_0\|_{H^3(\Omega)}^2
\left(\exp\{\|\xi_0\|_{H^4(\Omega)}\}\right)^{c_{18}}\exp\{-\beta_7 t\}.
\end{array}
\end{equation}

Take $c_{19} = \beta_6\beta_{10}$. Thus, Lemma $\ref{Sect5_Entropy_Decay_Lemma}$ is proved.
\end{proof}

\begin{remark}\label{Sect5_Varrho_Remark}
Similar to the results in Lemma $\ref{Sect3_Varrho_Remark}$, we have a priori estimates for $\mathcal{E}[\varrho-\bar{\varrho}](t)$ and $\sum\limits_{\ell=1}^{3}\|\partial_t^{\ell}\varrho\|_{H^{3-\ell}(\Omega)}^2$:

When $\mathcal{F}[p-\bar{p}](t)$ and $\mathcal{E}[S-\bar{S}](t)$ are uniformly bounded, $\mathcal{E}[\varrho-\bar{\varrho}](t)$ is also uniformly bounded due to $\varrho = \frac{1}{\sqrt[\gamma]{A}}p^{\frac{1}{\gamma}}\exp\{-\frac{S}{\gamma}\}$. For any given $T\in (0,+\infty]$, if $\sup\limits_{0\leq t\leq T} \mathcal{F}[\xi,v,\phi](t) \leq\ve$,
where $0<\ve\ll \min\{1,\ve_0,\ve_1,\ve_2\}$, then $\sum\limits_{\ell=1}^{3}\|\partial_t^{\ell}\varrho\|_{H^{3-\ell}(\Omega)}^2$ also decays at an exponential rate of $C\exp\{-\beta_7 t\}$.
\end{remark}

%%% find 6
\section{Darcy's Law and Nonlinear Diffusion of Non-Isentropic Euler Equations with Damping}
In this section, we prove the global existence of classical solutions to the diffusion equations $(\ref{Sect2_Final_Diffusion})$ under small data assumption and the nonlinear diffusion property of the non-isentropic Euler equations with damping $(\ref{Sect2_Final_Eq})$ when the time is large. For simplicity, we omit the symbol $\ \hat{}\ $ over the variables and constants in this section if there is no ambiguity, otherwise we will add the symbol $\ \hat{}\ $.

The proof of the local existence of classical solutions to IBVP for the parabolic-hyperbolic equations  $(\ref{Sect2_Parabolic_Hyperbolic})$ is standard, such as using the linearization-iteration-convergence scheme, so we give a lemma on the local existence without proof here.

\begin{lemma}\label{Sect6_LocalExistence}
$(Local\ Existence)$\\[6pt]
If $(\xi_0,\phi_0)\in H^4(\Omega)\times H^3(\Omega)$, $\inf\limits_{x\in\Omega}p_0(x)>0$ and $\partial_t^{\ell} \nabla\xi(x,0)\cdot n|_{\partial\Omega}=0$, $0\leq \ell\leq 3$, then there exists a finite time $T_{\ast}>0$, such that IBVP $(\ref{Sect2_Parabolic_Hyperbolic})$ admits a unique local classical solution $(\xi,\phi)$ satisfying
\begin{equation}\label{Sect6_Local_Regularity}
\left\{\begin{array}{ll}
(\xi,\phi)\in \underset{0\leq \ell\leq 3}{\cap}C^{\ell}([0,T_{\ast}),H^{4-\ell}(\Omega)\times H^{3-\ell}(\Omega)),
\\[6pt]
\triangle\xi \in C(\Omega\times[0,T_{\ast})).
\end{array}\right.
\end{equation}
\end{lemma}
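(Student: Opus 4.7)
The plan is to use the classical linearization--iteration--convergence scheme suggested in the text: treat $(\ref{Sect2_Parabolic_Hyperbolic})$ as a quasilinear parabolic equation for $\xi$ coupled to a linear transport equation for $\phi$ along the characteristics of the velocity field $-\frac{1}{a\varrho}\nabla\xi$. The parabolic smoothing on $\xi$ supplies exactly the regularity needed to transport $\phi$, which is the structural reason the theorem asks $p_0\in H^4$ but only $S_0\in H^3$. Concretely, set $(\xi^0,\phi^0)=(\xi_0,\phi_0)$ and, given $(\xi^n,\phi^n)$, put $\varrho^n=\varrho(\xi^n+\bar{p},\phi^n+\bar{S})$ and solve the two linear subproblems
\begin{equation*}
\xi^{n+1}_t = \frac{\gamma(\xi^n+\bar{p})}{a\varrho^n}\triangle\xi^{n+1} + \frac{\xi^n+\bar{p}}{a\varrho^n}\nabla\xi^{n+1}\cdot\nabla\phi^n,\quad \frac{\partial\xi^{n+1}}{\partial n}\Big|_{\partial\Omega}=0,\quad \xi^{n+1}(\cdot,0)=\xi_0,
\end{equation*}
\begin{equation*}
\phi^{n+1}_t = \frac{1}{a\varrho^n}\nabla\xi^{n+1}\cdot\nabla\phi^{n+1},\quad \phi^{n+1}(\cdot,0)=\phi_0,
\end{equation*}
in that order. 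Well-posedness of the first is standard linear parabolic theory on the space $\underset{0\leq\ell\leq 3}{\cap}C^{\ell}([0,T],H^{4-\ell}(\Omega))$, provided $\inf p^n\geq c_0>0$ and the compatibility conditions at $t=0$ hold; the second is solved by the method of characteristics, whose integral curves stay inside $\overline{\Omega}$ because the transport velocity $-\frac{1}{a\varrho^n}\nabla\xi^{n+1}$ is tangent to $\partial\Omega$ thanks to the Neumann condition on $\xi^{n+1}$.

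For uniform bounds I would run the energy estimates of Section 5 directly at the linear level: Lemmas \ref{Sect5_Total_Energy_Lemma}--\ref{Sect5_Entropy_Decay_Lemma} provide the template, but the commutator terms that usually arise when the nonlinear equation is differentiated are now replaced by explicit forcing involving $(\xi^n,\phi^n)$, which closes the estimate as long as $(\xi^n,\phi^n)$ stays in a fixed small ball in $H^4\times H^3$. A standard continuity argument then yields $T_\ast>0$, depending only on $\|\xi_0\|_{H^4(\Omega)}$, $\|\phi_0\|_{H^3(\Omega)}$ and $\inf_{\Omega}p_0$, on which all iterates satisfy
\begin{equation*}
\sup_{[0,T_\ast]}\sum_{0\leq\ell\leq 3}\bigl(\|\partial_t^{\ell}\xi^n\|_{H^{4-\ell}(\Omega)}^2+\|\partial_t^{\ell}\phi^n\|_{H^{3-\ell}(\Omega)}^2\bigr)\leq M,\quad \inf_{\Omega\times[0,T_\ast]}p^n\geq \tfrac12\inf_{\Omega}p_0.
\end{equation*}
Differencing the equations for two consecutive iterates and performing an $L^2$ energy estimate gives a contraction in $C([0,T_\ast];H^2(\Omega)\times L^2(\Omega))$ after possibly shrinking $T_\ast$; extracting a limit $(\xi,\phi)$ and interpolating with the uniform high-regularity bound yields the regularity claimed in $(\ref{Sect6_Local_Regularity})$, with $\triangle\xi\in C(\Omega\times[0,T_\ast))$ following from the Sobolev embedding $H^4\hookrightarrow C^2$ in three dimensions. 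Uniqueness is then proved by the same $L^2$ energy argument applied to the difference of two candidate solutions.

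The main obstacle is propagating the boundary compatibility $\partial_t^{\ell}\nabla\xi(x,0)\cdot n|_{\partial\Omega}=0$, $0\leq\ell\leq 3$, through the iteration at the highest order. These conditions are exactly what makes the linear parabolic problem for $\xi^{n+1}$ admit solutions in $C^{\ell}([0,T],H^{4-\ell})$ up to $\ell=3$; preserving them along the iteration amounts to computing $\partial_t^{\ell}\nabla\xi^{n+1}\cdot n|_{\partial\Omega}$ at $t=0$ from the linear equation and its time-differentiated forms, and checking they vanish whenever the analogous quantities for $(\xi^n,\phi^n)$ do. A related delicate point is the interplay between parabolic regularity of $\xi^{n+1}$ and the one-derivative loss that $\nabla\xi^{n+1}$ would cause when it appears as the transport coefficient for $\phi^{n+1}$; this loss is absorbed precisely because the theorem requires $\xi$ to live one Sobolev index above $\phi$, which is also what forces the appearance of $\mathcal{F}[\xi,v](t)$ rather than $\mathcal{E}[\xi,v](t)$ in the diffusion estimates of Section 5.
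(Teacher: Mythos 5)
The paper offers no proof of this lemma at all: it is stated as standard, with exactly the linearization--iteration--convergence scheme you describe, so your proposal follows the same route the authors intend. At the level of detail the paper treats this result, your outline (frozen-coefficient linear parabolic step for $\xi^{n+1}$, transport of $\phi^{n+1}$ along characteristics that remain in $\overline{\Omega}$ by the Neumann condition, uniform high-norm bounds, a low-norm contraction, and propagation of the compatibility conditions $\partial_t^{\ell}\nabla\xi(x,0)\cdot n|_{\partial\Omega}=0$) is the standard execution of that scheme.
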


The above lemma implies the local existence of classical solutions to IBVP $(\ref{Sect2_Final_Diffusion})$ as long as $(\xi,\phi)$ remain classical, namely, $(\xi,\phi)\in C^1([0,T^{\ast}), C^2(\Omega)\times C^1(\Omega))$. Based on the global a priori estimates for $(\xi,v,\phi)$, we obtained the global existence of classical solutions to IBVP $(\ref{Sect2_Final_Diffusion})$.

\begin{theorem}\label{Sect6_GlobalExistence_Thm}
$(Global\ Existence)$\\[6pt]
Assume $(\xi_0,\phi_0)\in H^4(\Omega)\times H^3(\Omega)$, $\inf\limits_{x\in\Omega}p_0(x)>0$ and $\partial_t^{\ell} \nabla\xi(x,0)\cdot n|_{\partial\Omega}=0$, $0\leq \ell\leq 3$.
There exists a sufficiently small number $\delta_2>0$, such that if $\|\xi_0\|_{H^4(\Omega)}+\|\phi_0\|_{H^3(\Omega)}\leq \delta_2$, then IBVP $(\ref{Sect2_Final_Diffusion})$ admits a unique global classical solution $(\xi,\phi)$ satisfying
\begin{equation}\label{Sect6_Global_Regularity}
\begin{array}{ll}
(\xi,\phi)\in \underset{0\leq \ell\leq 3}{\cap}C^{\ell}([0,+\infty),H^{4-\ell}(\Omega)\times H^{3-\ell}(\Omega)),\
\triangle\xi \in C(\Omega\times[0,+\infty)),
\end{array}
\end{equation}
moreover,
\begin{equation}\label{Sect6_Global_Regularity_Add}
\left\{\begin{array}{ll}
\varrho = \varrho(\xi,\phi)\in \underset{0\leq \ell\leq 3}{\cap}C^{\ell}([0,+\infty),H^{3-\ell}(\Omega)),\\[6pt]
v = -\frac{1}{ak_1\varrho}\nabla\xi \in \underset{0\leq \ell\leq 3}{\cap}C^{\ell}([0,+\infty),H^{3-\ell}(\Omega)),\
\nabla\cdot v \in C(\Omega\times[0,+\infty)).
\end{array}\right.
\end{equation}

$\forall t\geq 0$, $\mathcal{F}[\xi,v](t)$, $\mathcal{E}_1[\omega](t)$ and $\sum\limits_{\ell=1}^{3}\|\partial_t^{\ell}\phi\|_{H^{3-\ell}(\Omega)}^2$ decays exponentially,
$\mathcal{F}[\phi](t)$ is uniformly bounded.
\end{theorem}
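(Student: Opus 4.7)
The plan is to mirror the scheme used for Theorem \ref{Sect2_Global_Existence_Thm} but adapted to the parabolic–hyperbolic structure of the diffusion system. First I would invoke the local existence statement (Lemma \ref{Sect6_LocalExistence}) to obtain a classical solution $(\xi,\phi)$ on a maximal interval $[0,T_\ast)$ with the regularity \eqref{Sect6_Local_Regularity}, and correspondingly define $\varrho=\varrho(\xi,\phi)$ and $v=-\tfrac{1}{ak_1\varrho}\nabla\xi$; compatibility $\partial_t^\ell\nabla\xi(x,0)\cdot n|_{\partial\Omega}=0$ transfers to $\partial_t^\ell v(x,0)\cdot n|_{\partial\Omega}=0$ so that the a priori energy machinery of Section 5 applies. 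The extra regularity of $p_0$ in $H^4$ is exactly what is needed to make $\mathcal{F}[\xi,v](0)$ finite, since $\mathcal{F}[\xi](t)$ contains one more spatial derivative of $\xi$ than $\mathcal{E}[\xi](t)$.

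Next I would run a standard bootstrap/continuity argument on $\sup_{[0,t]}\mathcal{F}[\xi,v,\phi]$. Choose $\varepsilon\ll\min\{1,\varepsilon_0,\varepsilon_1,\varepsilon_2\}$ as in Lemmas \ref{Sect5_Total_Energy_Lemma}, \ref{Sect5_Epsilon0_Lemma}, \ref{Sect5_Decay_Lemma} and define
\begin{equation*}
\delta_2:=\min\Bigl\{\sqrt{\tfrac{\varepsilon}{2\beta_6}},\ \sqrt{\tfrac{\varepsilon}{2\beta_{10}}}\bigl(\exp\{\sqrt{\tfrac{\varepsilon}{2\beta_6}}\}\bigr)^{-c_{18}/2}\Bigr\}.
\end{equation*}
If $\|\xi_0\|_{H^4}+\|\phi_0\|_{H^3}\leq\delta_2$ then by Lemma \ref{Sect5_Decay_Lemma} one gets $\mathcal{F}[\xi,v](t)\leq\beta_6\|\xi_0\|_{H^4}^2 e^{-\beta_7 t}\leq\varepsilon/2$ and by Lemma \ref{Sect5_Entropy_Lemma} $\mathcal{E}[\phi](t)\leq\varepsilon/2$ on $[0,T_\ast)$, so the bootstrap assumption $\mathcal{F}[\xi,v,\phi]\leq\varepsilon$ self-improves and the solution cannot lose regularity; combined with local existence this extends $(\xi,\phi)$ to $[0,+\infty)$ with the regularity \eqref{Sect6_Global_Regularity}. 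The exponential decay statements for $\mathcal{F}[\xi,v]$, $\mathcal{E}_1[\omega]$, $\sum\|\partial_t^\ell\phi\|_{H^{3-\ell}}$ and the derived decays of $\partial_t^\ell\varrho$ are then immediate from Lemmas \ref{Sect5_Decay_Lemma}, \ref{Sect5_Entropy_Decay_Lemma} and Remark \ref{Sect5_Varrho_Remark}, while uniform bounds on $\|\varrho-\bar{\varrho}\|_{H^3}$ and $\|S-\bar{S}\|_{H^3}$ follow from the uniform bound on $\mathcal{E}[\phi]$ and $\mathcal{F}[\xi]$.

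For the equilibrium state description I would repeat the reasoning of Theorem \ref{Sect4_Convergence_Rate_Thm} with the diffusion equation variables. Since $|v|_\infty,|\nabla\xi|_\infty\lesssim\mathcal{F}[\xi,v]^{1/2}\to 0$ exponentially, we get $p_\infty=\hat{\bar p}$ (Lemma \ref{Sect2_P_Infty_Lemma} applies because $\int\hat p^{1/\gamma}dx$ is conserved along \eqref{Sect1_Diffusion_Eq}) and $u_\infty=v_\infty=\omega_\infty=0$. The exponential decay $|\phi_t|_\infty\lesssim(\mathcal{E}[v]\mathcal{E}[\phi])^{1/2}e^{-\beta_7 t/2}$ from Lemma \ref{Sect5_Entropy_Decay_Lemma} implies $\int_0^\infty S_t(x,s)\,ds$ converges uniformly, so $S_\infty(x)=S_0(x)+\int_0^\infty S_s(x,s)\,ds$ exists and is unique, lies in $[\hat S_-,\hat S_+]$, and $\hat\varrho_\infty=A^{-1/\gamma}\hat{\bar p}^{1/\gamma}e^{-S_\infty/\gamma}$. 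To show $S_\infty\not\equiv\mathrm{const}$ when $\hat S_+\neq\hat S_-$, I would use that $(\varrho\exp\{S/\gamma+\alpha S\})_t+\nabla\cdot(\varrho\exp\{S/\gamma+\alpha S\}u)=0$ still holds for \eqref{Sect1_Diffusion_Eq}, then follow verbatim the contradiction argument \eqref{Sect4_S_Infty_2}–\eqref{Sect4_S_Infty_5} of Theorem \ref{Sect4_Convergence_Rate_Thm} letting $\alpha\to\pm\infty$.

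The main obstacle I anticipate is verifying that the conservation law used for the Euler system—namely $\frac{d}{dt}\int\varrho\exp\{S/\gamma+\alpha S\}\,dx=0$—still holds for the diffusion system, because \eqref{Sect1_Diffusion_Eq} has no genuine momentum equation and $u$ is defined implicitly via Darcy's law. One must combine $(\ref{Sect1_Diffusion_Eq})_1$, $(\ref{Sect1_Diffusion_Eq})_3$ with $\varrho=\varrho(p,S)$ to recover $\varrho_t+\nabla\cdot(\varrho u)=0$ in this setting, and then integrate using $u\cdot n|_{\partial\Omega}=0$; once this is in place the non-constancy argument goes through. A minor but technical point is ensuring $\triangle\xi\in C(\Omega\times[0,+\infty))$ and $\nabla\cdot v\in C(\Omega\times[0,+\infty))$; these follow from $\xi\in C([0,\infty),H^4(\Omega))\hookrightarrow C(\Omega\times[0,\infty))$ and the identity $\nabla\cdot v=-\tfrac{1}{ak_1}\nabla\cdot(\varrho^{-1}\nabla\xi)$ together with the regularity of $\varrho$ given by \eqref{Sect6_Global_Regularity_Add}.
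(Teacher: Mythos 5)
Your bootstrap strategy matches the paper's: invoke Lemma \ref{Sect6_LocalExistence} for local existence, choose $\delta_2$ from $\varepsilon\ll\min\{1,\ve_0,\ve_1,\ve_2\}$ so that the decay and uniform-bound estimates of Lemmas \ref{Sect5_Decay_Lemma} and \ref{Sect5_Entropy_Lemma} self-improve the a priori assumption $\sup_{[0,T]}\mathcal{F}[\xi,v,\phi]\leq\varepsilon$, and then continue the solution globally. That part is essentially identical to the paper.

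The one genuine divergence is how you obtain $\nabla\cdot v\in C(\Omega\times[0,\infty))$ and $\triangle\xi\in C(\Omega\times[0,\infty))$. The paper does not argue directly from $\xi\in C([0,\infty),H^4)$; instead it uses the parabolic dissipation built into Lemma \ref{Sect5_Decay_Lemma}, namely $\int_0^T\mathcal{E}[\nabla\cdot v](s)\,\mathrm{d}s\leq\beta_9\|\xi_0\|_{H^4}^2$, to get $\nabla\cdot v\in L^2([0,T],H^3)$ and $\nabla\cdot v_t\in L^2([0,T],H^1)$, and then applies the Aubin--Lions lemma to conclude $\nabla\cdot v\in C([0,T],H^2)\hookrightarrow C(\Omega\times[0,T])$, after which $\triangle\xi=-ak_1\varrho\nabla\cdot v-ak_1 v\cdot\nabla\varrho$ inherits continuity. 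Your Sobolev-embedding route can be made to work, but as written it is slightly imprecise: ``$\xi\in C([0,\infty),H^4(\Omega))\hookrightarrow C(\Omega\times[0,\infty))$'' gives continuity of $\xi$, not of $\triangle\xi$; what you actually need is $\triangle\xi\in C([0,\infty),H^2(\Omega))$ together with $H^2(\Omega)\hookrightarrow C(\bar\Omega)$, and the continuity-in-$H^4$ of $\xi$ is itself one of the conclusions being established, so invoking it too casually risks circularity. The Aubin--Lions route is more self-contained because it derives the needed time-continuity directly from the space-time integrability furnished by the parabolic structure, which is precisely the extra regularity that the Darcy-law system enjoys over the full Euler system.

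One further mismatch: your third and fourth paragraphs prove the equilibrium-state characterization ($S_\infty$ exists and is non-constant, $p_\infty=\hat{\bar p}$, etc.) and raise the question of whether $\frac{\mathrm{d}}{\mathrm{d}t}\int_\Omega\varrho\exp\{\tfrac{S}{\gamma}+\alpha S\}\,\mathrm{d}x=0$ persists for the Darcy system. That material belongs to Theorem \ref{Sect6_Convergence_Rate_Thm}, not to the theorem at hand; the statement you are asked to prove stops at the exponential decay of $\mathcal{F}[\xi,v]$, $\mathcal{E}_1[\omega]$, $\sum_{\ell=1}^3\|\partial_t^\ell\phi\|_{H^{3-\ell}}^2$ and the uniform bound on $\mathcal{E}[\phi]$. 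Your concern about re-deriving the conservation law for the Darcy system is legitimate, but it applies to the next theorem, not this one.
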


\begin{proof}
In view of Lemmas $\ref{Sect5_Decay_Lemma}$ and $\ref{Sect5_Entropy_Lemma}$, we have the following global a priori estimates: for any given $T\in (0,+\infty]$, if
\begin{equation}\label{Sect6_Assumption_1}
\sup\limits_{0\leq t\leq T} \mathcal{F}[\xi,v,\phi](t) \leq\ve,
\end{equation}
where $0<\e\ll \min\{1,\ve_0,\ve_1,\ve_2\}$, then
\begin{equation}\label{Sect6_Decay}
\begin{array}{ll}
\mathcal{F}[\xi,v](t)\leq \beta_6 \|\xi_0\|_{H^4(\Omega)}^2\exp\{-\beta_7 t\}, \\[6pt]
\mathcal{E}[\phi](t) \leq \beta_{10}\|\phi_0\|_{H^3(\Omega)}^2\left(\exp\{\|\xi_0\|_{H^4(\Omega)}\}\right)^{c_{18}}.
\end{array}
\end{equation}

The constants $\ve_0,\ve_1,\ve_2$ are independent of $(\xi_0,\phi_0)$, so we can choose $\ve$ which is independent of $(\xi_0,\phi_0)$.

Take $\delta_2=\min\{\sqrt{\ve},\sqrt{\frac{\ve}{2\beta_6}},\sqrt{\frac{\ve}{2\beta_{10}}}
\left(\exp\{\sqrt{\frac{\ve}{2\beta_6}}\}\right)^{-\frac{c_{18}}{2}}\}$, then if $\|\xi_0\|_{H^4(\Omega)}+\|\phi_0\|_{H^3(\Omega)}\leq\delta_2$, we have
\begin{equation}\label{Sect6_Data_Condition}
\left\{\begin{array}{ll}
\|\xi_0\|_{H^4(\Omega)} \leq \sqrt{\frac{\ve}{2\beta_6}}, \\[6pt]
\|\phi_0\|_{H^3(\Omega)} \leq \sqrt{\frac{\ve}{2\beta_{10}}}\left(\exp\{\sqrt{\frac{\ve}{2\beta_6}}\}\right)^
{-\frac{c_{18}}{2}}.
\end{array}\right.
\end{equation}

Due to the estimates in $(\ref{Sect6_Decay})$, $(\xi,v,\phi)$ satisfy
\begin{equation}\label{Sect6_Solution_Condition}
\begin{array}{ll}
\mathcal{F}[\xi,v](t)\leq \frac{\ve}{2},\quad \mathcal{E}[\phi](t)\leq \frac{\ve}{2},\quad \forall t\in [0,T].
\end{array}
\end{equation}
This implies the a priori assumption $(\ref{Sect6_Assumption_1})$ is satisfied, the validity of the former a priori estimates is verified.

By Lemma $\ref{Sect5_Decay_Lemma}$, we have
\begin{equation}\label{Sect6_Aubin_Lions_1}
\begin{array}{ll}
\int\limits_{0}^{T} \mathcal{E}[\nabla\cdot v](s)\,\mathrm{d}s \leq \beta_9 \|\xi_0\|_{H^4(\Omega)}^2,
\end{array}
\end{equation}
which implies that for any given time $T\in (0,+\infty]$,
\begin{equation}\label{Sect6_Aubin_Lions_2}
\left\{\begin{array}{ll}
\|\nabla\cdot v\|_{L^2([0,T],H^3(\Omega))}^2 \lem \int\limits_{0}^{T} \mathcal{E}[\nabla\cdot v](s)\,\mathrm{d}s \lem \|\xi_0\|_{H^4(\Omega)}^2, \\[6pt]
\|\nabla\cdot v_t\|_{L^2([0,T],H^1(\Omega))}^2 \lem \int\limits_{0}^{T} \mathcal{E}[\nabla\cdot v](s)\,\mathrm{d}s \lem \|\xi_0\|_{H^4(\Omega)}^2.
\end{array}\right.
\end{equation}

By Aubin-Lions' Lemma, we obtain
\begin{equation}\label{Sect6_Aubin_Lions_3}
\begin{array}{ll}
\|\nabla\cdot v\|_{C([0,T],H^2(\Omega))}^2 \lem \|\xi_0\|_{H^4(\Omega)}^2,
\end{array}
\end{equation}
which implies that $\nabla\cdot v\in C(\Omega\times [0,T])$ for any $T>0$. Then
\begin{equation}\label{Sect6_Aubin_Lions_4}
\begin{array}{ll}
\triangle\xi = -a k_1\varrho\nabla\cdot v - a k_1 v\cdot\nabla\varrho \in C(\Omega\times [0,T]).
\end{array}
\end{equation}

Due to the global a priori estimates for $(\xi,v,\phi)$ and Lemma $\ref{Sect6_LocalExistence}$ on the local existence result, the classical solution $(\xi,\phi)$ can be extended to $[0,+\infty)$. $(\ref{Sect6_Aubin_Lions_4})$ holds for any given $T\in (0,+\infty]$. Thus, Theorem $\ref{Sect6_GlobalExistence_Thm}$ on the global existence of classical solutions to IBVP $(\ref{Sect2_Final_Diffusion})$ is proved.
\end{proof}

\begin{remark}\label{Sect6_Friction_Coefficient}
Our proof requires $a\geq C\sqrt{\ve}$ where $C>0$ is large enough. If $a\rto 0$,  $(p_0,\nabla p_0) \rto (\bar{p},0)$ is required.
\end{remark}

\begin{remark}\label{Sect6_DarcyLaw}
Theorem $\ref{Sect6_GlobalExistence_Thm}$ implies the global well-posedness of the diffusion equations $(\ref{Sect1_Diffusion_Eq})$ under small data assumption, thus Darcy's law is verified when the ideal gases are sufficiently mild and slow.
While the verification of Darcy's law for 1D non-isentropic p-system with damping see \cite{Hsiao_Pan_1999},
for isentropic Euler equations with damping see \cite{Pan_Zhao_2009},
for isothermal Euler equations with damping see \cite{Zhao_2010}.

\end{remark}

Since $(\xi,\phi)\in C^1(\Omega\times[0,+\infty))$ is the global classical solution to IBVP $(\ref{Sect2_Final_Diffusion})$, then $(p=\bar{p}+\xi,S=\bar{S}+\phi)$ is the global classical solution to IBVP for the diffusion equations $(\ref{Sect1_Diffusion_Eq})$.
The following theorem describes the asymptotical behavior of $(p,v,S,\varrho)$ relating to their equilibrium states $(p_{\infty},v_{\infty},S_{\infty},\varrho_{\infty})$.

\begin{theorem}\label{Sect6_Convergence_Rate_Thm}
Assume the conditions in Theorem $\ref{Sect6_GlobalExistence_Thm}$ hold. Let  $(p,S)$ be the global classical solution to IBVP $(\ref{Sect1_Diffusion_Eq})$.
$p_{\infty}=\bar{p}$, $u_{\infty}=v_{\infty}=\omega_{\infty}=0$. If $S_0\neq const$, then $S_{\infty}\neq const,\
\varrho_{\infty}(x)\neq const,\ \theta_{\infty}\neq const,\ e_{\infty}\neq const$.
As $t\rto +\infty$, $(p,u,S,\varrho)$ converge to $(\bar{p},0,S_{\infty},\varrho_{\infty})$ exponentially in $|\cdot|_{\infty}$ norm.
\end{theorem}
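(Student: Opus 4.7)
The plan is to mirror the argument of Theorem \ref{Sect4_Convergence_Rate_Thm}, but feeding in the decay and boundedness from Lemmas \ref{Sect5_Decay_Lemma}, \ref{Sect5_Entropy_Lemma} and \ref{Sect5_Entropy_Decay_Lemma} in place of their Section~3 counterparts. First, by Sobolev embedding and the exponential decay $\mathcal{F}[\xi,v](t) \lem \|\xi_0\|_{H^4(\Omega)}^2\exp\{-\beta_7 t\}$, I obtain $|\nabla p|_\infty + |\nabla v|_\infty + |\omega|_\infty \lem \mathcal{F}[\xi,v](t)^{1/2}\rto 0$, which directly forces $p_\infty=\bar{p}$ (a constant, then identified via Lemma \ref{Sect2_P_Infty_Lemma}), $v_\infty=u_\infty=0$ and $\omega_\infty=0$. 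Since $v = -\frac{1}{ak_1\varrho}\nabla\xi$ and Lemma \ref{Sect5_Entropy_Decay_Lemma} gives $|\phi_t|_\infty \lem \left(\sum_{1\leq\ell\leq 3}\|\partial_t^\ell\phi\|_{H^{3-\ell}}^2\right)^{1/2} \lem \exp\{-\frac{\beta_7}{2}t\}$, the improper integral $\int_0^\infty \phi_s(x,s)\,\mathrm{d}s$ converges absolutely and uniformly in $x$, so $S_\infty(x) = S_0(x)+\int_0^\infty S_s(x,s)\,\mathrm{d}s$ exists, is unique, and belongs to $[S_-,S_+]$.

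The key algebraic ingredient for the non-triviality of $S_\infty$ is the family of conservation laws
\begin{equation*}
\frac{\mathrm{d}}{\mathrm{d}t}\int_\Omega \varrho\exp\{\tfrac{S}{\gamma}+\alpha S\}\,\mathrm{d}x = 0,\qquad \alpha\in\mathbb{R}.
\end{equation*}
To verify this in the diffusion setting I will first derive the continuity equation $\varrho_t + \nabla\cdot(\varrho u)=0$ from $(\ref{Sect1_Diffusion_Eq})_1$ and $(\ref{Sect1_Diffusion_Eq})_3$ via the constitutive formula $\varrho=A^{-1/\gamma}p^{1/\gamma}\exp\{-S/\gamma\}$; combined with $S_t+u\cdot\nabla S=0$ this yields $(\varrho\exp\{S/\gamma+\alpha S\})_t + \nabla\cdot(\varrho u\exp\{S/\gamma+\alpha S\})=0$, and integrating against the boundary condition $u\cdot n|_{\partial\Omega}=0$ (which holds because $\nabla\xi\cdot n|_{\partial\Omega}=0$) gives the conserved quantity. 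Passing to the limit $t\rto \infty$ and using $p_\infty=\bar{p}$ leaves $\int_\Omega \bar{p}^{1/\gamma}\exp\{\alpha S_\infty\}\,\mathrm{d}x=\int_\Omega p_0^{1/\gamma}\exp\{\alpha S_0\}\,\mathrm{d}x$ for every $\alpha$.

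Assuming $S_\infty\equiv const$, I divide by $|\Omega|^{1/\alpha}$ and take $\alpha\rto +\infty$ and $\alpha\rto -\infty$ respectively, exactly as in the proof of Theorem \ref{Sect4_Convergence_Rate_Thm}: the two limits yield $\|\exp\{S_0-S_\infty\}\|_\infty=\|\exp\{S_\infty-S_0\}\|_\infty=1$, forcing $S_0\equiv S_\infty$ and contradicting $S_0\neq const$. Therefore $S_\infty\neq const$, and the pressure law together with $\theta_\infty=\bar{p}/(\mathcal{R}\varrho_\infty)$ and $e_\infty=C_V\theta_\infty$ propagate this non-constancy to $\varrho_\infty$, $\theta_\infty$, $e_\infty$. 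The main obstacle—and the only step that is really specific to the parabolic-hyperbolic reformulation—is ensuring the conservation-law identity survives the elimination of the momentum equation, i.e., that $\hat{u}=-\frac{1}{a\hat{\varrho}}\nabla\hat{p}$ has enough regularity (granted by Theorem \ref{Sect2_GlobalExistence_Thm_DiffusionEq}, which places $\hat{u}\in C([0,\infty),H^3(\Omega))$ with $\hat{u}\cdot n|_{\partial\Omega}=0$) to legitimately integrate by parts on $\partial\Omega$.

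Finally, for the exponential rates in $|\cdot|_\infty$, Sobolev embedding combined with Lemma \ref{Sect5_Decay_Lemma} yields $|p-\bar{p}|_\infty + |u|_\infty \lem \mathcal{F}[\xi,v](t)^{1/2}\lem \exp\{-\tfrac{\beta_7}{2}t\}$, while
\begin{equation*}
|S(x,t)-S_\infty(x)|_\infty \leq \int_t^\infty |\phi_s(x,s)|_\infty\,\mathrm{d}s \lem \exp\{-\tfrac{\beta_7}{2}t\},
\end{equation*}
with $|\varrho-\varrho_\infty|_\infty$ following by the constitutive law and smoothness of $\varrho(p,S)$ away from vacuum. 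This delivers the claimed exponential convergence of $(p,u,S,\varrho)$ to $(\bar{p},0,S_\infty,\varrho_\infty)$.
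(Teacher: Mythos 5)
Your proposal is correct and follows essentially the same route as the paper, which itself simply invokes the argument of Theorem \ref{Sect4_Convergence_Rate_Thm} with the Section~5 decay and boundedness lemmas substituted for their Section~3 counterparts. In fact you make explicit the one point the paper leaves implicit — that the continuity equation $\varrho_t+\nabla\cdot(\varrho u)=0$ and the boundary condition $u\cdot n|_{\partial\Omega}=0$ still hold for the diffusion system (via $(\ref{Sect1_Diffusion_Eq})_1$, $(\ref{Sect1_Diffusion_Eq})_3$, the constitutive law, and $\nabla p\cdot n|_{\partial\Omega}=0$), so the conservation-law and $\alpha\to\pm\infty$ contradiction argument carries over verbatim.
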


\begin{proof}
By Lemma $\ref{Sect5_Entropy_Decay_Lemma}$, we have
\begin{equation}\label{Sect6_S_Infty_Decay}
\begin{array}{ll}
|S_t|_{\infty}\lem
\left( \sum\limits_{\ell=1}^{3}\|\partial_t^{\ell}\phi\|_{H^{3-\ell}(\Omega)}^2 \right)^{\frac{1}{2}}
\\[6pt]\hspace{0.9cm}
\lem \|\xi_0\|_{H^4(\Omega)}\|\phi_0\|_{H^3(\Omega)}
\left(\exp\{\|\xi_0\|_{H^4(\Omega)}\}\right)^{\frac{c_{18}}{2}}\exp\{-\frac{\beta_7}{2} t\},
\end{array}
\end{equation}

So $\int\limits_{0}^{\infty}S_\tau(x,\tau) \,\mathrm{d}\tau$ converges, then
$S_{\infty}(x)=S_0(x) + \int\limits_{0}^{\infty}S_\tau(x,\tau) \,\mathrm{d}\tau$ is bounded.

Similar to the proof of Theorem $\ref{Sect4_Convergence_Rate_Thm}$, we can show $p_{\infty}=\bar{p}$, $u_{\infty}=v_{\infty}=\omega_{\infty}=0$. If $S_0\neq const$, then $S_{\infty}\neq const,\
\varrho_{\infty}(x)\neq const,\ \theta_{\infty}\neq const,\ e_{\infty}\neq const$.

The exponential decay rates of $(\xi,v,\phi_{t})$ provides exponential convergence rates of $(p,u,S,\varrho)$ to their equilibrium states as follows:
\begin{equation}\label{Sect6_Convergence_Rate}
\left\{\begin{array}{ll}
|p-p_{\infty}|_{\infty} =|p-\bar{p}|_{\infty}\lem \|\xi_0\|_{H^4(\Omega)}\exp\{-\frac{\beta_7}{2} t\}, \\[8pt]
|u-0|_{\infty}= k_1|v|_{\infty}\lem \|\xi_0\|_{H^4(\Omega)}\exp\{-\frac{\beta_7}{2} t\}, \\[8pt]

|S(x,t)-S_{\infty}(x)|_{\infty} = |-\int\limits_{t}^{\infty}S_s(x,s) \,\mathrm{d}s|_{\infty}
\leq \int\limits_{t}^{\infty} |\phi_s(x,s)|_{\infty} \,\mathrm{d}s \\[8pt]\qquad

\lem \frac{2}{\beta_7}\|\xi_0\|_{H^4(\Omega)}\|\phi_0\|_{H^3(\Omega)}
\left(\exp\{\|\xi_0\|_{H^4(\Omega)}\}\right)^{{\frac{c_{18}}{2}}}\exp\{-\frac{\beta_7}{2} t\}, \\[10pt]

|\varrho(x,t)-\varrho_{\infty}(x)|_{\infty}\lem \exp\{-\frac{\beta_7}{2} t\}.
\end{array}\right.
\end{equation}

So $(p,u,S,\varrho)\rto (\bar{p},0,S_{\infty},\varrho_{\infty})$ exponentially in $|\cdot|_{\infty}$ norm as $t\rto +\infty$.
\end{proof}

The following theorem states that the pressure and velocity of non-isentropic Euler equations with damping converge to those of the diffusion equations respectively, thus the pressure and velocity have nonlinear diffusion property.
\begin{theorem}\label{Sect6_Diffusion_Thm}
Assume $(\hat{p},\hat{u},\hat{S},\hat{\varrho})$ are variables of the diffusion equations $(\ref{Sect1_Diffusion_Eq})$ and $(p,u,S,\varrho)$ are variables of non-isentropic Euler equations with damping $(\ref{Sect1_NonIsentropic_EulerEq})$, the initial data $(p_0,u_0,S_0)$ satisfy the conditions in Theorem $\ref{Sect2_Global_Existence_Thm}$, $(\hat{p}_0,\hat{S}_0)$ satisfy the conditions in Theorem
$\ref{Sect2_GlobalExistence_Thm_DiffusionEq}$.  
If 
\begin{equation}\label{Sect6_Diffusion_1}
\begin{array}{ll}
\int\limits_{\Omega}p_0^{\frac{1}{\gamma}}\,\mathrm{d}x
=\int\limits_{\Omega}\hat{p}_0^{\frac{1}{\gamma}}\,\mathrm{d}x,
\end{array}
\end{equation}
then
\begin{equation}\label{Sect6_Diffusion_2}
\begin{array}{ll}
\|p-\hat{p}\|_{H^3(\Omega)} + \|u-\hat{u}\|_{H^3(\Omega)} \leq C_1\exp\{-C_2 t\},
\end{array}
\end{equation}
for some positive $C_1,C_2$.
\end{theorem}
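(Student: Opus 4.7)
The plan is to reduce the theorem to a triangle inequality that combines the exponential decay estimates for the two systems, which have already been proved separately in Theorems \ref{Sect2_Global_Existence_Thm} and \ref{Sect2_GlobalExistence_Thm_DiffusionEq}. The crucial observation is that the hypothesis $(\ref{Sect6_Diffusion_1})$ forces the two equilibrium pressures to coincide.

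First, I would invoke Lemma \ref{Sect2_P_Infty_Lemma} applied to both systems: for the non-isentropic Euler system with damping, $p_\infty = \bar{p} = \bigl(\tfrac{1}{|\Omega|}\int_\Omega p_0^{1/\gamma}\,\mathrm{d}x\bigr)^\gamma$; for the diffusion system, an analogous identity holds with $(\hat{p}_0,\hat{\bar{p}})$ in place of $(p_0,\bar{p})$, since $\int_\Omega \hat{p}^{1/\gamma}\,\mathrm{d}x$ is also a conserved quantity (one integrates $(\ref{Sect1_Diffusion_Eq})_1$ against $p^{1/\gamma-1}/\gamma$, or equivalently uses that $\hat{\varrho}\exp\{\hat{S}/\gamma\}$ is transported by $\hat{u}$ and $\hat{u}\cdot n|_{\partial\Omega}=0$). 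The assumption $\int_\Omega p_0^{1/\gamma}\,\mathrm{d}x = \int_\Omega \hat{p}_0^{1/\gamma}\,\mathrm{d}x$ then gives $\bar{p}=\hat{\bar{p}}$.

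Next, I would apply the triangle inequality,
\begin{equation*}
\|p-\hat{p}\|_{H^3(\Omega)} \leq \|p-\bar{p}\|_{H^3(\Omega)} + \|\hat{p}-\hat{\bar{p}}\|_{H^3(\Omega)},
\qquad
\|u-\hat{u}\|_{H^3(\Omega)} \leq \|u\|_{H^3(\Omega)} + \|\hat{u}\|_{H^3(\Omega)}.
\end{equation*}
By Theorem \ref{Sect2_Global_Existence_Thm} (together with Lemma \ref{Sect3_Decay_Lemma}), the Euler quantities satisfy
$\|p-\bar{p}\|_{H^3(\Omega)}^2+\|u\|_{H^3(\Omega)}^2 \lesssim \mathcal{E}[\xi,v](t)\leq \beta_1\|(\xi_0,v_0)\|_{H^3(\Omega)}^2\exp\{-\beta_2 t\}$. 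By Theorem \ref{Sect2_GlobalExistence_Thm_DiffusionEq} (together with Lemma \ref{Sect5_Decay_Lemma}), the diffusion quantities satisfy the analogous bound
$\|\hat{p}-\hat{\bar{p}}\|_{H^3(\Omega)}^2+\|\hat{u}\|_{H^3(\Omega)}^2 \lesssim \mathcal{F}[\hat{\xi},\hat{v}](t)\leq \beta_6\|\hat{\xi}_0\|_{H^4(\Omega)}^2\exp\{-\beta_7 t\}$. Combining the four estimates yields $(\ref{Sect6_Diffusion_2})$ with $C_2=\tfrac{1}{2}\min\{\beta_2,\beta_7\}$ and $C_1$ depending on the initial data norms already controlled by the smallness parameters $\delta_1,\delta_2$.

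There is essentially no analytic obstacle here beyond verifying the conservation law $\int_\Omega \hat{p}^{1/\gamma}\,\mathrm{d}x = \int_\Omega \hat{p}_0^{1/\gamma}\,\mathrm{d}x$ for the diffusion system; I would argue this either by direct computation using $(\ref{Sect1_Diffusion_Eq})_1$ (multiply by $\hat{p}^{1/\gamma-1}/\gamma$ and integrate, using $\hat{u}\cdot n|_{\partial\Omega}=0$) or, more transparently, by observing that $(\hat{\varrho}\exp\{\hat{S}/\gamma\})_t + \nabla\cdot(\hat{\varrho}\exp\{\hat{S}/\gamma\}\hat{u})=0$ exactly as in $(\ref{Sect2_P_Infty_1})$, so that total mass of $\hat{p}^{1/\gamma}=\sqrt[\gamma]{A}\,\hat{\varrho}\exp\{\hat{S}/\gamma\}$ is conserved; this step is the only place where one must be careful not to invoke the momentum equation $(\ref{Sect1_NonIsentropic_EulerEq})_2$, which is replaced in the diffusion regime by Darcy's law. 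Once this conservation is in hand, the rest reduces to an immediate application of the already-proved exponential decay estimates.
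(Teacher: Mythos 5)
Your proposal is correct and follows essentially the same route as the paper: establish $\bar{p}=\hat{\bar{p}}$ from the hypothesis (\ref{Sect6_Diffusion_1}), then apply the triangle inequality to compare each of $p,\hat{p}$ (resp.\ $u,\hat{u}$) to the common equilibrium $(\bar{p},0)$, and invoke the exponential decay from Lemmas \ref{Sect3_Decay_Lemma} and \ref{Sect5_Decay_Lemma}. The only slight difference is that you spell out the conservation of $\int_\Omega \hat{p}^{1/\gamma}\,\mathrm{d}x$ for the diffusion system (which the paper leaves implicit when it asserts $\bar{p}=\hat{\bar{p}}$), a useful clarification since Lemma \ref{Sect2_P_Infty_Lemma} is only proved directly for the Euler system; your observation that the continuity and entropy equations alone suffice, without the momentum equation, is exactly the right way to carry it over.
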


\begin{proof}
The condition $(\ref{Sect6_Diffusion_1})$ implies $\bar{p}=\hat{\bar{p}}$.

Take
$C_1 =4(1+k_1^2)\max\{\beta_1\|(\xi_0,v_0)\|_{H^3(\Omega)}^2, \beta_6 \|\xi_0\|_{H^4(\Omega)}^2\}$,
$C_2 =\min\{\beta_2, \beta_7\}$. By Lemmas $\ref{Sect3_Decay_Lemma}$ and $\ref{Sect5_Decay_Lemma}$, we have
\begin{equation}\label{Sect6_Diffusion_Prove1}
\begin{array}{ll}
\|p-\hat{p}\|_{H^3(\Omega)}^2
\leq 2\|p-\bar{p}\|_{H^3(\Omega)}^2 + 2\|\hat{p}-\hat{\bar{p}}\|_{H^3(\Omega)}^2 \\[6pt]\hspace{2cm}
\leq 2\beta_1\|(\xi_0,v_0)\|_{H^3(\Omega)}^2\exp\{-\beta_2 t\} + 2\beta_6 \|\xi_0\|_{H^4(\Omega)}^2\exp\{-\beta_7 t\},
\\[6pt]\hspace{2cm}
\leq \frac{C_1}{2}\exp\{-C_2 t\}, \\[6pt]
\|u-\hat{u}\|_{H^3(\Omega)}^2 \leq 2k_1^2\|v\|_{H^3(\Omega)}^2+ 2k_1^2\|\hat{v}\|_{H^3(\Omega)}^2
 \\[6pt]\hspace{2cm}
\leq 2k_1^2\beta_1\|(\xi_0,v_0)\|_{H^3(\Omega)}^2\exp\{-\beta_2 t\}
+ 2k_1^2\beta_6 \|\xi_0\|_{H^4(\Omega)}^2\exp\{-\beta_7 t\},
\\[6pt]\hspace{2cm}
\leq \frac{C_1}{2}\exp\{-C_2 t\}.
\end{array}
\end{equation}

Thus, Theorem $\ref{Sect6_Diffusion_Thm}$ is proved.
\end{proof}

%%% find 7
%\newpage
\bibliographystyle{siam}
\addcontentsline{toc}{section}{References}
\bibliography{FuzhouWu_EulerEq_Diffusion}

\end{document}